\documentclass[11pt,reqno]{amsart}
\usepackage[a4paper,margin=25mm]{geometry}
\usepackage[T1]{fontenc}
\usepackage{lmodern,microtype,amsmath,amssymb,amsthm,mathtools,booktabs,mathrsfs,longtable,array}
\usepackage{needspace}
\usepackage{hyperref}
\hypersetup{hidelinks,
 pdftitle={An explicit polynomial counterexample to Connes' embedding conjecture},
 pdfauthor={Jiaqi Wang and Lihong Zhi},
 pdfsubject={Trace-positive noncommutative polynomials and finite group certificates}}
\numberwithin{equation}{section}
\newtheorem{theorem}{Theorem}[section]
\newtheorem{proposition}[theorem]{Proposition}
\newtheorem{lemma}[theorem]{Lemma}
\newtheorem{corollary}[theorem]{Corollary}
\theoremstyle{definition}

\theoremstyle{remark}
\newtheorem{remark}[theorem]{Remark}
\newcommand{\C}{\mathbb C}
\newcommand{\R}{\mathbb R}
\newcommand{\Q}{\mathbb Q}
\newcommand{\Z}{\mathbb Z}
\newcommand{\F}{\mathbb F}

\newcommand{\tr}{\operatorname{tr}}
\newcommand{\Tr}{\operatorname{Tr}}
\newcommand{\Ad}{\operatorname{Ad}}

\newcommand{\gc}[2]{\{#1,#2\}}
\newcommand{\EL}{\operatorname{EL}}
\newcommand{\SL}{\operatorname{SL}}

\newcommand{\rank}{\operatorname{rank}}
\newcommand{\spec}{\operatorname{spec}}
\newcommand{\HS}{\mathrm{HS}}
\newcommand{\cE}{\mathcal E}
\newcommand{\eps}{\varepsilon}
\newcommand{\ip}[2]{\langle #1,#2\rangle}
\DeclareMathOperator{\sgn}{sgn}
\newcommand{\cyc}{\mathrel{\sim_{\rm cyc}}}
\newcommand{\HC}{\mathcal H}
\title[An explicit polynomial counterexample to Connes' embedding conjecture]{An explicit polynomial counterexample to Connes' embedding conjecture}

\author[Jiaqi Wang]{Jiaqi Wang}
\address{State Key Laboratory of Mathematical Sciences, Academy of Mathematics
and Systems Science, Chinese Academy of Sciences, Beijing 100190, China;
University of Chinese Academy of Sciences, Beijing 100049, China}
\email{jiaqiwang@amss.ac.cn}
\author[Lihong Zhi]{Lihong Zhi}
\address{State Key Laboratory of Mathematical Sciences, Academy of Mathematics
and Systems Science, Chinese Academy of Sciences, Beijing 100190, China;
University of Chinese Academy of Sciences, Beijing 100049, China}
\email{lzhi@mmrc.iss.ac.cn}
\date{}
\subjclass[2020]{Primary 46L10; Secondary 13J30, 16S34, 20F65}
\keywords{Connes embedding problem, trace-positive noncommutative polynomials,
sums of Hermitian squares, property $(T)$}
\begin{document}

\begin{abstract}
We construct an explicit Hermitian polynomial in six selfadjoint
variables,
\[
 f=-1+\omega+\omega^*
   +M\sum_{j=1}^{15}(2-u_j-u_j^*),
\]
with integer coefficients, degree $72$, and exactly $33$ monomials.
Here $\omega,u_1,\ldots,u_{15}$ are specified words, $*$ reverses
words, and $M$ is a specified positive integer.
Its normalized trace is at least $3/4$ on selfadjoint matrix
contraction tuples of every dimension, but equals $-1$ at a
specified tuple of selfadjoint unitaries in a group von Neumann
algebra. Thus $f$ is a counterexample to the algebraic formulation
of Connes' embedding conjecture.

We also construct a Hermitian quartic in $37$ selfadjoint variables,
with coefficients in $\Z[i]=\Z+i\Z$ and $387$ monomials, attaining
the same trace bounds without norm restrictions on selfadjoint
matrix inputs. Degree four is minimal in this unrestricted setting.
Finally, an encoding in two selfadjoint variables yields a
counterexample on the contraction domain with integer coefficients
and degree at most $48$; two variables are minimal.
\end{abstract}
\maketitle

\section{Introduction}\label{sec:intro}

Connes' embedding conjecture, formulated in
1976~\cite[p.~105]{Connes}, asked whether every $\mathrm{II}_1$
factor with a separable predual embeds into an ultrapower of the
hyperfinite $\mathrm{II}_1$ factor. Equivalently, it asks whether
finitely many joint moments of selfadjoint contractions in such
a factor can be approximated arbitrarily well by normalized traces of matrix
contractions~\cite[Proposition~3.17]{KS}.
This approximation viewpoint connects the conjecture with
Voiculescu's work on random matrix limits~\cite{Voiculescu}
and Kirchberg's equivalences involving tensor products of
$C^*$-algebras and the QWEP conjecture~\cite{Kirchberg}.

The connection with Tsirelson's problem was established by
Junge and collaborators~\cite{Junge} and Fritz~\cite{Fritz}.
Following Slofstra's nonclosure theorem~\cite{Slofstra},
Ji, Natarajan, Vidick, Wright, and Yuen obtained a negative
answer to Connes' embedding conjecture through
$\mathrm{MIP}^*=\mathrm{RE}$~\cite{JNVWY}.
Subsequent developments include Lin's theorem
$\mathrm{MIP}^{\mathrm{co}}=\mathrm{coRE}$ and alternative proof of
$\mathrm{MIP}^*=\mathrm{RE}$~\cite{Lin}, and Manzoor's construction
of a countable probability measure preserving equivalence relation
whose von Neumann algebra is not Connes embeddable~\cite{Manzoor}.

An algebraic formulation due to Klep and
Schweighofer~\cite{KS} asks whether every Hermitian polynomial
with nonnegative trace on all selfadjoint matrix contraction
tuples admits, after any positive scalar perturbation, a
certificate in the contraction quadratic module modulo
commutators. Related tracial moment and positivity formulations
were developed by Hadwin~\cite{Hadwin} and
R\u{a}dulescu~\cite{Rad04}; see also the real formulation
in~\cite{BDKS} and Ozawa's unified treatment~\cite{OzawaCEC}.

The negative solution of the embedding conjecture implies the
existence of polynomials separating matrix traces from traces
on finite von Neumann algebras. The corresponding existence
statement for unrestricted matrix tuples is recorded
in~\cite[Proposition~6.4]{KSV}.
Our purpose is to give such a polynomial explicitly, including
its coefficients, a negative group substitution, and bounds
valid in every matrix dimension. The construction uses the
group obstruction of Kun and Thom~\cite{KunThom}, Thom's
normalization argument~\cite{Thom}, and the spectral and
commutant results of Alekseev, Liu, and Thom~\cite{ALT}.

\subsection{Main results}

For $N\ge1$, let $\mathcal A_N=\C\langle X_1,\ldots,X_N\rangle$
be the free $*$-algebra on selfadjoint variables. The involution
conjugates coefficients and reverses words; a polynomial $p$
is Hermitian if $p^*=p$. Write $\Z\langle X_1,\ldots,X_N\rangle$
for the subalgebra with integer coefficients, and $p(A)$ for
evaluation at a tuple $A=(A_1,\ldots,A_N)$.
Let $M_d=M_d(\C)$, let $\Tr$ be the usual matrix trace, and put
$\tr_d=d^{-1}\Tr$. We use $\|\cdot\|$ for the operator norm.
A selfadjoint contraction tuple satisfies $A_j^*=A_j$ and
$\|A_j\|\le1$ for every $j$.

For the words $\omega,u_1,\ldots,u_{15}$ in
\eqref{eq:signalword}--\eqref{eq:penaltywords}, define
\begin{equation}\label{eq:penaltyscheme}
 f=-1+\omega+\omega^*
   +M\sum_{j=1}^{15}(2-u_j-u_j^*),
 \qquad M=2^{\,2^{55227}+54}.
\end{equation}
This is the same integer $M$ as in \eqref{eq:globalconstants}.
Each penalty $2-u_j-u_j^*$ has nonnegative trace on selfadjoint
matrix contraction tuples. Appendix~\ref{app:nativeexpanded} gives
the complete expansion of $f$ in the six variables.

Let $\Lambda$ be the group defined in \eqref{eq:s2:Lambda}.
Its group von Neumann algebra $L(\Lambda)$ has canonical trace
$\tau_\Lambda$, which is one on the identity group element and
zero on every other group element. Let $a=(a_1,\ldots,a_6)$ be
the tuple of selfadjoint unitaries in $L(\Lambda)$ specified in
\eqref{eq:s2:involution-tuple}.

\begin{theorem}\label{thm:main}\label{thm:native}\label{csimp:r3:nativethm}
The polynomial $f$ in \eqref{eq:penaltyscheme} is Hermitian,
has integer coefficients, degree $72$, and exactly $33$ monomials.
For every $d\ge1$ and every selfadjoint contraction tuple
$A\in M_d^6$,
\begin{equation}\label{eq:matrixpositive}
 \tr_d f(A)\ge\frac34,
\end{equation}
whereas the specified group tuple satisfies
\begin{equation}\label{eq:negative}
 \tau_\Lambda f(a)=-1.
\end{equation}
\end{theorem}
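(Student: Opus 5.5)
The proof splits into three independent parts: the combinatorial claims, the group value \eqref{eq:negative}, and the matrix bound \eqref{eq:matrixpositive}.

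\emph{Combinatorial claims.} These are read off from the expansion in Appendix~\ref{app:nativeexpanded}. The involution fixes $1$ and swaps $\omega\leftrightarrow\omega^*$ and $u_j\leftrightarrow u_j^*$, so $f^*=f$. All coefficients are $\pm1$, $M$, or sums of such, hence integers. The degree is $\max(|\omega|,\max_j|u_j|)=72$. The monomial count $1+2+2\cdot15=33$ holds provided the words $\omega,\omega^*,u_j,u_j^*$ are pairwise distinct, nonempty and not palindromic. This has to be checked on the explicit words \eqref{eq:signalword}--\eqref{eq:penaltywords}.

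\emph{Group value.} At the tuple $a$ of \eqref{eq:s2:involution-tuple}, each $u_j(a)$ should be the image of a defining relator of $\Lambda$, so $u_j(a)=1$ and every penalty $2-u_j(a)-u_j(a)^*$ vanishes identically. Then
\[
\tau_\Lambda f(a)=-1+2\operatorname{Re}\tau_\Lambda(\omega(a)).
\]
So I need $\omega(a)$ to be a nontrivial group element, giving $\tau_\Lambda(\omega(a))=0$. In the Kun--Thom construction $\omega$ encodes an element that is nontrivial in $\Lambda$ but is forced to be trivial in every finite quotient and in every approximate matrix representation. I would cite the presentation \eqref{eq:s2:Lambda} and the nontriviality statement from the group-theoretic section.

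\emph{Matrix bound.} This is the main obstacle. Fix a selfadjoint contraction tuple $A\in M_d^6$. Put $U_j=u_j(A)$, a contraction, and
\[
\delta=\sum_j\tr_d(2-U_j-U_j^*)=\sum_j 2\operatorname{Re}\tr_d(1-U_j)\ge 0.
\]
If $M\delta\ge 2$, then $\tr_d f\ge -1-2+M\delta\ge -1$ is not enough, so the threshold must be larger: if $M\delta\ge 4$, the $\omega$-terms contribute at least $-2$ and $\tr_d f(A)\ge -3+4>3/4$. Otherwise $\delta<4/M$ is tiny.

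In the small-$\delta$ case, first pass from contractions to unitaries. The bound $\operatorname{Re}\tr(1-U_j)$ small forces the letters to be close in normalized Hilbert--Schmidt norm to selfadjoint unitaries; polar decomposition gives $\|A_i-\operatorname{sgn}(A_i)\|_2^2\lesssim\delta\cdot(\text{word length})$, and this must be built into the penalty words. This yields an honest unitary matrix representation of the free product in which the relators hold up to $\HS$-error $\eta=O(\sqrt{\delta})$, i.e.\ an $\eta$-approximate representation of $\Lambda$ in $U(d)$ with the normalized Hilbert--Schmidt metric.

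Then invoke the quantitative stability/rigidity input: Kun--Thom's argument together with Thom's normalization and the Alekseev--Liu--Thom spectral gap. For $\eta$ below an explicit threshold (this is where the tower exponent in $M$ arises), any such approximate representation has $\|\omega(A)-1\|_2\le \epsilon$ with $\epsilon\le 1/8$, which gives
\[
\operatorname{Re}\tr_d\omega(A)\ge 1-\tfrac{\epsilon^2}{2}\ge \tfrac78 .
\]
Hence $\tr_d f(A)\ge -1+\tfrac74=\tfrac34$, using that the penalty term is nonnegative.

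The delicate points are the explicit constants through the property-(T)/spectral-gap estimates that force $\omega\approx1$ in every finite dimension, and the error propagation along words of length up to $72$. I would make both precise by quoting the earlier quantitative proposition bounding $\|\omega-1\|_2$ in terms of the relator defects, and by choosing $M$ as in \eqref{eq:globalconstants} so that $4/M$ lies below that threshold.
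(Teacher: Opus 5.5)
Your combinatorial count and your negative evaluation agree with the paper. The matrix bound also has the paper's frame: threshold $\rho_*=4/M$, sign rounding, rigidity, transfer back. Below, $\rho$ is your penalty sum; the paper reserves $\delta$ for $2^{-J}$.

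The gap is the step ``small penalties give an approximate representation of $\Lambda$, then quote rigidity.'' The penalties control only the fifteen retained relations \eqref{csimp:r3:fifteen}, which are not a presentation of $\Lambda$. The rigidity result, Theorem~\ref{thm:gate}, is stated for the $97$ full relations: fifth powers of all five basic roots, all root commutations, all fifteen integer relations including the three braids, the thirty compression relations with the prescribed targets $\pi_{ij}=d_i^*e_jd_ie_j^*$, and the five centralizations. Deriving these from the fifteen is a substantive part of the proof:
\begin{itemize}
\item Lemma~\ref{csimp:r3:recoverthree} uses $x=szs$ and $(qs)^3=1$ to recover $x^5=1$, $[d,z]=0$ and $[c,x]=0$.
\item Lemma~\ref{csimp:r17:lem:core} transports the remaining relations by conjugation with $r=t^2$ and $q$, and recovers the braids through Lemma~\ref{lem:braid}.
\item The same lemma compares the short target $p$ with $\pi_{12},\pi_{23},\pi_{13}$, the last via Lemma~\ref{lem:integertransport}.
\end{itemize}
These steps lose a factor $3\cdot2^{13}$.

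There is a second missing step. The recovery lemmas require $t^6=1$ and $(qs)^3=1$ \emph{exactly}. So does Theorem~\ref{thm:gate}, which needs $v=t^3$ to be a symmetry, and so does the identity $\tr_d(-1+h^*WhW+Wh^*Wh)=1-\|[h,W]\|_2^2$. Sign rounding only gives these relations up to $O(\sqrt\rho)$. You need Lemma~\ref{csimp:r3:roundorder}:
\begin{itemize}
\item Round $t$ and $qs$ to exact sixth and third roots by a Borel function commuting with complex conjugation, keeping $q$ fixed.
\item Then $\widetilde tq$ and $\widetilde s$ are again symmetries, so they can replace the values of $X_4$ and $X_1$.
\item This matters because only on a tuple of symmetries do the words $u_i$ measure the retained defects.
\end{itemize}
With this, $\|A_j-\widetilde A_j\|_2\le11\sqrt\rho$ and the retained defects are at most $793\sqrt\rho$. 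The full defects are then at most $2^{25}\sqrt\rho<\delta/2$, since $\rho<4/M=\delta^2/2^{52}$, which is exactly the hypothesis of Theorem~\ref{thm:gate}.

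Two minor corrections:
\begin{itemize}
\item H\"older gives $\|A_j-\sgn(A_j)\|_2\le\sqrt{\rho/2}$ with no word-length factor.
\item The bound $\operatorname{Re}\tr_d\omega\ge1-\epsilon^2/2$ needs $\omega$ unitary, so it fails at the original contraction tuple. Evaluate the signal at $\widetilde A$ and pay $792\sqrt\rho$ to return to $A$, or use the Cauchy--Schwarz bound $1-\epsilon$.
\end{itemize}
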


The negative evaluation is proven in
Proposition~\ref{prop:s2:negative-evaluation}, and the matrix
bound is established in Theorem~\ref{thm:matrixpositivity}.

The main text develops this polynomial. Appendix~\ref{sec:quartic}
uses a reduction following R\u{a}dulescu~\cite{Rad99,CapRad}
to construct a Hermitian quartic in $37$ selfadjoint variables,
with $387$ monomials and coefficients in the Gaussian integers
$\Z[i]=\{a+ib:a,b\in\Z\}$. It has the same trace bounds as $f$
without norm restrictions on the matrix inputs; degree four is
minimal for such unrestricted separation.
Appendix~\ref{app:fewvariables} encodes this quartic in two
selfadjoint variables, giving a polynomial with integer
coefficients and degree at most $48$ whose trace is positive on
matrix contraction pairs and negative at a specified contraction
pair in a finite tracial von Neumann algebra. Two variables are
minimal for separation on contractions.

\subsection{The algebraic consequence}

The contraction quadratic module in $\mathcal A_N$ is
\begin{equation}\label{eq:cubemoduleintro}
 \mathcal M_N=
 \left\{\sum_\ell a_\ell^*a_\ell+
 \sum_{j=1}^N\sum_\ell b_{j\ell}^*(1-X_j^2)b_{j\ell}:
 a_\ell,b_{j\ell}\in\mathcal A_N\right\},
\end{equation}
where all sums are finite. We write $[a,b]=ab-ba$ for the
additive commutator and $[\mathcal A_N,\mathcal A_N]_{\rm lin}$
for the linear span of these commutators. Two polynomials are
cyclically equivalent if their difference belongs to this span.

The algebraic formulation of Connes' embedding conjecture asserts
that, for every $N$ and every Hermitian $p\in\mathcal A_N$,
the condition
\begin{equation}\label{eq:algebraicpositive}
 \tr_d p(A)\ge0
 \quad\text{for every }d\ge1\text{ and every selfadjoint
 contraction tuple }A\in M_d^N
\end{equation}
implies
\begin{equation}\label{eq:algebraicconnes}
 p+\eps\in\mathcal M_N+
 [\mathcal A_N,\mathcal A_N]_{\rm lin}
 \qquad\text{for every }\eps>0.
\end{equation}
The equivalence with the embedding conjecture is proved
in~\cite[Theorem~3.18]{KS}, with the real and complex formulations
related in~\cite{BDKS}. The converse implication from
\eqref{eq:algebraicconnes} to \eqref{eq:algebraicpositive} follows
by taking matrix traces and letting $\eps$ tend to zero.

Theorem~\ref{thm:main} gives the explicit failure
\begin{equation}\label{eq:introexclusion}
 f+\eps\notin\mathcal M_6+
 [\mathcal A_6,\mathcal A_6]_{\rm lin}
 \qquad(0\le\eps<1).
\end{equation}
Indeed, evaluation at $a$ gives nonnegative trace on
$\mathcal M_6$ and zero trace on commutators, whereas
$\tau_\Lambda(f(a)+\eps)=-1+\eps<0$; see
Corollary~\ref{cor:module}.
The positive perturbation and the contraction constraints are
essential to this formulation. For example, the noncommutative
Motzkin polynomial in~\cite[Example~4.4]{KS} is not cyclically
equivalent to a sum of Hermitian squares, but every positive
scalar perturbation is cyclically equivalent to an element
of the contraction quadratic module.

\subsection{Construction and proof strategy}\label{subsec:priorwork}

Starting from the groups of Kun and
Thom~\cite[Theorem~E]{KunThom}, we use
\begin{equation}\label{eq:groups}
\begin{split}
 H&=\EL_3(\F_5[\xi_1,\xi_2,\xi_3]),\\
 G&=\EL_3(\F_5[\xi_1^{\pm1},\xi_2^{\pm1},\xi_3^{\pm1}])
       \rtimes\SL_3(\Z),\\
 \Gamma&=G*_H\bigl(H\times\langle v\mid v^2=1\rangle\bigr).
\end{split}
\end{equation}
Here $\F_5=\Z/5\Z$ and the $\xi_i$ are commuting indeterminates.
For a unital ring $R$, the group $\EL_3(R)$ is generated by
$I_3+bE_{ij}$ with $b\in R$ and $i\ne j$, where $I_3$ is the
identity matrix and $E_{ij}$ has its only nonzero entry, equal to
one, in position $(i,j)$. The group $\SL_3(\Z)$ consists of
integer matrices of determinant one. Its action on the Laurent
polynomial ring is given by $\xi^n\mapsto\xi^{Un}$ for
$U\in\SL_3(\Z)$ and $n\in\Z^3$, where
$\xi^n=\xi_1^{n_1}\xi_2^{n_2}\xi_3^{n_3}$, and acts on matrix
entries accordingly. We view $H$ as a subgroup of $G$ by
$g\mapsto(g,I_3)$. The amalgamated free product $*_H$ identifies
the two copies of $H$, so $\Gamma$ adjoins an involution $v$
commuting with every element of $H$.

The group obstruction comes from conjugation that maps $H$
into a proper subgroup of itself. In matrix algebras, a unitary
conjugation that maps a unital $*$-subalgebra into itself must
map it onto itself, because the two algebras have the same
finite dimension. The compression and centralization relations
then force the commutator defining the signal to vanish.
In the group model, the corresponding commutator has nonzero
trace of its squared modulus. Section~\ref{sec:polynomial}
preserves this distinction while adjoining further involutions
to obtain $\Lambda$ and the six coordinates.

Thom~\cite[Theorems~1.2--1.3 and \S5.2]{Thom} and
Alekseev, Liu, and Thom~\cite[Theorem~B and Corollary~C]{ALT}
establish the group obstruction through normalization and
relative commutants in tracial matrix ultraproducts.
Our analytic task is to obtain a finite tolerance for the
same rigidity conclusion that is independent of the matrix
dimension. Section~\ref{sec:certificates} obtains spectral
bounds from the finite subgroup relations and the integer
certificate of Netzer and Thom~\cite{NT}, then constructs
algebras describing approximate commutation with the corresponding inputs.
Section~\ref{sec:reconstruction} proves the quantitative
normalization estimate needed to reverse approximate inclusions.

The analytic argument uses $97$ full relations. The six
coordinates encode fifteen retained equations, from which
Section~\ref{sec:gate} recovers all full relations with explicit
error bounds. These fifteen equations differ from the fifteen
integer relations used to transfer the certificate of Netzer and Thom
in Lemma~\ref{lem:Kshort}.
For arbitrary selfadjoint matrix contractions, a small total
penalty controls the distance to a tuple of selfadjoint unitaries
and the defects of its relations. The resulting reflection
estimate makes the trace of the signal close to $1$.
If the total penalty is larger, its contribution multiplied
by $M$ gives the lower bound directly. At the specified group
tuple, every penalty vanishes and the signal has trace $-1$.

\subsection{Notation and preliminaries}\label{sec:prelim}

An element $u$ of a unital $*$-algebra is unitary if
$u^*u=uu^*=1$. A symmetry is a selfadjoint unitary, equivalently
an element $s$ satisfying $s=s^*$ and $s^2=1$.
We write $\mathcal U(d)$ for the unitary group of $M_d$.
For words or operators $u,v$, write
$\gc{u}{v}=uvu^*v^*$; for unitary elements, this is the group
commutator.

For a finite von Neumann algebra with normalized faithful trace
$\tau$, set
\[
 |x|=(x^*x)^{1/2},\qquad
 \|x\|_{p,\tau}=\bigl(\tau(|x|^p)\bigr)^{1/p}
 \quad(p=2,4).
\]
Unless a trace is specified, matrix $p$-norms use $\tr_d$. Thus
\[
 \|x\|_2^2=\frac{\Tr(x^*x)}d,\qquad
 \langle x,y\rangle=\tr_d(y^*x),\qquad
 \|x\|_{\HS}=\bigl(\Tr(x^*x)\bigr)^{1/2},
\]
where $\|\cdot\|_{\HS}$ is the unnormalized Hilbert--Schmidt norm.
Auxiliary rectangular matrices retain the denominator $d$.
On a corner $e_iM_de_i$ with nonzero projection $e_i$, we write
$\tau_i(x)=\tr_d(x)/\tr_d(e_i)$ when using the normalized corner
trace. Function $L^p$ norms use the stated probability weights.

For a polynomial $p=\sum_w c_ww$ in distinct words, define its
coefficient norm by
\[
 \|p\|_{\mathrm{coeff},1}=\sum_w|c_w|.
\]
Each coefficient estimate specifies the alphabet: either the formal
complex symbols and their adjoints in an auxiliary dictionary, or
the primitive selfadjoint variables. This coefficient norm differs
from function $L^1$ and matrix norms.

For a linear map $\Phi$ between matrix spaces, set
\begin{equation}\label{eq:mapnorms}
 \|\Phi\|_{\infty\to2}
 =\sup_{\|x\|\le1}\|\Phi(x)\|_2,
 \qquad
 \|\Phi\|_{2\to2}
 =\sup_{\|x\|_2\le1}\|\Phi(x)\|_2.
\end{equation}
Also set
$\|\Phi\|_{\infty\to\infty}=\sup_{\|x\|\le1}\|\Phi(x)\|$.
All suprema include nonselfadjoint inputs.
For domain $M_d$ with trace $\tr_d$,
$\|\Phi\|_{\infty\to2}\le\|\Phi\|_{2\to2}$.
The mixed norm is written $\|\Phi\|_{\infty,2}$ in~\cite{PSS}.

For a unital $*$-subalgebra $A\subseteq M_d$, write
\[
 A_1=\{a\in A:\|a\|\le1\},\qquad
 A'=\{x\in M_d:xa=ax\text{ for all }a\in A\}
\]
for its operator norm unit ball and its commutant in $M_d$, and
put $\mathcal U(A)=A\cap\mathcal U(d)$.
The trace-preserving conditional expectation $E_A:M_d\to A$ is
the orthogonal projection onto $A$ for the above inner product.
It satisfies $E_A(bxc)=bE_A(x)c$ for $b,c\in A$ and $x\in M_d$,
and is contractive in both the operator norm and the normalized
$2$-norm; see~\cite[Theorem~6.2.2]{Peterson}.

For $u\in\mathcal U(d)$, put $a=E_A(u)$.
Since $A$ is finite dimensional, the partial isometry in the polar
decomposition of $a$ extends to a unitary $v\in\mathcal U(A)$
with $a=v|a|$. Every such completion satisfies
\begin{equation}\label{eq:unitaryrounding}
 \|u-v\|_2\le\sqrt2\,\|u-E_A(u)\|_2.
\end{equation}
Indeed, $\|a\|\le1$ and $u-a$ is orthogonal to $A$, so
\[
 \|a-v\|_2^2=\tr_d\bigl((1-|a|)^2\bigr)
 \le1-\|a\|_2^2=\|u-a\|_2^2.
\]
Since $a-v\in A$, orthogonality gives
$\|u-v\|_2^2=\|u-a\|_2^2+\|a-v\|_2^2
\le2\|u-a\|_2^2$.
See also~\cite[Theorem~2.2]{Higham}.

The paper is organized as follows. Section~\ref{sec:polynomial}
introduces the group model, derives the finite relations, and
constructs $f$ and its negative group evaluation.
Section~\ref{sec:certificates} establishes spectral estimates
and reconstructs algebras describing approximate commutation.
Section~\ref{sec:reconstruction} proves quantitative normalization.
Section~\ref{sec:gate} combines these results with relation recovery
and rounding to prove matrix positivity. Appendices~\ref{sec:quartic}
and~\ref{app:fewvariables} give the quartic construction and its
encoding in two selfadjoint variables, respectively;
Appendix~\ref{app:nativeexpanded} lists the $33$ monomials of $f$.

\section{Finite relations and the separating polynomial}\label{sec:polynomial}

The construction has three stages. We first identify elementary matrices
and integer actions in the group model and record the finite relations
used in the analytic proof. We then exploit symmetries of these elements
to encode the required relations using six involutions. Finally, we
replace those involutions by free selfadjoint variables and form the
polynomial from one signal word and fifteen relation penalties.
Throughout this section, $\mathcal I=\{1,2,3\}$.

\subsection{The group model and its basic elements}
\label{subsec:exactsolutions}
Recall the groups from \eqref{eq:groups}:
\[
\begin{gathered}
 H=\operatorname{EL}_3(\mathbb F_5[\xi_1,\xi_2,\xi_3]),
\\
 G=\operatorname{EL}_3(\mathbb F_5[\xi_1^{\pm1},\xi_2^{\pm1},
 \xi_3^{\pm1}])\rtimes\operatorname{SL}_3(\mathbb Z),
\\
 \Gamma=G*_H\bigl(H\times\langle v\mid v^2=1\rangle\bigr).
\end{gathered}
\]
Here $\mathbb F_5=\mathbb Z/5\mathbb Z$, the $\xi_i$ are commuting
indeterminates, and $\operatorname{EL}_3(R)$ is generated by
$e_{ij}(b)=I_3+bE_{ij}$ for $i\ne j$ and $b\in R$.
Here $I_3$ is the identity matrix and $E_{ij}$ has its only nonzero
entry, equal to $1$, in position $(i,j)$.
The action of $A\in\operatorname{SL}_3(\mathbb Z)$ sends
$\xi^n$ to $\xi^{An}$, where
$\xi^n=\xi_1^{n_1}\xi_2^{n_2}\xi_3^{n_3}$ and $n\in\mathbb Z^3$.
The amalgam identifies the two copies of $H$ and adjoins an involution
$v$ commuting with every element of $H$.

We use a subscript zero for fixed group elements.
Our construction starts from the following five elementary
matrices in $H$:
\begin{equation}\label{eq:s2:basic-group-roots}
 x_0=e_{12}(1),\qquad y_0=e_{23}(1),\qquad
 z_{i,0}=e_{31}(\xi_i)\quad(i\in\mathcal I).
\end{equation}
These belong to the standard root subgroups corresponding
to the matrix positions $(1,2)$, $(2,3)$, and $(3,1)$.
We refer to them as the five basic root elements.
The choice of these elements is motivated by the three finite subgroups
\[
 \langle x_0\rangle,\qquad \langle y_0\rangle,\qquad
 \langle z_{1,0},z_{2,0},z_{3,0}\rangle,
\]
which have orders $5,5,125$, respectively.
Each pair generates a finite group with central commutators.
The structure of these groups is used to establish the spectral
estimate in Section~\ref{sec:certificates}.
We do not assume that the five selected elements generate all of $H$.

Their commutators give seven further root elements:
\begin{equation}\label{eq:s2:derived-group-roots}
\begin{aligned}
 c_0&=\gc{x_0}{y_0}=e_{13}(1),\\
 d_{i,0}&=\gc{x_0}{z_{i,0}}=e_{32}(-\xi_i),&
 e_{i,0}&=\gc{y_0}{z_{i,0}}=e_{21}(\xi_i)
 \quad(i\in\mathcal I).
\end{aligned}
\end{equation}
The six integer action elements are
\begin{equation}\label{eq:s2:group-actions}
 U_{ij,0}=(I_3,I_3+E_{ij})\in G\qquad(i\ne j).
\end{equation}
Conjugation by $U_{ij,0}$ fixes $x_0,y_0$ and $z_{k,0}$ for $k\ne j$,
and replaces $\xi_j$ by $\xi_i\xi_j$. In particular,
\[
 U_{ij,0}z_{j,0}U_{ij,0}^{-1}=e_{31}(\xi_i\xi_j).
\]
The elementary matrix on the right can be expressed
as a word in the five basic elements:
\begin{equation}\label{eq:s2:group-targets}
 d_{i,0}^{-1}e_{j,0}d_{i,0}e_{j,0}^{-1}
 =e_{31}(\xi_i\xi_j)\qquad(i<j).
\end{equation}
These formulas explain the compression relations introduced below.

The obstruction comes from the difference between forward and inverse
conjugation. Put $u=U_{12,0}$, $h=z_{2,0}$, and $W=uvu^{-1}$.
Then
\[
 uhu^{-1}=e_{31}(\xi_1\xi_2)\in H,\qquad
 u^{-1}hu=e_{31}(\xi_1^{-1}\xi_2)\notin H.
\]
Writing $k=u^{-1}hu$, we have
\[
 u^{-1}(h^{-1}WhW)u=k^{-1}vkv.
\]
The right-hand side is a reduced nonidentity word in the amalgam
$\Gamma$. The canonical group trace therefore gives
\begin{equation}\label{eq:s2:negative-signal}
 \tau_\Gamma(h^*WhW)=0,\qquad
 \tau_\Gamma([h,W]^*[h,W])=2.
\end{equation}
We will preserve this nonidentity word while adjoining the symmetries
needed for the six coordinates.

\subsection{The full relations used in the proof}
\label{subsec:fullrelations}

We now extract the finite relations used by the analytic argument.
Each family comes from an explicit feature of the group model:
elementary-matrix multiplication, the integer actions, or the
centralization built into the amalgam.

To formulate these relations for matrix solutions, let
\[
 x,y,z_1,z_2,z_3,c,d_1,d_2,d_3,e_1,e_2,e_3,
 \qquad U_{ij}\ (i\ne j),\qquad v
\]
be unitary entries, with $v$ a symmetry.
The corresponding fixed group elements carry a subscript zero.
In particular, put
\[
 \mathcal B=\{x,y,z_1,z_2,z_3\}.
\]
At the group substitution, these five entries take the values
$x_0,y_0,z_{1,0},z_{2,0},z_{3,0}$ specified in
\eqref{eq:s2:basic-group-roots}.
For arbitrary unitary tuples, the entries $c,d_i,e_i$ are
initially separate; the crossing equations below constrain
them to be the appropriate commutators.

The relationships among the elementary matrices follow from
the multiplication rule
\[
 E_{ij}E_{k\ell}=\delta_{jk}E_{i\ell},
\]
where $\delta_{jk}$ is the Kronecker delta. In particular,
\[
 \gc{x_0}{y_0}=c_0,\qquad
 \gc{x_0}{z_{i,0}}=d_{i,0},\qquad
 \gc{y_0}{z_{i,0}}=e_{i,0}.
\]
For example, the second identity is equivalent to
$x_0z_{i,0}=d_{i,0}z_{i,0}x_0$.
The same multiplication rule gives the required commutations.
For instance, $E_{32}E_{31}=E_{31}E_{32}=0$ implies
$[d_{i,0},z_{j,0}]=0$.
Finally, since the coefficient ring has characteristic five
and $E_{ij}^2=0$ for $i\ne j$,
\[
 (I_3+bE_{ij})^5=I_3.
\]

These identities motivate the following relations on the
unitary entries:
\begin{equation}\label{eq:s2:root-commutations}
\begin{gathered}
 {}[z_i,z_j]=[d_i,d_j]=[e_i,e_j]=0\qquad(i<j),\\
 [c,x]=[c,y]=0,\\
 [d_i,x]=[e_i,y]=0\qquad(i\in\mathcal I),\\
 [d_i,z_j]=[e_i,z_j]=0\qquad(i,j\in\mathcal I),
\end{gathered}
\end{equation}
together with
\begin{equation}\label{eq:s2:root-crossings}
\begin{gathered}
 xy=cyx,\qquad xz_i=d_i z_i x,\qquad
 yz_i=e_i z_i y\qquad(i\in\mathcal I),\\
 g^5=1\qquad(g\in\mathcal B).
\end{gathered}
\end{equation}
There are $35$ commutations, seven crossings, and five power
relations, giving $47$ equations.

The relations among the integer actions follow from
elementary matrix identities in $\operatorname{SL}_3(\mathbb Z)$,
since their group values are
\[
 U_{ij,0}=(I_3,I_3+E_{ij}).
\]
For example,
\[
 (I_3+E_{12})(I_3+E_{23})
 =(I_3+E_{13})(I_3+E_{23})(I_3+E_{12})
\]
gives $U_{12,0}U_{23,0}=U_{13,0}U_{23,0}U_{12,0}$.
Matrices in the same row or column commute, and multiplication
in the corresponding two-coordinate plane gives the braid
identities. We therefore impose
\begin{equation}\label{eq:s2:integer-relations}
\begin{aligned}
 U_{ij}U_{ik}&=U_{ik}U_{ij},&
 U_{ji}U_{ki}&=U_{ki}U_{ji}
 &&(j<k,\ \{i,j,k\}=\mathcal I),\\
 U_{ij}U_{jk}&=U_{ik}U_{jk}U_{ij}
 &&&&(i,j,k\text{ distinct}),\\
 U_{ij}U_{ji}^*U_{ij}&=U_{ji}^*U_{ij}U_{ji}^*
 &&&&(i<j).
\end{aligned}
\end{equation}
These are six commutations, six crossings, and three braid
relations. These fifteen integer equations are used by the
spectral certificate; they are distinct from the fifteen
retained relations defining the polynomial penalties.

The compression relations follow from the action of $U_{ij,0}$
on the coefficient variables:
\[
\xi_j\longmapsto\xi_i\xi_j,\qquad
\xi_k\longmapsto\xi_k\quad(k\ne j).
\]
It therefore fixes $x_0,y_0,z_{k,0}$ for $k\ne j$, while
\[
 U_{ij,0}z_{j,0}U_{ij,0}^{-1}
 =e_{31}(\xi_i\xi_j).
\]
By \eqref{eq:s2:group-targets}, for $i<j$ this target equals
\[
 e_{31}(\xi_i\xi_j)
 =d_{i,0}^{-1}e_{j,0}d_{i,0}e_{j,0}^{-1}.
\]
This explains the compression target words
\begin{equation}\label{eq:s2:full-targets}
 \pi_{ij}=d_i^*e_jd_i e_j^*\qquad(i<j).
\end{equation}

The centralization relations come from the definition
\[
 \Gamma=G*_H\bigl(H\times\langle v\mid v^2=1\rangle\bigr):
\]
the involution $v$ commutes with every element of $H$,
including the five selected elements.
Thus the compression and centralization equations are
\begin{equation}\label{eq:s2:compression-relations}
\begin{gathered}
 {}[U_{ij},x]=[U_{ij},y]=0\qquad(i\ne j),\\
 [U_{ij},z_k]=0\qquad(i\ne j,\ k\ne j),\\
 U_{ij}z_j=\pi_{\min(i,j),\max(i,j)}U_{ij}
 \qquad(i\ne j),\\
 [v,g]=0\qquad(g\in\mathcal B).
\end{gathered}
\end{equation}
For each of the six actions, there are five compression
equations, giving thirty in total. Together with the five
centralizations, the full system consists of
\[
 47+15+30+5=97
\]
equations.

For an exact unitary matrix solution, let
$\mathcal A\subseteq M_d(\mathbb C)$ be the unital
$*$-algebra generated by $\mathcal B$.
These equations are satisfied by the group model and are
selected to support the analytic estimates. They are not
asserted to present the whole group.

Their effect on an exact matrix solution is elementary.
Let $\mathcal A\subseteq M_d(\mathbb C)$ be the unital
$*$-algebra generated by $\mathcal B$.
The crossing equations express $c,d_i,e_i$ as words in $\mathcal B$,
so the targets $\pi_{ij}$ also belong to $\mathcal A$.
The compression equations imply
\[
 U_{12}\mathcal A U_{12}^*\subseteq\mathcal A.
\]
Both algebras have the same finite dimension, so equality holds.
Since $v$ commutes with $\mathcal A$, it commutes with
$U_{12}^*z_2U_{12}$. Consequently,
\[
 [z_2,U_{12}vU_{12}^*]=0.
\]

Sections~\ref{sec:certificates}--\ref{sec:gate} establish a
quantitative version of this conclusion for approximate
solutions, with bounds independent of the matrix dimension.
We next explain how symmetries of the group model allow the
required full relations to be recovered from fifteen relations
encoded by six involutions.

\subsection{Six involutions and the retained relations}\label{subsec:coordinates}

We first realize the required symmetries in a group and explain the six
coordinates. We then introduce the corresponding free variables.

To construct the cyclic permutation and reflection, let $J_0$
be the antidiagonal permutation matrix and set
$D_{\mathrm{sign}}=\operatorname{diag}(1,-1,1)$.
 The automorphism
\[
\vartheta(X)=D_{\mathrm{sign}}J_0(X^{-1})^T J_0D_{\mathrm{sign}}
\]
is an involution. It preserves $H$ and the Laurent elementary group,
and commutes with substitutions of the Laurent variables. Enlarge the
group model to
\begin{equation}\label{eq:s2:enlarged-groups}
\begin{aligned}
 G'&=\Bigl(\operatorname{EL}_3(
 \mathbb F_5[\xi_1^{\pm1},\xi_2^{\pm1},\xi_3^{\pm1}])
 \rtimes\operatorname{GL}_3(\mathbb Z)\Bigr)\rtimes\langle\vartheta\rangle,\\
 H'&=(H\rtimes S_3)\rtimes\langle\vartheta\rangle,\\
 \Gamma'&=G'*_{H'}\bigl(H'\times\langle v\mid v^2=1\rangle\bigr).
\end{aligned}
\end{equation}
Here $S_3$, the permutation group of $\mathcal I$, is represented by permutation
matrices acting on the exponent columns. The involution $v$ centralizes
$H'$.

Write $P_{(123)}$ and $P_{(23)}$ for these permutation matrices, and set
\begin{equation}\label{eq:s2:group-symmetries}
 z_0=z_{1,0},\qquad a_0=U_{12,0},\qquad
 q_0=\vartheta P_{(23)},\qquad t_0=P_{(123)}^2v.
\end{equation}
Then
\[
 q_0x_0q_0=y_0,\qquad q_0z_0q_0=z_0^{-1},\qquad
 q_0t_0q_0=t_0^{-1},
\]
and
\[
 t_0^2=P_{(123)},\qquad t_0^3=v,\qquad t_0^6=1.
\]
Thus one element $t_0$ encodes both the cyclic permutation of the
three root indices and the centralizing involution.

The negative signal persists in this enlarged model. Indeed,
\[
 a_0^{-1}z_{2,0}a_0=e_{31}(\xi_1^{-1}\xi_2)\notin H',
\]
because its action and $\vartheta$ components are trivial but its
matrix entry is not polynomial. With $W_0=a_0va_0^{-1}$, the word
$z_{2,0}^{-1}W_0z_{2,0}W_0$ is conjugate to a reduced alternating
word in $\Gamma'$, so it is nonidentity by the normal-form theorem
for amalgamated products~\cite[Chapter~I, \S1]{Serre}.
This verifies the normal-form
obstruction directly in $\Gamma'$; no embedding of $\Gamma$ into
$\Gamma'$ is required.

We next construct an involution that interchanges $x_0$ and $z_0$.
Set $H_0=\langle x_0,y_0,z_0\rangle$.
The matrix
\[
 P_{\mathrm{swap}}=
 \begin{pmatrix}
 1&0&0\\
 0&0&-\xi_1\\
 0&-\xi_1&0
 \end{pmatrix}
\]
defines an inverse-transpose automorphism
$X\mapsto P_{\mathrm{swap}}(X^{-1})^T P_{\mathrm{swap}}^{-1}$
of $H_0$. Together with the restriction of $\vartheta$, its action is
\[
\begin{array}{c|ccc}
 &x_0&y_0&z_0\\ \hline
 \text{first transposition}&y_0&x_0&z_0^{-1}\\
 \text{second transposition}&z_0&y_0^{-1}&x_0
\end{array}
\]
Both maps are involutions, and
\[
 \bigl(D_{\mathrm{sign}}J_0P_{\mathrm{swap}}^{-T}\bigr)^3
 =-\xi_1^{-2}I_3.
\]
Their product therefore has cube equal to the identity as an
automorphism, giving an $S_3$-action on $H_0$.
Conjugation by $q_0$ is the first transposition, since $P_{(23)}$
fixes $\xi_1$. Moreover $q_0\notin H_0$, so
$\langle H_0,q_0\rangle$ is the corresponding semidirect product
with a group of order two. Form
\begin{equation}\label{eq:s2:Lambda}
 \Lambda=\Gamma'*_{\langle H_0,q_0\rangle}(H_0\rtimes S_3),
\end{equation}
identifying $q_0$ with the first transposition in the second factor.
Let $s_0$ be the second transposition. Then
\[
 s_0z_0s_0=x_0,\qquad (q_0s_0)^3=1.
\]
The normal-form theorem~\cite[Chapter~I, \S1]{Serre}
gives an injection $\Gamma'\hookrightarrow
\Lambda$, so the nonidentity signal word remains nonidentity.

We now encode the required group elements using six involutions.
Put $D_a=(I_3,\allowbreak\operatorname{diag}(1,-1,-1))$.
This involution inverts
$a_0$, while $q_0$ inverts both $t_0$ and $z_0$. Consequently
\begin{equation}\label{eq:s2:involution-tuple}
 (a_1,\ldots,a_6)
 =(s_0,\ a_0D_a,\ D_a,\ t_0q_0,\ z_0q_0,\ q_0)
\end{equation}
is a tuple of six involutions. The two coordinates $a_0D_a,D_a$
recover $a_0$ by multiplication; the three coordinates
$t_0q_0,z_0q_0,q_0$ recover $t_0,z_0$ using the common reflection
$q_0$; and $s_0$ recovers $x_0$ from $z_0$ by conjugation.

We now introduce six free selfadjoint variables $X_1,\ldots,X_6$,
corresponding in order to \eqref{eq:s2:involution-tuple}, and define
the word abbreviations
\begin{equation}\label{csimp:r3:readout}
 s=X_1,\quad q=X_6,\quad a=X_2X_3,\quad t=X_4X_6,\quad
 z=X_5X_6,\quad x=szs.
\end{equation}
At the displayed group tuple these words recover
$s_0,q_0,a_0,t_0,z_0,x_0$, respectively. No additional free variables
are introduced by the following dictionary:
\begin{equation}\label{csimp:r17:eq:dict}
\begin{gathered}
 r=t^2,\quad v=t^3,\quad z_1=z,\quad z_2=rzr^*,\quad z_3=r^*zr,\\
 a_+=rar^*,\quad b=qaq,\quad d=\gc{x}{z},\quad \ell=rq,\\
 p=d^*(\ell d^*\ell)d(\ell d^*\ell)^*,\quad W=ava^*.
\end{gathered}
\end{equation}

In the group model, conjugation by $r$ cyclically permutes
$z_1,z_2,z_3$, and $a,b,a_+$ have values
$U_{12,0},U_{13,0},U_{23,0}$, respectively.
The word $\ell$ has value $\vartheta P_{(12)}$, and
\[
 d\longmapsto e_{32}(-\xi_1),\qquad
 \ell d^*\ell\longmapsto e_{21}(\xi_2),\qquad
 p\longmapsto e_{31}(\xi_1\xi_2).
\]
Thus $p$ is the target of the compression by $a$.
For approximate matrix solutions, its comparison with the full target
$\pi_{12}$ is proved in Section~\ref{sec:gate}; it is not a word identity.

For any substitution of selfadjoint unitaries for the $X_j$, the
words $s,q,\ell$ are symmetries, and $qtq=t^*$, $qzq=z^*$.
If also $t^6=1$, then $r^3=v^2=1$ and $W$ is a symmetry.
These identities are not assumed for arbitrary matrix contraction
inputs of the final polynomial.

The fifteen retained relations, corresponding in order to the
penalty words $u_1,\ldots,u_{15}$ defined below, are
\begin{equation}\label{csimp:r3:fifteen}
\begin{gathered}
 t^6=1,\quad(qs)^3=1,\quad [t,x]=[a,x]=0,\quad[z,z_2]=0,\\
 [d,x]=[d,z_2]=0,\quad z^5=1,\quad[a,b]=[b,a_+]=0,\\
 aa_+=ba_+a,\quad[a,z]=[a,z_3]=0,\quad az_2a^*=p,\quad[v,z]=0.
\end{gathered}
\end{equation}
Their origin is now explicit. The first two relations describe the
finite symmetries. The commutation relations among the root elements and fifth power are elementary
matrix identities over $\mathbb F_5$. The relations
$[a,b]=[b,a_+]=0$ and $aa_+=ba_+a$ are one row commutation,
one column commutation, and one crossing among
$U_{12,0},U_{13,0},U_{23,0}$. The three relations involving the
action of $a$ on $z,z_2,z_3$ express that $U_{12,0}$ fixes
$\xi_1,\xi_3$ and sends $\xi_2$ to $\xi_1\xi_2$.
Finally, $v$ centralizes the basic roots; $[t,x]=[a,x]=0$ records
that the indicated symmetries and action fix the constant root $x_0$.
Every relation in \eqref{csimp:r3:fifteen} holds at the group tuple
\eqref{eq:s2:involution-tuple}.

The connection with the full system is supplied by the additional
word assignments
\begin{equation}\label{eq:s2:full-readouts}
\begin{gathered}
 y=qxq,\quad c=\gc{x}{y},\quad
 d_i=\gc{x}{z_i},\quad e_i=\gc{y}{z_i}\quad(i\in\mathcal I),\\
 (U_{12},U_{13},U_{23},U_{31},U_{21},U_{32})
 =(a,b,rar^*,r^*ar,rbr^*,r^*br).
\end{gathered}
\end{equation}
In particular, the five entries of $\mathcal B$ are now words in the six
variables, and the seven root crossings hold identically on unitary
substitutions. The other full relations are recovered by the finite
calculations in Section~\ref{sec:gate}. More precisely, on a selfadjoint
unitary tuple with $t^6=(qs)^3=1$ exactly, errors at most $\varepsilon$
in the retained relations give errors at most $3\cdot2^{13}\varepsilon$
in all 97 full relations. The proof passes through an intermediate
system of seventeen equations and compares the compression targets.
The same section supplies the rounding needed to make the two
finite-order relations exact. This separates the purpose of the two
systems: the 97 relations support the analytic estimates, while the
fifteen retained relations determine the polynomial penalties.

\subsection{The explicit polynomial and its negative evaluation}
\label{subsec:explicitpolynomial}

We work in $\mathbb Z\langle X_1,\ldots,X_6\rangle$, with $X_j^*=X_j$.
The signal is motivated by the group word $z_2^*Wz_2W$; the penalty
words come from the fifteen retained relations.
Write the $i$th relation in \eqref{csimp:r3:fifteen} as $L_i=R_i$,
expressing each commutation as an equality of products.
In $z_2^*Wz_2W$ and the relators $L_iR_i^*$, substitute
\eqref{csimp:r3:readout}--\eqref{csimp:r17:eq:dict}, expand powers
and adjoints, and repeatedly delete adjacent pairs $X_jX_j$.
This gives the signal word $\omega$. For each relator, further remove
matching first and last letters repeatedly to obtain $u_i$; no cyclic
rotation is made. For example, the crossing $aa_+=ba_+a$ gives
$\gc{a}{a_+}b^*$, and the compression $az_2a^*=p$ gives $az_2a^*p^*$.
These operations specify literal words, rather than identities in the
free algebra.

The explicit signal word is
\begin{equation}\label{eq:signalword}
\begin{aligned}
\omega
 &=X_4X_6X_4X_5(X_6X_4)^2
   X_2X_3(X_4X_6)^3X_3X_2\\
 &\quad\cdot
   (X_4X_6)^2X_5X_4X_6X_4
   X_2X_3(X_4X_6)^3X_3X_2.
\end{aligned}
\end{equation}
The penalty words are listed below; powers of parenthesized words
denote repeated concatenation.
\begingroup
\allowdisplaybreaks[1]
\begin{align}
u_1&=(X_4X_6)^6,\notag\\
u_2&=(X_6X_1)^3,\notag\\
u_3&=X_4X_6X_1X_5X_6X_1X_6X_4X_1X_6X_5X_1,\notag\\
u_4&=X_2X_3X_1X_5X_6X_1X_3X_2X_1X_6X_5X_1,\notag\\
u_5&=X_5(X_6X_4)^2(X_6X_5X_4X_6X_4)^2
      X_5(X_6X_4)^2,\notag\\
u_6&=X_1X_5X_6(X_1X_6X_5)^2
      (X_1X_5X_6)^3(X_1X_6X_5)^2,\notag\\
u_7&=(X_1X_5X_6)^2(X_1X_6X_5)^2
      (X_4X_6)^2X_5X_4X_6X_4\notag\\
   &\quad\cdot
      (X_5X_6X_1)^2(X_6X_5X_1)^2
      X_4X_6X_4X_5(X_6X_4)^2,\notag\\
u_8&=(X_5X_6)^5,\notag\\
u_9&=(X_2X_3X_6)^2(X_3X_2X_6)^2,\notag\\
u_{10}
   &=\bigl(X_6X_2X_3(X_6X_4)^2\bigr)^2
     \bigl(X_6X_3X_2(X_6X_4)^2\bigr)^2,\notag\\
u_{11}
   &=X_2X_3(X_4X_6)^2X_2X_3(X_6X_4)^2X_3X_2
     \bigl((X_4X_6)^2X_3X_2X_6\bigr)^2,\notag\\
u_{12}
   &=X_2X_3X_5X_6X_3X_2X_6X_5,\notag\\
u_{13}
   &=X_2X_3(X_6X_4)^2X_5(X_6X_4)^2X_6X_3X_2
     (X_6X_4)^2X_6X_5(X_4X_6)^2,\notag\\
u_{14}
   &=X_2X_3(X_4X_6)^2X_5X_4X_6X_4X_3X_2
     \bigl(X_4X_6X_4(X_5X_6X_1)^2
                       (X_6X_5X_1)^2\bigr)^2\notag\\
   &\quad\cdot
     \bigl(X_4X_6X_4(X_1X_5X_6)^2
                       (X_1X_6X_5)^2\bigr)^2,\notag\\
u_{15}
   &=\bigl((X_4X_6)^3X_5\bigr)^2.\label{eq:penaltywords}
\end{align}
\endgroup

On a tuple $A$ of selfadjoint unitary matrices, adjacent equal letters
cancel, while removing matching endpoints conjugates the evaluated
word by a unitary. Consequently,
\[
 \omega(A)=z_2(A)^*W(A)z_2(A)W(A),\qquad
 \|1-u_i(A)\|_2=\|L_i(A)-R_i(A)\|_2\quad(1\le i\le15).
\]
On arbitrary selfadjoint matrix inputs, the expressions
\eqref{eq:signalword}--\eqref{eq:penaltywords} are evaluated as written,
without assuming $X_j^2=1$. Their adjoints are obtained by reversing
the letters.

Set
\begin{equation}\label{eq:globalconstants}
 J=2^{55226},\qquad M=2^{2J+54},
\end{equation}
and define
\begin{equation}\label{eq:nativefinal}
 f=-1+\omega+\omega^*+M\sum_{j=1}^{15}(2-u_j-u_j^*).
\end{equation}
Each penalty has nonnegative trace on selfadjoint matrix contraction
tuples, since every evaluated word is a contraction.
The 32 words $\omega,\omega^*,u_1,u_1^*,\ldots,u_{15},u_{15}^*$
are pairwise distinct. Each signal word has length~36, and the
penalty words have maximum length~72 and collectively contain all
six variables. Thus $f$ is Hermitian with integer coefficients,
degree~72, and exactly 33 monomials, including the constant $30M-1$.
Its total expanded letter count is~716. The complete expansion appears
in Appendix~\ref{app:nativeexpanded}.

\begin{proposition}\label{prop:s2:negative-evaluation}
The group involutions $a_1,\ldots,a_6$ in
\eqref{eq:s2:involution-tuple}, regarded as selfadjoint unitaries in
$L(\Lambda)$, satisfy
\[
 u_i(a_1,\ldots,a_6)=1\quad(1\le i\le15),\qquad
 \tau_\Lambda f(a_1,\ldots,a_6)=-1.
\]
\end{proposition}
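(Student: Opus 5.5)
The proof reduces to three checks: the letter substitutions preserve word values on symmetries, all fifteen retained relations hold in $\Lambda$, and the signal word is a nonidentity group element whose trace is zero.

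\textbf{Step 1: reduction to group relations.} The elements $a_1,\ldots,a_6$ in \eqref{eq:s2:involution-tuple} are involutions in $\Lambda$. In $L(\Lambda)$ their left-regular images are therefore symmetries. On such a tuple, deleting adjacent pairs $X_jX_j$ does not change the value of a word. Removing matching first and last letters conjugates the value by a symmetry. Hence, as recorded before \eqref{eq:globalconstants}, $\omega(a)=z_2^*Wz_2W$ evaluated at the readouts \eqref{csimp:r3:readout}--\eqref{csimp:r17:eq:dict}. Likewise $u_i(a)$ is conjugate to $L_i(a)R_i(a)^*$. So $u_i(a)=1$ exactly when the $i$th relation of \eqref{csimp:r3:fifteen} holds in $\Lambda$ for the group values $s_0,q_0,a_0,t_0,z_0$. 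One must also confirm that the readouts give these values: $a_2a_3=a_0D_a^2=a_0$, $a_4a_6=t_0q_0^2=t_0$, and $a_5a_6=z_0$.

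\textbf{Step 2: verify the fifteen relations in $\Lambda$.} I would go through the relations one at a time, using the group computations of \S\ref{subsec:coordinates}.
\begin{itemize}
\item $t_0^6=1$ and $(q_0s_0)^3=1$ are stated there.
\item $x=szs$ has value $s_0z_0s_0=x_0$. The element $r=t^2$ has value $P_{(123)}^2v^2=P_{(123)}^2$, which permutes the $\xi_i$ cyclically, so $z_2,z_3$ are root elements $e_{31}(\xi_k)$. The element $v=t^3$ has value $P_{(123)}^3v^3=v$, using that $v$ commutes with $H'\ni P_{(123)}$.
\item The root commutations $[z,z_2]$, $[d,x]$, $[d,z_2]$ and the relation $z^5=1$ are elementary-matrix identities in characteristic $5$.
\item For $[t,x]=0$: $P_{(123)}$ fixes the constant root $x_0$ and $v$ centralizes $H$.
\item For $[a,x]=[a,z]=[a,z_3]=0$: $U_{12,0}$ fixes $\xi_1,\xi_3$ and the constant entries.
\item $[v,z]=0$ holds by centralization.
\item The relations among $a,b,a_+$ are the stated $\SL_3(\Z)$ identities once one checks the values $b\mapsto U_{13,0}$ and $a_+\mapsto U_{23,0}$. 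This uses conjugation by $q_0=\vartheta P_{(23)}$ and by $P_{(123)}^2$, together with the fact that $\vartheta$ commutes with the integer action.
\item For $az_2a^*=p$: both sides equal $e_{31}(\xi_1\xi_2)$, by the displayed values of $d$, $\ell d^*\ell$, and $p$, and by \eqref{eq:s2:group-targets}.
\end{itemize}

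\textbf{Step 3: the trace computation.} Since every $u_i(a)=1$, each penalty $2-u_i-u_i^*$ vanishes at $a$. This gives $\tau_\Lambda f(a)=-1+2\operatorname{Re}\tau_\Lambda(\omega(a))$, so it remains to show that the group element $z_{2,0}^{-1}W_0z_{2,0}W_0$ is not the identity. Here $W_0=a_0va_0^{-1}$ is an involution. Conjugating by $a_0$ gives $k^{-1}vkv$ with $k=a_0^{-1}z_{2,0}a_0=e_{31}(\xi_1^{-1}\xi_2)$. In $\Gamma'$ one has $k\in G'\setminus H'$ and $v\notin H'$, so this is a reduced alternating word of length four, which is nontrivial by the normal-form theorem \cite[Chapter~I, \S1]{Serre}. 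The injection $\Gamma'\hookrightarrow\Lambda$ then shows that it is nontrivial in $\Lambda$. The canonical trace vanishes on nonidentity elements, so $\tau_\Lambda(\omega(a))=0$ and $\tau_\Lambda f(a)=-1$.

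\textbf{Main obstacle.} The delicate point is the bookkeeping in Steps 1--2. One has to confirm that each literal word in \eqref{eq:penaltywords}, once reduced on symmetries, really is a conjugate of the corresponding relator, and that the compound readouts ($b$, $a_+$, $\ell$, $p$) take the claimed group values in the enlarged group. I would handle this by computing each readout's value explicitly in $G'$ as a pair (matrix over the Laurent ring, element of $\mathrm{GL}_3(\Z)\rtimes\langle\vartheta\rangle$), checking the $\vartheta$-twists carefully. The identities $q_0t_0q_0=t_0^{-1}$ and $\ell\mapsto\vartheta P_{(12)}$ are where sign and inverse-transpose errors are most likely.
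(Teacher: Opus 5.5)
Your proposal is correct and follows the paper's own argument, with more of the relation checks written out: the penalties vanish because adjacent deletions preserve values and endpoint deletions only conjugate a relator that equals $1$, and the signal is conjugate by $a_0$ to the reduced alternating word $k^{-1}vkv$ in $\Gamma'$, which stays nonidentity in $\Lambda$ and therefore has trace zero. One computational slip needs fixing: $r=t^2$ has value $(P_{(123)}^2v)^2=P_{(123)}^4v^2=P_{(123)}$, as the paper records with $t_0^2=P_{(123)}$, and $a_+\mapsto U_{23,0}$ comes from conjugation by $P_{(123)}$, not $P_{(123)}^2$. With $P_{(123)}^2$ you would get $z_2\mapsto e_{31}(\xi_3)$ and $a_+\mapsto U_{31,0}$, and the checks of $az_2a^*=p$, $[a,z_3]=0$ and $aa_+=ba_+a$ would all fail, although your other bullets already use the correct values.
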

\begin{proof}
The readout words at this tuple have the group values constructed
above and satisfy all fifteen retained relations. Adjacent deletions
preserve their values, and endpoint deletions preserve the identity
value of a relator. Thus every penalty vanishes.
The signal word evaluates to $z_{2,0}^{-1}W_0z_{2,0}W_0$,
which is nonidentity in $\Gamma'$ and remains nonidentity under
$\Gamma'\hookrightarrow\Lambda$. Its canonical trace and that of
its inverse are zero. Substituting into \eqref{eq:nativefinal}
gives the asserted trace $-1$.
\end{proof}

The rest of the main text proves the uniform matrix trace bound.
The full relations provide the spectral and normalization estimates;
the recovery argument transfers these estimates to the retained
relations, and rounding connects their defects to the polynomial
penalties on arbitrary selfadjoint contractions.

\section{Spectral certificates and correction}\label{sec:certificates}

We now pass from the group relations of Section~\ref{sec:polynomial}
to unitary matrices satisfying those relations approximately.
The change of viewpoint is to study matrices that nearly commute
with the given inputs. Matrices satisfying a fixed commutator-error
bound need not form an algebra. Our aim is to approximate them by
an actual unital $*$-subalgebra whose entire operator-norm unit ball
nearly commutes with the inputs, with estimates independent of the
matrix dimension.

Following \cite[Section~2.1, (2.1)]{ALT}, for integers $d,m\ge1$ and
unitaries $v_1,\ldots,v_m\in\mathcal U(d)$, define the averaging
operator $T$ and its commutator energy by
\begin{equation}\label{eq:energy}
 T(x)=\frac{x}{2}+\frac1{4m}\sum_{j=1}^m
 (v_jxv_j^*+v_j^*xv_j),\qquad
 \cE(x)=\frac1{4m}\sum_{j=1}^m\|[v_j,x]\|_2^2.
\end{equation}
Thus $\cE(x)=0$ precisely when $x$ commutes with every $v_j$,
and small energy measures approximate commutation.
The map $T$ is unital, completely positive, trace preserving, and
selfadjoint on $L^2(M_d,\tr_d)$. The term $x/2$ ensures that its
spectrum lies in $[0,1]$. Writing $\mathcal L=I-T$, where $I$ is
the identity map, we have
\begin{equation}\label{eq:energyid}
 \cE(x)=\ip{\mathcal Lx}{x},\qquad
 \cE(x)\le\|x\|_2^2.
\end{equation}
In particular, the fixed points of $T$ form the commutant of the
input tuple. The seminorm $\sqrt{\cE}$ is subadditive and satisfies
\[
 \sqrt{\cE(xy)}
 \le\sqrt{\cE(x)}+\sqrt{\cE(y)}
 \qquad(\|x\|,\|y\|\le1).
\]
Conditional expectations and induced map norms are as in
Section~\ref{sec:prelim}, particularly \eqref{eq:mapnorms}.

The spectral input to our argument is an inequality of the form
\[
 \ip{(\mathcal L^2-\lambda\mathcal L)y}{y}
 \ge-C\eps
 \qquad(\|y\|\le1),
\]
where $0<\lambda<1$ and $C\ge0$ are fixed rational parameters,
and $\eps\ge0$ bounds the relevant relation defects.
When $\eps=0$, this inequality excludes spectrum in $(0,\lambda)$.
For positive $\eps$, we use the stated quadratic-form bound on
the operator-norm unit ball; an exact spectral gap for the
original tuple is not assumed.

Our main conclusion is that this inequality allows us to
reconstruct an approximate commutant. For fixed $m,\lambda,C$
and any prescribed positive tolerance, sufficiently small $\eps$
yields a unital $*$-subalgebra $A\subseteq M_d$ with two properties.
For every contraction $x$, the distance $\|x-E_Ax\|_2$ is bounded
by a constant multiple of $\sqrt{\cE(x)}$ plus the prescribed
tolerance. Every contraction in $A$ also commutes with each
$v_j$ up to that tolerance. Here $E_A$ is the trace-preserving
conditional expectation onto $A$. Both the required smallness
of $\eps$ and the resulting bounds are independent of $d$.

Sections~\ref{sec:field} and~\ref{subsec:K} establish the spectral
inequality for two input lists. For the basic elements
$x,y,z_1,z_2,z_3$, we use the ordered 375-entry word list
$\mathscr L_H$ defined in \eqref{eq:Hlist}. It comes from the
three finite subgroups in the group model, with repetitions
chosen to give each subgroup equal weight.
For the integer actions, we use the six unitary entries $U_{ab}$
from Section~\ref{subsec:fullrelations}. Their energies are
\begin{equation}\label{eq:specialenergies}
 \mathcal E_H(x)
 =\frac1{4\cdot375}\sum_{w\in\mathscr L_H}\|[w,x]\|_2^2,
 \qquad
 \mathcal E_K(x)
 =\frac1{24}\sum_{a\ne b}\|[U_{ab},x]\|_2^2.
\end{equation}
Each word in $\mathscr L_H$ is evaluated at the given unitary
entries; these matrices need not generate finite groups.
The finite-field relations and the integer-action certificate
give, respectively, the parameters
\begin{equation}\label{eq:actualinputs}
 (m,\lambda,C)=(375,1/96,2^{80})
 \quad\text{and}\quad
 (6,1/144,26).
\end{equation}

Section~\ref{subsec:spectralcorrection} uses these spectral bounds
to reduce commutator energy by taking powers of $T$, correct
the unitary tuple, and construct an approximately idempotent
averaging map. Section~\ref{sec:algebra} reconstructs an actual
unital $*$-subalgebra from this map and combines the estimates
to prove Theorem~\ref{thm:interface}.

Section~\ref{sec:reconstruction} then proves a general normalization
theorem for algebras satisfying these approximate-commutant
estimates. In Section~\ref{sec:gate}, we apply the construction
separately to the basic elements and the integer actions.
The two resulting algebras, together with the compression relations,
satisfy the hypotheses of the normalization theorem. Its conclusion
and the centralization relations yield the commutator estimate
needed for matrix positivity.

\subsection{The spectral estimate over \texorpdfstring{$\F_5$}{F5}}\label{sec:field}
The subgroups
\[
 \langle x_0\rangle,\qquad
 \langle y_0\rangle,\qquad
 \langle z_{1,0},z_{2,0},z_{3,0}\rangle
\]
generated by the fixed elements in \eqref{eq:s2:basic-group-roots}
have orders $5,5,125$, respectively.
Their normal forms determine the following ordered list of words
in the corresponding unitary entries:
\begin{equation}\label{eq:Hlist}
\begin{aligned}
 \mathscr L_H=\bigl(
 &(x^a)_{0\le a\le4}\text{ repeated }25\text{ times};\\
 &(y^a)_{0\le a\le4}\text{ repeated }25\text{ times};\\
 &(z_1^az_2^bz_3^c)_{0\le a,b,c\le4}\bigr).
\end{aligned}
\end{equation}
The lists are concatenated in the displayed order, with exponents
in increasing lexicographic order. Each subgroup contributes
125 entries and therefore has the same weight in the average.
The resulting list has 375 entries of length at most twelve
in $x,y,z_1,z_2,z_3$. It is evaluated at arbitrary unitary entries,
which need not generate finite groups.
To transfer the finite-group spectral estimate to these substitutions,
we first control the fifth powers of the seven commutator entries.

For a unitary $u\in M_d$, write $\Ad(u)(x)=uxu^*$ for
conjugation by $u$. This map preserves $\|\cdot\|_2$.
Its powers denote repeated composition:
\[
 \Ad(u)^k(x)=u^kx(u^*)^k
 \qquad(x\in M_d,\ k\ge1).
\]

\begin{lemma}\label{lem:centralpowers}
Suppose the twelve entries $x,y,z_i,c,d_i,e_i$ $(1\le i\le3)$
are unitaries and the relations
\eqref{eq:s2:root-commutations}-\eqref{eq:s2:root-crossings}
have defects of at most $\varepsilon\ge0$ in $\|\cdot\|_2$. Then
\[
 \|q^5-1\|_2\le17\varepsilon
 \qquad(q\in\{c,d_1,d_2,d_3,e_1,e_2,e_3\}).
\]
\end{lemma}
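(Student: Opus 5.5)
The plan is a conjugation-iteration argument: push the crossing relation through five conjugations, then use the fifth-power relation to collapse them. I give it for $q=c$; the cases $q=d_i$ and $q=e_i$ are identical with different letters.

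For $q=c$, the crossing $xy=cyx$ has defect at most $\varepsilon$. Right multiplication by the unitary $x^*$ preserves $\|\cdot\|_2$, so
\[
 \|\Ad(x)(y)-cy\|_2\le\varepsilon .
\]
The aim is an approximate formula for iterated conjugation, namely
\[
 \|\Ad(x)^k(y)-c^ky\|_2\le e_k\qquad(1\le k\le5),
\]
with explicit errors $e_k$, where $e_1=\varepsilon$.

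The inductive step uses three facts.
\begin{enumerate}
\item $\Ad(x)$ is a $\|\cdot\|_2$-isometry. So from $\|\Ad(x)^{k-1}(y)-c^{k-1}y\|_2\le e_{k-1}$ we get $\|\Ad(x)^k(y)-xc^{k-1}yx^*\|_2\le e_{k-1}$.
\item The commutation $[c,x]=0$ has defect at most $\varepsilon$. Telescoping $[c^{k-1},x]=\sum_{j} c^{j}[c,x]c^{k-2-j}$ gives $\|[c^{k-1},x]\|_2\le(k-1)\varepsilon$, since all factors are unitaries. Hence $xc^{k-1}yx^*$ is within $(k-1)\varepsilon$ of $c^{k-1}xyx^*$.
\item By the displayed bound for $\Ad(x)(y)$ and unitarity of $c^{k-1}$, the element $c^{k-1}xyx^*$ is within $\varepsilon$ of $c^ky$.
\end{enumerate}
This yields $e_k=e_{k-1}+k\varepsilon$, so $e_5=(1+2+3+4+5)\varepsilon=15\varepsilon$. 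Only the commutation of $c$ with the conjugating letter $x$ is needed here, not $[c,y]=0$.

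Next I use the power relation $x^5=1$ with defect at most $\varepsilon$. We have
\[
 \Ad(x)^5(y)-y=(x^5-1)y(x^*)^5+y\bigl((x^*)^5-1\bigr),
\]
and $\|(x^*)^5-1\|_2=\|x^5-1\|_2$. Hence $\|\Ad(x)^5(y)-y\|_2\le2\varepsilon$. Combining this with $e_5$ gives
\[
 \|c^5y-y\|_2\le17\varepsilon .
\]
Since $y$ is unitary, $\|c^5-1\|_2=\|c^5y-y\|_2\le17\varepsilon$.

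The same computation applies to the other entries.
\begin{itemize}
\item For $d_i$, use the crossing $xz_i=d_iz_ix$, conjugate $z_i$ by $x$, and use $[d_i,x]=0$ together with $x^5=1$.
\item For $e_i$, use $yz_i=e_iz_iy$, conjugate $z_i$ by $y$, and use $[e_i,y]=0$ together with $y^5=1$.
\end{itemize}
All of the relations used appear in \eqref{eq:s2:root-commutations}--\eqref{eq:s2:root-crossings}.

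There is no substantive obstacle. The only care needed is bookkeeping: put the commutation error in the right place, so that $c^{k-1}$ is commuted past $x$ rather than past $y$. That ordering keeps the arithmetic sum at $15\varepsilon$ and gives the stated constant $17$.
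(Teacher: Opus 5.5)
Your proof is correct and follows the paper's argument. You conjugate $b$ by $a$ five times, using the crossing and the single commutation $[q,a]=0$, so the errors accumulate to $\varepsilon+2\varepsilon+\cdots+5\varepsilon=15\varepsilon$; then $\|a^5-1\|_2\le\varepsilon$ gives $\|\Ad(a)^5(b)-b\|_2\le2\varepsilon$, and unitarity of $b$ finishes. Your explicit telescoping of $[c^{k-1},x]$ is just a spelled-out version of the paper's step of replacing each factor $\Ad(a)(q)$ by $q$.
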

\begin{proof}
For $q=c,d_i,e_i$, choose $(a,b)=(x,y),(x,z_i),(y,z_i)$,
respectively. The corresponding relations give
\[
 \|\Ad(a)(b)-qb\|_2\le\varepsilon,\qquad
 \|\Ad(a)(q)-q\|_2\le\varepsilon,\qquad
 \|a^5-1\|_2\le\varepsilon.
\]
Apply $\Ad(a)$ five times, replacing $\Ad(a)(b)$ by $qb$
and each factor $\Ad(a)(q)$ by $q$. Since conjugation
preserves $\|\cdot\|_2$ and all factors are unitary, the
additional errors are at most
$\varepsilon,2\varepsilon,\ldots,5\varepsilon$. Hence
\[
 \|\Ad(a)^5(b)-q^5b\|_2
 \le(1+2+3+4+5)\varepsilon
 =15\varepsilon.
\]
Moreover,
\[
 \|\Ad(a)^5(b)-b\|_2
 =\|a^5b(a^*)^5-b\|_2
 \le2\|a^5-1\|_2
 \le2\varepsilon.
\]
Since $b$ is unitary, the triangle inequality now gives
\[
 \|q^5-1\|_2=\|q^5b-b\|_2
 \le15\varepsilon+2\varepsilon=17\varepsilon.\qedhere
\]
\end{proof}

\begin{proposition}\label{prop:Htransfer}
Suppose the twelve entries are unitaries and the 47 equations
\eqref{eq:s2:root-commutations}-\eqref{eq:s2:root-crossings}
have normalized $2$-norm defects of at most $\varepsilon$. For the
Laplacian $\mathcal L=I-T$ associated with $\mathscr L_H$, we have
\begin{equation}\label{eq:Hquadratic}
 \langle(\mathcal L^2-\mathcal L/96)y,y\rangle
 \ge-2^{80}\varepsilon\qquad(\|y\|\le1).
\end{equation}
\end{proposition}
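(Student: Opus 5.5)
The plan is to compare the operator $T$ built from $\mathscr L_H$ with the averaging operator of an exact unitary representation of the finite group model, and then transfer an exact spectral inequality with an error linear in $\varepsilon$. The structure of $\mathscr L_H$ is the starting point. Each of the three blocks is the full list of normal-form words for one of the finite groups $\langle x_0\rangle\cong\Z/5$, $\langle y_0\rangle\cong\Z/5$ and $\langle z_{1,0},z_{2,0},z_{3,0}\rangle\cong(\Z/5)^3$. Hence, for exact relations, the block average $P_j(x)=\frac1{125}\sum_w wxw^*$ is the conditional expectation onto the commutant of the corresponding subgroup image. Since each word $w$ and $w^*$ contribute symmetrically, one gets
\[
T=\tfrac12 I+\tfrac16(P_1+P_2+P_3)+O(\varepsilon).
\]
With $Q=\frac13(P_1+P_2+P_3)$ this means $\mathcal L=\frac12(I-Q)+R$, where $R$ is an error map.

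\emph{Step 1: approximate projections.} I would show that $\|P_j^2-P_j\|_{\infty\to2}$, $\|P_j-P_j^*\|$ and similar quantities are $O(\varepsilon)$, and that $P_j$ is nearly idempotent in the quadratic-form sense on the unit ball. For the cyclic blocks this follows from $\|x^5-1\|_2\le\varepsilon$: the word $x^ax^b$ is within $O(\varepsilon)$ of $x^{(a+b)\bmod 5}$ in $\|\cdot\|_2$, and conjugation by unitaries is $2$-norm isometric, so errors on contractions $y$ are controlled. For the $z$-block we also need $[z_i,z_j]\approx0$. Every reordering and power reduction costs at most a fixed multiple of $\varepsilon$, since words have length at most twelve.

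\emph{Step 2: the exact finite-group gap.} In the exact model the three subgroups, together with the commutator elements $c,d_i,e_i$ (which are central of order five by Lemma~\ref{lem:centralpowers}), generate a finite quotient: a Heisenberg-type $5$-group. There, $\langle(I-Q)^2-\mu(I-Q)\rangle\ge0$ for an explicit $\mu$, obtained from the angle between the three commutants. Concretely, I would decompose $L^2(M_d)$ under the conjugation action of the finite group generated by $x,y,z_i$ and the central elements. In each isotypic component for the central characters, the three projections are explicit, and the resulting $\mu$ yields $\lambda=1/96$ after the factor $\frac12$ and some slack. To make this robust, I would write the exact inequality as a sum-of-squares identity in the group algebra: an expression of $(I-Q)^2-\mu(I-Q)$ as $\sum_k \xi_k^*\xi_k$ modulo the relations, in the spirit of the Netzer--Thom certificates. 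Each use of a relation in that identity then costs a multiple of $\varepsilon$ when evaluated on a contraction $y$.

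\emph{Step 3: bookkeeping.} Substituting $\mathcal L=\frac12(I-Q)+R$ into the certificate and estimating cross terms $\langle R y,y\rangle$ and $\langle \mathcal L y,Ry\rangle$ by $\|R\|_{\infty\to2}\le C'\varepsilon$ gives \eqref{eq:Hquadratic}. The crude constant $2^{80}$ should absorb the sizes of the coefficients in the certificate, the word lengths, and the $17\varepsilon$ from Lemma~\ref{lem:centralpowers}.

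I expect the main obstacle to be Step 2. The three subgroups do not commute, since their pairwise commutators are the nontrivial central elements $c,d_i,e_i$. The spectral gap for $I-Q$ therefore depends on a representation-theoretic analysis of the finite Heisenberg-type group over $\F_5$, and this analysis must be turned into an explicit identity whose relation-uses are finite in number and linear in the defects. I would first try to prove that on each nontrivial central character the three commutant projections are in general position, with pairwise products of norm at most $1/\sqrt5$. I would then apply a standard three-projection angle bound, checking that it delivers $\lambda\ge 1/96$.
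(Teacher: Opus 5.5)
\textbf{There is a genuine gap in Step 2.} You claim that the three subgroups, together with $c,d_i,e_i$, generate a finite Heisenberg-type $5$-group. This is false. The 47 relations make each commutator entry central only within its own pair: $c$ commutes with $x,y$; $d_i$ with $x$ and the $z_j$; $e_i$ with $y$ and the $z_j$. They say nothing about $[c,z_i]$, $[d_i,y]$ or $[e_i,x]$, and in the group model these commutators are nontrivial. For instance, $e_{13}(1)$ does not commute with $e_{31}(\xi_i)$. Moreover, $y_0=e_{23}(1)$ and $d_{1,0}=e_{32}(-\xi_1)$ generate an infinite subgroup of $\SL_2(\F_5[\xi_1])$, since their product has transcendental trace $2-\xi_1$.

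So the group generated by the five basic elements is infinite, and there is no common center with respect to which $L^2(M_d)$ could be split into isotypic components. A finite quotient does exist if you impose the missing commutations. However, a spectral inequality or sum-of-squares identity valid only modulo those extra relations does not transfer to unitaries that satisfy only the 47 relations approximately. What is finite is each \emph{pair} group $F_{12},F_{13},F_{23}$, of order at most $5^7$, each with its own central commutators. The uniform gap is a property $(T)$-type phenomenon for an infinite group, obtained from the three pairwise angles in the manner of Ershov and Jaikin-Zapirain. Your last sentence points in this direction. But the angles must be computed pair by pair, relative to the fixed space of $F_{ij}$, not on central characters of a single finite group.

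\textbf{You also leave open how to make a three-projection bound robust for the approximate averages.} This is where the paper uses a concrete device. With the symmetrized transferred averages $Q_i$, one has $\mathcal L=(3I-Q_1-Q_2-Q_3)/6$ exactly, and the algebraic identity
\[
36\bigl(\mathcal L^2-\mathcal L/96\bigr)=\sum_{i<j}\Bigl((2I-Q_i-Q_j)^2-\tfrac{17}{32}(2I-Q_i-Q_j)\Bigr)-\sum_{i}(Q_i^2-Q_i)
\]
reduces everything to pairwise terms plus idempotency defects. Each pairwise term is the transfer of
\[
R_{ij}=(2-P_i-P_j)^2-\tfrac{17}{32}(2-P_i-P_j)\in\C[F_{ij}].
\]
This element is positive because $\spec(2-P_i-P_j)\subseteq\{0,1,2,1\pm5^{-1/2}\}$ and $17/32<1-5^{-1/2}$. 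Hence it is a single Hermitian square in the \emph{finite} group algebra, with $\|R_{ij}^{1/2}\|_{\ell^1}^2\le|F_{ij}|\,\|R_{ij}\|$. Normal-form words in $F_{ij}$ use only the relations of that pair and Lemma~\ref{lem:centralpowers}. The transfer is therefore approximately multiplicative with error linear in $\varepsilon$, and summing the three pairwise bounds gives $-2^{80}\varepsilon$. Without such a decomposition into pieces living in finite group algebras, your Step 3 has no certificate to substitute into.
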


\begin{proof}
We first work with the exact group elements. The indices $1,2,3$
refer to the subgroups
\[
 \langle x_0\rangle,\qquad
 \langle y_0\rangle,\qquad
 \langle z_{1,0},z_{2,0},z_{3,0}\rangle,
\]
respectively. Their normalized averages in $\mathbb C[H]$ are
\[
 P_1=\frac15\sum_{a=0}^4x_0^a,\qquad
 P_2=\frac15\sum_{a=0}^4y_0^a,\qquad
 P_3=\frac1{125}\sum_{a,b,c=0}^4z_{1,0}^az_{2,0}^bz_{3,0}^c.
\]
For $1\le i<j\le3$, let $F_{ij}$ be the subgroup generated
by the $i$th and $j$th subgroups. Explicitly,
\[
\begin{aligned}
 F_{12}&=\langle x_0,y_0\rangle,\\
 F_{13}&=\langle x_0,z_{1,0},z_{2,0},z_{3,0}\rangle,\\
 F_{23}&=\langle y_0,z_{1,0},z_{2,0},z_{3,0}\rangle.
\end{aligned}
\]
There are only three unordered pairs of distinct subgroups;
reversing their order gives the same generated group.
For each pair, $P_i,P_j\in\mathbb C[F_{ij}]$, we establish
the spectral estimate for $2-P_i-P_j$ in this finite group algebra.
The commutator $c_0$ is central in $F_{12}$, the elements
$d_{1,0},d_{2,0},d_{3,0}$ are central in $F_{13}$, and
$e_{1,0},e_{2,0},e_{3,0}$ are central in $F_{23}$.
Thus, every element of $F_{ij}$ can be written with powers
from the first subgroup, followed by powers from the second
subgroup, and then the central commutators.
All these generators have order five, so their exponents
can be chosen in $\{0,1,2,3,4\}$.
There are at most seven generators in these normal forms,
giving $|F_{ij}|\le5^7$ and a maximum word length of
$4\cdot7=28$.

Regard $\mathbb C[F_{ij}]$ as acting by left translation on
functions on $F_{ij}$. The averages $P_i,P_j$ are orthogonal
projections. The representation calculation in
\cite[Section~4]{EJ} gives
\[
 \spec(2-P_i-P_j)
 \subseteq\{0,1,2,1-5^{-1/2},1+5^{-1/2}\},
\]
where $\spec$ denotes the set of eigenvalues in this representation.
Since $17/32<1-5^{-1/2}$, the element
\[
 R_{ij}=(2-P_i-P_j)^2-\frac{17}{32}(2-P_i-P_j)
\]
is positive and has operator norm less than $3$.
Its positive square root belongs to $\mathbb C[F_{ij}]$.
Writing $\|\cdot\|_{\ell^1}$ for the sum of the absolute values
of the group coefficients, Cauchy's inequality gives
\begin{equation}\label{eq:Hsquarebudget}
 \|R_{ij}^{1/2}\|_{\ell^1}^2
 \le |F_{ij}|\,\|R_{ij}\|
 \le3\cdot5^7.
\end{equation}

We next transfer this identity to the given unitary matrices.
A word of length $n$, allowing inverses, can be reduced to normal
form using at most $2^{20}(n+1)^4$ of the stated relations and the
derived fifth power relations. Indeed, replacing inverse letters
by fourth powers leaves at most $4n$ letters. At most $16n^2$
crossings insert at most $64n^2$ central letters. Ordering these
letters and reducing their powers modulo five requires fewer
than the stated number of replacements.
Each replacement has defect at most $17\varepsilon$ by
Lemma~\ref{lem:centralpowers} and the hypotheses.
Multiplication by unitary factors preserves these error bounds.

For each $g\in F_{ij}$, let $w_g$ denote its normal word evaluated
at the given unitary entries. Define
\[
 \Phi(g)=\frac12\bigl(\Ad(w_g)+\Ad(w_{g^{-1}}^*)\bigr)
\]
and extend linearly to $\mathbb C[F_{ij}]$.
This map preserves adjoints, where the adjoint of an operator
on matrices is taken in $L^2(M_d,\tr_d)$.
Each $\Phi(g)$ is contractive in both the operator norm and
$\|\cdot\|_2$.

Products of two normal words have length at most $56$.
The preceding reduction bound, together with
$\|\Ad(u)(Y)-\Ad(v)(Y)\|_2\le2\|u-v\|_2$ for unitaries $u,v$
and contractions $Y$, gives
\begin{equation}\label{eq:phidefect}
 \|(\Phi(g)\Phi(h)-\Phi(gh))Y\|_2
 \le 2\cdot17\cdot2^{20}(57^4+29^4)\varepsilon
 \le2^{52}\varepsilon
 \qquad(\|Y\|\le1).
\end{equation}
The normal words within each original subgroup are chosen
consistently across the two groups containing it.
Thus $Q_i=\Phi(P_i)$ is independent of that choice of group.
The repetitions in \eqref{eq:Hlist} give
\[
 \mathcal L=\frac{3I-Q_1-Q_2-Q_3}{6},
\]
where $I$ is the identity map on $M_d$.

Extend \eqref{eq:phidefect} by linearity to group algebra elements.
Since
$\Phi(R_{ij}^{1/2})^*\Phi(R_{ij}^{1/2})$ is positive,
\eqref{eq:Hsquarebudget} controls the error in transferring
$R_{ij}=(R_{ij}^{1/2})^*R_{ij}^{1/2}$.
Transferring $(2-P_i-P_j)^2$ contributes at most
$16\cdot2^{52}\varepsilon$, because
$\|2-P_i-P_j\|_{\ell^1}\le4$. Consequently,
\[
 \left\langle
 \left((2I-Q_i-Q_j)^2-\frac{17}{32}(2I-Q_i-Q_j)\right)Y,Y
 \right\rangle
 \ge-2^{52}(16+3\cdot5^7)\varepsilon
 \qquad(\|Y\|\le1).
\]
Similarly, $P_i^2=P_i$ and $\|P_i\|_{\ell^1}=1$ give
\[
 \|(Q_i^2-Q_i)Y\|_2\le2^{52}\varepsilon
 \qquad(\|Y\|\le1).
\]

Finally, direct expansion gives the identity
\begin{equation}\label{eq:Hidentity}
\begin{aligned}
 36(\mathcal L^2-\mathcal L/96)
 ={}&\sum_{i<j}
 \left((2I-Q_i-Q_j)^2-\frac{17}{32}(2I-Q_i-Q_j)\right)\\
 &-\sum_{i=1}^3(Q_i^2-Q_i).
\end{aligned}
\end{equation}
Summing the preceding bounds and dividing by $36$, we obtain
\[
 \left\langle(\mathcal L^2-\mathcal L/96)Y,Y\right\rangle
 \ge-\frac{2^{52}}{12}(17+3\cdot5^7)\varepsilon
 \ge-2^{80}\varepsilon
 \qquad(\|Y\|\le1),
\]
as required.
\end{proof}

\subsection{The integer certificate}\label{subsec:K}

This subsection establishes the spectral estimate for the six
unitary action entries $U_{ij}\in\mathcal U(d)$, $1\le i\ne j\le3$.
Let $T$ be their averaging operator from \eqref{eq:energy}, with
$m=6$, and put $\mathcal L=I-T$. We prove that if the fifteen
integer relations in \eqref{eq:s2:integer-relations} have defects
at most $\eps\ge0$ in $\|\cdot\|_2$, then
\begin{equation}\label{eq:Kquadratic}
 \ip{(\mathcal L^2-\mathcal L/144)Y}{Y}
 \ge-26\eps
 \qquad(Y\in M_d,\ \|Y\|\le1).
\end{equation}
This supplies the input $(m,\lambda,C)=(6,1/144,26)$ for
Section~\ref{subsec:spectralcorrection}. We obtain the estimate
from an exact certificate of Netzer and Thom, controlling the
error introduced when the group relations hold only approximately.

Set $K=\SL_3(\Z)$ and write
\[
 v_{ij}=I_3+E_{ij}\qquad(1\le i\ne j\le3),
\]
where $E_{ij}$ is the matrix unit. Order these six generators
as $12,13,21,23,31,32$. 
Let $F_6$ be the free group on six formal generators mapped
to the corresponding $v_{ij}$. For each equation $L=R$ in \eqref{eq:s2:integer-relations},
replace the entries $U_{ij}$ by the corresponding formal
generators of $F_6$ and form the word $LR^{-1}$.
Let $\mathcal S_{15}\subseteq F_6$ denote the resulting
set of fifteen relator words.  
For unitary evaluations of $L$ and $R$,
\[
 \|LR^*-1\|_2=\|L-R\|_2,
\]
so the equation and its relator have the same defect.
Section~\ref{sec:gate} recovers these integer relations from
the retained relations using Lemma~\ref{csimp:r17:lem:core},
with the three braid relations supplied by Lemma~\ref{lem:braid}.

In the rational group algebra $\Q[K]$, define
\[
 \Delta_K=12-\sum_{i\ne j}(v_{ij}+v_{ij}^{-1}),
 \qquad D_K=\Delta_K/24.
\]
The involution sends each group element to its inverse.
The normalization by $24$ makes $D_K$ correspond to
$\mathcal L$ under the conjugation action associated with
an exact unitary representation.

Ozawa~\cite[Main Theorem]{OzawaT} established the connection
between property $(T)$ and certificates consisting of
Hermitian squares. Netzer and Thom~\cite[Section~2 and
Lemma~2.1]{NT} gave such a certificate for
$\Delta_K^2-\Delta_K/6$.
After division by $24^2$, the rational realization used here is
\begin{equation}\label{eq:Kcertificate}
 D_K^2-\frac1{144}D_K
 =\sum_{\nu=1}^{18326}a_\nu q_\nu^*q_\nu
 \quad\text{in }\Q[K],\qquad
 a_\nu\in\Q_{>0},\quad q_\nu\in\Z[F_6].
\end{equation}
Each $q_\nu$ is a finite integer linear combination of words
in $F_6$, and its image in $\Q[K]$ is understood in this identity.

To use the certificate for approximate matrix solutions, we
must control the errors in identifying words that represent
the same element of $K$. The following lemma records the
required reductions using $\mathcal S_{15}$ and the coefficient
bounds that yield the error $26\eps$.

Expand both sides of \eqref{eq:Kcertificate} in freely reduced
words in $F_6$, and write their difference as
\[
 Z=\sum_u z_u u\in\Q[F_6],
\]
where $z_u$ is the coefficient of the word $u$.
For each element of $K$ represented in this expansion, collect
all words with nonzero coefficients representing that element
into a class $C$.
For each class, let $u_C\in C$ be the word selected in the
certificate data as its representative.
Thus every $u\in C$ represents the same element of $K$ as
$u_C$, and $u_C^{-1}u$ represents the identity in $K$.
All sums over $C$ below range over these classes.
\begin{lemma}\label{lem:Kshort}
Each comparison word $u_C^{-1}u$, $u\in C$, reduces to the
identity using at most thirteen relation replacements from
$\mathcal S_{15}$, allowing inverses. Moreover,
\begin{equation}\label{eq:Kbudget}
 C_K:=2\sum_C\sum_{u\in C\setminus\{u_C\}}|z_u|
 =\frac{238056328850089}{144000000000000}<2.
\end{equation}
Consequently, for any tuple $V$ of six unitaries in a finite
tracial algebra, with $w(V)$ denoting word evaluation,
\begin{equation}\label{eq:Karea}
 \max_C\max_{u\in C}\|(u_C^{-1}u)(V)-1\|_2
 \le13\max_{s\in\mathcal S_{15}}\|s(V)-1\|_2.
\end{equation}
\end{lemma}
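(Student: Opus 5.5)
This lemma has two parts. The first two assertions are finite computational facts about explicit certificate data, and the third is a soft analytic consequence of the first. I would prove them in that order, keeping the computational part as an exact verification over $\Q$ and $F_6$.

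\emph{Reductions and budget.} Start from the rational certificate \eqref{eq:Kcertificate}: the $18326$ weights $a_\nu$ and integer combinations $q_\nu$ of words in $F_6$.
\begin{itemize}
\item Expand $\sum_\nu a_\nu q_\nu^*q_\nu$ and $D_K^2-\frac1{144}D_K$ in $\Q[F_6]$, with freely reduced words and exact rational arithmetic, and form $Z$.
\item Group the words with nonzero coefficients by their images in $K=\SL_3(\Z)$, computed as exact integer $3\times3$ matrices. Within each class $C$, take the representative $u_C$ from the certificate data.
\item The certificate identity in $\Q[K]$ says exactly that $\sum_{u\in C}z_u=0$ for every class. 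This is a useful consistency check on the grouping.
\item Sum $|z_u|$ over nonrepresentative words, multiply by $2$, and compare the exact rational with $\frac{238056328850089}{144000000000000}$. This gives \eqref{eq:Kbudget}.
\end{itemize}

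For the reduction claim, for each pair $(u_C,u)$ I would exhibit an explicit derivation of $u_C^{-1}u$ to the empty word. Each step freely reduces, or inserts or deletes one conjugate $w s^{\pm1}w^{-1}$ with $s\in\mathcal S_{15}$, equivalently replaces a subword $L$ by $R$ or the reverse. There are then two options:
\begin{itemize}
\item store the derivations as certificate data and check each step mechanically;
\item find them by a bounded breadth-first search in the Steinberg-type presentation. The fifteen relations are the Steinberg relations for the $v_{ij}$ plus the braid relations, and these suffice for the short words occurring here, since the $q_\nu$ have small support.
\end{itemize}
Then take the maximum length of these derivations and confirm it is at most $13$.

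\emph{Analytic consequence.} Let $V$ be unitaries in a finite tracial algebra, and fix a derivation $u_C^{-1}u=w_0\to w_1\to\dots\to w_k=1$ with $k\le13$. Free reductions do not change the evaluation, since $gg^{-1}$ evaluates to $1$ for unitaries. A replacement step takes $w_{i}=\alpha L\beta$ to $w_{i+1}=\alpha R\beta$ (or inserts $\alpha s^{\pm1}\beta$ for $\alpha\beta$). Because $\alpha(V),\beta(V)$ are unitary, the $2$-norm is unitarily invariant, and $\|L-R\|_2=\|LR^*-1\|_2$ (the identity noted before the lemma), each step satisfies
\[
\|w_i(V)-w_{i+1}(V)\|_2\le\max_{s\in\mathcal S_{15}}\|s(V)-1\|_2 .
\]
Inverse replacements are covered since $\|s^{-1}(V)-1\|_2=\|s(V)-1\|_2$. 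Telescoping over the at most $13$ steps gives \eqref{eq:Karea}.

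\emph{Main obstacle.} The hard step is the uniform bound of thirteen replacements across all classes. This is a word-problem computation whose feasibility depends on the certificate's words being short and on choosing good representatives $u_C$. I would first try the presentation-based rewriting strategy, processing the largest classes first, and fall back to stored derivations if the search is too expensive. The budget bound is routine exact arithmetic, though large.
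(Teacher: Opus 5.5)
Your proposal is correct and follows the paper's proof. Both proofs verify exactly over $\Q$ the certificate expansion, the zero class sums and the budget $C_K$. Both then give explicit reduction paths of at most thirteen replacements from $\mathcal S_{15}$ for each comparison word, and telescope, using that free cancellation is exact and that each replacement costs at most $\max_{s\in\mathcal S_{15}}\|s(V)-1\|_2$ by unitary invariance of $\|\cdot\|_2$. The paper adds one bookkeeping step: it collapses the $6\,764$ nontrivial comparison words to $945$ cases under free cancellation, cyclic conjugation and inversion, which is harmless on unitary evaluations for the same invariance reason you use.
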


\begin{proof}
The certificate data contain $9\,692$ nonzero terms in $2\,928$
classes, giving $6\,764$ nontrivial comparison words.
Under free cancellation, cyclic conjugation, and inversion,
these reduce to $945$ cases with the following numbers
of relation replacements:
\[
\begin{array}{c|rrrrrrrrrrrrr}
 \text{replacements}&1&2&3&4&5&6&7&8&9&10&11&12&13\\\hline
 \text{cases}&15&165&183&195&148&109&78&22&17&8&2&1&2.
\end{array}
\]
The coefficient calculation gives \eqref{eq:Kbudget}.
The rational identity, the zero-sum classes, the coefficient budget,
and each relation replacement have been checked using exact arithmetic.
The certificate data, reduction paths, and verification scripts are
available from the authors upon request.

On unitary evaluations, free cancellation is exact, while
conjugation and inversion preserve the norm of a relator defect.
Each relation replacement therefore contributes at most
$\max_{s\in\mathcal S_{15}}\|s(V)-1\|_2$.
Telescoping over at most thirteen replacements proves
\eqref{eq:Karea}.
\end{proof}

We now apply the lemma to six unitary matrices $V$ whose
relator defects are at most $\eps\ge0$.
Since $Z$ maps to zero in $\Q[K]$, its coefficients sum to zero
in each class. Hence
\[
 Z=\sum_C\sum_{u\in C\setminus\{u_C\}}z_u(u-u_C).
\]
By \eqref{eq:Karea}, $\|u(V)-u_C(V)\|_2\le13\eps$.
Passing to conjugation gives, for every contraction $Y\in M_d$,
\[
 \|\Ad(u(V))(Y)-\Ad(u_C(V))(Y)\|_2
 \le2\|u(V)-u_C(V)\|_2
 \le26\eps.
\]
Thus the residual contributes at most
\[
 26\eps\sum_C\sum_{u\in C\setminus\{u_C\}}|z_u|
 =13C_K\eps\le26\eps
\]
to the absolute value of the quadratic form.
Under this conjugation action, the Hermitian squares remain
positive and the formal expression defining $D_K$ becomes
$\mathcal L$. The residual bound therefore proves
\eqref{eq:Kquadratic}.

\subsection{Spectral correction with explicit tolerances}
\label{subsec:spectralcorrection}

We now use the preceding approximate spectral bounds to construct
an averaging map for the algebra reconstruction in
Section~\ref{sec:algebra}.
Lemma~\ref{lem:spectralfilter} uses powers of $T$ to reduce
commutator energy and bounds the resulting change in a matrix.
Theorem~\ref{thm:input} then corrects the unitaries to obtain
exactly invariant blocks with spectral gaps away from scalar
matrices on each block. The resulting averaging map is close to
a power of $T$ and approximately idempotent in the
$\|\cdot\|_{\infty\to2}$ norm.

The correction follows Alekseev, Liu, and Thom
\cite[proofs of Theorem~4.3 and Corollary~5.1]{ALT}.
We specify the tolerances explicitly because they determine the
final penalty coefficient. All choices are independent of the
matrix dimension.

Fix an integer $m\ge1$ and rational numbers
$0<\lambda<1$, $C\ge0$, and $0<\theta_{\rm sp}<1$,
where $\theta_{\rm sp}$ is the desired accuracy. Set
\[
 \kappa=\lambda/2,\qquad c=\kappa^2/2^{28}.
\]
For a positive rational number $s$, define
$d_2(s)=\min\{j\in\Z_{\ge0}:2^j\ge s\}$.
The power $N$ controls decay on the corrected corners, while $r_I$
is the total unitary-edit tolerance. The constants $K_0,K_b,K_\gamma$
transfer the projection errors into that tolerance; $a_I,t_I$ are
used for coverage and orthogonalization. Set
\begin{equation}\label{eq:inputparameters1}
\begin{gathered}
 \kappa'=\kappa/1024,\qquad
 N=\lceil1/c\rceil d_2(16/\theta_{\rm sp}),\qquad
 r_I=\frac{\theta_{\rm sp}}{16N},\qquad
 C_0=\frac{2^{20}}{\kappa^2},\\
 K_0=4m+\frac{256m}{\kappa'},\qquad
 K_b=72K_0+128m,\qquad
 K_\gamma=\frac{3\cdot2^{19}}{\kappa^2}K_0+112m,\\
 b_I=\frac{r_I^2}{8K_b},\qquad
 a_I=\min\left\{\frac14,\frac{r_I^2}{512m}\right\},\qquad
 t_I=\left\lceil(192/b_I)^2\right\rceil.
\end{gathered}
\end{equation}
The next choices control projection repair through $\gamma,v_I$
and the remaining spectral filtering through $\beta,L,\mu_I,m_{\rm cut}$.
The final entry $\eps_{\rm in}$ is the admissible relation error:
\begin{equation}\label{eq:inputparameters2}
\begin{gathered}
 \gamma=\min\left\{
 \frac14,\frac{\kappa^2}{2^{20}},
 \frac{r_I^2}{8K_\gamma},\frac{b_I}{222},
 \frac{a_Ib_I}{168\cdot2^{8t_I}}
 \right\},\\
 v_I=\min\left\{
 \frac12,\frac{r_I^2}{128mC_0},
 \frac{r_I^2}{128m},\frac{\gamma^8}{3}
 \right\},\qquad
 \alpha=v_I^2,\qquad
 \beta=\min\left\{\frac{\theta_{\rm sp}}{16},\frac{\alpha^2}{64}\right\},\\
 L=\max\left\{N,\left\lceil\frac{2}{\kappa\beta}\right\rceil\right\},
 \qquad
 \mu_I=\frac{\beta^2}{4L^2},\qquad
 \rho_{\rm filt}=\left(\frac{1-\lambda}{1-\kappa}\right)^2,\\
 m_{\rm cut}=\lceil1/(1-\rho_{\rm filt})\rceil d_2(2/(\kappa\mu_I)),\\
 \delta_*=\frac{\kappa\mu_I(1-\kappa)^{2m_{\rm cut}}}2,\qquad
 \eps_{\rm in}=\min\left\{1,\frac{\delta_*}{C+1}\right\}.
\end{gathered}
\end{equation}

All denominators are positive, and these choices use only rational
arithmetic, integer powers, minima, maxima, and ceilings.
Bernoulli's inequality gives the bounds
\begin{equation}\label{eq:filterchoices}
 (1-c)^N,(1-\kappa)^N\le\theta_{\rm sp}/16,\qquad
 (1-\kappa)^L\le\beta/2,\qquad
 \rho_{\rm filt}^{m_{\rm cut}}\le\kappa\mu_I/2.
\end{equation}
We also have
\[
 0<\alpha\le1/4,\qquad 9\alpha\le\gamma^{16},\qquad
 \beta\le\alpha^2/64.
\]

Throughout this subsection, let $v_1,\ldots,v_m\in\mathcal U(d)$,
with $T,\cE$ as in \eqref{eq:energy} and $\mathcal L=I-T$, and assume
\begin{equation}\label{eq:residualhyp}
 \ip{(\mathcal L^2-\lambda\mathcal L)y}{y}\ge-C\eps
 \quad(\|y\|\le1),\qquad 0\le\eps\le\eps_{\rm in}.
\end{equation}
We first estimate the effect of repeated averaging.

\begin{lemma}\label{lem:spectralfilter}
For every contraction $x\in M_d$,
\begin{equation}\label{eq:betafilter}
 \|x-T^Lx\|_2\le\kappa^{-1/2}\sqrt{\cE(x)}+\beta/2,
 \qquad
 \cE(T^Lx)\le\mu_I+(1-\kappa)^{2L}\le\beta^2/2.
\end{equation}
Moreover,
\begin{equation}\label{eq:powersclose}
 \max\bigl\{\|T^N-T^L\|_{\infty\to2},
             \|T^{2N}-T^N\|_{\infty\to2}\bigr\}
 \le\beta/2+\theta_{\rm sp}/16.
\end{equation}
\end{lemma}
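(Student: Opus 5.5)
The plan is to work entirely with the spectral decomposition of the selfadjoint operator $\mathcal L=I-T$ on $L^2(M_d,\tr_d)$, whose spectrum lies in $[0,1]$. For a fixed contraction $x$, let $\mu_x$ be its spectral measure for $\mathcal L$. Then $\mu_x([0,1])=\|x\|_2^2\le1$ and $\cE(x)=\int t\,d\mu_x(t)$ by \eqref{eq:energyid}. For any polynomial $g$ we have $\|g(T)x\|_2^2=\int|g(1-t)|^2d\mu_x$, and $\ip{h(\mathcal L)g(T)x}{g(T)x}=\int h(t)|g(1-t)|^2d\mu_x$. Each quantity in the lemma is then an integral against $\mu_x$. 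The key intermediate claim is the low-spectrum bound
\[
 \int_{(0,\kappa]}t\,d\mu_x(t)\le\mu_I .
\]
Once this holds, every remaining estimate is an elementary comparison of scalar functions on $[0,\kappa]$ and $(\kappa,1]$.

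To prove the claim, I would apply hypothesis \eqref{eq:residualhyp} not to $x$ but to $y=T^{m_{\rm cut}}x$. This is legitimate because $T$ is unital completely positive, so $y$ is again an operator-norm contraction. Write $m=m_{\rm cut}$ and split $\int(t^2-\lambda t)(1-t)^{2m}d\mu_x\ge-C\eps$ into three ranges. On $(0,\kappa]$, the bound $t\le\lambda/2$ gives $t^2-\lambda t\le-\kappa t$, and the weight satisfies $(1-t)^{2m}\ge(1-\kappa)^{2m}$. On $(\kappa,\lambda)$ the integrand is nonpositive, so that range can be dropped. On $[\lambda,1]$, the integrand is at most $t(1-\lambda)^{2m}$. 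Rearranging gives
\[
 \kappa(1-\kappa)^{2m}\int_{(0,\kappa]}t\,d\mu_x\le C\eps+(1-\lambda)^{2m}\cE(x).
\]
Dividing through, and using $\cE(x)\le1$, $\rho_{\rm filt}^{m_{\rm cut}}\le\kappa\mu_I/2$ from \eqref{eq:filterchoices}, and $C\eps\le C\eps_{\rm in}\le\delta_*=\kappa\mu_I(1-\kappa)^{2m_{\rm cut}}/2$, bounds each of the two contributions by $\mu_I/2$. This filtering step is the main obstacle. The hypothesis alone only bounds the negative low-spectrum part by the positive high-spectrum part plus $C\eps$. It says nothing useful unless the high part is damped more strongly than the low part, and that is exactly what the power $T^{m_{\rm cut}}$ and the ratio $\rho_{\rm filt}$ accomplish. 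I would check carefully that the high part is bounded using $t^2-\lambda t\le t$, so that $\cE(x)\le1$ suffices.

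With the claim in hand, the rest is direct. First, $\cE(T^Lx)=\int t(1-t)^{2L}d\mu_x$. The part over $(0,\kappa]$ is at most $\mu_I$, and the part over $(\kappa,1]$ is at most $(1-\kappa)^{2L}\cE(x)\le(1-\kappa)^{2L}$. The total is at most $\mu_I+\beta^2/4\le\beta^2/2$, since $\mu_I=\beta^2/(4L^2)\le\beta^2/4$ and $(1-\kappa)^L\le\beta/2$. Next, $\|x-T^Lx\|_2^2=\int(1-(1-t)^L)^2d\mu_x$. For $t>\kappa$ the integrand is at most $1\le t/\kappa$, contributing at most $\cE(x)/\kappa$. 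For $t\le\kappa$ it is at most $L^2t^2\le L^2t$, contributing at most $L^2\mu_I=\beta^2/4$. Taking square roots and using $\sqrt{a+b}\le\sqrt a+\sqrt b$ gives the first inequality in \eqref{eq:betafilter}.

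For \eqref{eq:powersclose}, take $n\in\{L,2N\}$. Then $\|(T^N-T^n)x\|_2^2=\int((1-t)^N-(1-t)^n)^2d\mu_x$. The point $t=0$ contributes nothing. On $(0,\kappa]$ the integrand is at most $(|n-N|t)^2\le L^2t$, using $N\le L$ and, for $n=2N$, $|n-N|=N\le L$. This range therefore contributes at most $\beta^2/4$. On $(\kappa,1]$ the integrand is at most $(1-\kappa)^{2N}\le(\theta_{\rm sp}/16)^2$, against total mass at most $1$. Taking square roots gives $\beta/2+\theta_{\rm sp}/16$. Since the bound holds for every contraction $x$, including nonselfadjoint ones, it bounds the $\|\cdot\|_{\infty\to2}$ norms as required.
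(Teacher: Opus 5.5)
Your proposal is correct and follows the paper's proof essentially step for step. Both apply \eqref{eq:residualhyp} to $T^{m_{\rm cut}}x$ and split into the ranges $[0,\kappa]$, $(\kappa,\lambda)$, $[\lambda,1]$ to get the low-spectrum bound \eqref{eq:lowmass}, then split the spectral measure at $\kappa$ for each remaining estimate. Your write-up only makes explicit the scalar comparisons the paper leaves implicit, such as $1\le t/\kappa$ on the upper interval and $|(1-t)^N-(1-t)^n|\le|n-N|\,t$ on the lower one.
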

\begin{proof}
For a contraction $x$, let $\nu_x$ be its spectral measure for
$\mathcal L$, supported in $[0,1]$ and of mass $\|x\|_2^2\le1$.
Apply \eqref{eq:residualhyp} to the contraction $T^{m_{\rm cut}}x$.
On $[0,\kappa]$, we have $s(\lambda-s)\ge\kappa s$ and
$(1-s)^{2m_{\rm cut}}\ge(1-\kappa)^{2m_{\rm cut}}$.
The contribution from $[\kappa,\lambda]$ is nonpositive,
whereas the positive contribution from $[\lambda,1]$ is at most
$(1-\lambda)^{2m_{\rm cut}}$. Hence
\begin{equation}\label{eq:lowmass}
 \int_{[0,\kappa]}s\,d\nu_x(s)
 \le\frac{C\eps+(1-\lambda)^{2m_{\rm cut}}}
          {\kappa(1-\kappa)^{2m_{\rm cut}}}
 \le\mu_I.
\end{equation}
Split the spectrum at $\kappa$. On the lower interval, use
$1-(1-s)^L\le Ls$ and \eqref{eq:lowmass}; on the upper interval,
use $(1-s)^L\le(1-\kappa)^L$.
These bounds give \eqref{eq:betafilter}.
The same decomposition, together with $L\ge N$ and
\eqref{eq:filterchoices}, gives \eqref{eq:powersclose}.
\end{proof}

We now correct the unitaries and construct the averaging map.
For a nonzero projection $e_i$, write
\[
 \tau_i(Y)=\frac{\tr_d(Y)}{\tr_d(e_i)},\qquad
 \|Y\|_{2,\tau_i}=\tau_i(Y^*Y)^{1/2}
 \quad(Y\in e_iM_de_i)
\]
for the normalized trace and its associated norm on the corner
$e_iM_de_i$.

\begin{theorem}\label{thm:input}
With the parameters above and under \eqref{eq:residualhyp},
the following hold.
\begin{enumerate}
\renewcommand{\labelenumi}{\textup{(\roman{enumi})}}
\item For every contraction $x\in M_d$,
\begin{equation}\label{eq:inputfilter}
 \|x-T^Lx\|_2\le\kappa^{-1/2}\sqrt{\cE(x)}+\theta_{\rm sp},
 \qquad \sqrt{\cE(T^Lx)}\le\theta_{\rm sp}.
\end{equation}
\item There are nonzero mutually orthogonal projections $e_i$
with $\sum_i e_i=1$ and, for each $j$, two unitaries
$V_j^+,V_j^-$ commuting with every $e_i$.
Writing $\sum_{j,\pm}$ for the sum over $1\le j\le m$ and
both signs, we have
\begin{equation}\label{eq:inputedit}
 \sum_{j,\pm}\|V_j^\pm-v_j\|_2^2\le\theta_{\rm sp}^2.
\end{equation}
For every $Y\in e_iM_de_i$,
\begin{equation}\label{eq:cornerspectralgap}
 \frac1{8m}\sum_{j,\pm}\|[V_j^\pm,Y]\|_{2,\tau_i}^2
 \ge c\|Y-\tau_i(Y)e_i\|_{2,\tau_i}^2.
\end{equation}
\item There is a unital completely positive map $F_0:M_d\to M_d$
that preserves the trace, is selfadjoint on $L^2$, and satisfies
$F_0(e_i x e_j)=e_iF_0(x)e_j$ for all $x\in M_d$, with
\begin{equation}\label{eq:inputchannel}
\begin{gathered}
 \|F_0^2-F_0\|_{\infty\to2}\le\theta_{\rm sp},\qquad
 \|F_0-T^L\|_{\infty\to2}\le\theta_{\rm sp},\\
 \max_i\|F_0|_{e_iM_de_i}-\tau_i(\cdot)e_i\|_{2\to2}
 \le\theta_{\rm sp}.
\end{gathered}
\end{equation}
\end{enumerate}
\end{theorem}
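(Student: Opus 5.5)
Part (i) follows directly from Lemma~\ref{lem:spectralfilter}, since $\beta/2\le\theta_{\rm sp}/32$ and $\beta^2/2\le\theta_{\rm sp}^2$. The real work is in (ii) and (iii). For these I will follow the scheme of Alekseev, Liu, and Thom \cite[proofs of Theorem~4.3 and Corollary~5.1]{ALT}, keeping every tolerance explicit.

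For (ii), the plan has four steps.
\begin{enumerate}
\item Let $P$ be the spectral projection of $\mathcal L$ (on $L^2(M_d,\tr_d)$) for $[0,\kappa]$. By \eqref{eq:lowmass}, every contraction has $\mathcal L$-energy at most $\mu_I$ on $[0,\kappa]$. So the range of $P$ consists of almost-commuting elements, while its complement is strictly gapped at $\kappa$.
\item Extract projections from the almost-invariant elements. Take selfadjoint contractions of the form $T^Lx$, apply spectral cut-offs, and obtain projections $f$ with $\cE(f)$ small. The relevant bound is $\sqrt{\cE(\chi(h))}\lesssim$ a Lipschitz/coverage constant times $\sqrt{\cE(h)}$. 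The threshold is chosen by averaging over $a_I$-spaced levels, and the parameters $t_I,a_I,b_I$ handle coverage and orthogonalization.
\item Repair these projections into exact orthogonal projections $e_i$ with $\sum e_i=1$. Iterating almost-orthogonal families costs errors of order $2^{8t_I}$, which is why $\gamma$ contains $a_Ib_I/(168\cdot 2^{8t_I})$.
\item Define $V_j^\pm=\sum_i$ (polar part of $e_iv_j^{\pm}e_i$). Since $\sum_i\|[v_j,e_i]\|_2^2$ is controlled by the energies, the estimate \eqref{eq:unitaryrounding} gives \eqref{eq:inputedit} through the constants $K_0,K_b,K_\gamma$ and $r_I$.
\end{enumerate}

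The corner gap \eqref{eq:cornerspectralgap} needs a stopping argument, and this is the step I expect to be the main obstacle. The partition is refined until no corner $e_iM_de_i$ contains a nonscalar almost-invariant projection. Then on each corner, any $Y$ orthogonal to $e_i$ with small energy would yield a further splitting projection, which contradicts maximality. This has to be quantified so that the gap constant is $c=\kappa^2/2^{28}$, independently of $d$ and of the number of blocks. The termination of the refinement is also needed, and it follows because each split removes a definite trace mass or energy budget. Tracking the $\kappa'=\kappa/1024$ losses through the cut-off lemma and through the passage from $v_j$ to $V_j^\pm$ (both signs, $2m$ terms) is delicate, and I would first try the ALT recursion verbatim with $\alpha,\gamma,v_I$ as the budgets.

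For (iii), set $T_V(x)=x/2+\frac1{8m}\sum_{j,\pm}(V_j^\pm x V_j^{\pm*}+V_j^{\pm*}xV_j^\pm)$ and $F_0=T_V^N$. This map is unital, completely positive, trace preserving and selfadjoint on $L^2$. Because the $V_j^\pm$ commute with the $e_i$, it also satisfies $F_0(e_ixe_j)=e_iF_0(x)e_j$. The comparison $\|T_V-T\|_{\infty\to2}\le 2\cdot(\text{edit})$ follows from \eqref{eq:inputedit}, and so $\|T_V^N-T^N\|_{\infty\to2}\le N\theta_{\rm sp}'$, which is small by the choice $r_I=\theta_{\rm sp}/(16N)$. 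Combining this with \eqref{eq:powersclose} gives $\|F_0-T^L\|_{\infty\to2}\le\theta_{\rm sp}$. On $e_iM_de_i$, the gap \eqref{eq:cornerspectralgap} gives $\|T_V^N-\tau_i(\cdot)e_i\|_{2\to2}\le(1-c)^N\le\theta_{\rm sp}/16$. Approximate idempotence then follows from $\|T^{2N}-T^N\|_{\infty\to2}$ together with the same edit comparison.
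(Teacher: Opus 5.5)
Part~(i) is correct. Part~(iii) is the paper's argument: $F_0=Q_0^N$ for the averaging map of the corrected tuple, the corner gap gives $(1-c)^N\le\theta_{\rm sp}/16$, and telescoping gives $\|Q_0^k-T^k\|_{\infty\to2}\le kr_I$. One correction there: this needs the sharper internal bound $\sum_{j,\pm}\|V_j^\pm-v_j\|_2^2\le5r_I^2/8$ produced in the construction of (ii). The stated bound \eqref{eq:inputedit} alone gives only $\|Q_0-T\|_{\infty\to2}\lesssim\theta_{\rm sp}$, and $N\theta_{\rm sp}$ is not small.

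The genuine gap is in (ii), which is the substance of the theorem.
\begin{itemize}
\item The constant $c=(\kappa')^2/256$ is fixed, so any Cheeger-type stopping rule must use a \emph{relative} threshold of fixed size, $\cE(q)\lesssim\kappa'\tr_d(q)$. Splitting corners along such projections and re-pinching costs a squared edit $\sum_j\|\cdot\|_2^2$ of order $m\,\cE(q)$, hence up to order $m\kappa'$ in total. That is far above the required $r_I^2$.
\item Iterating the refinement only makes this worse. After the first re-pinching, the energies belong to an edited tuple to which \eqref{eq:residualhyp} does not directly apply.
\item The gap \eqref{eq:cornerspectralgap} must hold on \emph{every} corner with the same $c$. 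The deviations of the pinched unitaries from $v_j$ are controlled only in aggregate, so small corners can carry large relative errors.
\end{itemize}
So ``refine until maximal, then contradict maximality'' cannot deliver \eqref{eq:inputedit} and \eqref{eq:cornerspectralgap} simultaneously. Separately, indicator cut-offs $1_{\mathcal J}(h)$ are not Lipschitz. Coarea yields only a square-root loss, not $\sqrt{\cE(1_{\mathcal J}(h))}\lesssim\sqrt{\cE(h)}$.

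What is missing is the mechanism the paper takes from \cite{ALT}, which bridges the fixed Cheeger scale and the tiny edit scale through the spectral hypothesis on the \emph{original} tuple.
\begin{itemize}
\item Greedily choose, in the uncovered part, projections $p_i$ of \emph{minimum rank} with $\cE(p_i)<\kappa\tr_d(p_i)/64$, measured for the original energy $\cE$.
\item Use the projection correction for $T^L$, which rests on \eqref{eq:residualhyp}, to replace each $p_i$ by $f_i$ with $\cE(f_i)\le\alpha\tr_d(p_i)$. Since $\sum_i\tr_d(p_i)\le8$, the total energy is at most $9\alpha$, independently of the number of pieces.
\item Only then orthogonalize, via resolvents and \eqref{eq:orthogenergy}, to partial isometries $Z_i$, and pinch to get $W_j$.
\item Minimality of rank transports to the half-rank Cheeger condition $\cE_W(q)\ge\kappa'\tr_d(q)-\varepsilon_i^{\rm tr}$ for $q\le P_i$, with $\sum_i\varepsilon_i^{\rm tr}$ small.
\item \cite[Lemma~4.2]{ALT} then discards the parts of corners where the error is too large. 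It gives the gap $c$ on the retained corners and sets $V_j^\pm=1$ on the discarded parts and on $P_0$.
\item That complement is split into rank-one projections, on which \eqref{eq:cornerspectralgap} holds trivially.
\end{itemize}
Your maximality idea does appear, but at the selection stage and for the original energy. There the spectral hypothesis can upgrade relative almost-invariance to near-exact invariance before any pinching takes place.
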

\begin{proof}
Part~\textup{(i)} follows from Lemma~\ref{lem:spectralfilter}, since
$\beta\le\theta_{\rm sp}/16$. For part~\textup{(ii)}, we follow the
projection selection and orthogonalization in
\cite[Theorem~4.3]{ALT}, keeping the total error over all corners.
We use the Hilbert-Schmidt estimates for functional calculus
\cite[Lemmas~VII.5.4-VII.5.5]{Bhatia}, the inequality of Araki and
Yamagami \cite[Theorem~1]{AY} for rectangular matrices by zero padding,
and the coarea, cutoff, and polar completion estimates in
\cite[Lemma~2.3, Section~2.4, and (3.12)]{ALT}.
All auxiliary spaces retain the trace denominator $d$; in particular,
$\|Z\|_2^2=d^{-1}\Tr(Z^*Z)$ for a rectangular matrix $Z$.

Apply the projection correction in
\cite[proof of Theorem~2.4]{ALT} to $T^L$.
Lemma~\ref{lem:spectralfilter} and $\beta\le\alpha^2/64$ give a
projection $r_0$ such that
\[
 \tr_d(1-r_0)\le\kappa\beta^2/\alpha+36\beta^2/\alpha^2\le\alpha.
\]
For every projection $p\le r_0$ with
$\cE(p)<\kappa\tr_d(p)/64$, the same correction gives a projection
$f$ satisfying
\[
\begin{gathered}
 \|f-p\|_2^2\le36\kappa^{-1}\cE(p),\qquad
 \cE(f)\le\alpha\tr_d(p),\\
 \tfrac13\tr_d(p)\le\tr_d(f)\le\tfrac53\tr_d(p).
\end{gathered}
\]
We select these projections as in
\cite[proof of Theorem~4.3, Step~1]{ALT}.
For a selfadjoint matrix $S$ and a real interval $\mathcal J$, let $1_{\mathcal J}(S)$
denote its spectral projection; let $\operatorname{pol}(X)$ denote
the partial isometry in the polar decomposition of $X$.
Starting with $S_0=1-r_0$, set
\[
 b_i=1_{[0,\kappa/512]}(S_{i-1}),\qquad
 w_i=\operatorname{pol}(r_0b_i).
\]
Then $w_i^*w_i=b_i$ and $w_iw_i^*\le r_0$.
Choose a nonzero projection $p_i\le w_iw_i^*$ of minimum rank with
$\cE(p_i)<\kappa\tr_d(p_i)/64$, correct it to $f_i$, and put
$S_i=S_{i-1}+f_i$. Stop when no such $p_i$ exists.
The selection terminates and satisfies
\begin{equation}\label{eq:selectionbounds}
\begin{gathered}
 \sum_i\tr_d(p_i)\le8,\qquad \sum_i\tr_d(f_i)\le8/3,\qquad
 A_*:=\cE(1-r_0)+\sum_i\cE(f_i)\le9\alpha\le\gamma^{16},\\
 \tr_d\left(1_{[0,\gamma]}\left(1-r_0+\sum_i f_i\right)\right)
 \le C_0\sqrt\alpha.
\end{gathered}
\end{equation}
For the last estimate, the coarea argument in
\cite[Theorem~4.3, Step~2]{ALT} gives the upper bound
$2^{17}\sqrt{2A_*}/\kappa^2\le C_0\sqrt\alpha$.

To obtain projections that can be made orthogonal, set
\[
 \lambda_0=\gamma^2,\qquad R_i=(\lambda_0+S_i)^{-1},\qquad
 D_i=\lambda_0(R_{i-1}-R_i).
\]
The resolvent estimates of \cite[Lemmas~3.2-3.3]{ALT} give, for every
unitary $u$,
\begin{equation}\label{eq:resolventledger}
\begin{gathered}
 \sum_i\|[u,R_{i-1}]f_i\|_2^2
 \le148\lambda_0^{-3}\left(\|[u,1-r_0]\|_2^2+
                              \sum_i\|[u,f_i]\|_2^2\right),\\
 \sum_i\cE(D_i)\le2696\lambda_0^{-4}A_*,\qquad
 \sum_i\tr_d(S_{i-1}^2D_i)\le3\lambda_0.
\end{gathered}
\end{equation}
For the constant $148$, integration in
\cite[proof of Lemma~3.2]{ALT} gives
$72\lambda_0^{-3}+2\lambda_0^{-2}\le74\lambda_0^{-3}$ for
selfadjoint unitaries; dilation to matrices of twice the size gives
$148$ for general unitaries. For the constant $2696$, factor
\[
 D_i=Y_iY_i^*,\qquad
 Y_i=\sqrt{\lambda_0}R_{i-1}f_i
       (1+f_iR_{i-1}f_i)^{-1/2}.
\]
Here $0\le D_i\le1$, so $\|Y_i\|\le1$, and functional calculus gives
\[
 \|[u,Y_i]\|_2
 \le2\lambda_0^{-3/2}\|[u,f_i]\|_2
       +\tfrac32\lambda_0^{-1/2}\|[u,R_{i-1}]f_i\|_2.
\]
Squaring, summing, and using the product rule for $D_i=Y_iY_i^*$
gives the coefficient $4(8+\tfrac92\cdot148)=2696$.

We use the following orthogonalization estimate
\cite[(3.14)]{ALT}. For a family $q=(q_i)$ of projections with
\[
 \sum_i\tr_d(q_i)\le3,\qquad \sum_i\cE(q_i)\le s,
 \qquad s\ge0,
\]
write $h_+=(h+|h|)/2$ for the positive part of a selfadjoint matrix
$h$, and define
\[
 \operatorname{def}(q)
 =\tr_d\left(\left(1-\sqrt{\sum_iq_i}\right)_+^2\right).
\]
For $0<a\le1/4$ and $t\ge1$, there are partial isometries $Z_i$
with pairwise orthogonal projections $Z_i^*Z_i$ and $Z_iZ_i^*\le q_i$
such that
\begin{equation}\label{eq:orthogenergy}
 \sum_i\cE(Z_i)
 \le2s+\frac{8e^{4t}\sqrt{s}}{a}+\frac{64}{\sqrt{t}},
 \qquad
 \tr_d\left(1-\sum_iZ_i^*Z_i\right)
 \le\operatorname{def}(q)+4a.
\end{equation}
The coarea estimate on $[\gamma,2\gamma]$ and the coverage calculation
in \cite[Lemma~3.3]{ALT} supply projections $q_i$ with
$\rank q_i\le\rank f_i$ and
\[
\begin{gathered}
 \sum_i\cE(q_i)\le37\gamma^2,\qquad
 \sum_i\tr_d(q_i)\le8/3,\qquad
 \sum_i\tr_d(S_{i-1}^2q_i)\le3\gamma,\\
 \operatorname{def}(q)\le\alpha+C_0\sqrt\alpha+7\gamma.
\end{gathered}
\]
Apply \eqref{eq:orthogenergy} with $s=37\gamma^2$, $a=a_I$, and
$t=t_I$. For the resulting $Z_i$, put
\[
 B_*=\sum_i\cE(Z_i),\qquad P_i=Z_i^*Z_i,\qquad
 P_0=1-\sum_iP_i,\qquad Q_i=Z_iZ_i^*.
\]
The choices of $\gamma$ and $t_I$ give
\[
 B_*\le74\gamma^2+56\cdot2^{8t_I}\gamma/a_I+64/\sqrt{t_I}
 \le b_I,
\]
and the partition $(P_i)_{i\ge0}$ satisfies
$\tr_d(P_0)\le\alpha+C_0\sqrt\alpha+7\gamma+4a_I$.
Pinching and polar completion give unitaries $W_j$ commuting with
every $P_i$ such that
\[
 \sum_j\|W_j-v_j\|_2^2\le32mB_*.
\]
Following \cite[(4.12)]{ALT}, define
\[
 \Delta_{j,i}^{\pm}=v_j^{\pm1}Z_i-Z_iW_j^{\pm1},\qquad
 \varepsilon_i^{\rm tr}=\frac1{2m}\sum_{j,\pm}\|\Delta_{j,i}^{\pm}\|_2^2
       +2\tr_d((1-b_i)Q_i).
\]
Let $\cE_W$ denote the energy \eqref{eq:energy} for the tuple $(W_j)$.
Transporting the minimum rank condition to $P_i$ gives
\cite[Theorem~4.3, Step~3, (4.13)]{ALT}
\[
 \cE_W(q)\ge\kappa'\tr_d(q)-\varepsilon_i^{\rm tr}
 \qquad(q\text{ a projection},\ q\le P_i,\
                      \rank q\le\rank P_i/2).
\]
Moreover,
\[
 \sum_i\varepsilon_i^{\rm tr}\le72B_*+3\cdot2^{19}\gamma/\kappa^2.
\]
Indeed, the product rule and $\sum_iP_i\le1$ give
\[
 \sum_{j,i,\pm}\|\Delta_{j,i}^{\pm}\|_2^2
 \le16mB_*+4\sum_j\|W_j-v_j\|_2^2\le144mB_*.
\]
Since $Q_i\le q_i$ and
$1-b_i\le(512/\kappa)^2S_{i-1}^2$, we also have
\[
 \sum_i\tr_d((1-b_i)Q_i)
 \le(512/\kappa)^2\sum_i\tr_d(S_{i-1}^2q_i)
 \le3\cdot2^{18}\gamma/\kappa^2.
\]

Apply \cite[Lemma~4.2]{ALT} to the partition $(P_i)_{i\ge0}$,
the unitary tuple $(W_j)$, and the errors $(\varepsilon_i^{\rm tr})_{i\ge1}$.
This gives corrected unitaries $V_j^\pm$ and retained
subprojections of the $P_i$, with scalar gap
$(\kappa')^2/256=c$ on every nonzero retained corner.
The construction takes $V_j^\pm$ to be the identity on the
complement of the retained projections. This complement
includes both $P_0$ and all portions discarded from the other
$P_i$.

Split this entire complement into projections of rank one,
adjoin them to the nonzero retained projections, and denote
the resulting partition by $(e_i)$. This refinement leaves
the corrected unitaries and their error bounds unchanged.
Every $V_j^\pm$ commutes with every $e_i$, and the scalar gap
holds on each corner: it holds on the retained corners by
the lemma and on the new corners because every matrix in
a corner of rank one is scalar. 
Their total error satisfies
\[
 \sum_{j,\pm}\|V_j^\pm-v_j\|_2^2
 \le K_bB_*+K_\gamma\gamma+16m\alpha+
       16mC_0\sqrt\alpha+64ma_I\le5r_I^2/8.
\]

Each of the five terms is at most $r_I^2/8$ by
\eqref{eq:inputparameters1} -\eqref{eq:inputparameters2}.
Since $r_I\le\theta_{\rm sp}$, this proves part~\textup{(ii)}.

For part~\textup{(iii)}, define the averaging operator of the corrected
tuple and its $N$th power by
\[
 Q_0(x)=\frac{x}{2}
 +\frac1{8m}\sum_{j,\pm}
 \bigl(V_j^\pm x(V_j^\pm)^*
       +(V_j^\pm)^*xV_j^\pm\bigr),
 \qquad F_0=Q_0^N.
\]
Both maps are unital, completely positive, trace preserving,
selfadjoint on $L^2$, and satisfy the block identity in
part~\textup{(iii)}. Since $0\le Q_0\le I$ on $L^2$, the
scalar gap on each corner yields
\[
 \max_i
 \|F_0|_{e_iM_de_i}-\tau_i(\cdot)e_i\|_{2\to2}
 \le(1-c)^N\le\theta_{\rm sp}/16.
\]
The error bound and Cauchy--Schwarz give
\[
 \|Q_0-T\|_{\infty\to2}
 \le\frac1{2m}\sum_{j,\pm}\|V_j^\pm-v_j\|_2
 \le\frac{r_I}{\sqrt{2m}}\le r_I.
\]
Both $Q_0$ and $T$ are contractive in the operator norm and the
$2$-norm, so telescoping gives
\[
 \|Q_0^k-T^k\|_{\infty\to2}\le kr_I\qquad(k\ge1).
\]
Finally, \eqref{eq:powersclose}, $Nr_I=\theta_{\rm sp}/16$, and
$\beta\le\theta_{\rm sp}/16$ imply
\[
 \begin{aligned}
 \|F_0-T^L\|_{\infty\to2}
 &\le Nr_I+\beta/2+\theta_{\rm sp}/16
 \le5\theta_{\rm sp}/32,\\
 \|F_0^2-F_0\|_{\infty\to2}
 &\le3Nr_I+\beta/2+\theta_{\rm sp}/16
 \le9\theta_{\rm sp}/32.
 \end{aligned}
\]
This proves \eqref{eq:inputchannel}.
\end{proof}

\subsection{Reconstruction and approximate commutants}
\label{sec:algebra}

Section~\ref{subsec:spectralcorrection} produces an approximately
idempotent averaging map $F_0$. We now replace it by the conditional
expectation $E_A$ onto an actual matrix subalgebra $A$.
Proposition~\ref{thm:reconstruction} gives this replacement under
the block conditions established in Theorem~\ref{thm:input}.
Combining it with the filtering estimates yields
Theorem~\ref{thm:interface}: every contraction with small commutator
energy is close to $A$, and every contraction in $A$ nearly commutes
with the input unitaries. Both estimates are independent of the
matrix dimension.

For an exactly idempotent unital completely positive map preserving
the trace, its range is a subalgebra and the map is the conditional
expectation onto that range
\cite[Chapters~3 and~4]{Paulsen}; see also
\cite[Theorems~6.1.8 and~6.2.2]{Peterson}.
The following quantitative version uses the reconstruction argument
of Alekseev, Liu, and Thom \cite[proof of Theorem~5.2]{ALT}.

\begin{proposition}\label{thm:reconstruction}
Let $e_i$ be nonzero mutually orthogonal projections in $M_d$ with
$\sum_i e_i=1$. Write
\[
 m_i=\rank e_i,\qquad
 \tau_i(x)=\frac{\Tr(x)}{m_i}\quad(x\in e_iM_de_i).
\]
Suppose $F_0:M_d\to M_d$ is unital, completely positive, preserves
the trace, is selfadjoint on $L^2$, and satisfies
$F_0(e_i x e_j)=e_iF_0(x)e_j$ for all $i,j$ and $x\in M_d$.
If $0<\zeta<1$ and
\[
 \delta_0:=\|F_0^2-F_0\|_{\infty\to2}\le\frac{\zeta^8}{2^{96}},
 \qquad
 \sigma:=\max_i
 \|F_0|_{e_iM_de_i}-\tau_i(\cdot)e_i\|_{2\to2}
 \le\frac{\zeta^8}{2^{96}},
\]
then there is a unital $*$-subalgebra $A\subseteq M_d$ containing
every $e_i$ such that
\[
 \|F_0-E_A\|_{\infty\to2}\le\zeta.
\]
\end{proposition}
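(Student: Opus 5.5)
The plan is to turn the approximately idempotent channel $F_0$ into an exact conditional expectation in three moves. First produce an approximate projection with large spectral gap. Then show its near-range is nearly a subalgebra. Finally round that near-subalgebra to an actual one.

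\textbf{Step 1: spectral projection.} Since $F_0$ is selfadjoint on $L^2(M_d,\tr_d)$ and contractive, its spectrum lies in $[-1,1]$. Let $P$ be the spectral projection of $F_0$ onto $[1/2,1]$ and put $V=\operatorname{ran}P$.

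The hypothesis on $\delta_0$ only controls $F_0^2-F_0$ on operator-norm contractions, not on all of $L^2$. I would therefore use the block structure to pass from the $\infty\to2$ norm to the $2\to2$ norm. On each diagonal corner $e_iM_de_i$, $F_0$ is within $\sigma$ in $2\to2$ norm of the rank-one projection $\tau_i(\cdot)e_i$. The off-diagonal blocks $e_iM_de_j$ and the span of the $e_i$ remain. Here I would follow \cite[proof of Theorem~5.2]{ALT}. Matrices of small $2$-norm with large operator norm are controlled by a truncation argument, which gives $\|F_0-P\|_{\infty\to2}\lesssim\delta_0^{1/2}+\sigma^{1/2}$. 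The loss of roots accounts for the eighth power in the hypothesis.

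Note that $P$ fixes $1$ and each $e_i$ up to small error. Indeed $F_0(e_i)=e_i$ exactly: this follows from $F_0(1)=1$ and the block identity.

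\textbf{Step 2: approximate multiplicativity.} The aim is to show that for contractions $a,b$ with $F_0a\approx a$ and $F_0b\approx b$, the product $ab$ is also nearly fixed. I would use the Schwarz inequality $F_0(a^*a)\ge F_0(a)^*F_0(a)$ together with trace preservation. These give
\[
\tr_d\bigl(F_0(a^*a)-F_0(a)^*F_0(a)\bigr)=\|a\|_2^2-\|F_0a\|_2^2,
\]
which is small when $a$ is nearly fixed. Choi's multiplicative domain estimate then yields $\|F_0(ab)-F_0(a)F_0(b)\|_2$ small. Hence the set of contractions that are nearly fixed is closed under products up to a small $L^2$ error, uniformly in $d$.

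\textbf{Step 3: rounding to an exact algebra.} This is the main obstacle. An approximately multiplicative, approximately projective subspace must be replaced by an exact unital $*$-subalgebra $A$ containing each $e_i$, with a bound independent of $d$. My first attempt follows ALT. Pass to the corner decomposition and note that each diagonal corner contributes only scalars, so $A\cap e_iM_de_i=\C e_i$. The algebra $A$ is then determined by the off-diagonal bimodules $e_iAe_j$. These must be spanned by partial isometries implementing equivalences between subprojections of the $e_i$. I would extract such partial isometries by polar decomposition of $F_0$ applied to elements of $e_iM_de_j$, controlled by the unitary rounding estimate \eqref{eq:unitaryrounding}. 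Grouping corners into equivalence classes gives a subalgebra of the form $\bigoplus_k M_{n_k}\otimes 1$.

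Finally, compare $E_A$ with $F_0$ corner by corner. Use orthogonality of $E_A$ to collect the errors, and track the accumulated losses, which are at most a fixed power of $\max(\delta_0,\sigma)$. With the constant $2^{96}$ these losses combine to at most $\zeta$.
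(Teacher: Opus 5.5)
The gap is in Step~3, and Step~1 does not prepare for it. Step~1 holds without any truncation: since $|s-1_{[1/2,1]}(s)|\le2|s^2-s|$ on $[-1,1]$, spectral calculus gives $\|F_0-P\|_{\infty\to2}\le2\delta_0$. What the rounding actually needs is a dichotomy on each block.

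Let $\Phi_{ij}$ be the restriction of $F_0$ to $e_iM_de_j$. For an eigenvector $v$ with eigenvalue $\lambda$, the Schwarz inequality and the corner hypothesis give $(\lambda^2-\sigma)\Tr|v|^4\le\|v\|_{\HS}^4/\max(m_i,m_j)$. This forces the polar factor of $v$ to be close to a partial isometry of rank $\min(m_i,m_j)$ only when $\lambda\ge1-\rho$ with $\rho$ small. An eigenvalue in $[1/2,1-\rho)$ gives no such control. The $\infty\to2$ hypothesis cannot exclude such eigenvalues on an individual block, because a contraction supported in a block of rank $m$ has $\|\cdot\|_2\le\sqrt{m/d}$. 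Many small blocks can therefore carry them.

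The paper handles this by first applying the truncation and matching argument of \cite[(5.4)--(5.5)]{ALT}, with $\rho=(\zeta/4096)^2$. It discards a set of indices with $\tr_d(1-p)\le64\delta_0^2/\rho^6$, so that every retained block has $\spec(\Phi_{ij})\subseteq[-\rho,\rho]\cup[1-\rho,1]$. All later errors are measured in $\|\cdot\|_{\HS}$ relative to $\max(m_i,m_j)$, so the blockwise $2\to2$ bounds add by orthogonality independently of the number of blocks. Your proposal has no discarding step. Moreover, an absolute $\|\cdot\|_2$ error ``uniformly in $d$'', as stated in your Step~2, says nothing about blocks of small rank.

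In Step~3 you also take for granted the two facts that carry the construction. The first is that each retained block has at most one high eigendirection; the second is that linking of corners is transitive. Both need the product estimate, in its relative form $\|F_0(xy)-xy\|_{\HS}\le\sqrt2(\mathcal D(x)^{1/2}+\mathcal D(y)^{1/2})$, combined with the hypothesis on $\sigma$. For the first, suppose two orthogonal high eigenvectors have polar completions $u,z$. Then $u^*z\in e_jM_de_j$ is nearly fixed and nearly traceless, although $\|u^*z\|_{\HS}^2\ge2\min(m_i,m_j)-\max(m_i,m_j)$, which contradicts the scalar-corner bound. For transitivity, $u_{ij}u_{jk}$ is nearly fixed and of large norm. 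Spectral separation forbids this unless $\Phi_{ik}$ has a high eigenvector.

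Finally, your claim $A\cap e_iM_de_i=\C e_i$ is false for the algebra you need. It makes every $e_i$ minimal in $A$, so $e_iAe_j\ne0$ would force $m_i=m_j$. Linked corners only satisfy $\min(m_i,m_j)\ge(1-3\rho)\max(m_i,m_j)$. The paper instead builds matrix units $W_{ij}=U_iU_j^*$ on subprojections $q_i\le e_i$ of the least rank in each class. It then sets $A=A_0\oplus(1-q)M_d(1-q)$ with $q=\sum_iq_i$, which puts $e_i=q_i+(e_i-q_i)$ in $A$ at a cost of $\sqrt{\tr_d(1-q)}$. For retained $i$ the diagonal corner of $A$ is then $\C q_i\oplus(e_i-q_i)M_d(e_i-q_i)$, not $\C e_i$.
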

\begin{proof}
We follow the reconstruction in \cite[Theorem~5.2]{ALT}.
In this proof, put $\rho_{\rm rec}=(\zeta/4096)^2$, so
$\delta_0,\sigma\le\rho_{\rm rec}^4$ and $0<\rho_{\rm rec}<2^{-24}$.
The block identity makes each rectangular block
$e_iM_de_j$ invariant under $F_0$; denote its restriction by
$\Phi_{ij}$. These restrictions are selfadjoint contractions on
the corresponding Hilbert--Schmidt spaces; write $\spec$ for the spectrum.
The truncation and matching argument in \cite[(5.4)-(5.5)]{ALT}
retains a set of indices with block sum
$p=\sum_{i\text{ retained}}e_i$ such that
\[
 \tr_d(1-p)\le64\delta_0^2/\rho_{\rm rec}^6\le64\rho_{\rm rec}^2,
 \qquad
 \spec(\Phi_{ij})\subseteq[-\rho_{\rm rec},\rho_{\rm rec}]\cup[1-\rho_{\rm rec},1]
\]
for all retained $i,j$.

Using the unnormalized Hilbert--Schmidt norm from Section~\ref{sec:prelim}, set
\[
 \mathcal D(x)=\|x\|_{\HS}^2-\operatorname{Re}\Tr(x^*F_0(x)).
\]
The square root of $\mathcal D$ is a seminorm satisfying
$\mathcal D(x^*)=\mathcal D(x)$. For contractions $x,y$, the product
estimate \cite[(5.7)]{ALT} gives
\[
 \|F_0(xy)-xy\|_{\HS}
 \le\sqrt2\bigl(\mathcal D(x)^{1/2}+\mathcal D(y)^{1/2}\bigr).
\]
Call a nonzero eigenvector of $\Phi_{ij}$ high if its eigenvalue
lies in $[1-\rho_{\rm rec},1]$. For such a vector $v$, put
$m_{\max}=\max(m_i,m_j)$ and $t=\min(m_i,m_j)$, and normalize
$\|v\|_{\HS}^2=m_{\max}$.
For its eigenvalue $\lambda_v\ge1-\rho_{\rm rec}$, the bounds
$\sigma\le\rho_{\rm rec}^4$ and $0<\rho_{\rm rec}<2^{-24}$ imply
\[
 (\lambda_v^2-\sigma)^{-1}
 \le\bigl((1-\rho_{\rm rec})^2-\rho_{\rm rec}^4\bigr)^{-1}
 \le1+3\rho_{\rm rec}.
\]
Also $\lambda_v^2-\sigma\ge1-3\rho_{\rm rec}$.
Thus the fourth moment and rank inequalities in \cite[(5.4)]{ALT} give
\[
 t\ge(1-3\rho_{\rm rec})m_{\max},\qquad
 \Tr|v|^4\le(1+3\rho_{\rm rec})m_{\max}.
\]
Complete the polar factor of $v$ to a partial isometry $u\in e_iM_de_j$
of rank $t$. Applying $(s-1)^2\le(s^2-1)^2$ to the singular values gives
\[
\begin{aligned}
 \|u-v\|_{\HS}^2
 &\le\Tr|v|^4-2m_{\max}+t\le6\rho_{\rm rec} m_{\max},\\
 \mathcal D(u)^{1/2}
 &\le\sqrt{\rho_{\rm rec} m_{\max}}+\sqrt2\|u-v\|_{\HS}
 \le5\sqrt{\rho_{\rm rec} m_{\max}}.
\end{aligned}
\]

Each retained block has at most one high eigendirection.
Indeed, suppose $v,w$ are orthogonal high eigenvectors with the above
normalization, and let $u,z$ be their completed polar factors.
Then $|\Tr(u^*z)|\le2\sqrt6\,\sqrt{\rho_{\rm rec}}\,m_{\max}$.
For $a=u^*z\in e_jM_de_j$, the rank comparison, the hypothesis on
diagonal blocks, and the product estimate give
\[
\begin{aligned}
 \|\tau_j(a)e_j\|_{\HS}&\le8\sqrt{\rho_{\rm rec} m_{\max}},\\
 \|a-\tau_j(a)e_j\|_{\HS}
 &\le10\sqrt2\,\sqrt{\rho_{\rm rec} m_{\max}}+\sigma\sqrt{m_{\max}}
 \le16\sqrt{\rho_{\rm rec} m_{\max}}.
\end{aligned}
\]
But $\|a\|_{\HS}^2\ge2t-m_{\max}\ge(1-6\rho_{\rm rec})m_{\max}$, so these
inequalities imply $1-6\rho_{\rm rec}\le(8+16)^2\rho_{\rm rec}$, or $1\le582\rho_{\rm rec}$,
contradicting $\rho_{\rm rec}<2^{-24}$.

For retained indices, define $i\sim j$ when $\Phi_{ij}$ has a high
eigenvector, and let $u_{ij}$ be its completed polar factor.
This relation is reflexive and symmetric because $F_0(e_i)=e_i$
and $F_0$ preserves adjoints. To prove transitivity, suppose
$i\sim j\sim k$ and put
$a=u_{ij}u_{jk}$ and $M_*=\max(m_i,m_j,m_k)$.
The rank and product estimates give
\[
\begin{aligned}
 \|a\|_{\HS}^2
 &\ge\min(m_i,m_j)+\min(m_j,m_k)-m_j
 \ge(1-6\rho_{\rm rec})M_*,\\
 \|F_0(a)-a\|_{\HS}&\le15\sqrt{\rho_{\rm rec} M_*}.
\end{aligned}
\]
If $i\not\sim k$, spectral separation gives
$\|\Phi_{ik}\|_{2\to2}\le\rho_{\rm rec}$. Consequently,
$(1-\rho_{\rm rec})^2(1-6\rho_{\rm rec})\le225\rho_{\rm rec}$.
The left side is at least $1/8$, giving $1\le1800\rho_{\rm rec}$, again a
contradiction. Thus $\sim$ is an equivalence relation.

In each class choose an index $o$ of least rank. Set $U_o=e_o$,
$U_i=u_{io}$ for $i\ne o$ in that class, and
\[
 q_i=U_iU_i^*,\qquad W_{ij}=U_iU_j^*\quad(i\sim j).
\]
Since $U_i^*U_i=e_o$, the $W_{ij}$ form exact matrix units with
$W_{ii}=q_i$. Moreover, $m_o\ge(1-3\rho_{\rm rec})m_i$ for every $i$ in the class.
For distinct $i\sim j$, the product estimate gives
\[
 \frac{\|\Phi_{ij}(W_{ij})-W_{ij}\|_{\HS}}{\sqrt{m_o}}
 \le\frac{10\sqrt2}{\sqrt{1-3\rho_{\rm rec}}}\sqrt{\rho_{\rm rec}}\le15\sqrt{\rho_{\rm rec}}.
\]
Let $\Pi_{ij}$ be the orthogonal projection onto the high eigenline,
and, for any nonzero matrix $w$, let $P_w$ be the orthogonal projection
onto $\mathbb Cw$, using the Hilbert-Schmidt inner product.
Spectral separation and the distance formula for projections onto
lines imply
\[
 \|\Pi_{ij}-P_{W_{ij}}\|_{2\to2}
 =\left\|(I-\Pi_{ij})\frac{W_{ij}}{\sqrt{m_o}}\right\|_{\HS}
 \le\frac{15\sqrt{\rho_{\rm rec}}}{1-\rho_{\rm rec}}\le16\sqrt{\rho_{\rm rec}}.
\]
Since $\|\Phi_{ij}-\Pi_{ij}\|_{2\to2}\le\rho_{\rm rec}$, it follows that
\[
 \|\Phi_{ij}-P_{W_{ij}}\|_{2\to2}
 \le\rho_{\rm rec}+16\sqrt{\rho_{\rm rec}}\le41\sqrt{\rho_{\rm rec}}.
\]
On diagonal blocks,
\[
 \|\Phi_{ii}-P_{q_i}\|_{2\to2}
 \le\sigma+\sqrt{1-m_o/m_i}
 \le\sigma+\sqrt{3\rho_{\rm rec}}\le41\sqrt{\rho_{\rm rec}};
\]
between distinct classes, $\|\Phi_{ij}\|_{2\to2}\le\rho_{\rm rec}$.

Let $A_0$ be the algebra generated by the $W_{ij}$, with unit
$q=\sum_iq_i$, taking $q_i=0$ on discarded indices. Define
\[
 A=A_0\oplus(1-q)M_d(1-q).
\]
Then $q\le p$, $\tr_d(1-q)\le64\rho_{\rm rec}^2+3\rho_{\rm rec}\le4\rho_{\rm rec}$, and
$e_i=q_i+(e_i-q_i)\in A$ for every $i$.
Let $E_{A_0}:qM_dq\to A_0$ be the conditional expectation preserving
the restricted trace $\tr_d$. Thus
\[
 E_A(x)=E_{A_0}(qxq)+(1-q)x(1-q).
\]
On each retained rectangle, the map $x\mapsto E_{A_0}(qxq)$ is
$P_{W_{ij}}$ within a class and zero between classes.
Orthogonality of the rectangles therefore gives
\[
 \|F_0(pxp)-E_{A_0}(qxq)\|_2
 \le41\sqrt{\rho_{\rm rec}}\,\|pxp\|_2\qquad(x\in M_d),
\]
with no dependence on the number of blocks.
For a contraction $x$, use $L^2$ contractivity and
$\|x-pxp\|_2\le\sqrt{2\tr_d(1-p)}$ to obtain
\[
\begin{aligned}
 \|(F_0-E_A)(x)\|_2
 &\le\|F_0(x-pxp)\|_2
       +\|F_0(pxp)-E_{A_0}(qxq)\|_2\\
 &\quad+\|(1-q)x(1-q)\|_2\\
 &\le\sqrt{2\tr_d(1-p)}+41\sqrt{\rho_{\rm rec}}+\sqrt{\tr_d(1-q)}\\
 &\le\sqrt{128}\rho_{\rm rec}+43\sqrt{\rho_{\rm rec}}\le55\sqrt{\rho_{\rm rec}}\le\zeta.
\end{aligned}
\]
This proves the required bound in the $\infty\to2$ norm.
\end{proof}

We now combine Theorem~\ref{thm:input} and
Proposition~\ref{thm:reconstruction} to obtain the main result of
this section.
Fix integers $d,m\ge1$ and rational numbers
$0<\lambda<1$, $C\ge0$, and $0<\zeta_{\rm alg}<1$.
Set
\[
 \theta_{\rm sp}=\frac{\zeta_{\rm alg}^8}{2^{96}},
\]
and let $\eps_{\rm in}$ be the tolerance defined by
\eqref{eq:inputparameters1}-\eqref{eq:inputparameters2}
for these inputs. It depends only on $m,\lambda,C,\zeta_{\rm alg}$.
For $v_1,\ldots,v_m\in\mathcal U(d)$, use $T,\cE$ from
\eqref{eq:energy} and put $\mathcal L=I-T$.

\begin{theorem}[Approximate commutants]
\label{thm:interface}\label{cor:interface}
Suppose
\[
 \ip{(\mathcal L^2-\lambda\mathcal L)y}{y}\ge-C\eps
 \quad(y\in M_d,\ \|y\|\le1),\qquad
 0\le\eps\le\eps_{\rm in}.
\]
Then there is a unital $*$-subalgebra $A\subseteq M_d$ such that,
for every contraction $x\in M_d$ and every contraction $a\in A$,
\begin{align}
 \|x-E_Ax\|_2
 &\le\sqrt{2/\lambda}\,\sqrt{\cE(x)}+3\zeta_{\rm alg},
 \label{eq:algebrainterface}\\
 \max_{1\le j\le m}\|[v_j,a]\|_2
 &\le6\sqrt m\,\zeta_{\rm alg}.\notag
\end{align}
\end{theorem}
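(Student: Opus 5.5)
The plan is to combine Theorem~\ref{thm:input} with Proposition~\ref{thm:reconstruction}, with $\theta_{\rm sp}=\zeta_{\rm alg}^8/2^{96}$ and $\zeta=\zeta_{\rm alg}$. The hypothesis is exactly \eqref{eq:residualhyp}, so Theorem~\ref{thm:input} applies and yields three objects: the partition $(e_i)$, the corrected unitaries $V_j^\pm$, and the map $F_0$. This $F_0$ satisfies $\|F_0^2-F_0\|_{\infty\to2}\le\theta_{\rm sp}$ and the corner estimate with $\sigma\le\theta_{\rm sp}$. These are precisely the hypotheses $\delta_0,\sigma\le\zeta^8/2^{96}$ of Proposition~\ref{thm:reconstruction}. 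It therefore produces a unital $*$-subalgebra $A\ni e_i$ with $\|F_0-E_A\|_{\infty\to2}\le\zeta_{\rm alg}$.

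For \eqref{eq:algebrainterface}, fix a contraction $x$ and write
\[
 x-E_Ax=(x-T^Lx)+(T^L-F_0)x+(F_0-E_A)x.
\]
By \eqref{eq:inputfilter}, \eqref{eq:inputchannel} and the reconstruction, the three terms are bounded by $\kappa^{-1/2}\sqrt{\cE(x)}+\theta_{\rm sp}$, $\theta_{\rm sp}$ and $\zeta_{\rm alg}$. Since $\kappa=\lambda/2$, we have $\kappa^{-1/2}=\sqrt{2/\lambda}$. Also $2\theta_{\rm sp}\le2\zeta_{\rm alg}$, so the total is at most $\sqrt{2/\lambda}\sqrt{\cE(x)}+3\zeta_{\rm alg}$.

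For the commutator bound, take a contraction $a\in A$. The idea is that $a=E_A(a)$ is close to $F_0(a)$, which is close to $T^L(a)$, and $T^L(a)$ has small energy. Precisely, $\|a-F_0a\|_2\le\zeta_{\rm alg}$ and $\|F_0a-T^La\|_2\le\theta_{\rm sp}$. Also $\sqrt{\cE(T^La)}\le\theta_{\rm sp}$, because $T^La$ is the image of a contraction.

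The energy seminorm satisfies $\sqrt{\cE(y)}\le\|y\|_2$. It is subadditive and controls each commutator through $\|[v_j,y]\|_2\le\sqrt{4m\,\cE(y)}=2\sqrt m\sqrt{\cE(y)}$. Hence
\[
 \|[v_j,a]\|_2\le2\sqrt m\bigl(\sqrt{\cE(a-T^La)}+\sqrt{\cE(T^La)}\bigr)\le2\sqrt m(\zeta_{\rm alg}+2\theta_{\rm sp}).
\]
This is well within $6\sqrt m\,\zeta_{\rm alg}$. A cruder route estimates $\|[v_j,a-T^La]\|_2\le2\|a-T^La\|_2$ directly, and it also fits the constant.

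The only delicate point is checking that the parameter coupling is consistent. First, $\eps_{\rm in}$ must be computed with exactly this $\theta_{\rm sp}$, so that Theorem~\ref{thm:input}'s bounds feed Proposition~\ref{thm:reconstruction}'s thresholds. Second, the constraints $\theta_{\rm sp}<1$ and $\zeta<1$ must hold, which they do since $\zeta_{\rm alg}<1$. The rest is bookkeeping, and no step requires new analysis.
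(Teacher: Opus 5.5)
Your proposal is correct and follows the paper's proof essentially step for step. You combine Theorem~\ref{thm:input} with Proposition~\ref{thm:reconstruction} at $\zeta=\zeta_{\rm alg}$ and obtain the distance estimate by the triangle inequality through $T^L$ and $F_0$. For the commutators you bound $\sqrt{\cE(a)}\le\sqrt{\cE(T^La)}+\|a-T^La\|_2\le\zeta_{\rm alg}+2\theta_{\rm sp}$ and then use $\|[v_j,a]\|_2\le2\sqrt m\,\sqrt{\cE(a)}$, exactly as the paper does.
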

\begin{proof}
Apply Theorem~\ref{thm:input}, followed by
Proposition~\ref{thm:reconstruction} with $\zeta=\zeta_{\rm alg}$.
The choice of $\theta_{\rm sp}$ ensures that the proposition applies,
and the two approximation bounds give
\[
 \|E_A-T^L\|_{\infty\to2}
 \le\|E_A-F_0\|_{\infty\to2}+\|F_0-T^L\|_{\infty\to2}
 \le\zeta_{\rm alg}+\theta_{\rm sp}.
\]
Together with \eqref{eq:inputfilter}, this yields, for every
contraction $x\in M_d$,
\[
 \|x-E_Ax\|_2
 \le\sqrt{2/\lambda}\,\sqrt{\cE(x)}
       +\zeta_{\rm alg}+2\theta_{\rm sp}.
\]
Since $\theta_{\rm sp}\le\zeta_{\rm alg}$, the first estimate follows.

For a contraction $a\in A$, use $E_Aa=a$ and the fact that
$\sqrt{\cE}$ is a seminorm bounded by $\|\cdot\|_2$:
\[
 \sqrt{\cE(a)}
 \le\sqrt{\cE(T^La)}+\|a-T^La\|_2
 \le\zeta_{\rm alg}+2\theta_{\rm sp}\le3\zeta_{\rm alg}.
\]
Finally, \eqref{eq:energy} gives
$\|[v_j,a]\|_2\le2\sqrt m\,\sqrt{\cE(a)}$ for each $j$.
\end{proof}

\section{Quantitative normalization of approximate commutants}
\label{sec:reconstruction}

An exact inclusion between a matrix algebra and one of its unitary
conjugates is an equality, since the two algebras have the same
dimension. We prove a quantitative version under an additional
hypothesis: a unital $*$-subalgebra $B\subseteq M_d$ approximates
the common commutant of the conjugating unitaries, in the sense provided by
Theorem~\ref{thm:interface}. The main result,
Theorem~\ref{thm:normalization}, reverses the near inclusions supplied
by the compression relations, with bounds independent of the matrix
dimension and the number of unitaries. Section~\ref{sec:gate} uses
this reversal in the reflection estimate.

For unital $*$-subalgebras $E,A\subseteq M_d$ and $\nu\ge0$, write
\[
 E\subseteq_\nu A
 \quad\Longleftrightarrow\quad
 \sup_{x\in E,\ \|x\|\le1}\|x-E_Ax\|_2\le\nu.
\]
This near inclusion measures approximation uniformly on the unit
ball; the approximants $E_Ax$ are contractions because $E_A$ is
contractive. See also \cite{Christensen}, \cite[(2.1)]{Thom}, and
\cite[Remark~6.6]{PSS}.
We write $Z(A)=A\cap A'$ for the center of $A$.

We first prove an estimate for reconstructing a partition.
Finite averaging then supplies the required control on the center.
Thom's relative correction and
conditional median argument
\cite[Proposition~3.1, Lemma~4.1, and Theorem~4.2]{Thom}
then compare the matrix blocks and reverse the near inclusions.

\subsection{Central partitions and averaging}

The center $Z(A)$ is the algebra of functions on its minimal
projections, weighted by their traces. Its unital subalgebras
correspond to partitions of these atoms.
The next lemma reconstructs such a partition from an almost
idempotent Markov operator. The following subsection applies it
to $E_{Z(A)}E_B|_{Z(A)}$.

For a finite probability space $(\Omega,\mu)$ with positive atom
weights $\mu(x)$, a Markov operator has the form
\[
 (Pf)(x)=\sum_{y\in\Omega}P(x,y)f(y),\qquad
 P(x,y)\ge0,\qquad \sum_{y\in\Omega}P(x,y)=1.
\]
For a partition $\mathcal C$ of $\Omega$, its conditional expectation
averages on each part:
\[
 (E_{\mathcal C}f)(x)
 =\frac1{\mu(C)}\sum_{y\in C}\mu(y)f(y)
 \qquad(x\in C\in\mathcal C).
\]
Its range is the algebra of functions constant on each part.
All function norms below use the measure $\mu$.

\begin{lemma}\label{thm:commrecon}
Let $P$ be a Markov operator on $(\Omega,\mu)$, selfadjoint on
$L^2(\mu)$ with $0\le P\le I$. If $0<\zeta<1$ and
\[
 \delta_0:=\|P^2-P\|_{\infty\to2}\le\zeta^{12}/2^{42},
\]
then there is a partition $\mathcal C$ such that
\[
 \|P-E_{\mathcal C}\|_{\infty\to2}\le\zeta.
\]
The bound is independent of the number and weights of the atoms.
\end{lemma}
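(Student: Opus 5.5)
We will extract the partition from the spectral projection of $P$ near $1$ and then compare that projection with $E_{\mathcal C}$ for a suitable partition. Put $\rho=\zeta^2/2^{c}$ for a small absolute power $c$ fixed at the end. First we pass from the $\infty\to2$ bound to a spectral statement. Since $P$ is selfadjoint with $0\le P\le I$, write $\nu_f$ for the spectral measure of a bounded function $f$. Then $\|(P^2-P)f\|_2^2=\int s^2(1-s)^2\,d\nu_f\le\delta_0^2\|f\|_\infty^2$. Applying this to indicator functions $f=1_S$, we get the following: the mass of $1_S$ in the spectral window $[\rho,1-\rho]$ is at most $\delta_0^2/(\rho^2(1-\rho)^2)$. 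This holds uniformly in $S$, and in particular for singletons and for all sets at once. Let $\Pi$ be the spectral projection of $P$ for $[1-\rho,1]$.

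Second, we build the partition from the kernel of $P$. The high part of $P$ is close to $\Pi$, so $P(x,\cdot)$ is nearly a probability density that is nearly idempotent. We define $x\approx y$ when $P^2(x,y)/\mu(y)$ is large compared with the typical value. More robustly, we use $Q=P^{k}$ with $k\approx\rho^{-1}\log(1/\rho)$: we have $\|Q-\Pi\|_{\infty\to2}\le(1-\rho)^{k}+\text{window mass}$, and $Q$ is again a symmetric Markov kernel. Set $g_x=Q(x,\cdot)/\mu(\cdot)$, a probability density with respect to $\mu$. Near-idempotence says $Qg_x\approx g_x$ in an averaged sense. We call $x$ \emph{good} if the $L^1$-mass of $Qg_x-g_x$ is small. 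By Markov's inequality applied to $\sum_x\mu(x)\|(Q^2-Q)(x,\cdot)\|$, which is controlled via $\|Q^2-Q\|_{\infty\to1}\le\|Q^2-Q\|_{\infty\to2}$ tested on sign vectors, the bad atoms have total weight $O(\rho)$. For good $x$, a conditional median/threshold argument in the spirit of Thom \cite[Lemma~4.1]{Thom} produces a level set $C_x=\{y:g_x(y)\ge t\}$ with $g_x\approx\mu(C_x)^{-1}1_{C_x}$ in $L^1$. The threshold $t$ is chosen by averaging over levels, which is a coarea argument. Next we show that $C_x$ and $C_{x'}$ are either nearly equal or nearly disjoint, by the same dichotomy as in the high-eigenvector argument of Proposition~\ref{thm:reconstruction}. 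We then select disjoint representatives greedily and put the remaining atoms into singleton parts.

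Third, we compare $P$ with $E_{\mathcal C}$. For a contraction $f$, we split $f=f1_{\text{good}}+f1_{\text{bad}}$. The bad part contributes at most $2\sqrt{\mu(\text{bad})}$. On the good part, $Qf(x)=\int g_xf\,d\mu$ is within the $L^1$ error of the average of $f$ over $C_x$, which is $(E_{\mathcal C}f)(x)$. Because everything is pointwise and weighted by $\mu$, no dependence on the number of atoms enters. Combining $\|P-Q\|_{\infty\to2}$, which is bounded by the window mass plus $(1-\rho)$-decay terms, with $\|Q-E_{\mathcal C}\|_{\infty\to2}$ gives a bound of order $\rho^{1/4}$ or so. The exponent $12$ and the constant $2^{42}$ in the hypothesis should absorb the square roots lost at each of the roughly three stages: spectral window, Markov inequality, and level-set selection.

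The main obstacle is the second step: making the level sets $C_x$ consistent with one another, so that they form a genuine partition, with errors measured in $\mu$-weighted $L^1$ and independent of the atom count. Here I would first try the dichotomy via the Hilbert–Schmidt product estimate as in Proposition~\ref{thm:reconstruction}, treating $1_{C_x}/\sqrt{\mu(C_x)}$ as approximate high eigenvectors.
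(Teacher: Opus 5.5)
\textbf{There is a genuine gap in your second step, and that step is the heart of the lemma.}

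\emph{The bad-atom bound does not follow from what you cite.} The quantity $\sum_x\mu(x)\sum_y|(Q^2-Q)(x,y)|$ is an entrywise norm of the kernel. The $\ell^1$ norm of row $x$ needs the sign vector $\sgn\,(Q^2-Q)(x,\cdot)$, which changes with $x$. The $\infty\to1$ norm allows only one bounded test function for all rows at once, so it can be smaller by a factor of order $\sqrt N$ on $N$ atoms. This happens in the present setting. Take $N$ atoms of weight $1/N$. Let $H$ be a symmetric kernel with entries $\pm1/N$, zero row sums, and $L^2$ operator norm $\gamma=O(N^{-1/2})$; for instance, $N^{-1}$ times the Kronecker product of a Sylvester--Hadamard matrix with the $2\times2$ matrix with rows $(1,-1),(-1,1)$. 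Put $P=(1-\gamma)(E_{\{\Omega\}}+\tfrac12H)+\gamma I$. Then:
\begin{itemize}
\item $P$ is a selfadjoint Markov operator with $0\le P\le I$.
\item $\|P^2-P\|_{\infty\to2}\le\|P^2-P\|_{2\to2}=O(N^{-1/2})$, and the lemma's conclusion holds with $\mathcal C=\{\Omega\}$.
\item Nevertheless, every row of $P^2-P$ has $\ell^1$ norm $\tfrac12-O(N^{-1/2})$.
\item Every row density $g_x$ stays at $L^1$-distance roughly $1/4$ or more from every normalized indicator.
\end{itemize}
Passing to $Q=P^k$ happens to smooth this example, but your argument gives no reason why it should in general. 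The row densities of small atoms are unbounded functions, while the hypothesis controls only bounded test functions. You also do not explain how $L^1$ near-invariance of a single density $g_x$ yields a level set $C_x$ with $g_x\approx1_{C_x}/\mu(C_x)$. A coarea argument needs an energy bound, and none is available for these densities.

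\emph{The consistency step is left open.} You correctly flag it as the main obstacle. The dichotomy of Proposition~\ref{thm:reconstruction} does not transfer. There the blocks $e_i$ are given and $F_0$ is block diagonal, so one only decides which given blocks are linked by a high eigenvector. Here, producing the blocks is the whole content of the lemma.

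\emph{A smaller point on the spectral step.} With $k\approx\rho^{-1}\log(1/\rho)$, the function $s^k$ takes every value in $[(1-\rho)^k,1]$ on $[1-\rho,1]$. So $\|Q-\Pi\|_{\infty\to2}$ is not bounded by $(1-\rho)^k$ plus the window mass. You would need a second, much narrower window with $\delta_0\ll b\ll1/k$. The powers are unnecessary anyway, since $P$ itself is $\infty\to2$-close to its spectral projection.

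\textbf{The missing idea} is to work with sets rather than rows, through a Cheeger-type clustering for the conductance $c(x,y)=\mu(x)P(x,y)$. The paper tests the hypothesis only on indicators.
\begin{enumerate}
\item It removes a maximal disjoint family of sets with $\|(P-P^2)1_S\|_2>r_{\rm cut}\sqrt{\mu(S)}$. Their union has small measure by averaging over independent signs. This is legitimate because $\sum_S\pm1_S$ is a single contraction; it is the correct form of your sign-vector idea.
\item Off this set, any $S$ with boundary $b(S)<\mu(S)/36$ satisfies $\mathcal D(P1_S)\le r_{\rm cut}\mu(S)$. A coarea choice of a level set of $P1_S$ then gives a set $U$ with $b(U)\le\rho_{\rm cut}\mu(U)$.
\item Greedily removing such sets, always starting from a least-mass $S$, yields a partition with total boundary below $6\rho_{\rm cut}$. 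Its parts expand on subsets of at most half their mass.
\item A median argument on each part, which is an $L^1$ Cheeger inequality, bounds $\|(I-E_{\mathcal C})P\|_{\infty\to2}$. The total boundary bounds $\|(I-P)E_{\mathcal C}\|_{\infty\to2}$.
\item The identity $(P-E_{\mathcal C})^2=(P^2-P)+(I-E_{\mathcal C})P+(I-P)E_{\mathcal C}$, together with selfadjointness, gives $\|P-E_{\mathcal C}\|_{\infty\to2}\le\zeta$.
\end{enumerate}
Consistency is automatic in this construction, because the parts are removed one at a time and no row densities ever appear.
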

\begin{proof}
Put
\[
 \rho_{\rm cut}=\zeta^4/2^{12},\qquad
 r_{\rm cut}=\rho_{\rm cut}^2/64.
\]
Then $\delta_0\le\rho_{\rm cut}^3/64$.
Selfadjointness gives symmetric weights
$c(x,y)=\mu(x)P(x,y)$. Define the boundary of a set $S$ and the
energy of a function $f$ by
\[
 b(S)=\sum_{x\in S,\ y\notin S}c(x,y),\qquad
 \mathcal D(f)=\frac12\sum_{x,y\in\Omega}c(x,y)|f(x)-f(y)|^2
             =\langle(I-P)f,f\rangle.
\]
Write $1_S$ for the indicator of $S$.
Choose a maximal disjoint family of sets satisfying
\[
 \|(P-P^2)1_S\|_2>r_{\rm cut}\sqrt{\mu(S)},
\]
and let $\Omega_{\rm bad}$ be their union. Averaging the squared norm of
$(P-P^2)\sum_S\pm1_S$ over independent signs gives
\[
 \mu(\Omega_{\rm bad})\le\delta_0^2/r_{\rm cut}^2\le\rho_{\rm cut}^2.
\]
By maximality, no subset of $\Omega\setminus \Omega_{\rm bad}$ satisfies the
strict inequality above.

For $S\subseteq\Omega\setminus \Omega_{\rm bad}$ with $b(S)<\mu(S)/36$, put
$f=P1_S$. Since $0\le P\le I$,
\[
 \|f-1_S\|_2^2\le b(S),\qquad
 \mathcal D(f)\le r_{\rm cut}\mu(S).
\]
For $U_t=\{x:f(x)>t\}$ and $1/3\le t\le2/3$, writing
$\triangle$ for symmetric difference, we have
\[
 \mu(U_t\mathbin{\triangle}S)
 \le9\|f-1_S\|_2^2\le9b(S)<\mu(S)/4.
\]
Also $0\le f\le1$ and $\int f\,d\mu=\mu(S)$, so
$\mu\{f\ge1/3\}\le3\mu(S)$.
Coarea and Cauchy--Schwarz on edges meeting this set give
\[
 \int_{1/3}^{2/3}b(U_t)\,dt
 \le\sqrt{3\mu(S)\mathcal D(f)}.
\]
Thus some $U=U_t$ satisfies
\[
 b(U)\le3\sqrt3\,\sqrt{r_{\rm cut}}\mu(S)
 \le6\sqrt{r_{\rm cut}}\mu(S)
 \le\rho_{\rm cut}\mu(U).
\]

Starting with $R=\Omega\setminus \Omega_{\rm bad}$, choose a nonempty $S\subseteq R$
of least mass satisfying $b(S)<\mu(S)/36$. Form $U$ as above, remove
$C=U\cap R$, and repeat. If no such $S$ exists, keep the nonempty
remainder as the last part. Since $S\subseteq R$,
$C\mathbin{\triangle}S\subseteq U\mathbin{\triangle}S$, and hence
\[
 \tfrac34\mu(S)\le\mu(C)\le\tfrac54\mu(S),\qquad
 \mu(U)\le\tfrac53\mu(C).
\]
Every nonempty $Q\subseteq C$ with $\mu(Q)\le\mu(C)/2$ has
$\mu(Q)\le5\mu(S)/8<\mu(S)$, so minimality gives
$b(Q)\ge\mu(Q)/36$. The stopping condition gives the same bound
on the final remainder. Add $\Omega_{\rm bad}$ as an exceptional part if it is
nonempty, and denote the resulting partition by $\mathcal C$.
Counting each edge when its endpoints are first separated and
using $b(\Omega_{\rm bad})\le\mu(\Omega_{\rm bad})$ gives
\begin{equation}\label{eq:totalboundary}
 \sum_{C\in\mathcal C}b(C)
 \le2b(\Omega_{\rm bad})+2\sum_Ub(U)
 \le2\rho_{\rm cut}^2+\tfrac{10}{3}\rho_{\rm cut}
 <6\rho_{\rm cut}.
\end{equation}

For a real function $f$, let $g$ equal a median of $f$ on each
ordinary part and zero on $\Omega_{\rm bad}$, with $\|g\|_\infty\le\|f\|_\infty$.
Thus, in each ordinary part, both $\{f>g\}$ and $\{f<g\}$ have
at most half its mass. Integrating the preceding expansion bound
over the positive and negative level sets of $f-g$ gives
\[
\begin{aligned}
 \|f-g\|_1
 &\le\|f\|_\infty\rho_{\rm cut}
 +36\left(\sqrt{\mathcal D(f)/2}
                  +12\|f\|_\infty\rho_{\rm cut}\right),\\
 \|f-g\|_2^2&\le2\|f\|_\infty\|f-g\|_1.
\end{aligned}
\]
Here Cauchy--Schwarz bounds the contribution from edges within parts
by $\sqrt{\mathcal D(f)/2}$; edges between parts contribute at most
$2\|f\|_\infty\sum_Cb(C)$, and the exceptional part contributes
at most $\|f\|_\infty\mu(\Omega_{\rm bad})$.
For a real contraction $x$, take $f=Px$. Its energy is at most
$\delta_0$, so its squared distance to the partition algebra is at most
\[
 866\rho_{\rm cut}+72\sqrt{\delta_0/2}\le875\rho_{\rm cut}.
\]
Applying this to real and imaginary parts gives
\[
 \|(I-E_{\mathcal C})P\|_{\infty\to2}
 \le44\sqrt{\rho_{\rm cut}}.
\]
Conversely, \eqref{eq:totalboundary} and $(I-P)^2\le I-P$ give
\[
 \|(I-P)E_{\mathcal C}\|_{\infty\to2}
 \le4\sqrt{\rho_{\rm cut}}.
\]
Finally, pair the identity
\[
 (P-E_{\mathcal C})^2
 =(P^2-P)+(I-E_{\mathcal C})P+(I-P)E_{\mathcal C}
\]
with an arbitrary contraction. Since $P-E_{\mathcal C}$ is
selfadjoint, this yields
\[
 \|P-E_{\mathcal C}\|_{\infty\to2}^2
 \le\delta_0+48\sqrt{\rho_{\rm cut}}
 \le49\zeta^2/64<\zeta^2.\qedhere
\]
\end{proof}

\subsection{Finite mixing and invariance of the center}

We first approximate $E_B$ by finite powers of an averaging map,
then use this approximation to control its action on $Z(A)$.

\begin{lemma}\label{lem:correctedmixing}
Let $B\subseteq M_d$ be a unital $*$-subalgebra, put $P=E_B$,
and let $V_1,\ldots,V_m\in\mathcal U(d)$, where $m\ge1$ is an integer.
Define
\[
 \cE_V(x)=\frac1{4m}\sum_{j=1}^m\|[V_j,x]\|_2^2.
\]
Suppose $0<k\le1$, $\varepsilon_{\rm alg}\ge0$, and
\[
 \|x-Px\|_2\le k^{-1/2}\sqrt{\cE_V(x)}+\varepsilon_{\rm alg}
 \quad(\|x\|\le1),\qquad
 \max_j\|[V_j,b]\|_2\le\varepsilon_{\rm alg}
 \quad(b\in B_1).
\]
Let $S$ be the averaging map below, and let $S^*$ denote its
adjoint on $L^2(M_d,\tr_d)$:
\[
 S=\frac12\left(I+\frac1m\sum_j\Ad(V_j)\right),\qquad
 S^*=\frac12\left(I+\frac1m\sum_j\Ad(V_j^*)\right).
\]
For each integer $n\ge1$, set
\[
 e_n=(1-k/2)^{n/2}
       +\sqrt2\bigl(n+1+\sqrt{2/k}\bigr)\varepsilon_{\rm alg}.
\]
Then
\begin{equation}\label{eq:correctedmixing}
 \max\{\|S^n-P\|_{\infty\to2},
         \|(S^*)^n-P\|_{\infty\to2}\}\le e_n.
\end{equation}
\end{lemma}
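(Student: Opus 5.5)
The plan is to split a contraction $x$ as $x=Px+y$ with $y=x-Px$, and treat the two pieces separately. The piece $Px$ is almost fixed by $S$. The piece $y$ decays geometrically under $S$ because of an energy-dissipation identity combined with the first hypothesis. The statement for $(S^*)^n$ follows by the same argument with $V_j$ replaced by $V_j^*$. This is legitimate because $\|[V_j^*,x]\|_2=\|[V_j,x]\|_2$, so $\cE_V$ and both hypotheses are unchanged. I therefore only treat $S^n$.

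\emph{Step 1: the $B$-part.} For a contraction $b\in B_1$ we have $\Ad(V_j)b-b=[V_j,b]V_j^*$, hence $\|Sb-b\|_2\le\varepsilon_{\rm alg}/2$. Since $S$ is a contraction for $\|\cdot\|_2$, telescoping gives
\[
 \|S^nPx-Px\|_2\le\sum_{k<n}\|S^k(SPx-Px)\|_2\le n\varepsilon_{\rm alg}/2 .
\]
The same bound for $S^*$ gives an almost-invariance of $P$ under $S$. Indeed, for $z\in M_d$ and $b\in B_1$ we have $\langle PSz,b\rangle=\langle z,S^*b\rangle$, which differs from $\langle z,b\rangle$ by at most $\|z\|_2\varepsilon_{\rm alg}/2$. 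Taking the supremum over $b$ in the $2$-norm unit ball of $B$ shows that $\|P(S^kz)-Pz\|_2$ is at most of order $k\varepsilon_{\rm alg}\|z\|_2$. This needs some care, since elements of the $2$-ball of $B$ are not contractions. I would pass to $\|b\|\le1$ via the dual description of $\|\cdot\|_{2}$ against contractions, or use the homogeneity of the commutator bound in $b$. Applied to $z=y$, where $Py=0$, this gives $\|P S^k y\|_2\lesssim k\varepsilon_{\rm alg}$.

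\emph{Step 2: the dissipation identity.} Write $A=\frac1m\sum_j\Ad(V_j)$, so that $S=\tfrac12(I+A)$ and $S^*S=\tfrac14(I+A+A^*+A^*A)$. Using $\|Az\|_2\le\|z\|_2$ together with
\[
 \|z\|_2^2-\operatorname{Re}\langle\Ad(V_j)z,z\rangle=\tfrac12\|[V_j,z]\|_2^2,
\]
I expect to obtain
\[
 \|z\|_2^2-\|Sz\|_2^2\ge\tfrac12\bigl(\|z\|_2^2-\operatorname{Re}\langle Az,z\rangle\bigr)=\cE_V(z).
\]
Now set $z_k=S^ky$. This satisfies $\|z_k\|\le2$, so the first hypothesis applies to $z_k/2$. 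Combined with Step 1 it gives
\[
 \|z_k\|_2\le\|z_k-Pz_k\|_2+\|Pz_k\|_2\le k^{-1/2}\sqrt{\cE_V(z_k)}+O\bigl((k+1)\varepsilon_{\rm alg}\bigr).
\]
Hence, whenever $\|z_k\|_2$ exceeds the error term $\eta_k$, we get $\cE_V(z_k)\ge k(\|z_k\|_2-\eta_k)^2$, and so $\|z_{k+1}\|_2^2\le\|z_k\|_2^2-k(\|z_k\|_2-\eta_k)^2$.

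\emph{Step 3: closing the recursion.} This is the main obstacle: turning the perturbed recursion into the clean bound $(1-k/2)^{n/2}+\sqrt2(n+1+\sqrt{2/k})\varepsilon_{\rm alg}$. Here I would avoid letting the errors compound. The idea is to compare $z_k$ with its projection onto $B^\perp$, namely $w_k=z_k-Pz_k$, for which the hypothesis gives $\|w_k\|_2\le k^{-1/2}\sqrt{\cE_V(z_k)}+\varepsilon_{\rm alg}$ with no accumulated term. I would then use the elementary inequality
\[
 (a-\varepsilon)_+^2\ge a^2/2-\varepsilon^2 ,
\]
which is where the factors $1-k/2$ and $\sqrt{2/k}$ should originate. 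This yields $\|w_{k+1}\|_2^2\le(1-k/2)\|w_k\|_2^2+O(\varepsilon_{\rm alg}^2)$ up to the linear drift of $Pz_k$. Iterating, summing the geometric series $\sum(1-k/2)^j\le 2/k$ for the additive terms, and adding the Step 1 drift (the $n+1$ term) together with the $B$-part error should give $\|S^nx-Px\|_2\le e_n$. If the constants do not come out exactly, I would instead run the recursion directly on squared norms and accept the constant $\sqrt2$ from $\sqrt{a^2+b^2}\le a+b$ in the final step.
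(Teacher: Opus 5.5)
\textbf{The gap is in the second half of Step~1, and Steps~2 and~3 depend on it.} You need $\|PS^ky\|_2=O(k\varepsilon_{\rm alg})$, and neither of your suggested justifications gives this.

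Pairing against $b\in B_1$ controls the trace norm $\|P(S-I)z\|_1$, not the $2$-norm. Homogeneity only gives $\|[V_j,b]\|_2\le\varepsilon_{\rm alg}\|b\|$, with the operator norm on the right. With $w=P(S-I)z\in B$, the best this route yields is $\|w\|_2^2=\langle z,(S^*-I)w\rangle\le\varepsilon_{\rm alg}\|z\|\,\|z\|_2$. That is a bound of order $\sqrt{\varepsilon_{\rm alg}}$, which is useless for an $e_n$ linear in $\varepsilon_{\rm alg}$.

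The inequality you actually state, with $\|z\|_2$ on the right, is false. Take $B=\C p\oplus\C(1-p)$ with $\tr_d(p)=t$ tiny, and let every $V_j$ move $p$ to something orthogonal to it. The hypotheses do not prevent this: commutators with $B_1$ are at most $2\sqrt{2t}$, and the first hypothesis holds whenever the $V_j$ have a spectral gap on traceless matrices. Then $z=(S^*-I)p$ satisfies $\|P(S-I)z\|_2\ge\frac12\|z\|_2$.

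The operator-norm version $\|P(S^k-I)z\|_2\le\sqrt2\,k\,\varepsilon_{\rm alg}\|z\|$ is true. Proving it needs the idea you are missing, which is the key step of the paper's proof. Haar averaging over $\mathcal U(B)$ gives $\|V_j-E_{B'}(V_j)\|_2\le\varepsilon_{\rm alg}$. Then \eqref{eq:unitaryrounding} gives unitaries $U_j\in\mathcal U(B')$ with $\|U_j-V_j\|_2\le\sqrt2\varepsilon_{\rm alg}$. The averaging map $\widetilde S$ built from the $U_j$ satisfies $P\widetilde S=\widetilde SP=P$ exactly.

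Once you have $\widetilde S$, there is no need to split $x=Px+y$ or track any drift. Proceed as follows.
\begin{enumerate}
\item Orthogonality gives $\|(I-P)\widetilde Sy\|_2^2=\|(I-P)y\|_2^2-(\|y\|_2^2-\|\widetilde Sy\|_2^2)$ exactly.
\item Your dissipation inequality, applied to $\widetilde S$, bounds the last bracket below by $\cE_U(y)$.
\item On contractions, $\sqrt{\cE_U}$ and $\sqrt{\cE_V}$ differ by at most $\sqrt2\varepsilon_{\rm alg}$. So the first hypothesis holds for $\cE_U$ with additive error $b_{\rm mix}=(1+\sqrt{2/k})\varepsilon_{\rm alg}$.
\item Apply this directly to the contractions $\widetilde S^jx$. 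This avoids the factor $2$ you lose by applying the hypothesis to $S^ky/2$.
\item The inequality $(a-b)_+^2\ge a^2/2-b^2$ gives the contraction factor $1-k/2$ with additive term $kb_{\rm mix}^2$.
\item Summing the geometric series gives $\|\widetilde S^n-P\|_{\infty\to2}\le(1-k/2)^{n/2}+\sqrt2 b_{\rm mix}$.
\item Telescoping $\|S-\widetilde S\|_{\infty\to2}\le\sqrt2\varepsilon_{\rm alg}$ contributes exactly $\sqrt2 n\varepsilon_{\rm alg}$. This is where the $n$ in $e_n$ comes from.
\end{enumerate}
These terms add up to exactly $e_n$.

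In your version, even after repairing Step~1, three things cost extra constants: the decomposition $x=Px+y$, the bound $\|y\|\le2$, and the drift terms $\|PSz_k\|_2^2-\|Pz_k\|_2^2$ in the squared-norm recursion. As you concede in Step~3, you have not shown that the stated $e_n$ comes out.
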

\begin{proof}
Haar averaging over $\mathcal U(B)$ gives
$\|V_j-E_{B'}(V_j)\|_2\le\varepsilon_{\rm alg}$.
By \eqref{eq:unitaryrounding}, choose $U_j\in\mathcal U(B')$ with
$\|U_j-V_j\|_2\le\sqrt2\varepsilon_{\rm alg}$.
Let $\cE_U$ be the energy defined above with $U_j$ in place of $V_j$,
and put
\[
 \widetilde S=\frac12\left(I+\frac1m\sum_j\Ad(U_j)\right).
\]
Each $\Ad(U_j)$ and its adjoint fix $B$ pointwise, so
\begin{equation}\label{eq:exactfixed}
 P\widetilde S=\widetilde SP=P,\qquad
 \|S-\widetilde S\|_{\infty\to2}
 \le\sqrt2\varepsilon_{\rm alg}.
\end{equation}
For contractions $x$, the triangle inequality for the tuple of
commutators yields
\[
 \bigl|\sqrt{\cE_U(x)}-\sqrt{\cE_V(x)}\bigr|
 \le\sqrt2\varepsilon_{\rm alg}.
\]
Thus, with $b_{\rm mix}=(1+\sqrt{2/k})\varepsilon_{\rm alg}$,
\begin{equation}\label{eq:correctedgap}
 \|x-Px\|_2\le k^{-1/2}\sqrt{\cE_U(x)}+b_{\rm mix}
 \qquad(\|x\|\le1).
\end{equation}

The average $m^{-1}\sum_j\Ad(U_j)$ is an $L^2$ contraction, hence
\[
 \|y\|_2^2-\|\widetilde Sy\|_2^2\ge\cE_U(y).
\]
For a contraction $y$, \eqref{eq:correctedgap} also gives
$\cE_U(y)\ge(k/2)\|(I-P)y\|_2^2-kb_{\rm mix}^2$.
Using \eqref{eq:exactfixed}, we obtain
\[
 \|(I-P)\widetilde Sy\|_2^2
 \le(1-k/2)\|(I-P)y\|_2^2+kb_{\rm mix}^2.
\]
Since $\widetilde S$ preserves contractions, iteration gives
\[
 \|(I-P)\widetilde S^nx\|_2^2
 \le(1-k/2)^n+2b_{\rm mix}^2\qquad(\|x\|\le1).
\]
Consequently,
\[
 \|\widetilde S^n-P\|_{\infty\to2}
 \le(1-k/2)^{n/2}+\sqrt2b_{\rm mix}.
\]
Both $S$ and $\widetilde S$ are contractive in the operator norm
and the $2$-norm. Telescoping \eqref{eq:exactfixed} therefore gives
\[
 \|S^n-\widetilde S^n\|_{\infty\to2}
 \le n\sqrt2\varepsilon_{\rm alg}.
\]
Adding the last two bounds proves the estimate for $S^n$.
Replacing $V_j,U_j$ by their adjoints preserves both energies
and the correction bounds, proving the estimate for $(S^*)^n$.
\end{proof}

When $\varepsilon_{\rm alg}>0$, the error term
$n\varepsilon_{\rm alg}$ grows with $n$, so we choose a finite
averaging power below.

\begin{lemma}\label{lem:centeraveraging}
Let $A,B\subseteq M_d$ be unital $*$-subalgebras, let $m,n\ge1$
be integers, and let $\eta\ge0$. Suppose
$V_1,\ldots,V_m\in\mathcal U(d)$ satisfy
$V_j^*AV_j\subseteq_\eta A$ for every $j$.
For the averaging map
\[
 S=\frac12\left(I+\frac1m\sum_j\Ad(V_j)\right),
\]
suppose that, for some $e_n\ge0$,
\[
 \max\{\|S^n-E_B\|_{\infty\to2},
         \|(S^*)^n-E_B\|_{\infty\to2}\}\le e_n.
\]
Then
\begin{equation}\label{eq:centerbound}
 \|(I-E_{Z(A)})E_B|_{Z(A)}\|_{\infty\to2}
 \le2e_n+3n\eta.
\end{equation}
\end{lemma}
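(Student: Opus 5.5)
The plan is to show that $y=E_Bz$ is simultaneously close to $A$ and nearly commutes with $A$, for an arbitrary contraction $z\in Z(A)$. Averaging over $\mathcal U(A)$ then produces an element of $Z(A)$ near $y$. Since $E_{Z(A)}y$ is the best $\|\cdot\|_2$-approximant of $y$ in $Z(A)$, it suffices to exhibit some element of $Z(A)$ within $2e_n+3n\eta$ of $y$. The two hypotheses on $S^n$ and $(S^*)^n$ are used for the two halves: $(S^*)^n$ moves $z$ by conjugations $\Ad(V_j^*)$, which nearly preserve $A$, while $S^n$ moves $z$ by conjugations $\Ad(V_j)$, which nearly preserve $A'$.

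\emph{Step 1 (near $A$).} If $x$ is a contraction with $\|x-E_Ax\|_2\le\delta$, then
\[
\|\Ad(V_j^*)x-E_A\Ad(V_j^*)x\|_2\le\|\Ad(V_j^*)(x-E_Ax)\|_2+\|\Ad(V_j^*)(E_Ax)-E_A\Ad(V_j^*)(E_Ax)\|_2\le\delta+\eta .
\]
Here the first term uses that $I-E_A$ is a $2$-norm contraction, and the second uses $V_j^*AV_j\subseteq_\eta A$ applied to the contraction $E_Ax\in A$. The identity summand of $S^*$ does not increase the distance. Since the distance to $A$ is convex, induction gives $\|(S^*)^nz-E_A(S^*)^nz\|_2\le n\eta$. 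Hence $\|y-E_Ay\|_2\le e_n+n\eta$.

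\emph{Step 2 (near $A'$).} In the exact case $[V_jxV_j^*,a]=V_j[x,V_j^*aV_j]V_j^*$, and $V_j^*aV_j\in A$, so $\Ad(V_j)$ maps $A'$ into $A'$. Quantitatively, fix a unitary $u\in\mathcal U(A)$ and write $V_j^*uV_j=b+r$ with $b=E_A(V_j^*uV_j)$ and $\|r\|_2\le\eta$. For a contraction $x$ we get $\|[\Ad(V_j)x,u]\|_2\le\|[x,b]\|_2+2\eta$. The term $\|[x,b]\|_2$ must be controlled for a contraction $b\in A$ rather than a unitary, so I will run the induction with the quantity $\sup_{a\in A_1}\|[x,a]\|_2$. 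This quantity is zero at $x=z$, grows by at most $2\eta$ per conjugation, and is convex. Thus $\sup_{a\in A_1}\|[S^nz,a]\|_2\le2n\eta$, and therefore $\sup_{u\in\mathcal U(A)}\|[y,u]\|_2\le2e_n+2n\eta$.

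\emph{Step 3 (combine).} Put $a_0=E_Ay\in A_1$, so that $\|y-a_0\|_2\le e_n+n\eta$. Let $\Psi(x)=\int_{\mathcal U(A)}uxu^*\,du$ be the Haar average, which is the trace-preserving conditional expectation onto $A'$. Since $a_0\in A$, $\Psi(a_0)$ lies in $A\cap A'=Z(A)$, and
\[
\|a_0-\Psi(a_0)\|_2\le\sup_u\|[u,a_0]\|_2 .
\]
The main obstacle is the constant. Bounding $\|[u,a_0]\|_2\le\|[u,y]\|_2+2\|y-a_0\|_2$ naively yields roughly $4e_n+4n\eta$. To reach $2e_n+3n\eta$ I would instead use $\Psi(y)$ and $E_A$ more carefully. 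The key point is that $\Psi$ commutes with $E_A$, because $\Psi$ averages over unitaries of $A$ and $E_A$ is $A$-bimodular. Hence $E_A\Psi(y)=\Psi(E_Ay)\in Z(A)$, and
\[
\|y-E_A\Psi(y)\|_2\le\|y-\Psi(y)\|_2+\|(I-E_A)\Psi(y)\|_2 .
\]
Because $\Psi(y)-y$ is an average of commutators, the first term is at most $\sup_u\|[u,y]\|_2$, and I would estimate it directly at the level of $S^n z$ via Step 2 instead of through $y$. The second term equals $\|\Psi((I-E_A)y)\|_2\le\|(I-E_A)y\|_2$, which is at most $e_n+n\eta$ by Step 1. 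Replacing $y$ by $S^nz$ in the first term costs only $2e_n$ via $\|S^n-E_B\|_{\infty\to2}\le e_n$, and the total should land at $2e_n+3n\eta$ after tightening the $2\eta$ per step in Step 2 using $\|r\|_2\le\eta$ only once per conjugation. This bookkeeping is the step I expect to require the most care.
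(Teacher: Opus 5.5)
Your Steps~1 and~2 are the paper's backward and forward averaging estimates. Your decomposition $y-E_A\Psi(y)=(I-\Psi)y+\Psi(I-E_A)y$ is the paper's identity $I-E_{Z(A)}=(I-E_A)+E_A(I-E_{A'})$ with the factors in the other order, since $E_A\Psi=\Psi E_A=E_{Z(A)}$. The gap is in the final bookkeeping, which you flag but resolve incorrectly.

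You bound $\|y-\Psi(y)\|_2$ by $\sup_u\|[u,y]\|_2$ and only afterwards replace $y$ by $S^nz$. That replacement costs $2\|y-S^nz\|_2\le2e_n$. Together with $\|(I-E_A)y\|_2\le e_n+n\eta$, you get $3e_n+3n\eta$. Tightening the $\eta$-terms in Step~2 cannot repair this, because it never touches the coefficient of $e_n$: for $\eta=0$ your argument gives $3e_n$, whereas the lemma asserts $2e_n$.

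The missing observation is that $I-\Psi=I-E_{A'}$ is an orthogonal projection on $L^2(M_d,\tr_d)$. You should therefore transfer from $y=E_Bz$ to $S^nz$ before passing to commutators, and apply Step~2 directly at $S^nz$:
\[
\begin{aligned}
\|(I-E_{A'})E_Bz\|_2
&\le\|(I-E_{A'})(E_Bz-S^nz)\|_2+\|(I-E_{A'})S^nz\|_2\\
&\le e_n+\sup_{u\in\mathcal U(A)}\|[u,S^nz]\|_2\le e_n+2n\eta.
\end{aligned}
\]
Adding $\|(I-E_A)E_Bz\|_2\le e_n+n\eta$ from Step~1 gives $2e_n+3n\eta$; this is exactly the paper's proof. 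For the later application in Theorem~\ref{thm:normalization}, the weaker constant $3e_n+3n\eta$ would still lie below $b$. It does not, however, prove the lemma as stated.
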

\begin{proof}
Every word $w$ of length $\ell\le n$ in the $V_j$, without inverses,
satisfies $w^*Aw\subseteq_{\ell\eta}A$.
Indeed, iterate the assumed inclusions, using conditional
expectations to choose contraction approximants at each step.
For $z\in Z(A)_1$ and $a\in A_1$, it follows that
\[
 [wzw^*,a]=w[z,w^*aw]w^*,\qquad
 \|[wzw^*,a]\|_2\le2\ell\eta.
\]
The map $S^n$ is a convex combination of these conjugations,
including the identity. Thus $\|[S^nz,a]\|_2\le2n\eta$.
Haar averaging over $\mathcal U(A)$ gives
\begin{equation}\label{eq:forwardcommutant}
 \|(I-E_{A'})S^nz\|_2\le2n\eta.
\end{equation}
The map $(S^*)^n$ is the corresponding average of $z\mapsto w^*zw$.
The word inclusions therefore give
\begin{equation}\label{eq:backwardalgebra}
 \|(I-E_A)(S^*)^nz\|_2\le n\eta.
\end{equation}
The mixing hypotheses imply
\[
 \|(I-E_{A'})E_Bz\|_2\le e_n+2n\eta,
 \qquad
 \|(I-E_A)E_Bz\|_2\le e_n+n\eta.
\]
Haar averaging and the bimodularity of $E_A$ show that
$E_AE_{A'}=E_{A'}E_A=E_{Z(A)}$.
Apply
\[
 I-E_{Z(A)}=(I-E_A)+E_A(I-E_{A'})
\]
and the $L^2$ contractivity of $E_A$ to the last two bounds.
\end{proof}

Thus forward averaging controls the distance to $A'$, while
backward averaging controls the distance to $A$.
Both follow from the assumed inclusions $V_j^*AV_j\subseteq_\eta A$.
The Haar expectation identities used here also appear in
\cite[proof of Lemma~4.1]{Thom}.

To obtain a central partition, let $A\subseteq M_d$ be a unital
$*$-subalgebra and let $\eta\ge0$. Suppose the hypotheses of
Lemma~\ref{lem:correctedmixing} hold and
$V_j^*AV_j\subseteq_\eta A$ for every $j$.
Choose an integer $n\ge1$ and $0<\zeta<1$ such that, with $e_n$
defined in that lemma,
\[
 2e_n+3n\eta\le b:=\zeta^{12}/2^{42}.
\]
Lemma~\ref{lem:centeraveraging} bounds the distance of
$E_B(Z(A)_1)$ to $Z(A)$ by $b$.
The compression $E_{Z(A)}E_B|_{Z(A)}$ is a Markov operator,
selfadjoint on $L^2(Z(A),\tr_d)$. As a compression of the orthogonal
projection $E_B$, it lies between $0$ and $I$ in the Hilbert-space
operator order. Its difference from its square is
\[
 E_{Z(A)}E_B(I-E_{Z(A)})E_B|_{Z(A)},
\]
whose $\infty\to2$ norm is at most $b$ by \eqref{eq:centerbound}.
Apply Lemma~\ref{thm:commrecon} to $Z(A)$, with the atom weights
given by the normalized traces of its minimal projections.
It gives a unital abelian subalgebra $C\subseteq Z(A)$ satisfying
\begin{equation}\label{eq:centralpartition}
 \|E_B|_{Z(A)}-E_C|_{Z(A)}\|_{\infty\to2}
 \le\zeta+b\le2\zeta.
\end{equation}

\subsection{Normalization of near inclusions}

We first record the finite correction estimates from
\cite[proof of Proposition~3.1]{Thom}, with relaxed constants.
They replace a near inclusion by commuting representations on a
nearby finite-dimensional space, using the unnormalized Hilbert--Schmidt
norm from Section~\ref{sec:prelim}.

\begin{corollary}\label{lem:dilation}
Suppose $E,A\subseteq M_d$ are unital $*$-subalgebras,
$E\subseteq_\nu A$, $0\le\nu\le1/8$, and
$C\subseteq E\cap A$ is a unital $*$-subalgebra.
There are a finite-dimensional Hilbert space $\mathcal K$, an isometry
$V:\C^d\to\mathcal K$, a projection $R\in B(\mathcal K)$ of rank $D$,
and commuting unital $*$-representations
\[
 \pi_E:E\to B(R\mathcal K),\qquad
 \pi_{A'}:A'\to B(R\mathcal K),
\]
where $B(R\mathcal K)$ is the algebra of operators on $R\mathcal K$.
Write $P=VV^*$ and regard operators on $R\mathcal K$ as operators on
$\mathcal K$ by extending them by zero on $(R\mathcal K)^\perp$.
There are partial isometries $W,Z:\C^d\to R\mathcal K$ such that
\begin{align}
 \|R-P\|_{\HS}&\le6\nu\sqrt d,& |D-d|&\le32\nu^2d,\label{eq:dilrank}\\
 \|W-V\|_{\HS}&\le12\nu\sqrt d,& d-\rank W&\le32\nu^2d,\label{eq:dilW}\\
 \|Z-V\|_{\HS}&\le16\nu\sqrt d,& d-\rank Z&\le64\nu^2d,\label{eq:dilZ}
\end{align}
Moreover,
\begin{equation}\label{eq:intertwining}
 Wx=\pi_E(x)W\ (x\in E),\qquad
 Zy=\pi_{A'}(y)Z\ (y\in A'),\qquad
 Zc=\pi_E(c)Z\ (c\in C).
\end{equation}
For contractions, these intertwiners give the transport bounds
\begin{equation}\label{eq:diltransport}
 \begin{aligned}
 \|\pi_E(x)-VxV^*\|_{\HS}&\le32\nu\sqrt d
       &&(x\in E,\ \|x\|\le1),\\
 \|\widehat x-VxV^*\|_{\HS}&\le64\nu\sqrt d
       &&(Zx=\widehat xZ,\ Zx^*=\widehat x^*Z),
 \end{aligned}
\end{equation}
where the second line assumes $x\in M_d$ and
$\widehat x\in B(R\mathcal K)$ are contractions.
It also applies to bounded joint functional calculi preserved by $Z$.
\end{corollary}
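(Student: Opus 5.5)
We follow the proof of Proposition~3.1 in Thom, which builds a commuting pair of representations out of a near inclusion $E\subseteq_\nu A$.

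\emph{Step 1: the dilation.} Let $\mathcal K=\C^d\otimes\C^d$, viewed as $L^2(M_d,\tr)$ up to normalization. Let $V:\C^d\to\mathcal K$ be the standard isometric embedding, so that $A$ acts on the left and $A'$ acts by the commuting right action. Consider the projection $P=VV^*$, the trace-preserving expectation $E_A$, and its unitary-group average over $\mathcal U(E)$. Concretely, average the conjugates $\pi(u)P\pi(u)^*$ over the Haar measure of $\mathcal U(E)$, where $\pi$ is the left action of $E$ on $\C^d$ amplified so that it commutes with a copy of $A'$. This average is a positive contraction $Q$ that commutes with $\pi(E)$. The near inclusion gives $\|Q-P\|_{\HS}\le c\nu\sqrt d$, because $u^*Au\subseteq_\nu A$-type estimates control $\|[u,P]\|_{\HS}$ through $\|x-E_Ax\|_2$.

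\emph{Step 2: the projection $R$.} Take $R=1_{[1/2,1]}(Q)$. The standard estimate $\|1_{[1/2,1]}(Q)-P\|_{\HS}\le2\|Q-P\|_{\HS}$ then gives \eqref{eq:dilrank}, with $|D-d|\le\|R-P\|_{\HS}^2$ since $|\Tr(R-P)|\le\|R-P\|_{\HS}^2$ for projections. Since $R$ commutes with $\pi(E)$ and with the $A'$-action, restricting both actions to $R\mathcal K$ produces the commuting representations $\pi_E$ and $\pi_{A'}$.

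\emph{Step 3: the intertwiners.} Define $W$ as the polar part of $R\,\pi_E$-equivariantized $V$, namely the polar part of the $\mathcal U(E)$-average of $\pi_E(u)RVu^*$. This makes $Wx=\pi_E(x)W$ exact. Define $Z$ analogously, averaging over $\mathcal U(A')$ and also over $\mathcal U(C)$; this is possible because $C\subseteq E\cap A$ commutes with $A'$, and it yields the remaining two relations in \eqref{eq:intertwining}. The bounds \eqref{eq:dilW} and \eqref{eq:dilZ} follow from \eqref{eq:unitaryrounding}-type polar rounding together with the rank-deficit estimate $d-\rank\le\|{\cdot}-V\|_{\HS}^2$. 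The constants $12$ and $16$ come from adding the averaging error to $\|R-P\|_{\HS}$.

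\emph{Step 4: transport bounds.} For $x\in E$ with $\|x\|\le1$, write
\[
\pi_E(x)-VxV^*=\pi_E(x)(WW^*-VV^*)+(W-V)xV^*+Wx(W-V)^*,
\]
which bounds the difference by a constant multiple of $\|W-V\|_{\HS}+\|R-P\|_{\HS}$. The second estimate in \eqref{eq:diltransport} is proved the same way using $Z$, and it extends to joint functional calculi because those calculi are preserved by the intertwining relations.

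The main obstacle is Step 1. There one has to show that averaging over $\mathcal U(E)$ moves $P$ by only $O(\nu\sqrt d)$ in Hilbert--Schmidt norm, while staying compatible with the $A'$ action. I would first try to follow Thom's argument verbatim and simply track the constants.
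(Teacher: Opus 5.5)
There is a genuine gap in Step~1, and Step~4 rests on a false identity.

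\textbf{Step~1.} The paper does not build the dilation itself. It imports $\mathcal K,V,R,\pi_E,\pi_{A'},Z$, the projection and rank estimates, and the intertwining for $Z$ from Thom's relative correction. It then proves only the polar rounding $W=\operatorname{pol}(\mathsf T)$, via $\|\operatorname{pol}(\mathsf T)-V\|_{\HS}\le2\|\mathsf T-V\|_{\HS}\le4\sqrt2\,\nu\sqrt d$, and the transport bounds. Your Step~1, which you rightly flag as the crux, is the only place where $E\subseteq_\nu A$ enters. The mechanism you sketch there does not deliver the needed estimate.

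Being an isometric intertwiner for $A$ and $A'$ into $\C^d\otimes\C^d$ is not enough. You need $V$ to compress the $E$-action to the \emph{trace-preserving} expectation, $V^*\pi(x)V=E_A(x)$ for $x\in E$, while $\rho(y)V=Vy$ for $y\in A'$. One way to get this is the Stinespring dilation $(\mathcal K,\pi,V)$ of the unital completely positive map $E_A|_E\colon E\to A$. The representation $\rho$ of $A'$ then comes from Arveson's commutant lifting, which applies because $E_A(E)\subseteq A$ commutes with $A'$. With this choice, for contractions $x\in E$,
\[
 \|\pi(x)V-Vx\|_{\HS}^2=\Tr E_A(x^*x)-2\operatorname{Re}\Tr\bigl(x^*E_A(x)\bigr)+\Tr(x^*x)
 =2\|x-E_Ax\|_{\HS}^2\le2\nu^2d .
\]
This gives $\|Q-P\|_{\HS}\le2\sqrt2\,\nu\sqrt d$, hence $\|R-P\|_{\HS}\le4\sqrt2\,\nu\sqrt d$ and $|D-d|\le32\nu^2d$. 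Moreover $E_A(c)=c$ gives $\pi(c)V=Vc$ exactly for $c\in C$, so your $\mathcal U(A')\times\mathcal U(C)$-average for $Z$ is simply $RV$.

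Without the tracial compression the estimate can fail. For $A=\C$, the intertwiners for $1\otimes A'$ are $\xi\mapsto\alpha\otimes\xi$. Let $E$ be spanned by $1$ and a rank-one projection $e$ with $e\alpha=\alpha$. Then $\nu\le2/\sqrt d$, but $Q=P$, the averaged $E$-intertwiner is $Ve$, and $\|W-V\|_{\HS}^2=d-1$. Separately, the phrase ``$u^*Au\subseteq_\nu A$-type estimates'' misidentifies the hypothesis, which is $E\subseteq_\nu A$.

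\textbf{Step~4.} Your displayed identity is false. Since $\pi_E(x)WW^*=WxW^*$, its right-hand side equals $2WxW^*-\pi_E(x)P-VxV^*$, not $\pi_E(x)-VxV^*$. The correct decomposition uses $\pi_E(x)R=\pi_E(x)$:
\[
 \pi_E(x)-VxV^*=\pi_E(x)(R-WW^*)+(W-V)xW^*+Vx(W-V)^* .
\]
Here $R-WW^*$ is a projection of rank $D-\rank W\le|D-d|+(d-\rank W)\le64\nu^2d$. This gives $(8+2\cdot12)\nu\sqrt d=32\nu\sqrt d$. With $Z$ in place of $W$ (where $\widehat xZZ^*=ZxZ^*$ follows from $Zx=\widehat xZ$) one gets $(\sqrt{96}+32)\nu\sqrt d\le64\nu\sqrt d$.

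If you instead bound $\|R-WW^*\|_{\HS}$ by $\|R-P\|_{\HS}+2\|W-V\|_{\HS}$, as your sketch suggests, the stated bounds yield only $54\nu\sqrt d$ and $70\nu\sqrt d$. Similarly, $d-\rank\le\|\cdot-V\|_{\HS}^2$ yields the rank bounds in \eqref{eq:dilW}--\eqref{eq:dilZ} only from the sharper estimates $\|W-V\|_{\HS}\le4\sqrt2\,\nu\sqrt d$ and $\|Z-V\|_{\HS}\le8\nu\sqrt d$, not from $12$ and $16$.
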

\begin{proof}
Apply Thom's relative correction to $E\subseteq_\nu A$ with
common subalgebra $C$. Let $\mathsf T$ be the averaged exact $E$-intertwiner
from that construction, and let $\operatorname{pol}(\mathsf T)$ denote the
partial isometry in its polar decomposition, zero on $\ker \mathsf T$.
Set $W=\operatorname{pol}(\mathsf T)$. The construction supplies $Z$ and
the projection, rank, and intertwining estimates, leaving only
the norm estimate for $W$ in \eqref{eq:dilW}.
For that estimate, the singular-value argument for polar approximation
\cite[Theorem~2.2 and Corollary~2.3]{Higham} gives
\[
 \|\operatorname{pol}(\mathsf T)-V\|_{\HS}
 \le2\|\mathsf T-V\|_{\HS}
 \le4\sqrt2\,\nu\sqrt d.
\]
The polar factor has the same rank as $\mathsf T$ and preserves its
exact intertwining identities, giving \eqref{eq:intertwining} for $W$.

For $W$, suppose $x,\widehat x$ are contractions with
$Wx=\widehat xW$ and $Wx^*=\widehat x^*W$.
The range projection $WW^*$ commutes with $\widehat x$, and
\[
 \|\widehat x-WxW^*\|_{\HS}\le\sqrt{D-\rank W},\qquad
 \|WxW^*-VxV^*\|_{\HS}\le2\|W-V\|_{\HS}.
\]
The same estimates hold with $Z$ in place of $W$ under the
corresponding intertwining identities.
Their sum, with \eqref{eq:dilrank}--\eqref{eq:dilZ}, proves
\eqref{eq:diltransport}. For commuting selfadjoint operators,
intertwining also preserves their joint functional calculus,
giving the last assertion.
\end{proof}

We next compare the matrix block sizes with their multiplicities,
following \cite[proof of Theorem~4.2]{Thom}.
Fix a unital $*$-subalgebra $C\subseteq Z(A)$ and
$\widetilde V\in\mathcal U(d)\cap C'$ such that
\[
 E=\widetilde V^*A\widetilde V\subseteq_\nu A,\qquad
 0\le\nu\le\bar\nu\le1/8,\qquad \bar\nu>0.
\]
Then $C\subseteq Z(A)\cap Z(E)$.
In their respective orthogonal decompositions, write
\[
 A=\bigoplus_j(M_{b_j}\otimes I_{s_j}),\qquad
 E=\bigoplus_i(M_{a_i}\otimes I_{r_i}),
\]
where $b_j,a_i$ are matrix block sizes and $s_j,r_i$ their
multiplicities. Let $f_j,e_i$ be the corresponding minimal central
projections and set
\[
 \Delta_A=\sum_j\frac{b_j}{s_j}f_j,\qquad
 \Delta_E=\sum_i\frac{a_i}{r_i}e_i=\widetilde V^*\Delta_A\widetilde V.
\]
Choose the positive invertible element $M_C\in C$ so that
\[
 F_A=\Delta_A(\Delta_A+M_C)^{-1},\qquad E_C(F_A)=\tfrac12 I,
 \qquad F_E=\widetilde V^*F_A\widetilde V.
\]
This conditional median exists uniquely: on each minimal projection
of $C$, the conditional average $E_C(F_A)$ decreases continuously
and strictly from $1$ to $0$ as the positive scalar value of $M_C$
increases from $0$ to infinity.
Since $\widetilde V$ commutes with $M_C$, also
$F_E=\Delta_E(\Delta_E+M_C)^{-1}$.

Apply Corollary~\ref{lem:dilation}.
Let $r_i^\sharp$ be the multiplicity of $M_{a_i}$ in $\pi_E(E)$,
and let $b_j^\sharp$ be the multiplicity of $M_{s_j}$ in
$\pi_{A'}(A')$; these multiplicities may be zero.
The $E$-intertwiner $W$ leaves at most $d+D-2\rank W$
unmatched dimensions. For the $A'$-intertwiner $Z$, the corresponding
bound is $d+D-2\rank Z$.
Thus \eqref{eq:dilrank}--\eqref{eq:dilZ} give, as in the estimates
preceding \cite[(3.3)]{Thom},
\begin{equation}\label{eq:multedit}
 \sum_i a_i|r_i^\sharp-r_i|\le160\nu^2d,\qquad
 \sum_j s_j|b_j^\sharp-b_j|\le160\nu^2d.
\end{equation}
Set $A^\sharp=\pi_{A'}(A')'$, with the commutant taken in
$B(R\mathcal K)$. The commuting representations give
$\pi_E(E)\subseteq A^\sharp$.
For this inclusion, let $m_{ij}$ be the multiplicity of the
$M_{a_i}$ block in the $M_{b_j^\sharp}$ block of $A^\sharp$.
Then \cite[(4.2)--(4.3)]{Thom} gives
\begin{equation}\label{eq:branching}
 b_j^\sharp=\sum_i m_{ij}a_i,\qquad
 r_i^\sharp=\sum_j m_{ij}s_j,\qquad
 \rank\bigl(\pi_E(e_i)\pi_{A'}(f_j)\bigr)=a_im_{ij}s_j.
\end{equation}
On $R\mathcal K$, define
\[
 \Delta^- =\sum_{r_i^\sharp>0}\frac{a_i}{r_i^\sharp}\pi_E(e_i),\qquad
 \Delta^+ =\sum_{b_j^\sharp>0}\frac{b_j^\sharp}{s_j}\pi_{A'}(f_j),\qquad
 \widehat M=\pi_E(M_C).
\]
These operators commute and are positive invertible on $R\mathcal K$.
Equation~\eqref{eq:branching} gives $\Delta^-\le\Delta^+$, hence
\begin{equation}\label{eq:liftedtransforms}
 0<\widehat F^-:=\Delta^-(\Delta^-+\widehat M)^{-1}
 \le\widehat F^+:=\Delta^+(\Delta^++\widehat M)^{-1}<R.
\end{equation}

\begin{corollary}\label{lem:medianmatching}
In this setup,
\begin{equation}\label{eq:medianinvariance}
 \|\widetilde V^*F_A\widetilde V-F_A\|_2\le128\sqrt{\bar\nu}.
\end{equation}
\end{corollary}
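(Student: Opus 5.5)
We follow Thom's conditional median argument \cite[proof of Theorem~4.2]{Thom}, transporting $F_A$ and $F_E=\widetilde V^*F_A\widetilde V$ to the lifted operators $\widehat F^+$ and $\widehat F^-$ on $R\mathcal K$. The inequality \eqref{eq:liftedtransforms} then combines with the median normalization $E_C(F_A)=\tfrac12 I$ to force both transforms to be close to each other. Since $F_E-F_A$ is the quantity to be bounded in \eqref{eq:medianinvariance}, we aim for
\[
 \|F_E-F_A\|_2^2\le2\|F_E-F_A\|_1\le\text{const}\cdot\bar\nu ,
\]
using $0\le F_A,F_E\le1$. It therefore suffices to bound the normalized trace norm $\|F_E-F_A\|_1$ by a multiple of $\bar\nu$.

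\emph{Step one: transport.} We show that $\widehat F^-$ is close to $VF_EV^*$ and $\widehat F^+$ is close to $VF_AV^*$ in $\|\cdot\|_{\HS}/\sqrt d$. For $\widehat F^-$ we use the $E$-intertwiner $W$. Since $F_E$ is a function of $\Delta_E$ and $M_C$, both central in $E$ (with $C\subseteq E$), the element $W$ intertwines $F_E$ with the same function of $\pi_E(\Delta_E)$ and $\pi_E(M_C)$. This function differs from $\widehat F^-$ only through replacing $r_i$ by $r_i^\sharp$. The multiplicity edit bound \eqref{eq:multedit} controls this replacement: on the block $\pi_E(e_i)$, of rank $a_ir_i^\sharp$, the change in $x/(x+m)$ is at most $|r_i-r_i^\sharp|/r_i^\sharp$. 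Summing gives a trace-norm error of $O(\nu^2 d)$ before normalization, hence $O(\nu^2)$ after dividing by $d$. For $\widehat F^+$ we argue similarly with $Z$, which intertwines $A'$ and also $C$, using $\Delta^+$ built from $\pi_{A'}(f_j)$ and $b_j^\sharp$. The transport bounds \eqref{eq:diltransport}, applied to joint functional calculi, convert these intertwinings into $\HS$ errors of $O(\nu\sqrt d)$. The rank deficits from \eqref{eq:dilrank}--\eqref{eq:dilZ} contribute $O(\nu^2 d)$ in trace.

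\emph{Step two: median comparison.} Both $F_A$ and $F_E$ have $C$-conditional expectation equal to $\tfrac12 I$. For $F_E$ this holds because $\widetilde V$ commutes with $C$ and $E_C$ is trace preserving. Consequently $E_C(F_A-F_E)=0$, so on each minimal projection of $C$ the difference $F_A-F_E$ has zero trace. We transport this to $R\mathcal K$ through $\pi_E(C)$. Then $\widehat F^+-\widehat F^-\ge0$, and its trace against each $\pi_E(c_k)$, for the atoms $c_k$ of $C$, is approximately zero. A positive operator with nearly vanishing trace has small trace norm, so $\|\widehat F^+-\widehat F^-\|_1=O(\nu)$ in normalized terms. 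Here the error is only linear in $\nu$, because the $\HS$ transport errors of order $\nu\sqrt d$ enter the trace through Cauchy--Schwarz against an operator of $\HS$ norm $\sqrt d$. Pulling this back through $V$ gives $\|F_A-F_E\|_1\le c\,\nu\le c\,\bar\nu$. Hence $\|F_A-F_E\|_2\le\sqrt{2c\bar\nu}$, and tracking constants should give $\sqrt{2c}\le128$.

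The main obstacle is step one: showing that the transforms are preserved by the intertwiners with errors linear in $\nu$ rather than depending on the block structure. In particular, the term $\widehat M=\pi_E(M_C)$ must be matched under $Z$. This requires the identity $Zc=\pi_E(c)Z$ for $c\in C$ from \eqref{eq:intertwining}, together with the fact that $\pi_E(C)$ and $\pi_{A'}(A')$ commute, so that $\widehat F^+$ is a joint functional calculus preserved by $Z$. We would take care to handle blocks with zero multiplicity $r_i^\sharp$ or $b_j^\sharp$. These are absent from $R\mathcal K$, and we would charge them to the rank deficits.
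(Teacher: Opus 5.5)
Your proposal is correct and is essentially the paper's proof: you transport $F_E$ and $F_A$ to $\widehat F^-$ and $\widehat F^+$ through $W$, $Z$ and the multiplicity edits with $\HS$ errors $O(\nu\sqrt d)$, then combine $\widehat F^-\le\widehat F^+$ with one global Cauchy--Schwarz on $R\mathcal K+P\mathcal K$ to get $\Tr(\widehat F^+-\widehat F^-)=O(\nu d)$. The differences are minor: the paper needs only $\Tr F_E=\Tr F_A$, which holds by unitary invariance, so the median condition and the atom-by-atom comparison are unnecessary (per-atom transport errors would not sum uniformly over an unbounded number of atoms, so you should pass directly to the total trace); and the paper finishes in $\HS$ norm via $\|X\|_{\HS}^2\le\Tr X$ for the positive contraction $X=\widehat F^+-\widehat F^-$, rather than through the trace norm of $F_A-F_E$.
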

\begin{proof}
The scalar comparisons in \cite[(4.5)--(4.6)]{Thom}, together with
\eqref{eq:multedit}, bound each squared Hilbert--Schmidt error caused
by changing the multiplicities by $160\nu^2d$.
Adding the transport bounds \eqref{eq:diltransport} yields
\begin{equation}\label{eq:transformtransport}
 \|\widehat F^- -VF_EV^*\|_{\HS}\le80\nu\sqrt d,\qquad
 \|\widehat F^+ -VF_AV^*\|_{\HS}\le80\nu\sqrt d.
\end{equation}
Since $\Tr F_E=\Tr F_A$, Cauchy--Schwarz on
$R\mathcal K+P\mathcal K$, whose dimension is at most
$D+d\le5d/2$, gives
\[
 0\le\Tr(\widehat F^+-\widehat F^-)\le320\nu d.
\]
The difference is a positive contraction by
\eqref{eq:liftedtransforms}, so its squared Hilbert--Schmidt norm
is at most its trace. Thus
\[
 \|F_E-F_A\|_2\le160\nu+\sqrt{320\nu}
 \le128\sqrt{\bar\nu}.\qedhere
\]
\end{proof}

\begin{corollary}\label{lem:medianreverse}
In the same setup,
\begin{equation}\label{eq:reversenear}
 \sup_{a\in A,\ \|a\|\le1}\|a-E_E(a)\|_2
 \le64\bar\nu+
 8\left(\frac{\max_\pm\|\widehat F^\pm-R/2\|_{\HS}}{\sqrt d}\right)^{1/2}.
\end{equation}
Here $E_E$ is the trace-preserving conditional expectation onto $E$,
so $E_E(a)$ is a contraction whenever $a$ is.
\end{corollary}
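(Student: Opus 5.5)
We transport the problem to the dilation space $R\mathcal K$ of Corollary~\ref{lem:dilation}. There the lifted operators $\widehat F^-\le\widehat F^+$ of \eqref{eq:liftedtransforms} are close to $R/2$, and this closeness forces $\pi_{A'}(A')$ to nearly commute with $\pi_E(E)'$. Equivalently, $A^\sharp$ is nearly equal to $\pi_E(E)$. The argument then pulls this back to $\C^d$, following the conditional median argument in \cite[proof of Theorem~4.2]{Thom}.

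\textbf{Step 1 (branching).} By \eqref{eq:branching}, on the range of $\pi_E(e_i)\pi_{A'}(f_j)$ the two operators act as scalars, namely
\[
\Delta^-=a_i\Big/\sum_{j'} m_{ij'}s_{j'},\qquad
\Delta^+=\sum_{i'} m_{i'j}a_{i'}/s_j .
\]
The key observation is that $\Delta^-=\Delta^+$ on a nonzero corner exactly when that corner is a "diagonal" embedding. Here diagonal means $m_{ij}\le1$, $m_{ij}=1$ for a unique pair, and $M_{a_i}=M_{b_j^\sharp}$, so the inclusion is an equality on that block.

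\textbf{Step 2 (converting to a ratio bound).} The map $s\mapsto s/(s+\mu)$ has derivative bounded below on any region where its value stays away from $0$ and $1$. Near the value $1/2$ we have $s\approx\mu$, and there a multiplicative gap between $\Delta^-$ and $\Delta^+$ translates into a gap between $\widehat F^-$ and $\widehat F^+$. Set
\[
\rho=\max_\pm\|\widehat F^\pm-R/2\|_{\HS}/\sqrt d .
\]
Then, after weighting by rank, the total mass of corners on which $\Delta^+/\Delta^->1+t$ is at most of order $\rho/t$. Taking $t$ of order $\sqrt\rho$ leaves mass $O(\sqrt\rho)$ of non-diagonal corners. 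This square root is the source of the exponent $1/2$ in \eqref{eq:reversenear}. Corners where $\widehat F^\pm$ is far from $1/2$ carry mass at most $O(\rho^2)$ by Chebyshev, so they are absorbed into the same bound.

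\textbf{Step 3 (building the approximant).} On the union $Q$ of the good corners, $\pi_E(E)Q=A^\sharp Q$. For a contraction $a\in A$:
\begin{enumerate}
\item use the intertwiner $Z$ to get a lift $\widehat a\in A^\sharp$, which commutes with $\pi_{A'}(A')$ and satisfies $\|\widehat a-VaV^*\|_{\HS}\le64\nu\sqrt d$ by \eqref{eq:diltransport};
\item on $Q$, $\widehat a$ equals $\pi_E(x)$ for some contraction $x\in E$;
\item the complement of $Q$ contributes at most $2\|1-Q\|_{\HS}\lesssim\sqrt d\,\rho^{1/2}$;
\item pull back with $W$, using $\|\pi_E(x)-VxV^*\|_{\HS}\le32\nu\sqrt d$.
\end{enumerate}
This gives $\|a-x\|_2\le64\bar\nu+O(\rho^{1/2})$. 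Since $E_E(a)$ is the $2$-norm best approximant of $a$ in $E$, the bound also holds for $\|a-E_E(a)\|_2$. Tracking constants through these steps should give the factor $8$.

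\textbf{Main obstacle.} The hardest part is Step~2. The ratio $\Delta^+/\Delta^-$ must be controlled uniformly, independently of the block sizes, using only the Hilbert--Schmidt closeness of $\widehat F^\pm$ to $R/2$. This requires care where $\widehat M$ is very large or very small relative to $\Delta^\pm$, since there the map $s\mapsto s/(s+\mu)$ flattens. I would first isolate those corners through the level sets $\{|\widehat F^\pm-1/2|>1/4\}$. On the remaining corners the derivative of $s/(s+\mu)$ is comparable to $1/s$, which gives the multiplicative bound directly.
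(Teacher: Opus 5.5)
Your Steps~2 and~3 do not fit together. By your own Step~1, only corners with $\Delta^+/\Delta^-=1$ are diagonal. But $\Theta=\Delta^+(\Delta^-)^{-1}$ is not quantized near $1$. Take $\pi_E(E)=M_a\oplus M_1\subseteq A^\sharp=M_{a+1}$ with all multiplicities one. The large corner has ratio $(a+1)/a$ and relative mass $a/(a+1)$, yet its projection is not central in $A^\sharp$, so $A^\sharp Q\ne\pi_E(E)Q$ there.

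Consequently, the Markov bound on $\{\Theta>1+t\}$ gives no control of the non-diagonal mass. The corners with $1<\Theta\le1+t$ are never estimated, and the second item of Step~3 fails on them.

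The bookkeeping is also off in two places.
\begin{itemize}
\item Even granting $\Tr(R-Q)=O(\sqrt\rho)\,d$, the complement costs $2\|R-Q\|_{\HS}/\sqrt d=O(\rho^{1/4})$, not $O(\rho^{1/2})$.
\item Your transport terms, $64\nu$ through $Z$ plus $32\nu$ through $W$, add to $96\nu$, not $64\bar\nu$. The paper instead uses $ZaZ^*\in A^\sharp$ with $\|ZaZ^*-VaV^*\|_{\HS}\le2\|Z-V\|_{\HS}\le32\nu\sqrt d$. This also avoids the exact relation $Za=\widehat aZ$, which you did not justify.
\end{itemize}

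The missing idea is a quantitative replacement for ``diagonal''. This is Thom's block-ratio estimate \cite[Lemma~4.1]{Thom},
$\sup_{x\in A^\sharp,\ \|x\|\le1}\|x-E_{\pi_E(E)}(x)\|_{\HS}^2\le2\Tr(R-\Theta^{-1})$,
in which corners with $\Theta$ close to $1$ contribute little. The paper applies it with no thresholding at all:
\begin{itemize}
\item The exact identity $\Theta^{-1}=\widehat F^-(R-\widehat F^+)\bigl(\widehat F^+(R-\widehat F^-)\bigr)^{-1}$ disposes of your ``main obstacle''; no derivative bounds for $s\mapsto s/(s+\mu)$ are needed.
\item The scalar inequality $0\le1-\frac{a(1-b)}{b(1-a)}\le8\bigl(|a-\tfrac12|+|b-\tfrac12|\bigr)$ for $0<a\le b<1$, with Cauchy--Schwarz and $D\le3d/2$, gives $\Tr(R-\Theta^{-1})\le32\sqrt d\max_\pm\|\widehat F^\pm-R/2\|_{\HS}$.
\end{itemize}
The factor $8=(2\cdot32)^{1/2}$ and the exponent $1/2$ therefore come from the square in Thom's lemma, not from optimizing $t$.

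Alternatively, your corner strategy can be rescued by thresholding at a \emph{fixed} ratio. Let $Q$ be the sum of the corners $\pi_E(e_i)\pi_{A'}(f_j)$ with $\Theta_{ij}<2$.
\begin{itemize}
\item Such a corner has $m_{ij}=1$, since $\Theta_{ij}\ge m_{ij}^2$.
\item It is the only such corner in its row and column: whenever $m_{ij},m_{i'j}\ge1$, we have $\Theta_{ij}^{-1}+\Theta_{i'j}^{-1}\le(a_i+a_{i'})/b_j^\sharp\le1$, and similarly in the other index.
\end{itemize}
Hence $QA^\sharp Q=\pi_E(E)Q$ for the compression (not $A^\sharp Q$). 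Chebyshev at the fixed level $(3-2\sqrt2)/2$ gives $\Tr(R-Q)=O(\rho^2)\,d$, which leads to $\|a-E_E(a)\|_2\le64\nu+50\rho$. This implies the stated bound once the trivial estimate $\|a-E_E(a)\|_2\le1$ is used for $\rho\ge1/64$.
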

\begin{proof}
For the exact inclusion $\pi_E(E)\subseteq A^\sharp$, set
$\Theta=\Delta^+(\Delta^-)^{-1}\ge R$.
Thom's block-ratio estimate \cite[Lemma~4.1]{Thom}, written with
the unnormalized trace on $R\mathcal K$, gives
\[
 \sup_{x\in A^\sharp,\ \|x\|\le1}
 \|x-E_{\pi_E(E)}(x)\|_{\HS}^2
 \le2\Tr(R-\Theta^{-1}).
\]
All inverses here are taken on $R\mathcal K$.
The commuting transforms satisfy
\[
 \Theta^{-1}
 =\frac{\widehat F^-(R-\widehat F^+)}
        {\widehat F^+(R-\widehat F^-)}.
\]
Apply the scalar inequality
\[
 0\le1-\frac{a(1-b)}{b(1-a)}
 \le8\bigl(|a-\tfrac12|+|b-\tfrac12|\bigr)
 \qquad(0<a\le b<1).
\]
Since $D\le3d/2$, Cauchy--Schwarz gives
\[
 \Tr(R-\Theta^{-1})\le32\sqrt d\,
             \max_\pm\|\widehat F^\pm-R/2\|_{\HS}.
\]
Consequently the corrected reverse error, divided by $\sqrt d$,
is bounded by the second term in \eqref{eq:reversenear}.

For a contraction $a\in A$, the intertwining identities imply
$ZaZ^*\in A^\sharp$, and
\[
 \|ZaZ^*-VaV^*\|_{\HS}\le2\|Z-V\|_{\HS}\le32\nu\sqrt d.
\]
Its conditional-expectation approximant in $\pi_E(E)$ lifts to
a contraction $e\in E$; choose zero on any block killed by $\pi_E$.
Equation~\eqref{eq:diltransport} gives
$\|\pi_E(e)-VeV^*\|_{\HS}\le32\nu\sqrt d$.
These two transport errors add $64\nu$ after division by $\sqrt d$.
Finally, $E_E$ is the orthogonal projection onto $E$ in $L^2$, so
$\|a-E_E(a)\|_2\le\|a-e\|_2$.
Using $\nu\le\bar\nu$ proves \eqref{eq:reversenear}.
\end{proof}

We now combine the preceding estimates. Fix
\begin{equation}\label{eq:normalizationparameters}
 k=2^{-9},\qquad \varepsilon_{\rm alg}=2^{-1664},\qquad
 \eta=2^{-1642}.
\end{equation}
For a unitary tuple $V_1,\ldots,V_m$, let $\cE_V$ be the commutator
energy defined in Lemma~\ref{lem:correctedmixing}.

\begin{theorem}\label{thm:normalization}
Let $d,m\ge1$ be integers, let $A,B\subseteq M_d$ be unital $*$-subalgebras,
and let $V_1,\ldots,V_m\in\mathcal U(d)$.
Suppose
\begin{align}
 \|x-E_Bx\|_2
 &\le k^{-1/2}\sqrt{\cE_V(x)}+\varepsilon_{\rm alg}
 &&(x\in M_d,\ \|x\|\le1),\label{eq:normalizationhyp}\\
 \max_{1\le j\le m}\|[V_j,b]\|_2
 &\le\varepsilon_{\rm alg}
 &&(b\in B,\ \|b\|\le1).\notag
\end{align}
If $V_j^*AV_j\subseteq_\eta A$ for every $j$, then
\begin{equation}\label{eq:normalizationconclusion}
 V_jAV_j^*\subseteq_{2^{-20}}A\qquad(1\le j\le m).
\end{equation}
\end{theorem}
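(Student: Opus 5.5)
The plan is to assemble the three preceding ingredients: finite mixing (Lemma~\ref{lem:correctedmixing}), central averaging (Lemma~\ref{lem:centeraveraging}) with the partition reconstruction (Lemma~\ref{thm:commrecon}), and Thom's median comparison (Corollaries~\ref{lem:dilation}, \ref{lem:medianmatching} and \ref{lem:medianreverse}). Fix $j$. Put $\widetilde V=V_j$ and $E=V_j^*AV_j$, so that $E\subseteq_\eta A$ by hypothesis. Reversing this inclusion is the same as the conclusion $V_jAV_j^*\subseteq A$ up to tolerance, because $\|a-E_E(a)\|_2=\|V_jaV_j^*-E_A(V_jaV_j^*)\|_2$ after conjugating by $V_j$. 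The median setup, however, needs a subalgebra $C\subseteq Z(A)$ commuting with $\widetilde V$. Producing this $C$ is the first task.

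\emph{Step 1: construct $C$.} Apply Lemma~\ref{lem:correctedmixing} with $k=2^{-9}$ and $\varepsilon_{\rm alg}=2^{-1664}$. Choose $n$ of order $2^{10}\cdot c\log_2$ of the target, for instance $n\approx 2^{14}$, so that $(1-k/2)^{n/2}$ is tiny. The error $e_n$ is then dominated by $(1-2^{-10})^{n/2}$ plus a negligible multiple of $\varepsilon_{\rm alg}$. Lemma~\ref{lem:centeraveraging} gives $2e_n+3n\eta\le b=\zeta^{12}/2^{42}$, and Lemma~\ref{thm:commrecon} then yields $C\subseteq Z(A)$ satisfying \eqref{eq:centralpartition}.

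We need $\zeta$ small enough that the final sum is at most $2^{-20}$. The final bound involves $\sqrt{\bar\nu}$ and a fourth-root term, so $\zeta$ of order $2^{-100}$ is needed. Then $b\approx 2^{-1242}$, and $3n\eta\approx 2^{-1626}$ fits comfortably. I expect the exponents $1642$ and $1664$ were chosen for exactly this step, and I would simply check them numerically.

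We also need $\widetilde V\in C'$, which is only approximately true. Since $C\subseteq Z(A)$ nearly lies in $B$ by \eqref{eq:centralpartition}, and $B$ nearly commutes with $V_j$, every contraction in $C$ nearly commutes with $V_j$. Haar averaging over $\mathcal U(C)$ together with \eqref{eq:unitaryrounding} replaces $V_j$ by a unitary $\widetilde V\in C'$ with $\|\widetilde V-V_j\|_2\lesssim\zeta$. Replacing $V_j$ by $\widetilde V$ perturbs $E$, so we obtain $\widetilde V^*A\widetilde V\subseteq_\nu A$ with $\nu\le\eta+4\|\widetilde V-V_j\|_2=:\bar\nu\le1/8$.

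\emph{Step 2: the median comparison.} This is the main obstacle. Corollary~\ref{lem:medianreverse} requires bounding $\|\widehat F^\pm-R/2\|_{\HS}/\sqrt d$, whereas Corollary~\ref{lem:medianmatching} only gives that $F_A$ is nearly $\widetilde V$-invariant. The plan is to show that $F_A$ is close to $E_C(F_A)=\tfrac12$.

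First, $F_A\in Z(A)$ is a contraction, and by \eqref{eq:medianinvariance} it is nearly invariant under $\Ad(\widetilde V^*)$. Since $\widetilde V$ is close to $V_j$, it is also nearly invariant under $\Ad(V_j^{*})$, up to $128\sqrt{\bar\nu}+2\|\widetilde V-V_j\|_2$.

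This invariance must hold for all $j$ at once in order to feed the energy. I would therefore run the Step~1 rounding for every $j$ simultaneously, with the same $C$. This works because $C$ does not depend on $j$, and it gives $\cE_V(F_A)\lesssim\sqrt{\bar\nu}$. Hypothesis \eqref{eq:normalizationhyp} then yields $\|F_A-E_BF_A\|_2\lesssim 2^{4.5}\bigl(\bar\nu^{1/4}\bigr)+\varepsilon_{\rm alg}$. Combining this with \eqref{eq:centralpartition} gives $\|F_A-\tfrac12\|_2\lesssim \bar\nu^{1/4}+2\zeta$, and the same holds for $F_E$ by conjugation.

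Finally, \eqref{eq:transformtransport} transports these bounds to $\widehat F^\pm$, at an extra cost of $80\nu$ plus $\|R-P\|_{\HS}/\sqrt d\le6\nu$.

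\emph{Step 3: conclude.} Insert the bound from Step~2 into \eqref{eq:reversenear}. This gives $\sup_a\|a-E_E(a)\|_2\lesssim64\bar\nu+8\,(C'\bar\nu^{1/4}+C''\zeta)^{1/2}$. Conjugating back by $\widetilde V$, and switching to $V_j$ at a cost of $2\|\widetilde V-V_j\|_2$, gives $V_jAV_j^*\subseteq_{\delta}A$. The resulting $\delta$ is an explicit power expression in $\zeta$ and $\eta$, of order $\zeta^{1/8}$ times a constant. The last task is to verify $\delta\le2^{-20}$ for the chosen $\zeta$; this is routine but constant-heavy bookkeeping.
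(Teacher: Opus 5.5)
Your outline is the paper's argument. You build $C\subseteq Z(A)$ via Lemmas~\ref{lem:correctedmixing}, \ref{lem:centeraveraging} and~\ref{thm:commrecon}, round each $V_j$ into $C'$, and fix one median $F_A$ for the whole tuple. You then combine \eqref{eq:medianinvariance}, the hypothesis \eqref{eq:normalizationhyp} and $E_CF_A=\tfrac12$ to get $F_A\approx\tfrac12$, transport this to $\widehat F^\pm$, and apply \eqref{eq:reversenear}. However, $k,\varepsilon_{\rm alg},\eta$ and the target $2^{-20}$ are fixed, so the parameter bookkeeping is the substance of this theorem, and yours does not close.

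First, $n\approx2^{14}$ gives $(1-2^{-10})^{2^{13}}\approx e^{-8}$, nowhere near $b$. You need $n$ of order $2^{21}$ or more; the paper takes $n=2^{22}$, which still leaves $3n\eta=3\cdot2^{-1620}$ far below $b$.

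Second, and more seriously, you weaken the energy estimate to $\cE_V(F_A)\lesssim\sqrt{\bar\nu}$. The energy is quadratic in the commutators. For every $j$, $\|[V_j,F_A]\|_2=\|V_j^*F_AV_j-F_A\|_2\le128\sqrt{\bar\nu}+2\|\widetilde V_j-V_j\|_2$. Hence $\sqrt{\cE_V(F_A)}$ is itself $O(\sqrt{\bar\nu})$, so $\|F_A-\tfrac12\|_2=O(\sqrt{\bar\nu})$ and the final error is $O(\zeta^{1/4})$, not $O(\zeta^{1/8})$.

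This loss is fatal with the given constants, because $\zeta$ cannot be made arbitrarily small. With $\eta=2^{-1642}$ and $n\gtrsim2^{21}$, the requirement $3n\eta\le b=\zeta^{12}/2^{42}$ forces $\zeta\gtrsim2^{-131}$. A bound of order $\zeta^{1/8}$, times a constant exceeding one, therefore stays far above $2^{-20}$, and the final check you defer would fail.

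Even with the correct scaling, $\zeta\approx2^{-100}$ is too large. With $\bar\nu=256\zeta$, the dominant term is about $8\bigl(2^{4.5}\cdot2^{10}\sqrt\zeta\bigr)^{1/2}\approx2^{10}\zeta^{1/4}$, which requires $\zeta$ below about $2^{-121}$. The admissible window is thus roughly $2^{-131}\lesssim\zeta\lesssim2^{-121}$.

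The paper takes $\zeta=2^{-128}$ and $\bar\nu=2^{-120}$. It obtains $\sqrt{\cE_V(F_A)}\le2^{-53}$, $\|F_A-\tfrac12\|_2<2^{-48}$ and $\max_\pm\|\widehat F^\pm-R/2\|_{\HS}/\sqrt d\le2^{-47}$, giving final error $64\bar\nu+8\sqrt{2^{-47}}+16\zeta<2^{-20}$. With these corrections your proposal becomes the paper's proof.
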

\begin{proof}
Choose
\begin{equation}\label{eq:normalizationledger}
 \zeta=2^{-128},\qquad
 b=\zeta^{12}/2^{42}=2^{-1578},\qquad n=2^{22}.
\end{equation}
Since $\sqrt{2/k}=32$ and $\sqrt2(n+33)<2^{24}$,
Lemma~\ref{lem:correctedmixing} gives
\[
 (1-k/2)^{n/2}\le2^{-2048},\qquad
 e_n\le2^{-2048}+2^{-1640}.
\]
Hence
\[
 2e_n+3n\eta
 \le2^{-2047}+2^{-1639}+3\cdot2^{-1620}<b.
\]
The central partition construction \eqref{eq:centralpartition}
therefore gives a unital $*$-subalgebra $C\subseteq Z(A)$ with
\[
 \|E_B|_{Z(A)}-E_C|_{Z(A)}\|_{\infty\to2}\le2\zeta.
\]
For a contraction $c\in C$, we have $E_Cc=c$ and
$\|c-E_Bc\|_2\le2\zeta$. Thus
\[
 \|[V_j,c]\|_2
 \le\|[V_j,E_Bc]\|_2+2\|c-E_Bc\|_2
 \le\varepsilon_{\rm alg}+4\zeta\le5\zeta.
\]
Haar averaging over $\mathcal U(C)$ and
\eqref{eq:unitaryrounding} give $\widetilde V_j\in\mathcal U(C')$ satisfying
\[
 \|\widetilde V_j-V_j\|_2\le5\sqrt2\zeta<8\zeta.
\]
Put $\bar\nu=256\zeta=2^{-120}$.
Since $\eta+16\zeta\le\bar\nu$, we obtain
$\widetilde V_j^*A\widetilde V_j\subseteq_{\bar\nu}A$ for every $j$.

Choose the conditional median $M_C$ of $\Delta_A$ once for the
whole tuple, and put $F_A=\Delta_A(\Delta_A+M_C)^{-1}$.
Thus $F_A\in Z(A)$, $0\le F_A\le1$, and $E_CF_A=I/2$.
Corollary~\ref{lem:medianmatching}, followed by the unitary correction,
gives
\[
 \|V_j^*F_AV_j-F_A\|_2
 \le128\sqrt{\bar\nu}+16\zeta\le2^{-52}.
\]
Consequently $\sqrt{\cE_V(F_A)}\le2^{-53}$.
Apply \eqref{eq:normalizationhyp} and the central partition bound:
\[
 \|F_A-I/2\|_2
 \le k^{-1/2}2^{-53}+\varepsilon_{\rm alg}+2\zeta<2^{-48}.
\]

Fix $j$ and apply Corollary~\ref{lem:dilation} to
$E=\widetilde V_j^*A\widetilde V_j\subseteq_{\bar\nu}A$, with the same $C$.
Write $V,R,P$ for its correction data and let $\widehat F^\pm$
be the associated transforms in \eqref{eq:liftedtransforms}.
Since $F_E=\widetilde V_j^*F_A\widetilde V_j$, its distance to $I/2$ equals that of $F_A$.
Equations~\eqref{eq:transformtransport} and~\eqref{eq:dilrank} give
\[
\begin{aligned}
 \frac{\max_\pm\|\widehat F^\pm-R/2\|_{\HS}}{\sqrt d}
 &\le80\bar\nu+2^{-48}
          +\frac{\|P-R\|_{\HS}}{2\sqrt d}\\
 &\le83\bar\nu+2^{-48}\le2^{-47}.
\end{aligned}
\]
Corollary~\ref{lem:medianreverse} now gives
$A\subseteq_{64\bar\nu+8\sqrt{2^{-47}}}E$.
Conjugating by $\widetilde V_j$ gives
$\widetilde V_jA\widetilde V_j^*\subseteq_{64\bar\nu+8\sqrt{2^{-47}}}A$.
Replacing $\widetilde V_j$ by $V_j$ adds at most $16\zeta$ to this error. Thus
\begin{equation}\label{eq:reverseledger}
 V_jAV_j^*\subseteq_{2^{-20}}A,\qquad
 64\bar\nu+8\sqrt{2^{-47}}+16\zeta<2^{-20}.
\end{equation}
For the last inequality, use $\sqrt{2^{-47}}\le3\cdot2^{-25}$ and
$2^{-114}+3\cdot2^{-22}+2^{-124}<2^{-20}$.
\end{proof}

\section{Reflection rigidity and trace separation}\label{sec:gate}

This section proves that $f$ has normalized trace at least $3/4$
on every tuple of six selfadjoint matrix contractions, completing
Theorem~\ref{thm:main}. We first combine the spectral estimates of
Section~\ref{sec:certificates} with the normalization theorem of
Section~\ref{sec:reconstruction} to show that approximate unitary
solutions of the 97 full relations force the commutator in the
signal to be small. We then recover these relations from the
fifteen retained equations in \eqref{csimp:r3:fifteen}.

For arbitrary selfadjoint contractions, a small total penalty allows
us to round the variables to symmetries and apply this reflection
estimate. A large total penalty gives the required lower bound
directly. Together with the negative group evaluation from
Section~\ref{sec:polynomial}, this rules out a representation of $f$
as an element of the contraction quadratic module plus a sum of
additive commutators.

\subsection{The reflection estimate}
We use $k,\varepsilon_{\rm alg},\eta$ from
\eqref{eq:normalizationparameters}, put $\delta=2^{-J}$ with $J$ as in
\eqref{eq:globalconstants}, and set
\begin{equation}\label{eq:outerparameters}
 \zeta_{\rm alg}=2^{-1700},\qquad
 \theta_{\rm sp}=\zeta_{\rm alg}^8/2^{96}=2^{-13696}.
\end{equation}
The next lemma shows that this relation tolerance is small enough
for both spectral inputs of Section~\ref{sec:certificates}.

\begin{lemma}\label{lem:tower}
For either input $(m,\lambda,C)$ in \eqref{eq:actualinputs},
\[
 \delta\le\eps_{\rm in}(m,\lambda,C,\theta_{\rm sp}).
\]
\end{lemma}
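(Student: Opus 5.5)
The plan is to verify the inequality by tracking binary logarithms through the chain of definitions \eqref{eq:inputparameters1}--\eqref{eq:inputparameters2}. We replace each quantity by a simple dyadic upper bound (for large quantities) or lower bound (for small ones). Every parameter depends monotonically on $1/\lambda$, $m$ and $C$ in the unfavourable direction, so we expect to treat both inputs of \eqref{eq:actualinputs} with a single worst-case bound. The bound would use $\lambda\ge 2^{-8}$ (valid for both $1/96$ and $1/144$), $m\le 2^9$ and $C\le 2^{80}$. The quantity $C$ enters only through $\eps_{\rm in}=\min\{1,\delta_*/(C+1)\}$, which costs at most $81$ bits. Thus the whole task is a lower bound of the form $\log_2(1/\delta_*)\le J-81$, where $\theta_{\rm sp}=2^{-13696}$ from \eqref{eq:outerparameters}.

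First stage (polynomial-size quantities). With $\kappa=\lambda/2\ge2^{-9}$ we get $c=\kappa^2/2^{28}\ge2^{-46}$ and $d_2(16/\theta_{\rm sp})=13700$. Hence $N\le 2^{46}\cdot 2^{14}=2^{60}$, and $r_I=\theta_{\rm sp}/(16N)\ge2^{-13760}$. Next we bound $\kappa'=\kappa/1024\ge 2^{-19}$, $K_0\le 2^{38}$, $K_b\le2^{45}$ and $K_\gamma\le 2^{78}$. This gives $b_I\ge 2^{-27568}$ and $a_I\ge 2^{-27536}$. Consequently $t_I=\lceil(192/b_I)^2\rceil\le 2^{2\cdot 27576}=2^{55152}$. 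I expect the doubling in this step to be the only place where $\log_2(1/\theta_{\rm sp})\approx 2^{14}$ becomes an exponent near $55200$. The final claim should therefore read $\log_2(1/\delta_*)\le 2^{c_0}\,t_I$ with $2^{c_0}t_I\le 2^{55226}-81$; the slack needed is a factor $2^{c_0}$ with $c_0\le 73$.

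Second stage (the tower). From the definition of $\gamma$, the term $a_Ib_I/(168\cdot2^{8t_I})$ dominates, so $\log_2(1/\gamma)\le 8t_I+2^{17}$. Then $\log_2(1/v_I)\le 8\log_2(1/\gamma)+O(1)$, $\alpha=v_I^2$, and $\beta\ge\alpha^2/64$. These give $\log_2(1/\beta)\le 32\log_2(1/\gamma)+O(1)\le 2^{8}t_I+2^{23}$. Next, $L\le 2^{11}/\beta$ and $\mu_I=\beta^2/(4L^2)$ give $\log_2(1/\mu_I)\le 4\log_2(1/\beta)+O(1)$. For $\rho_{\rm filt}$ we use
\[
1-\rho_{\rm filt}\ge 1-\frac{1-\lambda}{1-\kappa}=\frac{\lambda-\kappa}{1-\kappa}\ge\kappa,
\]
so that $\lceil1/(1-\rho_{\rm filt})\rceil\le 2^{10}$. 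It follows that $m_{\rm cut}\le 2^{10}\bigl(\log_2(1/\mu_I)+O(1)\bigr)\le 2^{21}t_I$. Finally, for $0<\kappa\le1/2$ we have $(1-\kappa)^{2m_{\rm cut}}\ge 2^{-4\kappa m_{\rm cut}}$ (or simply $\ge 2^{-2m_{\rm cut}}$). Hence $\log_2(1/\delta_*)\le 2m_{\rm cut}+\log_2(1/\mu_I)+O(1)\le 2^{23}t_I\le 2^{55175}$, which is far below $J=2^{55226}$.

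The main obstacle is the bookkeeping in the first stage. The exponent of $t_I$ is squared relative to $\log_2(1/b_I)$, so any additive error in $\log_2(1/b_I)$ doubles in the final exponent. I would therefore compute $K_0,K_b,K_\gamma,b_I,a_I$ with explicit powers of two for the worst case $m=375\le 2^9$, $\lambda=1/144$, and check that $\log_2(1/b_I)\le 27600$. This is the bound that decides whether the margin $2^{26}$ suffices. The tower stage only contributes a bounded multiplicative factor to $t_I$, and I would state each inequality there crudely using $t_I\ge 2^{50000}$ to absorb all lower-order terms.
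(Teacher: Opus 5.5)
Your proposal is correct and is essentially the paper's proof. Both arguments take crude dyadic bounds for $c,N,r_I,\kappa',K_0,K_b,K_\gamma,b_I,a_I,t_I$; the factor $2^{-8t_I}$ in $\gamma$ then fixes the scale, and the remaining stages $v_I,\alpha,\beta,L,\mu_I,m_{\rm cut},\delta_*$ only multiply the exponent by a bounded factor (the paper packages this with $T_0=2^{55200}$ and ends at $3\cdot2^{24}T_0+81\le2^{55226}$, while you end at $2^{23}t_I\le2^{55175}$). There are two harmless slips: from your bounds $m\le2^9$ and $r_I\ge2^{-13760}$ one only gets $a_I\ge2^{-27538}$, and the threshold $\log_2(1/b_I)\le27600$ in your last paragraph would not suffice against your factor $2^{23}$; but your actual value $\log_2(1/b_I)\le27568$, hence $t_I\le2^{55152}$, leaves a margin of $2^{51}$.
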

\begin{proof}
For both inputs, $m\le2^{14}$, $2^{-9}\le\kappa=\lambda/2\le1/4$,
and $C\le2^{80}$.
Substitute \eqref{eq:outerparameters} into
\eqref{eq:inputparameters1}--\eqref{eq:inputparameters2}.
Since $d_2(16/\theta_{\rm sp})=13700<2^{14}$, we obtain
\begin{center}
\begin{tabular}{ll@{\qquad}ll}
\toprule
Quantity & Bound & Quantity & Bound\\\midrule
$c$&$\ge2^{-46}$&$N$&$\le2^{60}$\\
$r_I$&$\ge2^{-13760}$&$C_0$&$\le2^{38}$\\
$\kappa'$&$\ge2^{-19}$&$K_0$&$\le2^{42}$\\
$K_b$&$\le2^{50}$&$K_\gamma$&$\le2^{82}$\\
$b_I$&$\ge2^{-27573}$&$a_I$&$\ge2^{-27543}$\\
$t_I$&$\le2^{55163}$&&\\\bottomrule
\end{tabular}
\end{center}
Put $T_0=2^{55200}$.
The last term in the minimum defining $\gamma$ is at least
$2^{-(55124+8\cdot2^{55163})}$, and
\[
 55124+8\cdot2^{55163}=55124+2^{55166}<16T_0.
\]
The other terms are also at least $2^{-16T_0}$. Successive substitution
therefore gives
\[
 \gamma\ge2^{-16T_0},\quad v_I\ge2^{-256T_0},\quad
 \alpha\ge2^{-512T_0},\quad\beta\ge2^{-2048T_0},\quad
 L\le2^{4096T_0},\quad\mu_I\ge2^{-16384T_0}.
\]
For the parameter $\rho_{\rm filt}$ in \eqref{eq:inputparameters2},
$1-\rho_{\rm filt}=\kappa(2-3\kappa)/(1-\kappa)^2\ge\kappa\ge2^{-9}$, so
\[
 d_2(2/(\kappa\mu_I))\le16384T_0+10\le2^{15}T_0,
 \qquad m_{\rm cut}\le2^{24}T_0.
\]
Using $1-\kappa\ge1/2$ and $C+1\le2^{81}$ in the definition of
$\eps_{\rm in}$ now yields
\begin{equation}\label{eq:newtolerance}
 \varepsilon_{\rm in}\ge2^{-(3\cdot2^{24}T_0+81)}
 \ge2^{-2^{55226}}=\delta.
\end{equation}
The resulting algebra errors also satisfy
\[
 3\zeta_{\rm alg}\le\varepsilon_{\rm alg},\qquad
 6\sqrt m\,\zeta_{\rm alg}\le\varepsilon_{\rm alg},
\]
as required by \eqref{eq:normalizationparameters}.
\end{proof}

The defect of a full relation is the normalized $2$-norm of its
left side minus its right side at the given matrix values.
We now apply the preceding estimates to the 97 relations of
Section~\ref{subsec:fullrelations}.

\begin{theorem}\label{thm:gate}\label{lem:gateslack}
Let $d\ge1$, and suppose the entries in the full system have unitary
values in $M_d$, with $v$ a symmetry. If, for some $0<\eps<\delta$, every full relation
has defect strictly less than $\eps$, then, with $u=U_{12}$ and $h=z_2$,
\begin{equation}\label{eq:gate}
 \|[h,uvu^*]\|_2\le g_0:=5\cdot2^{-20}.
\end{equation}
\end{theorem}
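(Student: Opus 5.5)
We apply Theorem~\ref{thm:interface} twice and then Theorem~\ref{thm:normalization}, quantifying the exact-case argument at the end of Section~\ref{subsec:fullrelations}.

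\emph{Step 1: the algebras.} Since $\eps<\delta$, Proposition~\ref{prop:Htransfer} gives the quadratic-form hypothesis for the list $\mathscr L_H$ with $(m,\lambda,C)=(375,1/96,2^{80})$. Lemma~\ref{lem:tower} gives $\eps\le\eps_{\rm in}$. Theorem~\ref{thm:interface} then yields a unital $*$-subalgebra $A_H$ approximating the commutant of the basic words. We do not use $A_H$ directly; we use $A:=A_H'$, which plays the role of the algebra generated by $\mathcal B$. Precisely, every contraction $a\in A_H$ nearly commutes with each element of $\mathcal B$, so each $g\in\mathcal B$ is within $O(\zeta_{\rm alg})$ of $A_H'$ after Haar averaging over $\mathcal U(A_H)$.

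The same procedure applies to the six action unitaries. Estimate \eqref{eq:Kquadratic} gives $(6,1/144,26)$, and Theorem~\ref{thm:interface} gives $B$ with
\[
\|x-E_Bx\|_2\le\sqrt{288}\sqrt{\cE_K(x)}+3\zeta_{\rm alg},\qquad
\|[U_{ij},b]\|_2\le6\sqrt6\,\zeta_{\rm alg}.
\]
Up to normalization of the energy ($\cE_K$ uses $1/24$, while $\cE_V$ with $m=6$ uses $1/24$ as well), this is exactly hypothesis \eqref{eq:normalizationhyp} with $k=2^{-9}$. The condition $\sqrt{288}\le2^{9/2}$ holds, and $3\zeta_{\rm alg}\le\varepsilon_{\rm alg}$ by Lemma~\ref{lem:tower}.

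\emph{Step 2: approximate inclusions.} We take $V_j$ to be the adjoints $U_{ij}^*$ of the six actions, or the $U_{ij}$ themselves, whichever orientation matches the compression relations. The goal is to verify $V_j^*AV_jsubseteq_\eta A$, where $A$ is the relative commutant obtained from $A_H$.

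The key point is the choice of $A$. I would take $A$ to be the algebra $A_H$ itself, with $A_H$ reinterpreted as the approximate commutant of $\mathcal B$. By the compression relations, conjugation by $U_{ij}$ sends each basic word to a word in $\mathcal B\cup\{\pi\}$ up to defect $\eps$, with $\pi$ a product of crossings of basic words. Hence $U_{ij}^*xU_{ij}$, for $x$ approximately commuting with $\mathcal B$, approximately commutes with $U_{ij}^*\mathcal BU_{ij}$. Which direction yields an inclusion must be matched against the statement: $V^*AV\subseteq A$ is needed, and the commutant reverses direction. So with $A$ equal to the approximate commutant and $U\mathcal BU^*\subseteq\mathcal A$, we obtain $U^*AU\subseteq_\eta A$ via inequality \eqref{eq:algebrainterface}: the energy $\cE_H(U^*aU)$ is bounded by the relation defects times word lengths $\le12$, times constants. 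That makes the error of size $\sqrt{2^{80}\eps}$-scale plus $3\zeta_{\rm alg}$, which is below $\eta=2^{-1642}$ since $\zeta_{\rm alg}=2^{-1700}$. Theorem~\ref{thm:normalization} then gives $UAU^*\subseteq_{2^{-20}}A$.

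\emph{Step 3: the signal.} The centralization relations and \eqref{eq:algebrainterface} say that $v$ is close to $E_A v$. Equivalently, $v$ nearly commutes with $\mathcal B$, so $v$ lies near $A_H$ up to $\sqrt{2\cdot96}$ times $O(\eps)$ plus $3\zeta_{\rm alg}$. Then $u^*hu$ should lie near $A_H'$: combine $UAU^*\subseteq_{2^{-20}}A$ with Haar averaging to show that $u^*hu$ nearly commutes with every contraction of $A_H$, with error $2\cdot2^{-20}$. This gives $\|[u^*hu,v]\|_2\le 2\cdot2^{-20}+2\|v-E v\|_2+\cdots\le5\cdot2^{-20}$, and conjugating by $u$ yields \eqref{eq:gate}.

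\emph{Main obstacle.} The hardest step is matching the roles: which of $A_H$ and its commutant is the algebra $A$ in Theorem~\ref{thm:normalization}, which direction of conjugation the compressions supply, and how to track the constants $2\cdot2^{-20}$ plus small terms so that the total is at most $5\cdot2^{-20}$. I would first test the choice $A=A_H$ with $V_j=U_{ij}$ in the exact case.
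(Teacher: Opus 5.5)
Your final route is essentially the paper's proof: take $A$ to be the approximate commutant produced from $\mathscr L_H$, use the compression relations with $V_j=U_{ab}$ to get $V_j^*AV_j\subseteq_\eta A$, reverse this with Theorem~\ref{thm:normalization}, and finish with $v\approx E_Av$ and the near-commutation of $A_1$ with $z_2$. Your initial choice $A:=A_H'$ in Step~1 is an unnecessary detour, and conjugating $h$ by $u^*$ rather than $v$ by $u$ is the same computation.

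One bookkeeping correction: $\sqrt{\mathcal E_H(V^*aV)}$ is not governed by a $\sqrt{2^{80}\eps}$ term, because $C$ does not enter \eqref{eq:algebrainterface}. It is dominated by the $\varepsilon_{\rm alg}$-level commutation of $A_1$ with $\mathcal B$, amplified by the $16$-letter targets $\pi_{ab}$ and by words of length $12$; the paper obtains $168\varepsilon_{\rm alg}$. The resulting bound still lies far below $\eta$.
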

\begin{proof}
Lemma~\ref{lem:tower} allows us to apply
Proposition~\ref{prop:Htransfer}, \eqref{eq:Kquadratic}, and
Theorem~\ref{thm:interface} to the two inputs
\[
 (m,\lambda,C)=(375,1/96,2^{80}),\qquad(6,1/144,26).
\]
Using $\kappa\ge k$ and the algebra error bounds above, we obtain
unital $*$-subalgebras $A,B\subseteq M_d$ with
\[
\begin{aligned}
 \|a-E_Aa\|_2&\le k^{-1/2}\sqrt{\mathcal E_H(a)}
                          +\varepsilon_{\rm alg},\\
 \|a-E_Ba\|_2&\le k^{-1/2}\sqrt{\mathcal E_K(a)}
                          +\varepsilon_{\rm alg}
 \qquad(\|a\|\le1).
\end{aligned}
\]
The unit balls of $A$ and $B$ commute, within $\varepsilon_{\rm alg}$,
with the five basic entries $x,y,z_1,z_2,z_3$ and the six action
unitaries, respectively. Here we used
$3\zeta_{\rm alg}\le\varepsilon_{\rm alg}$ and
$6\sqrt{375}\,\zeta_{\rm alg}\le\varepsilon_{\rm alg}$.

The crossing relations put $d_i$ and $e_i$ within $\eps$ of
$\gc{x}{z_i}$ and $\gc{y}{z_i}$, respectively. Their commutators
with $A_1$ therefore have norm at most
$4\varepsilon_{\rm alg}+2\eps$; those of each target $\pi_{ab}$ have
norm at most $16\varepsilon_{\rm alg}+8\eps$.
For $V=U_{ab}$, the compression relations give
\[
 \max_{g\in\{x,y,z_1,z_2,z_3\}}\|[V^*aV,g]\|_2
 \le16\varepsilon_{\rm alg}+10\eps
 \le28\varepsilon_{\rm alg}
 \qquad(a\in A_1),
\]
since $\eps<\delta<\varepsilon_{\rm alg}$.
The words in $\mathscr L_H$ have length at most twelve, hence
\[
\begin{aligned}
 \sqrt{\mathcal E_H(V^*aV)}&\le168\varepsilon_{\rm alg},\\
 \|V^*aV-E_A(V^*aV)\|_2
 &\le168\varepsilon_{\rm alg}/\sqrt k+\varepsilon_{\rm alg}
 \le\eta.
\end{aligned}
\]
Thus $V^*AV\subseteq_\eta A$ for every action unitary $V$.
Theorem~\ref{thm:normalization}, applied to $A,B$ and the six
$U_{ab}$, gives the reverse inclusions
$VAV^*\subseteq_{2^{-20}}A$.

The centralization relations and the same averaging list give
\[
 \|v-E_Av\|_2
 \le6\eps/\sqrt k+\varepsilon_{\rm alg}
 \le512\varepsilon_{\rm alg}/k.
\]
Since $E_Av\in A_1$, the reverse inclusion for $u=U_{12}$ puts
$uvu^*$ within $512\varepsilon_{\rm alg}/k+2^{-20}$ of $A_1$.
Every element of $A_1$ commutes with $h=z_2$ within
$\varepsilon_{\rm alg}$, so
\[
 \|[h,uvu^*]\|_2
 \le\varepsilon_{\rm alg}+1024\varepsilon_{\rm alg}/k+2^{-19}
 \le5\cdot2^{-20}=g_0.\qedhere
\]
\end{proof}

For a unitary $h$ and a symmetry $W$, the identity
\[
 \tr_d(-1+h^*WhW+Wh^*Wh)=1-\|[h,W]\|_2^2
\]
converts the reflection estimate into a lower bound for the signal.
We will use it after establishing the unitary and symmetry conditions
for the rounded variables.

\subsection{Finite transport and recovery}

We first record three word estimates used to recover the 97 full
relations. All defects are measured in the normalized $2$-norm.
Indices in the indexed relations below range over $\{1,2,3\}$.

\begin{lemma}\label{lem:reciprocity}
Let $a,b,c\in\mathcal U(d)$. Then
\[
 \|[\gc{a}{c},b]\|_2
 \le2\|[b,c]\|_2+2\|[\gc{a}{b},a]\|_2
       +\|[\gc{a}{b},c]\|_2.
\]
\end{lemma}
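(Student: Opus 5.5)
The plan is a direct word computation: conjugate $\gc{a}{c}$ by $b$ and return to $\gc{a}{c}$ by a short chain of replacements. Each replacement uses one of the three commutators on the right-hand side. Throughout I use two facts. First, for a unitary $b$,
\[
\|[x,b]\|_2=\|bxb^*-x\|_2.
\]
Second, left or right multiplication by unitaries preserves $\|\cdot\|_2$. So replacing a single unitary factor $p$ by $q$ inside a product of unitaries changes the product by exactly $\|p-q\|_2$ in $\|\cdot\|_2$. It therefore suffices to bound $\|b\gc{a}{c}b^*-\gc{a}{c}\|_2$.

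\emph{Step 1: remove $b$ from the $c$-factors.} Expanding,
\[
b\gc{a}{c}b^*=(bab^*)(bcb^*)(ba^*b^*)(bc^*b^*).
\]
I replace $bcb^*$ by $c$ and $bc^*b^*$ by $c^*$. Each replacement costs $\|bcb^*-c\|_2=\|[b,c]\|_2$, for a total of $2\|[b,c]\|_2$. The result is $a'ca'^*c^*$ with $a'=bab^*$.

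\emph{Step 2: rewrite $a'$ through $g=\gc{a}{b}$.} I use the exact identity $bab^*=(bab^*a^*)a=g^*a$, since $g^*=bab^*a^*$. This gives
\[
a'ca'^*c^*=g^*\,a\,c\,a^*\,g\,c^*.
\]
\emph{Step 3: cancel $g$ against $g^*$.} I move $g$ leftward through $a^*$, then $c$, then $a$. Each move is a single-factor replacement of a two-letter block:
\begin{itemize}
\item $a^*g\to ga^*$ costs $\|[g,a]\|_2$;
\item $cg\to gc$ costs $\|[g,c]\|_2$;
\item $ag\to ga$ costs $\|[g,a]\|_2$.
\end{itemize}
This reaches $g^*g\,aca^*c^*=\gc{a}{c}$.

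Summing the costs gives
\[
\|b\gc{a}{c}b^*-\gc{a}{c}\|_2\le2\|[b,c]\|_2+2\|[\gc{a}{b},a]\|_2+\|[\gc{a}{b},c]\|_2,
\]
which is the claim.

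The only real obstacle is choosing the order of moves so that $[\gc{a}{b},c]$ appears once, not two or three times. A naive route first commutes $g$ past $c^*$ and then conjugates all of $\gc{a}{c}$ by $g$, and it overcounts. Moving $g$ directly leftward from its position after $a^*$ until it meets $g^*$ is the step I expect to require care; the rest is bookkeeping of unitary invariance.
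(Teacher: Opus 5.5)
Your proof is correct and is essentially the paper's argument. Both are short telescoping word computations. Unitary invariance of $\|\cdot\|_2$ produces the two copies of $\|[b,c]\|_2$, and the remaining discrepancy is bounded by $\|[\gc{a}{b},aca^*]\|_2\le2\|[\gc{a}{b},a]\|_2+\|[\gc{a}{b},c]\|_2$. The only difference is cosmetic: you conjugate by $b$ and use $bab^*=\gc{a}{b}^*a$, whereas the paper uses $aba^*=\gc{a}{b}b$ together with the product rule for commutators.
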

\begin{proof}
The identity $aba^*=\gc{a}{b}b$ and the product commutator rule give
\[
 \|[b,aca^*]\|_2
 \le\|[b,c]\|_2+2\|[\gc{a}{b},a]\|_2
       +\|[\gc{a}{b},c]\|_2.
\]
Indeed, expand $[\gc{a}{b}b,aca^*]$ and use
$\|[aba^*,aca^*]\|_2=\|[b,c]\|_2$.
Expanding $[\gc{a}{c},b]$ with $\gc{a}{c}=(aca^*)c^*$ adds
at most $\|[b,c]\|_2$.
\end{proof}

The next lemma recovers the three braid relations from the other
twelve integer relations.

\begin{lemma}\label{lem:braid}
Let $U_{ab}\in\mathcal U(d)$ for $a\ne b$, and let $\iota\ge0$.
Suppose the six row and column commutations and the six Steinberg
relations in \eqref{eq:s2:integer-relations} have defects at most
$\iota$. Then
\[
 \|U_{ij}U_{ji}^*U_{ij}-U_{ji}^*U_{ij}U_{ji}^*\|_2
 \le27\iota\qquad(i<j).
\]
\end{lemma}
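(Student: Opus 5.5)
The approach is to transfer the classical Steinberg-group derivation of the braid relation (as in Milnor's treatment of $\mathrm{St}_n$, $n\ge3$) to approximate unitary solutions. Each use of an exact relation becomes a replacement whose cost is controlled by $\iota$. Fix $i<j$ and let $k$ be the third index. Put
\[
 w=U_{ij}U_{ji}^*U_{ij},\qquad w'=U_{ji}^*U_{ij}U_{ji}^*.
\]
The exact argument shows $wU_{ij}w^*=U_{ji}^*$. This gives $wU_{ij}=U_{ji}^*w$, that is,
\[
 U_{ij}U_{ji}^*U_{ij}U_{ij}=U_{ji}^*U_{ij}U_{ji}^*U_{ij}.
\]
Cancelling the final unitary $U_{ij}$ on the right gives $w=w'$. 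In $2$-norm the cancellation is exact, since $\|w-w'\|_2=\|wU_{ij}-w'U_{ij}\|_2$. So it suffices to bound $\|wU_{ij}w^*-U_{ji}^*\|_2$ by $27\iota$.

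For that bound, write $U_{ij}$ as a commutator through the third index. The Steinberg relation $U_{ik}U_{kj}=U_{ij}U_{kj}U_{ik}$ gives
\[
 \|U_{ij}-\gc{U_{ik}}{U_{kj}}\|_2\le\iota .
\]
Conjugation by $w$ preserves $\|\cdot\|_2$, so it suffices to compute $wU_{ik}w^*$ and $wU_{kj}w^*$ approximately and then take their group commutator. I compute $wU_{ik}w^*$ by peeling off the three factors of $w$ one at a time.
\begin{itemize}
 \item Conjugation by $U_{ij}$ fixes $U_{ik}$ up to $\iota$, by the row commutation.
 \item Conjugation by $U_{ji}^*$ sends $U_{ik}$ to $U_{jk}^*U_{ik}$ up to $O(\iota)$. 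This uses the Steinberg relation $U_{ji}U_{ik}=U_{jk}U_{ik}U_{ji}$ together with the column commutation $[U_{jk},U_{ik}]=0$.
 \item The final conjugation by $U_{ij}$ acts on $U_{jk}^*U_{ik}$ through the relation for $U_{ij}U_{jk}$. It returns approximately $U_{jk}^*$.
\end{itemize}
A symmetric computation, using the column commutation for $U_{kj}$, gives $wU_{kj}w^*\approx U_{ki}^*$. Then $\gc{U_{jk}^*}{U_{ki}^*}$ is compared with $U_{ji}^*$, again by one Steinberg relation combined with commutations.

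In each step I will use only the following facts about unitaries: $\|u_1\cdots u_n-u_1'\cdots u_n'\|_2\le\sum_\ell\|u_\ell-u_\ell'\|_2$; adjoints and conjugation preserve defects; and a group commutator is $4$-Lipschitz in each argument in $\|\cdot\|_2$. The error then telescopes into a sum of single-relation defects.

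I expect the main obstacle to be the bookkeeping of constants, not any conceptual step. Both images $wU_{ik}w^*$ and $wU_{kj}w^*$ enter a group commutator, so each of their errors counts twice. I will first derive each image with a minimal number of replacements, around five to six each. Next I will account for the $2$-norm cost of substituting these images into the commutator, plus the two endpoint replacements. The total should come out at most $27$.

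If the naive count exceeds $27$, I will try a shortcut. The identity $U_{ji}^*U_{ik}U_{ji}\approx U_{jk}^*U_{ik}$ already yields $U_{ik}U_{jk}^{\pm}$ products directly. Using it should avoid one of the commutator doublings, for instance by conjugating $\gc{U_{ik}}{U_{kj}}$ by $w$ only partially before recombining.
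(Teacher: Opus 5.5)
Your outline is the paper's own argument. You write $U_{ij}\approx\gc{U_{ik}}{U_{kj}}$, conjugate each factor by $w=U_{ij}U_{ji}^*U_{ij}$ one letter at a time, compare the resulting commutator with $U_{ji}^*$, and cancel by right multiplication. However, one of your target elements has the wrong sign, and the last step fails as you state it.

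In $\SL_3(\Z)$, put $v_{ab}=I_3+E_{ab}$. The element $v_{ij}v_{ji}^{-1}v_{ij}$ sends $\mathbf e_i\mapsto-\mathbf e_j$, $\mathbf e_j\mapsto\mathbf e_i$, and $\mathbf e_k\mapsto\mathbf e_k$. So it conjugates $v_{ik}$ to $v_{jk}^{-1}$, as you say, but it conjugates $v_{kj}$ to $v_{ki}$, not to $v_{ki}^{-1}$. Correspondingly $\gc{v_{jk}^{-1}}{v_{ki}^{-1}}=v_{ji}$. Hence comparing $\gc{U_{jk}^*}{U_{ki}^*}$ with $U_{ji}^*$ cannot cost $O(\iota)$. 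Concretely, take the left regular representation of $\SL_3(\F_3)$. Every relation holds there with $\iota=0$, yet $\gc{U_{jk}^*}{U_{ki}^*}=U_{ji}$ and $\|U_{ji}-U_{ji}^*\|_2=\sqrt2$.

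The repair is to redo your ``symmetric computation'' for $wU_{kj}w^*$.
First, $U_{ij}$ fixes $U_{kj}$ up to $\iota$.
Next, $U_{ji}^*U_{kj}U_{ji}$ lies within $2\iota$ of $U_{ki}U_{kj}$. To see this, rearrange $U_{kj}U_{ji}=U_{ki}U_{ji}U_{kj}$ and use $[U_{ki},U_{ji}]=0$.
Then $U_{ij}U_{ki}U_{ij}^*$ lies within $\iota$ of $U_{kj}^*U_{ki}$, by $U_{ki}U_{ij}=U_{kj}U_{ij}U_{ki}$.
One more use each of $[U_{ij},U_{kj}]=0$ and $[U_{ki},U_{kj}]=0$ gives $\|wU_{kj}w^*-U_{ki}\|_2\le6\iota$.

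The final comparison is then $\gc{U_{jk}^*}{U_{ki}}=U_{jk}^*\gc{U_{jk}}{U_{ki}}^*U_{jk}$. It lies within $2\iota$ of $U_{ji}^*$, by $U_{jk}U_{ki}=U_{ji}U_{ki}U_{jk}$ and $[U_{ji},U_{jk}]=0$.

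Two smaller points also need fixing.
\begin{enumerate}
\item In your middle step for $U_{ik}$, you pass from $U_{ji}U_{ik}\approx U_{jk}U_{ik}U_{ji}$ to $U_{ji}^*U_{ik}U_{ji}\approx U_{jk}^*U_{ik}$. The commutation needed there is the row commutation $[U_{ji},U_{jk}]=0$, which moves $U_{jk}$ past $U_{ji}^*$. The column commutation $[U_{jk},U_{ik}]=0$ that you cite does not do this.
\item A group commutator of unitaries is $2$-Lipschitz in each argument in $\|\cdot\|_2$. Your stated constant $4$ would give $47$.
\end{enumerate}
With these fixes the count is $1+2\cdot5+2\cdot6+2=25\le27$. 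The paper bounds the second image by $7\iota$ and obtains exactly $27$.
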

\begin{proof}
Fix distinct $i,j,k$ and put
\[
 a=U_{ij},\quad b=U_{ji},\quad p=U_{ik},\quad q=U_{jk},\quad
 r=U_{ki},\quad s=U_{kj},\qquad w=ab^*a.
\]
The assumed relations give
\[
\begin{gathered}
 \|apa^*-p\|_2\le\iota,\qquad
 \|aqa^*-pq\|_2\le\iota,\qquad
 \|b^*pb-q^*p\|_2\le2\iota,\\
 \|asa^*-s\|_2\le\iota,\qquad
 \|b^*sb-rs\|_2\le2\iota,\qquad
 \|ara^*-s^*r\|_2\le\iota,\qquad
 \|[r,s]\|_2\le\iota.
\end{gathered}
\]
For the two bounds involving $b^*$, rearrange $bp=qpb$ and
$sb=rbs$, respectively, and use one row or column commutation.
The bound for $ara^*$ follows by rearranging $ra=sar$.
Substitution into $wpw^*$ and $wsw^*$ now gives
\[
 \|wpw^*-q^*\|_2\le(1+2+2)\iota=5\iota,
 \qquad
 \|wsw^*-r\|_2\le(1+2+2+1)\iota\le7\iota.
\]
Since $\|a-\gc{p}{s}\|_2\le\iota$, replacing the four factors
in the conjugated commutator yields
\[
 \|waw^*-\gc{q^*}{r}\|_2
 \le(1+2\cdot5+2\cdot7)\iota=25\iota.
\]
The identity
$\gc{q^*}{r}=q^*\gc{q}{r}^*q$, together with the defects of
$qr=brq$ and $bq=qb$, gives
$\|\gc{q^*}{r}-b^*\|_2\le2\iota$.
Thus $\|waw^*-b^*\|_2\le27\iota$.
Multiplication on the right by $wa^*$ shows that this is precisely
the required bound for $\|ab^*a-b^*ab^*\|_2$.
\end{proof}

The following five-step calculation transports the remaining target.

\begin{lemma}\label{lem:integertransport}
Let the six $U_{ij}$ be unitaries whose twelve nonbraid integer
relations in \eqref{eq:s2:integer-relations} have defects at most
$\iota\ge0$. Put $K_{23}=U_{23}^*U_{32}$. Then
\begin{equation}\label{eq:integerconjugacy}
 \|K_{23}U_{21}K_{23}^*-U_{31}\|_2\le5\iota.
\end{equation}
\end{lemma}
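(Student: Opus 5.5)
The plan is to mimic the exact computation in $\SL_3(\Z)$ and count one defect $\iota$ for each relation replacement. In $K$ we have $v_{32}v_{21}v_{32}^{-1}=v_{31}v_{21}$, because $E_{32}E_{21}=E_{31}$. Conjugating by $v_{23}^{-1}$ fixes $v_{21}$, since $v_{21}$ and $v_{23}$ lie in the same row. It also sends $v_{31}$ to $v_{21}^{-1}v_{31}$, because $E_{23}E_{31}=E_{21}$. Finally $v_{21}^{-1}v_{31}v_{21}=v_{31}$, since $v_{21}$ and $v_{31}$ lie in the same column. Each of these steps uses exactly one of the twelve nonbraid relations in \eqref{eq:s2:integer-relations}, so the estimate should follow by telescoping with unitarily invariant norms.

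Concretely, I will write
\[
 K_{23}U_{21}K_{23}^*=U_{23}^*\bigl(U_{32}U_{21}U_{32}^*\bigr)U_{23}.
\]
The first step uses the Steinberg crossing $U_{32}U_{21}=U_{31}U_{21}U_{32}$, the case $(i,j,k)=(3,2,1)$. Multiplying on the right by $U_{32}^*$ gives $\|U_{32}U_{21}U_{32}^*-U_{31}U_{21}\|_2\le\iota$. Since conjugation by $U_{23}$ preserves $\|\cdot\|_2$, it remains to compare $U_{23}^*U_{31}U_{21}U_{23}$ with $U_{31}$.

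The second step factors this as $(U_{23}^*U_{31}U_{23})(U_{23}^*U_{21}U_{23})$. The row commutation $U_{21}U_{23}=U_{23}U_{21}$ gives $\|U_{23}^*U_{21}U_{23}-U_{21}\|_2\le\iota$. For the first factor I use the crossing $U_{23}U_{31}=U_{21}U_{31}U_{23}$, the case $(i,j,k)=(2,3,1)$. This gives $\|U_{23}U_{31}U_{23}^*-U_{21}U_{31}\|_2\le\iota$, and I need to invert the conjugation to obtain $U_{23}^*U_{31}U_{23}\approx U_{21}^*U_{31}$. This inversion is the one delicate point. One route is to rewrite $U_{31}\approx U_{23}^*U_{21}U_{31}U_{23}$ and commute $U_{21}$ past $U_{23}^*$ with the row commutation, at the cost of a second $\iota$. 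Alternatively, I could apply the Steinberg relation with $U_{31}$ and $U_{21}$ interchanged. Either way the product is within about $2\iota$–$3\iota$ of $U_{21}^*U_{31}U_{21}$.

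The last step is the column commutation $U_{21}U_{31}=U_{31}U_{21}$, which costs one more $\iota$ and yields $U_{31}$. Summing the replacements gives at most $5\iota$, provided the inversion in the second step costs no more than two defects. I expect that bookkeeping of the inverse conjugation to be the only real obstacle. Every other step is a single relation replacement multiplied by unitaries, where $\|uXw-uYw\|_2=\|X-Y\|_2$.
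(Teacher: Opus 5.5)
Your proposal is correct and uses the same five relation replacements as the paper: the crossing $U_{32}U_{21}=U_{31}U_{21}U_{32}$, the row commutation $[U_{23},U_{21}]=0$ twice, the crossing $U_{23}U_{31}=U_{21}U_{31}U_{23}$, and the column commutation $[U_{21},U_{31}]=0$; you group them as a product of two conjugated factors where the paper uses a linear chain. Your first route for the inverse conjugation costs exactly $2\iota$, so the total is $\iota+(2\iota+\iota)+\iota=5\iota$ with no remaining obstacle, and you can drop the ``interchanged'' alternative, since that relation is not among the listed integer relations.
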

\begin{proof}
Use the following five steps:
\[
\begin{aligned}
U_{23}^*U_{32}U_{21}U_{32}^*U_{23}
 &\longrightarrow U_{23}^*U_{31}U_{21}U_{23}\\
 &\longrightarrow U_{23}^*U_{31}U_{23}U_{21}\\
 &\longrightarrow U_{23}^*U_{21}^*U_{23}U_{31}U_{21}\\
 &\longrightarrow U_{21}^*U_{31}U_{21}\\
 &\longrightarrow U_{31}.
\end{aligned}
\]
Each step has error at most $\iota$. In order, the steps use
$U_{32}U_{21}=U_{31}U_{21}U_{32}$,
$[U_{23},U_{21}]=0$,
$U_{23}U_{31}=U_{21}U_{31}U_{23}$,
$[U_{23},U_{21}]=0$, and $[U_{21},U_{31}]=0$,
with their stated defects. Telescoping proves the bound.
\end{proof}

We now recover the full relations from the symmetry dictionary.
First take unitaries $x,a,t,z$ and a symmetry $q$ satisfying
$qtq=t^*$ and $qzq=z^*$, and form the words in
\eqref{csimp:r17:eq:dict}. At this stage $x$ is independent;
the additional representation $x=szs$ is introduced below.
Use the full readouts $y,c,d_i,e_i,U_{ij}$ in
\eqref{eq:s2:full-readouts}, evaluated at this independent $x$.
In particular, $d_1=d$. When $t^6=1$, we have
$r^3=1$, $v^2=1$, and $[v,r]=[v,q]=0$.
The intermediate system consists of seventeen equations:
\begin{align}
 &t^6=x^5=z^5=1;\label{csimp:r17:eq:orders}\\
 &[t,x]=[a,x]=0;\label{csimp:r17:eq:basiccomm}\\
 &[z,z_2]=[c,x]=[d,x]=[d,z]=[d,z_2]=0;\label{csimp:r17:eq:roots}\\
 &[a,b]=[b,a_+]=0,\qquad aa_+=ba_+a;\label{csimp:r17:eq:integer}\\
 &[a,z]=[a,z_3]=0,\qquad az_2a^*=p;\label{csimp:r17:eq:compress}\\
 &[v,z]=0.\label{csimp:r17:eq:central}
\end{align}
These equations recover the 97 full tests; the polynomial uses only
the fifteen equations \eqref{csimp:r3:fifteen}.
We will also use the exact identities
\begin{equation}\label{csimp:r17:eq:pflip}
 \ell p\ell=p^*,\qquad qp^*q=r^*pr,
\end{equation}
which follow from $\ell^2=1$ and
$p=d^*\ell d^*\ell d\ell d\ell$.
They supply the compression in the reverse direction.

\begin{lemma}\label{csimp:r17:lem:core}
Let $x,a,t,z$ be unitary and $q$ a symmetry, with
$qtq=t^*$, $qzq=z^*$, and $t^6=1$ exactly.
Form the readouts in \eqref{csimp:r17:eq:dict} and
\eqref{eq:s2:full-readouts}.
If the remaining sixteen equations
\eqref{csimp:r17:eq:orders}--\eqref{csimp:r17:eq:central}
have $2$-norm defects at most $e\ge0$, then all 97 full-system
defects are at most $2^{13}e$.
\end{lemma}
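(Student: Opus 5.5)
The plan is to use the exact symmetries of the dictionary to transport each of the sixteen given equations to every position in the full system. Each such transport is a conjugation by a fixed word: $r$ cyclically permutes indices, $q$ interchanges $x$ and $y$ and inverts $z_i$, and $\ell$ swaps indices $1,2$. Conjugation by a unitary preserves $2$-norm defects, so one transported equation keeps defect at most $e$. Errors grow only when several transported equations are combined by telescoping, and the total budget is $2^{13}e$.

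\emph{Step 1 (exact consequences).} From $t^6=1$ we get $r^3=1$ and $v^2=1$, and $qtq=t^*$ gives $qrq=r^*$. The readouts $z_2,z_3$ are the conjugates $rzr^*,r^*zr$, and the $U_{ij}$ in \eqref{eq:s2:full-readouts} are conjugates of $a,b$ by powers of $r$. The crossing equations in \eqref{eq:s2:root-crossings} hold identically, because $c,d_i,e_i$ are defined as commutators. The relations $t^6=1$ and $[v,r]=[v,q]=0$ are exact, so conjugating $[v,z]=0$ by $r^{\pm1}$ and by $q$ yields $[v,z_i]=0$ and $[v,y]=0$ with defect $e$. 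For $[v,x]$ I would use $[t,x]=0$ and $v=t^3$, which gives defect $3e$.

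\emph{Step 2 (root commutations and powers).} Conjugating $x^5=z^5=1$ by $q$ and $r$ gives all five power relations. Conjugating $[z,z_2]=0$ by $r$ gives $[z_i,z_j]=0$. Conjugating $[d,x]$, $[d,z]$, $[d,z_2]$ by powers of $r$ gives $[d_i,x]$ and $[d_i,z_j]$, and conjugating these by $q$ gives the $e_i$ versions. Here $t$ fixes $x$ only up to $e$, so each transport by $r^{\pm1}$ adds $O(e)$. Two relations remain, $[d_i,d_j]=0$ and $[e_i,e_j]=0$. For these I would apply Lemma~\ref{lem:reciprocity} with $a=x$, $c=z_j$, $b=d_i$, taking $[\gc{a}{b},a]$ from $[c,x]$-type inputs. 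This should cost a bounded multiple of $e$.

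\emph{Step 3 (integer relations and compressions).} Conjugating $[a,b]=0$, $[b,a_+]=0$ and $aa_+=ba_+a$ by powers of $r$ and by $q$ covers the six commutations and six crossings of \eqref{eq:s2:integer-relations}. One must check that $q$ maps the $U$-family to itself up to inverses. Lemma~\ref{lem:braid} then supplies the braid relations at cost $27$ times the resulting defect. For the compression equations, $[a,x]=0$ transports to $[U_{ij},x]$ and $[U_{ij},y]$, and $[a,z]=[a,z_3]=0$ transports to $[U_{ij},z_k]$ for $k\ne j$. For the targets, $az_2a^*=p$ transports to all $U_{ij}z_j=\pi U_{ij}$ with $i<j$ using the exact identities \eqref{csimp:r17:eq:pflip}, and Lemma~\ref{lem:integertransport} supplies the reverse direction $i>j$.

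The main obstacle is comparing $p$ with $\pi_{12}=d_1^*e_2d_1e_2^*$. This requires showing $\|\ell d^*\ell-e_2\|_2=O(e)$, that is, that $\ell$ maps $d_1=\gc{x}{z}$ to $e_2^{-1}$ approximately. Since $\ell=rq$ is an exact product of exact symmetries, this holds exactly up to the single defect $[t,x]$, entering through $rxr^*$ versus $x$. Finally, I would tally the worst path, counting telescoped replacements as in Lemma~\ref{lem:centralpowers}, and confirm that every count stays below $2^{13}$.
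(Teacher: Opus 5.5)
Your framework matches the paper's: transport by the exact symmetries $r,q$, telescoping, and Lemmas~\ref{lem:braid}, \ref{lem:reciprocity}, \ref{lem:integertransport}. However, three of your transport claims are false, and the most important one hides the key step of the proof. \textbf{The compression targets.} By \eqref{csimp:r17:eq:pflip}, conjugation by $r,q,\ell$ moves $p$ only within $\{p,rpr^*,r^*pr\}$ and their adjoints. Now $r^*pr\approx r^*\pi_{12}r\approx d_3^*e_1d_3e_1^*$, whereas the prescribed target for the pair $\{1,3\}$ is $\pi_{13}=d_1^*e_3d_1e_3^*$. In the group these agree only because $\xi_1\xi_3=\xi_3\xi_1$; no dictionary symmetry relates them. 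So transport with \eqref{csimp:r17:eq:pflip} does not give $U_{13}z_3=\pi_{13}U_{13}$; it gives $bz_3b^*\approx r^*pr$. In fact, transport and \eqref{csimp:r17:eq:pflip} already give all six compressions, reverse directions included, within $2e$, but with the temporary targets $p_{12}=az_2a^*$, $p_{23}=rp_{12}r^*$, $p_{13}=r^*p_{12}r$. So Lemma~\ref{lem:integertransport} is not what supplies the directions $i>j$. What is missing is the comparison $p_{13}\approx\pi_{13}$. The paper obtains it from $K_{23}=U_{23}^*U_{32}$, used as an approximate symmetry:
\begin{itemize}
\item The forward compression $p_{23}=U_{23}z_3U_{23}^*$ and the reverse one $U_{32}z_2U_{32}^*\approx p_{23}$ give $K_{23}z_2K_{23}^*\approx z_3$.
\item The fixed-element commutations give $K_{23}z_1K_{23}^*\approx z_1$ and $[K_{23},x],[K_{23},y]\approx0$, hence $K_{23}\pi_{12}K_{23}^*\approx\pi_{13}$.
\item Lemma~\ref{lem:integertransport} gives $K_{23}U_{21}K_{23}^*\approx U_{31}$, so $K_{23}p_{12}K_{23}^*\approx K_{23}U_{21}z_1U_{21}^*K_{23}^*\approx U_{31}z_1U_{31}^*=p_{13}$.
\end{itemize}
This is the dominant cost in the paper ($329e$). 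The comparison of $p$ with $\pi_{12}$, which you call the main obstacle, costs only $11e$. It also needs $[d,z]=0$, not just $[t,x]=0$, because $\ell d^*\ell=z_2^*(re_1r^*)z_2$.

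\textbf{Two further orientation failures.} Even numbers of $q$'s preserve cyclic index order, so conjugation cannot reverse orientation. This breaks two more of your steps.
\begin{itemize}
\item Conjugating $[d,z_2]=0$ by powers of $r$ gives only $[d_i,z_{i+1}]$ (indices mod $3$). Conjugation by $q$ or $\ell$ turns $d$'s into conjugates of $e$'s. So $[d_2,z_1]$, $[d_3,z_2]$, $[d_1,z_3]$ are never reached. This is where Lemma~\ref{lem:reciprocity} is needed, with $(a,b,c)=(x,z_1,z_2)$, bounding $[d_2,z_1]$ by $[z_1,z_2]$, $[d_1,x]$ and $[d_1,z_2]$. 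Your use of it for $[d_i,d_j]$ is unnecessary, since the product rule on $d_j=\gc{x}{z_j}$ suffices, and it presupposes exactly these missing bounds.
\item Conjugating $[a,x]=0$ by $r$ and $q$ reaches only $[U_{12},x],[U_{23},x],[U_{31},x]$ and $[U_{13},y],[U_{32},y],[U_{21},y]$. The bound for $[b,x]=q[a,y]q$ and its conjugates needs the crossing $b\approx\gc{a}{a_+}$, combined with $[a,x]$ and $[a_+,x]$.
\end{itemize}
A minor slip: $q$ conjugates $[v,z]$ to $[v,z^*]$, not to $[v,y]$. The bound for $[v,y]$ comes from $[v,x]$ via $qvq=v$, at cost $3e$.
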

\begin{proof}
Conjugation by $r$ cycles $(z_1,z_2,z_3)$ and the two action
triples $(a,a_+,U_{31})$ and $(b,U_{21},U_{32})$ exactly.
Conjugation by $q$ swaps indices $2$ and $3$ in the action entries,
inverts $z_i$ with the same index permutation, and interchanges $x,y$.
Thus the two commutations in \eqref{csimp:r17:eq:integer} give
all six row and column commutations by conjugation with $r$.
The crossing gives three crossings by $r$ and the other three by $q$,
all with defects at most $e$.
Lemma~\ref{lem:braid} bounds the three braid defects by $27e$.

Since $r=t^2$ and $v=t^3$, the bound $\|[t,x]\|_2\le e$ gives
\[
 \|[r,x]\|_2,\ \|[r,y]\|_2\le2e,\qquad
 \|[v,x]\|_2,\ \|[v,y]\|_2\le3e.
\]
The commutator product rule then gives
\[
 \|[a_+,x]\|_2,\ \|[U_{31},x]\|_2\le5e,\qquad
 \|[b,x]\|_2\le14e,\qquad
 \|[U_{21},x]\|_2,\ \|[U_{32},x]\|_2\le18e.
\]
Here the estimate for $b$ uses
$\|b-\gc{a}{a_+}\|_2\le e$.
Conjugating by $q$ gives the corresponding bounds with $y$.
Every action therefore commutes with $x,y$ within $18e$.

The definitions of $c,d_i,e_i$ make all seven crossing equations exact.
The relation $[z_1,z_2]=0$ and its cyclic conjugates bound every
$[z_i,z_j]$ by $e$. Conjugating $d_i$ by $r$ or $r^*$ changes it
from the corresponding $d_j$ by at most $4e$.
Transporting the retained commutations consequently gives
\[
 \|[d_i,x]\|_2\le13e,\qquad \|[d_i,z_i]\|_2\le9e.
\]
For the other order of the indices, Lemma~\ref{lem:reciprocity}
with $(a,b,c)=(x,z_1,z_2)$ gives
\[
 \|[d_2,z_1]\|_2
 \le2\|[z_1,z_2]\|_2+2\|[d_1,x]\|_2+\|[d_1,z_2]\|_2
 \le5e.
\]
Cyclic transport now bounds every $[d_i,z_j]$ by $13e$,
and the product rule bounds every $[d_i,d_j]$ by $52e$.

The exact identities
\[
 qd_1q=z_1^*e_1^*z_1,\qquad
 qd_2q=z_3^*e_3^*z_3,\qquad
 qd_3q=z_2^*e_2^*z_2
\]
and the same-index bounds place $e_1,e_3,e_2$ within $9e$ of
$qd_1^*q,qd_2^*q,qd_3^*q$, respectively.
Hence $[e_i,y]$ and $[e_i,z_j]$ have norms at most $31e$,
and the product rule bounds $[e_i,e_j]$ by $124e$.
Since $qcq=c^*$, the bound on $[c,x]$ also bounds $[c,y]$.
All five basic fifth-power defects are at most $e$, by conjugation
from $x^5$ and $z^5$. Thus the 47 tests involving these elements
have defects at most $124e$.

For the compression equations, define the temporary target words
\[
 p_{12}=az_2a^*,\qquad p_{23}=rp_{12}r^*,\qquad p_{13}=r^*p_{12}r,
 \qquad p_{ji}=p_{ij}\quad(i<j).
\]
The readouts and $r^3=1$ give the exact identities
\[
 p_{12}=U_{12}z_2U_{12}^*,\qquad
 p_{23}=U_{23}z_3U_{23}^*,\qquad
 p_{13}=U_{31}z_1U_{31}^*.
\]
These targets will be compared with the prescribed words
$\pi_{ij}=d_i^*e_jd_ie_j^*$ for $i<j$.
The commutations $[U_{ij},z_k]=0$ for $k\ne j$ follow from
\eqref{csimp:r17:eq:compress} by conjugation with $r,q$,
with defects at most $e$.
Since $\|p_{12}-p\|_2\le e$, the identities
\eqref{csimp:r17:eq:pflip} give
\[
 \|bz_3b^*-r^*p_{12}r\|_2\le2e.
\]
Its cyclic conjugates give the other two reverse compressions.
The three forward compressions are exact by the choice of $p_{ij}$.
Thus all six compressions to temporary targets have defects at most $2e$.

The retained estimate $\|[d,z]\|_2\le e$ gives
$\|qd_1^*q-e_1\|_2\le e$, and conjugation by $r$ transports
$e_1$ to $e_2$ within $4e$.
Therefore $\|\ell d^*\ell-e_2\|_2\le5e$, and
\[
 \|p_{12}-\pi_{12}\|_2
 =\|p_{12}-d_1^*e_2d_1e_2^*\|_2\le11e.
\]
Conjugating this comparison by $r$ adds at most $16e$, so
$\|p_{23}-\pi_{23}\|_2\le27e$.
For $K_{23}=U_{23}^*U_{32}$ from Lemma~\ref{lem:integertransport},
the reverse compression to $p_{23}$ and the exact forward compression give
\[
 \|K_{23}z_2K_{23}^*-z_3\|_2\le2e.
\]
The two fixed-element commutations for $z_1$ also give
$\|K_{23}z_1K_{23}^*-z_1\|_2\le2e$.
Moreover, $\|[K_{23},x]\|_2,\|[K_{23},y]\|_2\le36e$.
Since the crossings defining $d_1$ and $e_2$ are exact, editing
the two occurrences of each factor yields
\[
 \|K_{23}d_1K_{23}^*-d_1\|_2,\quad
 \|K_{23}e_2K_{23}^*-e_3\|_2
 \le(2\cdot36+2\cdot2)e=76e.
\]
Lemma~\ref{lem:integertransport}, with $\iota=e$, gives
$\|K_{23}U_{21}K_{23}^*-U_{31}\|_2\le5e$.
Now replace $p_{12}$ by $U_{21}z_1U_{21}^*$ at cost at most $2e$,
use the bound for $z_1$, and the exact identity $p_{13}=U_{31}z_1U_{31}^*$:
\[
 \|K_{23}p_{12}K_{23}^*-p_{13}\|_2
 \le(2+2\cdot5+2)e=14e.
\]
Transporting the seed $\|p_{12}-\pi_{12}\|_2\le11e$ therefore gives
\[
 \|p_{13}-\pi_{13}\|_2
 \le(14+11+4\cdot76)e=329e.
\]
Adding the reverse compression error gives at most $331e$.
The other prescribed target pairs cost at most $13e$ and $29e$.
Finally, $v$ commutes with $q,r$ exactly, so the five centralization
defects are at most $3e$.
The root, integer, fixed-element, and centralization defects are bounded
by $124e$, $27e$, $18e$, and $3e$, respectively. Thus every full
residual is at most $331e\le2^{13}e$, as claimed.
\end{proof}

We now impose $x=szs$ to recover three of the intermediate tests.
Let $s,q$ be symmetries and $z$ a unitary satisfying
\begin{equation}\label{csimp:r3:rootaction}
 qzq=z^*,\qquad(qs)^3=1,\qquad x=szs,\qquad y=qxq.
\end{equation}
Then $sxs=z$ and $sys=y^*$, the latter following from
$sqs=qsq$ and $qzq=z^*$.
As above, write $d=\gc{x}{z}$ and $c=\gc{x}{y}$.

\begin{lemma}\label{csimp:r3:recoverthree}
Under \eqref{csimp:r3:rootaction},
\[
 \|x^5-1\|_2=\|z^5-1\|_2,\qquad
 \|[d,z]\|_2=\|[d,x]\|_2,\qquad
 \|[c,x]\|_2\le3\|[d,x]\|_2.
\]
Suppose also that $a,t$ are unitaries with $qtq=t^*$ and $t^6=1$,
and form the readouts and action entries as in
Lemma~\ref{csimp:r17:lem:core}.
If the fifteen equations in \eqref{csimp:r3:fifteen} have defects
at most $e\ge0$, then the intermediate seventeen equations have
defects at most $3e$, and the 97 full defects are at most
$3\cdot2^{13}e$.
\end{lemma}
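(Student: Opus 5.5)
The first three identities follow from exact conjugation identities, and the second part is bookkeeping that feeds the retained defects into Lemma~\ref{csimp:r17:lem:core}.

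\emph{The three identities.} From \eqref{csimp:r3:rootaction}, $x=szs$ with $s$ a symmetry, so $x^5=sz^5s$. Conjugation by $s$ preserves $\|\cdot\|_2$, which gives $\|x^5-1\|_2=\|z^5-1\|_2$.

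For the second identity, note that $sds=\gc{sxs}{szs}=\gc{z}{x}=d^*$. Hence $s[d,x]s=[d^*,z]$. Since $d$ is unitary, $[d^*,z]=-d^*[d,z]d^*$, so $\|[d^*,z]\|_2=\|[d,z]\|_2$, and therefore $\|[d,x]\|_2=\|[d,z]\|_2$.

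For $c$, I would use the exact identities $sxs=z$ and $sys=y^*$. They give $scs=\gc{z}{y^*}$. Conjugating by $q$, using $qzq=z^*$ and $qyq=x$, turns this into $\gc{z^*}{x}$, which is conjugate to $d$ or $d^*$ up to a unitary factor such as $z^*$. Then $[c,x]$ transports to a commutator of $d^{\pm1}$ with $x$, $z$, or a product of these. The product rule bounds it by at most three copies of $\|[d,x]\|_2$ and $\|[d,z]\|_2$, which are equal. The exact word identity is what I would verify first. I expect this step to be the main obstacle: it needs $(qs)^3=1$ in the form $sqs=qsq$, and the orientation of the conjugates must be tracked carefully to get the constant $3$.

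\emph{The seventeen equations.} Compare the intermediate system \eqref{csimp:r17:eq:orders}--\eqref{csimp:r17:eq:central} with \eqref{csimp:r3:fifteen}.
\begin{itemize}
\item $t^6=1$ is exact by hypothesis.
\item $z^5=1$, $[t,x]=[a,x]=0$, $[z,z_2]=0$, $[d,x]=[d,z_2]=0$, the three integer relations, the three compressions, and $[v,z]=0$ are retained directly, with defect at most $e$.
\end{itemize}
This leaves three equations, $x^5=1$, $[d,z]=0$ and $[c,x]=0$. By the first part their defects are at most $e$, $e$ and $3e$ respectively. So all seventeen defects are at most $3e$.

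\emph{The 97 full relations.} The hypotheses of Lemma~\ref{csimp:r17:lem:core} hold: $q$ is a symmetry, $qtq=t^*$, $qzq=z^*$, and $t^6=1$ exactly. That lemma, applied with $3e$ in place of $e$, bounds all 97 full defects by $3\cdot2^{13}e$.
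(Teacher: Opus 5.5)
Your proposal is correct and follows essentially the paper's route. Exact conjugation by elements of the $S_3$ generated by $s,q$ (using $sqs=qsq$) identifies $c$ with a unitary conjugate of $d^{\pm1}$, a three-factor product rule gives the constant $3$, and the remaining bookkeeping into Lemma~\ref{csimp:r17:lem:core} is exactly as in the paper.

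One slip needs fixing: $q(scs)q=\gc{z^*}{x^*}=(xz)^*d^*(xz)$, not $\gc{z^*}{x}$. With this correction, conjugation by $(xz)qs$ sends $c\mapsto d^*$ and $x\mapsto xz^*x^*$, so $\|[c,x]\|_2\le2\|[d,x]\|_2+\|[d,z]\|_2=3\|[d,x]\|_2$. The paper instead conjugates $d$ by $j=qsq$ to get $(xy)^*c(xy)$ and expands $[c,x]$ along $y^*xy$.
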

\begin{proof}
Conjugation by $s$ gives the first equality, and
$sds=d^*$ gives the second.
Set $j=qsq=sqs$ and $f_0=jdj$. Then
\[
 jxj=x^*,\qquad jyj=z^*,\qquad jzj=y^*,\qquad
 f_0=\gc{x^*}{y^*}=(xy)^*c(xy).
\]
It follows that
\[
 \|[f_0,x]\|_2=\|[d,x]\|_2,\qquad
 \|[f_0,y]\|_2=\|[d,z]\|_2=\|[d,x]\|_2.
\]
Conjugating by $xy$ and applying the product rule now gives
\[
 \|[c,x]\|_2
 =\|[f_0,y^*xy]\|_2
 \le\|[f_0,x]\|_2+2\|[f_0,y]\|_2
 =3\|[d,x]\|_2.
\]
These estimates recover $x^5=1$, $[d,z]=0$, and $[c,x]=0$
from the fifteen retained tests. The extra retained equation
$(qs)^3=1$ supplies the symmetry used here, and
Lemma~\ref{csimp:r17:lem:core} gives the full-system bound.
\end{proof}

\subsection{Rounding and matrix positivity}

The relation penalties control both the distance to a tuple of
symmetries and the errors in the retained relations. We first record
these two bounds, then make the finite-order relations exact.

\begin{lemma}\label{lem:coverage}
Let $A=(A_1,\ldots,A_N)$ be a tuple of selfadjoint contractions in a
finite tracial algebra with normalized trace $\tau$. Let
$w_1,\ldots,w_m$ be words in $X_1,\ldots,X_N$ such that every
variable occurs in at least one word, and put
\[
 q=\sum_{\rho=1}^m\tau\bigl(2-w_\rho(A)-w_\rho(A)^*\bigr).
\]
Define $\widehat A_j=\sgn(A_j)$ by functional calculus, where
$\sgn(t)=1$ for $t\ge0$ and $\sgn(t)=-1$ for $t<0$.
Then $q\ge0$, each $\widehat A_j$ is a symmetry, and
\begin{equation}\label{eq:coverbounds}
 \|A_j-\widehat A_j\|_2\le\sqrt{q/2},\qquad
 \|1-w_\rho(A)\|_2\le\sqrt q.
\end{equation}
If every word has length at most $L$, then, with
$\widehat A=(\widehat A_1,\ldots,\widehat A_N)$,
\begin{equation}\label{eq:nativeedit}
 \|1-w_\rho(\widehat A)\|_2
 \le(1+L/\sqrt2)\sqrt q.
\end{equation}
\end{lemma}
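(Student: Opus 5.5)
The plan is to prove the four assertions in turn: nonnegativity of each penalty, the rounding bound for the variables, the bound $\|1-w_\rho(A)\|_2\le\sqrt q$, and the transfer to the symmetries $\widehat A$.

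\emph{Nonnegativity and the word bound.} Each $w_\rho(A)$ is a product of contractions, hence a contraction. For any contraction $w$,
\[
 \tau\bigl((1-w)^*(1-w)\bigr)=1-2\operatorname{Re}\tau(w)+\tau(w^*w)\le 2-2\operatorname{Re}\tau(w)=\tau(2-w-w^*).
\]
So every summand of $q$ is nonnegative, which gives $q\ge0$. Each single summand is at most $q$, so $\|1-w_\rho(A)\|_2^2\le q$, which is the second bound in \eqref{eq:coverbounds}.

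\emph{Rounding each variable.} Fix $j$ and choose a word $w=w_\rho$ in which $X_j$ occurs. The first step is to show
\[
 1-\tau(A_j^2)\le\tfrac12\tau(2-w-w^*).
\]
Write $w(A)=PA_jQ$, where $P,Q$ are contractions given by the prefix and suffix. Then
\[
 \operatorname{Re}\tau(w)\le|\tau(PA_jQ)|\le\|P^*\|_2\,\|A_jQ\|_2\le\|A_jQ\|_2.
\]
Next, $\|A_jQ\|_2^2=\tau(Q^*A_j^2Q)$, and $Q^*A_j^2Q\le QQ^*$ fails in general, so a direct order argument is not available. I would instead write $\tau(Q^*A_j^2Q)=\tau(A_j^2QQ^*)$ and use $QQ^*\le1$ together with $A_j^2\ge0$ to get $\tau(A_j^2QQ^*)\le\tau(A_j^2)$. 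Hence $\operatorname{Re}\tau(w)\le\sqrt{\tau(A_j^2)}$, and with $s=\sqrt{\tau(A_j^2)}\le1$,
\[
 \tfrac12\tau(2-w-w^*)\ge1-s\ge\tfrac12(1-s^2)=\tfrac12\bigl(1-\tau(A_j^2)\bigr).
\]
This is off by a factor from the target, so I would need to recheck. Functional calculus gives
\[
 \|A_j-\widehat A_j\|_2^2=\tau\bigl((1-|A_j|)^2\bigr)\le\tau(1-|A_j|)\le\tau(1-A_j^2),
\]
using $0\le|A_j|\le1$. Combining the two displays yields $\|A_j-\widehat A_j\|_2^2\le q\cdot$(a constant), and the goal is to make that constant $\tfrac12$.

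\emph{The main obstacle is this constant.} A sharper route is to use $1-s\ge\tfrac12(1-s^2)$ only where it is lossless. An alternative is the bound $\tau((1-|A_j|)^2)\le 1-\tau(|A_j|)\le 1-s$, after justifying $\operatorname{Re}\tau(w)\le\tau(|A_j|)$. That last inequality would come from Hölder in the form $|\tau(PA_jQ)|=|\tau(QPA_j)|\le\|QP\|\,\tau(|A_j|)\le\tau(|A_j|)$, which holds because $QP$ is a contraction. With it, $\|A_j-\widehat A_j\|_2^2\le1-\operatorname{Re}\tau(w)=\tfrac12\tau(2-w-w^*)\le q/2$. This Hölder step is the key estimate, and I would carry out this second route.

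\emph{Editing to the symmetries.} To obtain \eqref{eq:nativeedit}, replace the letters of $w_\rho(A)$ by the corresponding $\widehat A_j$ one at a time and telescope. Every intermediate factor is a contraction, so each letter replacement costs at most $\|A_j-\widehat A_j\|_2\le\sqrt{q/2}$. There are at most $L$ replacements, so
\[
 \|w_\rho(A)-w_\rho(\widehat A)\|_2\le L\sqrt{q/2}.
\]
Adding the bound $\|1-w_\rho(A)\|_2\le\sqrt q$ from the first step gives $(1+L/\sqrt2)\sqrt q$, as claimed.
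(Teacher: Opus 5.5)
Your proof is correct and, once the abandoned first attempt via $\tau(A_j^2)$ is dropped, it is the paper's argument. It uses the bound $2-w-w^*\ge(1-w)^*(1-w)$, cyclic rotation with tracial H\"older to get $\operatorname{Re}\tau(w)\le\tau(|A_j|)$, the estimate $\tau((1-|A_j|)^2)\le\tau(1-|A_j|)$, and letter-by-letter telescoping. You should also state the one-line check, asserted in the lemma, that $\sgn(A_j)$ is selfadjoint with square $1$.
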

\begin{proof}
For any contraction $w$,
\begin{equation}\label{eq:defectidentity}
 2-w-w^*=(1-w)^*(1-w)+(1-w^*w)\ge(1-w)^*(1-w).
\end{equation}
Thus every summand defining $q$ is nonnegative, and the second
bound in \eqref{eq:coverbounds} follows.
Choose a word $w=w_\rho(A)$ containing $A_j$. Cyclically rotate
its trace to $\tau(A_jB)$, where $\|B\|\le1$. Tracial H\"older gives
\[
 \operatorname{Re}\tau(w)\le|\tau(A_jB)|\le\tau(|A_j|).
\]
Consequently,
\[
 \|A_j-\widehat A_j\|_2^2
 =\tau\bigl((1-|A_j|)^2\bigr)
 \le\tau(1-|A_j|)
 \le1-\operatorname{Re}\tau(w)\le q/2.
\]
Telescoping each word under the substitutions
$A_j\mapsto\widehat A_j$ proves \eqref{eq:nativeedit}.
\end{proof}

\begin{lemma}\label{csimp:r3:roundorder}
Let $U\in\mathcal U(d)$ and let $Q\in M_d$ be a symmetry with $QUQ=U^*$.
For $n=3$ or $n=6$, there is a unitary $\widehat U$ such that
\[
 \widehat U^n=1,\qquad Q\widehat UQ=\widehat U^*,\qquad
 \|U-\widehat U\|_2\le\|U^n-1\|_2.
\]
\end{lemma}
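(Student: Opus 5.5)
We round $U$ spectrally. Let $\mu_n=\{\omega\in\C:\omega^n=1\}$, and fix a nearest-point map $\phi:\mathbb T\to\mu_n$ that is equivariant under complex conjugation. Concretely, take $\phi(e^{i\theta})=e^{2\pi i k/n}$, where $k$ is the integer nearest to $n\theta/2\pi$, with a fixed tie-breaking rule at the midpoints that is symmetric under $\theta\mapsto-\theta$. Put $\widehat U=\phi(U)$ by Borel functional calculus. Then $\widehat U$ is unitary with spectrum in $\mu_n$, so $\widehat U^n=1$.

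The key step is the pointwise inequality
\[
 |e^{i\theta}-\phi(e^{i\theta})|\le|e^{in\theta}-1|\qquad(\theta\in\R).
\]
Writing $\psi$ for the signed angular distance to the nearest $n$th root of unity, we have $|\psi|\le\pi/n$. The left side equals $2|\sin(\psi/2)|$ and the right side equals $2|\sin(n\psi/2)|$. Since $|n\psi/2|\le\pi/2$ and $\sin$ is increasing and concave on $[0,\pi/2]$, we get $|\sin(n\psi/2)|\ge|\sin(\psi/2)|$ for $n\ge1$. Applying the spectral theorem to the normal matrix $U$ and integrating this inequality against its spectral measure gives
\[
 \|U-\widehat U\|_2\le\|U^n-1\|_2.
\]
Only $n=3,6$ are needed, but the argument works for every $n$.

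For the reflection condition, $QUQ=U^*$ means that conjugation by the symmetry $Q$ sends $U$ to $\overline{\mathrm{id}}(U)$. Since $\Ad(Q)$ is a $*$-automorphism, it commutes with the continuous functional calculus. It therefore commutes with Borel functional calculus as well, because $U$ has finite spectrum and every Borel function agrees on that spectrum with a polynomial. Hence
\[
 Q\phi(U)Q=\phi(QUQ)=\phi(U^*)=(\phi\circ\overline{\,\cdot\,})(U).
\]
Conjugation-equivariance of $\phi$ gives $\phi\circ\overline{\,\cdot\,}=\overline{\phi}$, so $Q\widehat UQ=\overline\phi(U)=\widehat U^*$.

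The only delicate point is making $\phi$ conjugation-equivariant at the midpoints between consecutive roots. A midpoint $e^{i\theta}$ with $\theta=(2k+1)\pi/n$ is paired with its conjugate at $-(2k+1)\pi/n$. We handle this by rounding a midpoint in the direction that is mirrored under conjugation, for instance always toward the root with smaller $|\arg|$. When $n$ is odd the midpoint $-1$ is self-conjugate, and we send it to $1$ only if that choice is consistent. Since $|\arg|$ is invariant under conjugation, this choice is equivariant. The distance inequality still holds at midpoints, where equality $|\psi|=\pi/n$ occurs.
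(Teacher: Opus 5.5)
Your proof follows the same route as the paper's. You apply an equivariant nearest-root map $\phi$ by functional calculus, compare $2|\sin(\psi/2)|\le2|\sin(n\psi/2)|$ for $|\psi|\le\pi/n$ (monotonicity of $\sin$ on $[0,\pi/2]$ is enough, so concavity is not needed), and use $Q\phi(U)Q=\phi(U^*)=\phi(U)^*$. Your argument that $\Ad(Q)$ commutes with the calculus via interpolating polynomials is fine.

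The one point you leave loose is $z=-1$ when $n=3$, and it needs a separate check.

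\begin{itemize}
\item This point is the midpoint of the two non-real roots $e^{\pm2\pi i/3}$, so your smaller-$|\arg|$ rule cannot break the tie.
\item Equivariance forces $\phi(-1)$ to be real. The only real cube root of unity is $1$, so $\phi(-1)=1$ is required, not merely allowed ``if consistent.'' Either non-real choice would make $Q\widehat UQ\ne\widehat U^*$ on the $(-1)$-eigenspace, which $Q$ preserves.
\item Since $1$ is not a nearest root of $-1$, the bound $|\psi|\le\pi/n$ fails there. Your sine comparison, and your remark that the inequality ``still holds at midpoints,'' therefore do not cover this point.
\end{itemize}

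The inequality does hold at $-1$, with equality, because $|-1-1|=2=|(-1)^3-1|$. This is exactly how the paper closes the case, and it also justifies your claim that the argument works for every odd $n$.
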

\begin{proof}
For $n=6$, choose a nearest sixth root on the unit circle
equivariantly under complex conjugation; the real points $1,-1$
are already sixth roots. If the argument error is
$\theta\in[-\pi/6,\pi/6]$, then
$2\sin(|\theta|/2)\le2\sin(6|\theta|/2)$.
For $n=3$, choose nearest cubic roots equivariantly, except that
$-1$ is sent to $1$. Away from $-1$, the argument error lies in
$[-\pi/3,\pi/3]$ and satisfies
$2\sin(|\theta|/2)\le2\sin(3|\theta|/2)$.
At $-1$, the distance to the chosen root and $|(-1)^3-1|$ both
equal $2$.

In either case this defines a Borel function $\varphi_n$ satisfying
\[
 \varphi_n(z)^n=1,\qquad
 \varphi_n(\bar z)=\overline{\varphi_n(z)},\qquad
 |z-\varphi_n(z)|\le|z^n-1|\quad(|z|=1).
\]
Functional calculus with $\widehat U=\varphi_n(U)$ proves the
claims. The special choice at $-1$ preserves conjugation symmetry
for odd order.
\end{proof}

\begin{theorem}[Matrix positivity]\label{thm:matrixpositivity}
For every integer $d\ge1$ and every tuple $A=(A_1,\ldots,A_6)$
of selfadjoint contractions in $M_d$, the polynomial $f$ in
\eqref{eq:nativefinal} satisfies
\[
 \tr_d f(A)\ge\frac34.
\]
\end{theorem}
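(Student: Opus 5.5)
The proof will split according to the size of the total penalty
\[
 q=\sum_{j=1}^{15}\tr_d\bigl(2-u_j(A)-u_j(A)^*\bigr)\ge0,
\]
so that $\tr_d f(A)=-1+2\operatorname{Re}\tr_d\omega(A)+Mq$. Since $\omega(A)$ is a product of contractions, $\operatorname{Re}\tr_d\omega(A)\ge-1$, and hence $\tr_d f(A)\ge-3+Mq$. If $q\ge q_0:=4/M$, this already gives $\tr_d f(A)\ge1\ge3/4$. Everything therefore reduces to the small-penalty case $q<q_0$, where $\sqrt{q}$ is of order $2^{-J-27}$.

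In the small-penalty case, I will apply Lemma~\ref{lem:coverage} with $N=6$ and $L=72$; the penalty words contain all six variables. This gives symmetries $\widehat A_j=\sgn(A_j)$ with $\|A_j-\widehat A_j\|_2\le\sqrt{q/2}$ and retained defects $\|1-u_j(\widehat A)\|_2\le(1+72/\sqrt2)\sqrt q\le 2^6\sqrt q$. At the symmetries, the readout words become honest unitaries: $s,q$ are symmetries, $t,z,a$ are unitaries, and $qtq=t^*$, $qzq=z^*$. Moreover, each penalty defect equals the defect of the corresponding relation in \eqref{csimp:r3:fifteen}.

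Next I make the two finite-order relations exact. Lemma~\ref{csimp:r3:roundorder} with $n=6$, $U=t=\widehat A_4\widehat A_6$ and $Q=\widehat A_6$ produces $\widehat t$ with $\widehat t^6=1$ and $Q\widehat tQ=\widehat t^*$, at cost $\le 2^6\sqrt q$. Writing $\widehat t=X_4'X_6$ with $X_4'=\widehat tX_6$ keeps a symmetry, so this is again a substitution of symmetries.

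For $(qs)^3=1$ I apply the $n=3$ case to $U=qs$ with $Q=q$; indeed $q(qs)q=sq=(qs)^*$. The result is $\widehat U$ with $\widehat U^3=1$ and $q\widehat Uq=\widehat U^*$, and setting $s'=q\widehat U$ gives a symmetry with $(qs')^3=1$ exactly.

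Each edit changes the retained defects by a multiple of the edit size, via word-length telescoping with lengths at most $72$. All fifteen defects, and also the signal word, therefore move by at most $C_1\sqrt q$ with $C_1\le 2^{14}$. The retained defects stay below $e:=2^{14}\sqrt q$.

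Lemma~\ref{csimp:r3:recoverthree} then bounds all $97$ full defects by $3\cdot2^{13}e<\delta$, using the choice $M=2^{2J+54}$, $\delta=2^{-J}$. The hypotheses of Theorem~\ref{thm:gate} hold (the degenerate case $e=0$ is handled by any $\eps<\delta$), so $\|[z_2,W]\|_2\le g_0=5\cdot2^{-20}$, where $W=ava^*$ is a symmetry because $\widehat t^6=1$.

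The identity preceding Lemma~\ref{lem:reciprocity} gives, at the edited tuple,
\[
 \tr_d(-1+\omega+\omega^*)=1-\|[z_2,W]\|_2^2\ge 1-2^{-36}.
\]
Returning to $A$ costs at most $2\cdot36\cdot C_1\sqrt q$ in the signal, which is negligible. Since $Mq\ge0$, this yields $\tr_d f(A)\ge 1-2^{-36}-2^{-20}\ge3/4$.

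The main obstacle is the bookkeeping in the rounding step. It has to check that, after replacing $X_4$ and $X_1$ by the rounded symmetries, the literal words $u_j$ and $\omega$ (with adjacent cancellations valid only on symmetries) still equal the relation defects and the signal $z_2^*Wz_2W$. It also has to check that the accumulated constant $C_1$, times $3\cdot2^{13}$, keeps $e$-defects below $\delta$ given $\sqrt{q}<2^{-J-26}$; the exponent $54$ in $M$ is presumably chosen to absorb exactly these factors.
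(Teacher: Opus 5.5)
Your proof follows the paper's route step for step: the split at $4/M$, sign rounding by Lemma~\ref{lem:coverage}, exact finite orders via Lemma~\ref{csimp:r3:roundorder} applied to $t$ and $qs$ with $q$ fixed, then Lemma~\ref{csimp:r3:recoverthree}, Theorem~\ref{thm:gate}, the signal trace identity, and a negligible transfer back to $A$. Your check that $\widehat tX_6$ and $q\widehat U$ are symmetries is correct.

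The one step that fails as written is the closing inequality $3\cdot2^{13}e<\delta$. Since $M$ is a fixed number, this is not cosmetic. From $q<4/M=2^{-2J-52}$ you only get $\sqrt q<2^{-J-26}$, not $2^{-J-27}$. So with $e=2^{14}\sqrt q$ you obtain $3\cdot2^{13}e<3\cdot2^{27}\cdot2^{-J-26}=6\delta$. Even the sharper constant your own bounds give does not close. Those bounds are $52\sqrt q$ for the rounded defects and for each of the two edits, hence $e\le(52+72\cdot52)\sqrt q=3796\sqrt q$. This yields full defects only below $1.4\,\delta$, so Theorem~\ref{thm:gate} cannot be invoked.

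The loss comes from using the uniform length $L=72$ when bounding the two finite-order defects. Instead, apply \eqref{eq:nativeedit} with the actual lengths: $12$ for $u_1=(X_4X_6)^6$ and $6$ for $u_2=(X_6X_1)^3$. This gives $\|t^6-1\|_2\le10\sqrt q$ and $\|(qs)^3-1\|_2\le6\sqrt q$, so the final symmetries satisfy $\|A_j-\widetilde A_j\|_2\le11\sqrt q$ for every $j$. Telescoping from $A$ itself, where $\|1-u_i(A)\|_2\le\sqrt q$, bounds the retained defects by $(1+72\cdot11)\sqrt q=793\sqrt q\le2^{10}\sqrt q$. The full defects are then at most $3\cdot2^{13}\cdot793\sqrt q\le2^{25}\sqrt q<\delta/2$. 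This is exactly the paper's bookkeeping, and with it the rest of your argument goes through unchanged.

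A minor slip: $g_0^2=25\cdot2^{-40}$ exceeds $2^{-36}$. The signal bound should read $1-g_0^2\ge1-2^{-35}$, which still suffices.
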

\begin{proof}
Let
\[
 \rho=\sum_{j=1}^{15}\tr_d\bigl(2-u_j(A)-u_j(A)^*\bigr),
 \qquad
 \rho_*:=\frac4M=\frac{\delta^2}{2^{52}},
\]
where $\delta=2^{-J}$ and $J,M$ are fixed in
\eqref{eq:globalconstants}. Each $u_j(A)$ is a contraction, so
$\rho\ge0$. If $\rho\ge\rho_*$, the two signal words also
evaluate to contractions, and hence
\[
 \tr_d f(A)\ge-3+M\rho\ge1.
\]

Suppose now that $0\le\rho<\rho_*$. All six variables occur in
the penalty words. Lemma~\ref{lem:coverage} therefore gives
$\widehat A_j=\sgn(A_j)$ with
\[
 \|A_j-\widehat A_j\|_2\le\sqrt{\rho/2},\qquad
 \|1-u_i(A)\|_2\le\sqrt\rho.
\]
On this tuple put $q=\widehat A_6$, $s=\widehat A_1$, and
$t=\widehat A_4q$. Since $u_1=t^6$ and $u_2=(qs)^3$ have
lengths $12$ and $6$, respectively, \eqref{eq:nativeedit} gives
\[
 \|t^6-1\|_2\le10\sqrt\rho,\qquad
 \|(qs)^3-1\|_2\le6\sqrt\rho.
\]
Apply Lemma~\ref{csimp:r3:roundorder} to $t$ and $qs$, keeping
$q$ fixed. Write their corrected values as $\widetilde t$ and
$q\widetilde s$. Then
$\widetilde t^6=(q\widetilde s)^3=1$, and both
$\widetilde t q$ and $\widetilde s$ are symmetries.
Define the final tuple $\widetilde A$ by replacing
$\widehat A_4$ with $\widetilde t q$ and $\widehat A_1$ with
$\widetilde s$, leaving the other entries unchanged. Thus
\[
 \|A_j-\widetilde A_j\|_2\le11\sqrt\rho
 \qquad(1\le j\le6).
\]

Every penalty word has length at most $72$. Telescoping gives
\[
 \|1-u_i(\widetilde A)\|_2
 \le(1+72\cdot11)\sqrt\rho
 =793\sqrt\rho\le2^{10}\sqrt\rho.
\]
On a tuple of symmetries, the cancellations defining $u_i$
preserve the corresponding relation defect: deleting adjacent equal
letters preserves the evaluated word, and removing matching
endpoints conjugates it by a unitary. The retained relations
therefore have these same bounds at $\widetilde A$.
Lemmas~\ref{csimp:r3:recoverthree} and~\ref{csimp:r17:lem:core}
bound all 97 full-system defects by
\[
 3\cdot2^{13}\cdot793\sqrt\rho
 \le2^{25}\sqrt\rho<\delta/2.
\]
Choose $\eps$ strictly between the largest full defect and $\delta$.
Theorem~\ref{thm:gate} applies, including when $\rho=0$.

At $\widetilde A$, the word $z_2$ is unitary and $W$ is a
symmetry. The reflection estimate and the signal trace identity
give
\[
 \tr_d\bigl(-1+\omega(\widetilde A)
                   +\omega(\widetilde A)^*\bigr)
 \ge1-g_0^2,\qquad g_0=5\cdot2^{-20}.
\]
Each signal word has length $36$, so their combined trace change
between $A$ and $\widetilde A$ is at most $792\sqrt\rho$.
Consequently,
\[
 \tr_d f(A)\ge1-g_0^2-792\sqrt\rho+M\rho\ge\frac34.
\]
Indeed, $\delta\le1$ and $\rho<\rho_*$ imply
\[
 g_0^2+792\sqrt\rho
 \le25\cdot2^{-40}+792\cdot2^{-26}<\frac14.\qedhere
\]
\end{proof}

The explicit words in \eqref{eq:signalword}--\eqref{eq:penaltywords}
and their adjoints give 32 distinct nonconstant monomials.
The two signal words have length $36$, the penalty words have
maximum length $72$, and the total expanded letter count is $716$.
Since the constant coefficient is $30M-1\ne0$, the Hermitian
polynomial $f$ has integer coefficients, degree $72$, and exactly
33 monomials. These facts, Theorem~\ref{thm:matrixpositivity},
and the negative evaluation in
Proposition~\ref{prop:s2:negative-evaluation} prove
Theorem~\ref{thm:main}.

\begin{remark}
For a word $w=X_{i_1}\cdots X_{i_\ell}$, put
$t_j=X_{i_{j+1}}\cdots X_{i_\ell}$, with $t_\ell=1$.
The telescoping identity
\[
 1-w^*w=\sum_{j=1}^\ell t_j^*(1-X_{i_j}^2)t_j
\]
together with \eqref{eq:defectidentity} shows that every penalty
$2-u_j-u_j^*$ belongs to the contraction quadratic module
$\mathcal M_6$ defined in \eqref{eq:cubemoduleintro}.
\end{remark}

\begin{corollary}\label{cor:module}
For $0\le\eps<1$,
\[
 f+\eps\notin\mathcal M_6+
 [\mathcal A_6,\mathcal A_6]_{\rm lin},
\]
where $\mathcal A_6=\C\langle X_1,\ldots,X_6\rangle$ and
$[\mathcal A_6,\mathcal A_6]_{\rm lin}$ denotes the linear span
of additive commutators.
\end{corollary}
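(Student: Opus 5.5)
The plan is to argue by contradiction, evaluating a putative certificate at the group tuple $a=(a_1,\ldots,a_6)$ of Proposition~\ref{prop:s2:negative-evaluation}. Suppose that for some $0\le\eps<1$ we can write
\[
 f+\eps=\sum_\ell p_\ell^*p_\ell+\sum_{j=1}^6\sum_\ell b_{j\ell}^*(1-X_j^2)b_{j\ell}+\sum_k [g_k,h_k]
\]
as an identity in $\mathcal A_6$, with finitely many $p_\ell,b_{j\ell},g_k,h_k\in\mathcal A_6$.

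The first step is to evaluate this identity under the unital $*$-homomorphism $\mathcal A_6\to L(\Lambda)$ sending $X_j\mapsto a_j$. This map exists because $\mathcal A_6$ is the free $*$-algebra on selfadjoint generators and each $a_j$ is selfadjoint, and evaluation respects the identity. Next we apply the canonical trace $\tau_\Lambda$, using three facts. First, $\tau_\Lambda$ is positive, so $\tau_\Lambda(p(a)^*p(a))\ge0$. Second, each $a_j$ is a symmetry, so $1-a_j^2=0$ and the middle terms vanish identically. Even without this, $\|a_j\|\le1$ would give $1-a_j^2\ge0$, hence $b^*(1-a_j^2)b\ge0$, so the trace is nonnegative. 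Third, $\tau_\Lambda$ is tracial, so $\tau_\Lambda([g(a),h(a)])=0$. Together these give $\tau_\Lambda(f(a))+\eps\ge0$.

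On the other hand, Proposition~\ref{prop:s2:negative-evaluation} gives $\tau_\Lambda f(a)=-1$. Hence $-1+\eps\ge0$, which contradicts $\eps<1$.

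There is no real obstacle here; the substantive input is the negative evaluation already proved. The only point to be careful about is that the complex span of commutators is annihilated by $\tau_\Lambda$. This follows from linearity together with traciality of the canonical trace on the group von Neumann algebra, and holds regardless of the complex coefficients permitted in $\mathcal A_6$. Note also that the matrix positivity of Theorem~\ref{thm:matrixpositivity} is not needed for this corollary. It is what makes the corollary meaningful as a counterexample, since it shows that \eqref{eq:algebraicpositive} holds for $f$.
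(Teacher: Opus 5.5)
Your proof is correct and is the paper's argument. You evaluate at the group tuple $a$ of Proposition~\ref{prop:s2:negative-evaluation}, note that $\tau_\Lambda$ is nonnegative on $\mathcal M_6$ and vanishes on commutators, and compare this with $\tau_\Lambda(f(a)+\eps)=-1+\eps<0$.
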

\begin{proof}
At the contraction tuple of
Proposition~\ref{prop:s2:negative-evaluation}, every summand of
$\mathcal M_6$ has nonnegative trace and every additive commutator
has trace zero, whereas $\tau_\Lambda(f+\eps)=-1+\eps<0$.
\end{proof}

\subsection*{Acknowledgements}
We thank Andreas Thom for suggesting how to obtain an explicit polynomial
from the group double in his work with G\'abor Kun, and for his advice on
the two-variable encoding, proof simplification and attribution.
We thank Florin R\u{a}dulescu for his detailed note on a quartic reduction
of our example and for drawing our attention to his earlier fourth-moment
and analytic Hilbert-seventeenth formulations. We also thank Sizhuo Yan,
Jianting Yang, Tianshi Yu and Hao Liang for valuable discussions and
encouragement, and the MechMath Agent Team~\cite{cao2026mechmathagentteamllm}
for their support.
The authors are supported by the National Key R\&D Program of China
2023YFA1009401 and the Strategic Priority Research Program of the
Chinese Academy of Sciences under Grant XDA0480501.

\subsection*{Declaration of Generative AI}
Generative AI tools assisted in exploring proof strategies, checking
calculations,  revising the exposition, and drafting the paper. The authors take responsibility
for the mathematical content, references, and final manuscript.

\appendix

\section{A quartic reduction following R\u{a}dulescu}
\label{subsec:quartic}\label{sec:quartic}

We construct a Hermitian quartic in 37 selfadjoint variables whose
normalized trace is at least $3/4$ on all selfadjoint matrix tuples, without norm
restrictions, and equals $-1$ at a specified contraction tuple.
Auxiliary product coordinates make the required relations quadratic,
and a Hermitian quadratic selector encodes the fifth power relation.

\begin{remark}\label{rem:fourthmoments}
The reduction to moments of orders three and four goes back to
R\u{a}dulescu~\cite{Rad99}; see also~\cite[Section~2]{CapRad}.
His analytic noncommutative Hilbert-seventeenth formulation is a
separate result~\cite{Rad04}.
In a private note of 2 October 2026, Florin R\u{a}dulescu applied a
prefix product reduction to our earlier example, obtaining 1,577
selfadjoint variables and a matrix trace lower bound of $1/4$.
The dictionary with 37 variables, the symmetries of the basic elements, and the sparse
weights below are subsequent refinements.
For operators $U,V,Z$ in a finite tracial algebra with trace $\tau$,
the basic identity is
\[
 \tau((Z-UV)^*(Z-UV))
 =\tau(Z^*Z)-2\operatorname{Re}\tau(Z^*UV)+\tau(V^*U^*UV).
\]
It records a product through moments of order at most four.
Our construction shares products using the group relations and applies
the reflection estimate in Theorem~\ref{thm:gate} directly.
\end{remark}

The following selector replaces the fifth power relation by a
quadratic expression and a symmetry coordinate.

\begin{lemma}\label{cs:Hbound}
In a finite tracial algebra with normalized trace $\tau$, let $g$ be
unitary and $s_g$ a symmetry, and use
$\|x\|_2=\tau(x^*x)^{1/2}$.
Define
\[
 H_g=2g^2+2(g^*)^2+2g+2g^*-3-5s_g.
\]
Then, without any commutation assumption on $g$ and $s_g$,
\[
 \|g^5-1\|_2^2\le\|H_g\|_2^2+\tfrac52\|H_g\|_2.
\]
If $g^5=1$, the choice
$s_g=\frac25\sum_{j=0}^4g^j-1$ is a symmetry and gives $H_g=0$.
\end{lemma}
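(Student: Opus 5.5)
The plan is a scalar spectral comparison for the normal element $g$, combined with a nearest-symmetry argument for the selfadjoint element $P=H_g+5s_g$. This avoids any commutation between $g$ and $s_g$.

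\textbf{Step 1: the scalar identities.} Put $P=2g^2+2(g^*)^2+2g+2g^*-3$. Then $P$ is selfadjoint and $P=H_g+5s_g$. Since $g$ is unitary, $P=p(g)$ by functional calculus with $p(z)=2z^2+2z^{-2}+2z+2z^{-1}-3$ on the unit circle. Write $\sigma=z+z^{-1}=2\cos\theta\in[-2,2]$. Then $p=2\sigma^2+2\sigma-7$, and Chebyshev's identity $z^5+z^{-5}=\sigma^5-5\sigma^3+5\sigma$ gives
\[
 |z^5-1|^2=2-z^5-z^{-5}=(2-\sigma)(\sigma^2+\sigma-1)^2,
\]
\[
 25-p^2=4(\sigma+3)(2-\sigma)(\sigma^2+\sigma-1).
\]
On $[-2,2]$ we have $|\sigma^2+\sigma-1|\le5$ (maximum at $\sigma=2$) and $4(\sigma+3)\ge4$. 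Checking the ratio $|\sigma^2+\sigma-1|/(4(\sigma+3))$ at the endpoints and at the vertex $\sigma=-1/2$ gives at most $1/4$. Hence, pointwise on the circle,
\[
 |z^5-1|^2\le\tfrac14\,|25-p^2|=\tfrac14\,\bigl||p|-5\bigr|\,(|p|+5).
\]
Functional calculus then yields $\|g^5-1\|_2^2\le\frac14\tau\bigl(D(|P|+5)\bigr)$ with $D=\bigl||P|-5\bigr|$. Here $D$ and $|P|$ commute, since both are functions of $g$.

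\textbf{Step 2: nearest symmetry.} We show $\|D\|_2\le\|H_g\|_2$. Note that $D=|P-5\sgn(P)|$, so $\|D\|_2^2=\tau(P^2)+25-10\tau(|P|)$. For the arbitrary symmetry $s_g$,
\[
 \|H_g\|_2^2=\|P-5s_g\|_2^2=\tau(P^2)+25-10\operatorname{Re}\tau(Ps_g).
\]
Tracial H\"older gives $\operatorname{Re}\tau(Ps_g)\le\tau(|P|)$, which proves the claim. This is the only step that involves $s_g$, and no commutation with $g$ is needed.

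\textbf{Step 3: combining.} Since $|P|\le D+5$ and $D$ commutes with $|P|$, we have $\tau(D(|P|+5))\le\tau(D^2)+10\tau(D)\le\|D\|_2^2+10\|D\|_2$. With Step 2 this gives
\[
 \|g^5-1\|_2^2\le\tfrac14\|H_g\|_2^2+\tfrac52\|H_g\|_2,
\]
which is stronger than the stated bound.

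\textbf{The case $g^5=1$.} The spectrum of $g$ lies among the fifth roots of unity. At $z=1$ we have $\frac25\sum_jz^j-1=1$ and $p=5$. At the other roots we have $\sum_jz^j=0$, so the value is $-1$, and $\sigma^2+\sigma-1=0$ gives $p=-5$. Thus $s_g$ is a selfadjoint function of $g$ with values $\pm1$, hence a symmetry, and $P=5s_g$, so $H_g=0$.

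The main obstacle I expect is Step 1: finding the right factorization of $25-p^2$ and the constant $1/4$. The algebra needs to be checked carefully, especially the Chebyshev expansion and the maximization on $[-2,2]$.
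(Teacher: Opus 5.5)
Your argument is correct, and it follows a different route from the paper's.

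\textbf{The paper's route.} The paper never eliminates $s_g$. It sets $A=1+g+g^*+g^2+(g^*)^2$ and $e=(1+s_g)/2$. It then uses the exact splitting $\|H_g\|_2^2/4=\tau((1-e)A^2)+\tau(e(A-5)^2)$. With $t=|z-1|^2$, it bounds $|z^5-1|^2=t\,a(t)^2$ on the two compressions by $4a(t)^2$ and $5|a(t)-5|$, respectively. It finishes with Cauchy--Schwarz on the $e$-corner.

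\textbf{Your route.} You remove $s_g$ at the outset. For your selfadjoint $P=H_g+5s_g$, tracial H\"older shows that $\sgn(P)$ is the best symmetry, so $\|\,|P|-5\,\|_2\le\|H_g\|_2$. Everything else then takes place in the abelian algebra generated by $g$. There, the single inequality $|z^5-1|^2\le\frac14|25-p^2|$ treats both spectral regimes at once, through the factorization $|25-p^2|=\bigl||p|-5\bigr|(|p|+5)$. In the paper's variables this inequality reads $|a(t)|\le5-t$ on $[0,4]$. It yields the sharper bound $\frac14\|H_g\|_2^2+\frac52\|H_g\|_2$.

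\textbf{What each buys.} The paper's proof needs neither the sign function nor H\"older, and it works directly with the noncommuting pair. Yours explains why no commutation hypothesis is needed: the optimal symmetry is automatically a function of $g$.

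\textbf{One repair.} Checking the ratio $|\sigma^2+\sigma-1|/(4(\sigma+3))$ at the endpoints and at $\sigma=-1/2$ does not justify the bound by itself. The interior critical point of $(\sigma^2+\sigma-1)/(\sigma+3)$ is $\sigma=-3+\sqrt5$, not the vertex of the numerator. Instead, prove $|\sigma^2+\sigma-1|\le\sigma+3$ on $[-2,2]$ directly. The upper inequality is equivalent to $\sigma^2\le4$, and the lower one to $\sigma^2+2\sigma+2\ge0$.
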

\begin{proof}
Put $P=(1+s_g)/2$ and $A=1+g+g^*+g^2+(g^*)^2$.
Since $P$ is a projection and $A$ is selfadjoint,
\[
 H_g=2(A-5P),\qquad
 \|H_g\|_2^2/4
 =\tau((1-P)A^2)+\tau(P(A-5)^2).
\]
For $|z|=1$, put $t=|z-1|^2$ and $a(t)=t^2-5t+5$.
Then $1+z+\bar z+z^2+\bar z^2=a(t)$ and
$|z^5-1|^2=t\,a(t)^2$.
For $0\le t\le4$,
\[
 t\,a(t)^2\le4a(t)^2,\qquad
 t\,a(t)^2\le5|a(t)-5|.
\]
For the second inequality, use $|a(t)-5|=t(5-t)$ and
\[
 5(5-t)-a(t)^2=t(-t^3+10t^2-35t+45).
\]
Writing $v=4-t$, the last factor is
$v(v-1)^2+2v+1\ge0$.
Functional calculus and Cauchy--Schwarz now give
\[
 \|g^5-1\|_2^2
 \le4\tau((1-P)A^2)+5\tau(P|A-5|)
 \le\|H_g\|_2^2+\tfrac52\|H_g\|_2.
\]
When $g^5=1$, the average $\frac15\sum_{j=0}^4g^j$ is the
spectral projection for the eigenvalue one, proving the last assertion.
\end{proof}

\subsection{Quadratic relations and tracial majorization}
\label{cs:dictionary}\label{cs:quartic}

Let $T_1,\ldots,T_{37}$ be free selfadjoint variables.
Introduce the fifteen complex coordinate symbols, in this order,
\begin{equation}\label{csimp:r3:complexcoords}
 x,z,z_2,z_3,a,b,a_+,d,e_2,p,C_d,C_p,t,r,C_a,
\end{equation}
with the $j$th symbol abbreviating $T_{2j-1}+iT_{2j}$.
The seven remaining coordinate symbols are selfadjoint:
\begin{equation}\label{csimp:r3:realcoords}
 (q,v,\ell,s,s_z,W,j)=(T_{31},\ldots,T_{37}).
\end{equation}
Thus these 22 symbols represent 37 primitive selfadjoint variables.
On matrix inputs, the first fifteen symbols are arbitrary matrices
and the last seven are arbitrary selfadjoint matrices.
No norm bounds or unitary conditions are assumed.

The independent coordinates $C_d,C_p,C_a$ will record the products
$xz$, $d^*e_2$, and $ba_+$, respectively.
Retaining $C_p$ and $C_a$ keeps the equations $C_p d=pe_2$ and
$aa_+=C_a a$ quadratic. The thirty-one binomial equations are
\begin{align*}
&r=t^2,\quad v=tr,\quad qt=t^*q,\quad qz=z^*q,\quad bq=qa,\\
&a_+r=ra,\quad z_2r=rz,\quad rz_3=zr,\quad\ell=rq,\\
&C_d=xz,\quad C_dx^*=dz,\quad e_2\ell=\ell d^*,\\
&C_p=d^*e_2,\quad C_p d=pe_2,\quad az_2=pa,\quad vz=zv,\\
&tx=xt,\quad ax=xa,\quad zz_2=z_2z,\quad dx=xd,\quad dz_2=z_2d,\\
&ab=ba,\quad ba_+=a_+b,\quad C_a=ba_+,\quad aa_+=C_a a,\\
&az=za,\quad az_3=z_3a,\quad Wa=av,\quad xs=sz,\\
&jq=qs,\qquad js=sq.
\end{align*}
The last two equations recover the braid $qsq=sqs$ after rounding,
without a separate complex coordinate for $qs$.
Since $z=T_3+iT_4$ and $s_z=T_{35}$, define the Hermitian selector
\begin{equation}\label{eq:refinedselector}
 \widetilde H_z=T_3^2-7T_4^2+4T_3-5T_{35}.
\end{equation}
The expression $H_z$ from Lemma~\ref{cs:Hbound}, evaluated at these
coordinates, satisfies the polynomial identity
\begin{equation}\label{eq:selectorcomparison}
 \widetilde H_z
 =H_z-3\left(\frac{z^*z+zz^*}{2}-1\right).
\end{equation}
The two selectors therefore agree whenever $z$ is unitary, without
requiring it to commute with $s_z$.

For single letters $A,B$ from this dictionary or their adjoints, put
\[
 \mathcal N(A)=A^*A,\qquad
 \mathcal N(AB)=\tfrac12\bigl((A^*A)^2+(B^*B)^2\bigr).
\]
For each displayed equation $L=R$, define
\[
 \mathcal P(L,R)
 =\mathcal N(L)+\mathcal N(R)-L^*R-R^*L.
\]
These expressions majorize the residual squares under the trace,
as proved below. With $J$ fixed in \eqref{eq:globalconstants}, set
\begin{equation}\label{csimp:r3:quarticdef}
\begin{split}
 Q_\flat={}&\sum_{L=R}\mathcal P(L,R)
       +\frac{13}{7}\widetilde H_z^2
       +\frac12\sum_{u\in\{C_d,C_p,r,C_a,v,\ell\}}(1-u^*u)^2
       +\frac{325}{14}(1-s_z^2)^2,\\
 F_\flat={}&-1+z_2^*Wz_2W+Wz_2^*Wz_2+\Omega_\flat Q_\flat,
       \qquad \Omega_\flat=7\cdot2^{4J+126}.
\end{split}
\end{equation}
The first sum runs over all thirty-one equations.
The last seven squares in $Q_\flat$ penalize failures of unitarity;
for the selfadjoint symbols, unitarity means being a symmetry.
The following identity provides the additional control needed for
the other coordinates.

\begin{lemma}\label{csimp:r1:lem:major}\label{csimp:r1:majorsection}
For arbitrary matrix values $A,B\in M_d$,
\begin{equation}\label{csimp:r1:eq:major}
 \tr_d\bigl(\mathcal N(AB)-(AB)^*(AB)\bigr)
 =\tfrac12\|A^*A-BB^*\|_2^2.
\end{equation}
Consequently, $Q_\flat$ is cyclically equivalent to the sum of its
seven weighted unitarity squares, $(13/7)\widetilde H_z^2$, the 31
residual squares $(L-R)^*(L-R)$, and
$\tfrac12(A^*A-BB^*)^2$ for every occurrence of a product $AB$ on
either side of the equations, counting repetitions.
\end{lemma}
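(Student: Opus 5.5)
The identity \eqref{csimp:r1:eq:major} is a direct trace computation, and the consequence follows by collecting terms in each $\mathcal P(L,R)$. For the identity, I expand both sides. By definition,
\[
 \tr_d\mathcal N(AB)=\tfrac12\tr_d\bigl((A^*A)^2\bigr)+\tfrac12\tr_d\bigl((B^*B)^2\bigr),
 \qquad
 \tr_d\bigl((AB)^*(AB)\bigr)=\tr_d(B^*A^*AB)=\tr_d(A^*A\,BB^*),
\]
where the last step uses cyclicity. On the other side,
\[
 \tfrac12\|A^*A-BB^*\|_2^2=\tfrac12\tr_d\bigl((A^*A)^2\bigr)-\tr_d(A^*A\,BB^*)+\tfrac12\tr_d\bigl((BB^*)^2\bigr),
\]
since $A^*A$ and $BB^*$ are selfadjoint, so the two cross terms are equal and real. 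Finally $\tr_d((BB^*)^2)=\tr_d((B^*B)^2)$ by cyclicity, and the two sides agree. To get the statement at the level of polynomials up to cyclic equivalence, I note that the same manipulations are word identities modulo commutators: $B^*A^*AB\cyc A^*ABB^*$ and $(BB^*)^2\cyc(B^*B)^2$. Hence $\mathcal N(AB)-(AB)^*(AB)\cyc\tfrac12(A^*A-BB^*)^2$ in the free algebra, where $A,B$ are the corresponding letters or their adjoints, each a linear combination of the $T_k$.

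For the consequence, I take each equation $L=R$ and write
\[
 \mathcal P(L,R)=(L-R)^*(L-R)+\bigl(\mathcal N(L)-L^*L\bigr)+\bigl(\mathcal N(R)-R^*R\bigr).
\]
This holds because $(L-R)^*(L-R)=L^*L+R^*R-L^*R-R^*L$. Each side of each displayed equation is either a single letter or a product of two letters. For a single letter, $\mathcal N(A)-A^*A=0$. For a product $AB$, the correction term is cyclically equivalent to $\tfrac12(A^*A-BB^*)^2$ by the previous paragraph. I would check that every side of the 31 equations has one of these two forms. Some sides are products of letters or adjoints, for instance $t^*q$, $z^*q$, $\ell d^*$, $d^*e_2$ and $C_dx^*$; the definition of $\mathcal N$ allows adjoints, so these are covered. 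One side is the square $t^2$, which is the case $A=B=t$.

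Summing over all 31 equations, counting every product occurrence on either side with its repetitions, and leaving the other terms of $Q_\flat$ unchanged gives the stated decomposition. Those other terms are $\tfrac{13}{7}\widetilde H_z^2$ and the seven weighted unitarity squares. Nothing here is difficult. The only place needing care is the bookkeeping that confirms each side is a single letter or a two-letter product, so that no side has length three and no correction term is missed.
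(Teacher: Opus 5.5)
Your proposal is correct and follows the paper's proof. The trace identity comes from expanding the square and using the rotations $\tr_d(B^*A^*AB)=\tr_d(A^*ABB^*)$ and $\tr_d((B^*B)^2)=\tr_d((BB^*)^2)$, and the decomposition of $Q_\flat$ comes from applying the same expansion modulo commutators to $\mathcal P(L,R)=(L-R)^*(L-R)+(\mathcal N(L)-L^*L)+(\mathcal N(R)-R^*R)$; your check that every side of the 31 equations is a single letter or a two-letter product (including $t^2$ and the adjoint factors) is the bookkeeping the paper leaves implicit.
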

\begin{proof}
Cyclicity gives
$\tr_d((AB)^*(AB))=\tr_d(A^*ABB^*)$ and
$\tr_d((B^*B)^2)=\tr_d((BB^*)^2)$.
Expanding the square proves \eqref{csimp:r1:eq:major}.
The same expansion is an identity modulo commutators; summing it
proves the assertion about $Q_\flat$.
The additional squares vanish when the factors are unitary.
\end{proof}

In particular, $\tr_dQ_\flat\ge0$ on every selfadjoint matrix tuple.
We call the additional terms balance squares.
For square matrix values $A,B$, the identity
$\|1-A^*A\|_2=\|1-AA^*\|_2$ gives
\[
 \|1-A^*A\|_2
 \le\|1-B^*B\|_2+\|A^*A-BB^*\|_2.
\]
Thus these squares transfer unitarity control between factors.
If $\bar A$ is a unitary completion of the polar factor of $A$, then
\[
 \|A-\bar A\|_2\le\|1-A^*A\|_2,\qquad
 \|A-\bar A\|_4^2\le\|1-A^*A\|_2.
\]
These inequalities follow on singular values from
$|s-1|\le|s^2-1|$ and $|s-1|^4\le(s^2-1)^2$ for $s\ge0$.
For selfadjoint $A$, use sign rounding, sending a zero eigenvalue
to one. See~\cite[Chapters~III--IV]{Bhatia} and
\cite[Theorem~2.2]{Higham}.

The weights also reduce the number of monomials.
For $u\in\{C_d,C_p,r,C_a,v,\ell\}$, the quadratic diagonal term
cancels because
\[
 u^*u+\tfrac12(1-u^*u)^2
 =\tfrac12+\tfrac12(u^*u)^2.
\]
The weight $325/14$ cancels the $s_z^2$ contribution from
$(13/7)\widetilde H_z^2$.
The sum of the coefficients in the cyclic rotation class of
$T_3^2T_4^2$ is $26-14(13/7)=0$.
After cyclic collection, the coefficients of $T_3$, $T_4^2$, and
$T_{35}$ also vanish. No separate unitarity square for $z$ is needed,
as the next lemma shows.

\subsection{Rounding and recovery on unrestricted tuples}

We now evaluate the coordinates at arbitrary selfadjoint matrices
$T_1,\ldots,T_{37}$ of a common size.

Round each of the fifteen complex dictionary symbols to a unitary,
and each of the seven selfadjoint dictionary symbols to a symmetry.
Bars denote these independent roundings of the 22 symbols.
In particular, $e_2$ remains an independent coordinate; its relation
to the word $\ell d^*\ell$ is recovered later.

\begin{lemma}\label{csimp:r3:polar}
Put $\rho=\tr_dQ_\flat\ge0$.
Every raw binomial residual has $2$-norm at most $\sqrt\rho$, and
\[
 \|\widetilde H_z\|_2\le\sqrt{7/13}\sqrt\rho.
\]
Each dictionary symbol $u$ in
\eqref{csimp:r3:complexcoords}--\eqref{csimp:r3:realcoords} satisfies
\[
 \|1-u^*u\|_2\le5\sqrt\rho.
\]
Its independent polar or sign rounding $\bar u$ satisfies
\[
 \|u-\bar u\|_2\le5\sqrt\rho,\qquad
 \|u-\bar u\|_4\le\sqrt5\,\rho^{1/4}.
\]
After rounding, all thirty-one binomial residuals and the original
selector $H_{\bar z}$, using $\bar s_z$ as its symmetry coordinate,
have $2$-norm at most $128\sqrt\rho$.
\end{lemma}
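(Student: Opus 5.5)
The plan is to read everything off the cyclic decomposition in Lemma~\ref{csimp:r1:lem:major}. Taking $\tr_d$ of $Q_\flat$ gives $\rho$ as a sum of nonnegative terms: the 31 residual squares $\|L-R\|_2^2$, $(13/7)\|\widetilde H_z\|_2^2$, the weighted unitarity squares $\tfrac12\|1-u^*u\|_2^2$ for $u\in\{C_d,C_p,r,C_a,v,\ell\}$, $(325/14)\|1-s_z^2\|_2^2$, and the balance squares $\tfrac12\|A^*A-BB^*\|_2^2$. Each term is therefore at most $\rho$. This gives the first two claims at once: every raw binomial residual is at most $\sqrt\rho$, and $\|\widetilde H_z\|_2\le\sqrt{7/13}\sqrt\rho$. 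It also gives $\|1-u^*u\|_2\le\sqrt{2\rho}$ for the six directly penalized symbols and a comparable bound for $s_z$.

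Next I would propagate unitarity control to the remaining fifteen symbols. For each symbol there should be a chain of equations connecting it to a directly penalized one. For an equation $L=R$, the residual bound together with the balance squares should control $\|1-\mathcal N\|$-type quantities on one side from the other. Concretely, I would use $\|L^*L-R^*R\|_2\le(\|L\|+\|R\|)\|L-R\|_2$ only where norms are available. Since they are not available a priori, I would instead combine the trace identity with the transfer inequality $\|1-A^*A\|_2\le\|1-B^*B\|_2+\|A^*A-BB^*\|_2$ stated after Lemma~\ref{csimp:r1:lem:major}.

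For example, $C_d=xz$ together with the balance square for $xz$ and the control on $C_d$ should control $x$ and $z$. Likewise $r=t^2$ controls $t$, and $C_p=d^*e_2$ controls $d$ and $e_2$. Along the same lines, $C_a=ba_+$ controls $b$ and $a_+$, $bq=qa$ controls $a$ and $q$, and $\ell=rq$ controls $q$. The relations $z_2r=rz$ and $rz_3=zr$ control $z_2$ and $z_3$; $C_pd=pe_2$ controls $p$; $Wa=av$ controls $W$; and $xs=sz$ together with $jq=qs$ controls $s$ and $j$. For the selfadjoint symbol $s_z$, the $(325/14)$ square handles it directly.

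I expect this bookkeeping to be the main obstacle. A product residual $\|C-AB\|_2$ only controls $\tr((AB)^*AB)$ up to cross terms. So I will have to use the exact algebraic form $\mathcal P(L,R)$, whose trace equals the residual square plus the balance squares, and keep the chain short so that the constant stays at $5$. I would first check that each symbol is reached within one or two transfers from a penalized symbol.

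Once $\|1-u^*u\|_2\le5\sqrt\rho$ holds, the rounding bounds follow from the singular-value inequalities $\|u-\bar u\|_2\le\|1-u^*u\|_2$ and $\|u-\bar u\|_4^2\le\|1-u^*u\|_2$. For the residuals after rounding, each equation has degree at most two. I would write $\bar L-\bar R$ as $L-R$ plus substitution errors. A linear letter costs $\|u-\bar u\|_2$. A product $AB$ versus $\bar A\bar B$ costs $\|(A-\bar A)B\|_2+\|\bar A(B-\bar B)\|_2$, and the first term is bounded by Hölder, $\|A-\bar A\|_4\|B\|_4$, with $\|B\|_4\le1+\|B-\bar B\|_4$. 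Here the $\rho^{1/4}$ bound squared gives order $\sqrt\rho$. Summing these gives a bound of the form $c\sqrt\rho$ with modest $c$, and I expect $c\le128$ when $\rho\le1$. For $\rho>1$, the claimed bounds on rounded residuals are trivial since those residuals are at most $2$.

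The last step is the selector. By \eqref{eq:selectorcomparison}, $H_z=\widetilde H_z+3\bigl(\tfrac{z^*z+zz^*}2-1\bigr)$, so $\|H_z\|_2\le\sqrt{7/13}\sqrt\rho+15\sqrt\rho$. Then $H_{\bar z}-H_z$ is a quadratic expression in $z-\bar z$ and $s_z-\bar s_z$, and it is bounded by the same Hölder argument.
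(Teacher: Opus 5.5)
Your outline matches the paper's: read the raw bounds off Lemma~\ref{csimp:r1:lem:major}, propagate unitarity, round, then re-estimate. However, the propagation step relies on a transfer that is not available. The transfer inequality $\|1-A^*A\|_2\le\|1-B^*B\|_2+\|A^*A-BB^*\|_2$ can only be fed by the balance squares $\tfrac12\|A^*A-BB^*\|_2^2$, and each of these links the two \emph{factors} $A,B$ of a single product. Nothing in $\tr_dQ_\flat$ links a single-letter side of an equation to the factors on the other side. Indeed, $\tr_d\mathcal P(L,R)$ is exactly the residual square plus these factor-pair terms, so appealing to $\mathcal P$ does not close the gap.

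Hence your links ``$C_d=xz$ transfers control from $C_d$ to $x,z$'', ``$r=t^2$ controls $t$'', ``$C_p=d^*e_2$ controls $d,e_2$'' and ``$C_a=ba_+$ controls $b,a_+$'' are unavailable. A small $\|C_d-xz\|_2$ does not control $\|C_d^*C_d-z^*x^*xz\|_2$ without operator-norm bounds, as you noticed yourself. The missing idea is to use no residuals at all here, and instead choose equations whose products pair the right letters:
\begin{itemize}
\item $C_dx^*=dz$ pairs $C_d$ with $x$;
\item $C_pd=pe_2$ pairs $C_p$ with $d$;
\item $e_2\ell=\ell d^*$ pairs $\ell$ with $e_2$;
\item $a_+r=ra$, $z_2r=rz$, $rz_3=zr$, $\ell=rq$ and $v=tr$ pair $r$ with $a_+,a,z_2,z,z_3,q,t$;
\item the products $xs$, $Wa$, $ab$, $pa$, $jq$ then reach $s,W,b,p,j$.
\end{itemize}
Each link costs $\sqrt{2\rho}$, so two links give $3\sqrt{2\rho}\le5\sqrt\rho$. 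A third link would give $4\sqrt{2\rho}>5\sqrt\rho$. So the routing matters: for example, reaching $a$ through $q$ via $bq=qa$ and then $W$ through $a$ would already break the constant $5$.

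Second, your product estimate loses a factor of $\rho^{1/4}$. Writing $\|(A-\bar A)B\|_2\le\|A-\bar A\|_4\|B\|_4$ with $\|B\|_4\le1+\|B-\bar B\|_4$ yields $\sqrt5\,\rho^{1/4}+5\sqrt\rho$. Only the cross term is of order $\sqrt\rho$; the leading term is of order $\rho^{1/4}$. This gives rounded residuals of order $\rho^{1/4}$, not $128\sqrt\rho$, in exactly the small-$\rho$ regime that matters; your remark about $\rho>1$ does not address this.

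Use instead the three-term expansion
$AB-\bar A\bar B=(A-\bar A)\bar B+\bar A(B-\bar B)+(A-\bar A)(B-\bar B)$.
Since $\bar A,\bar B$ are unitaries, the first two terms cost at most $\|A-\bar A\|_2,\|B-\bar B\|_2\le5\sqrt\rho$. H\"older is needed only for the last term, $\|A-\bar A\|_4\|B-\bar B\|_4\le5\sqrt\rho$. This gives $15\sqrt\rho$ per quadratic monomial, hence at most $(1+15+15)\sqrt\rho=31\sqrt\rho$ per rounded residual. For the selector, the rounding change is $(60+20+25)\sqrt\rho$ on top of your bound $\|H_z\|_2\le16\sqrt\rho$, giving $121\sqrt\rho\le128\sqrt\rho$ uniformly in $\rho$.
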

\begin{proof}
Lemma~\ref{csimp:r1:lem:major} expresses $Q_\flat$, modulo
commutators, as a sum of weighted squares with nonnegative traces.
This proves the raw residual and selector bounds.
The seven coordinates with explicit unitarity penalties have
defects at most $\sqrt{2\rho}$.
Each balance square changes this bound by at most
$\sqrt{2\rho}$ between its two factors.
The displayed equations supply the following links:
\[
\begin{array}{c|l}
\text{coordinate with a unitarity penalty}&\text{neighbors used}\\\hline
C_d&x\\
C_p&d\\
r&a,a_+,q,t,z,z_2,z_3\\
\ell&e_2
\end{array}
\]
The second links are $x$ to $s$, $a$ to $W,b,p$, and $q$ to $j$.
Thus every symbol is at most two links from a penalized coordinate,
giving a defect at most $3\sqrt{2\rho}\le5\sqrt\rho$.
The singular value inequalities above give both rounding bounds.

For a product $AB$, expand
\[
 AB-\bar A\bar B
 =(A-\bar A)\bar B+\bar A(B-\bar B)
       +(A-\bar A)(B-\bar B).
\]
The first two terms are bounded in $2$-norm by $5\sqrt\rho$ each;
Schatten H\"older and the $4$-norm bounds give the same bound for
the third. Hence every quadratic monomial changes by at most
$15\sqrt\rho$, and each rounded binomial residual is at most
$31\sqrt\rho$.

By \eqref{eq:selectorcomparison}, the original selector on the raw
tuple satisfies
\[
 \|H_z\|_2
 \le\bigl(\sqrt{7/13}+15\bigr)\sqrt\rho\le16\sqrt\rho.
\]
Its quadratic, linear, and symmetry terms change under rounding
by at most $(60+20+25)\sqrt\rho=105\sqrt\rho$.
Therefore
$\|H_{\bar z}\|_2\le121\sqrt\rho\le128\sqrt\rho$.
These nonstrict bounds also cover $\rho=0$.
\end{proof}

\begin{lemma}\label{cs:fourfactor}
Let $B_i,\widehat B_i,E_i\in M_d$ satisfy
$B_i=\widehat B_i+E_i$ for $1\le i\le4$.
If $\alpha,\beta\ge0$ and
$\|\widehat B_i\|\le1$, $\|E_i\|_2\le\alpha$,
$\|E_i\|_4\le\beta$, then
\[
 \bigl|\tr_d(B_1B_2B_3B_4
          -\widehat B_1\widehat B_2\widehat B_3\widehat B_4)\bigr|
 \le4\alpha+6\alpha^2+4\alpha\beta^2+\beta^4.
\]
\end{lemma}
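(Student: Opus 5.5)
We expand each factor $B_i=\widehat B_i+E_i$ multilinearly. The product $B_1B_2B_3B_4$ then splits into $16$ terms, indexed by the subsets $S\subseteq\{1,2,3,4\}$ of positions at which we select $E_i$ instead of $\widehat B_i$. The term with $S=\emptyset$ is exactly $\widehat B_1\widehat B_2\widehat B_3\widehat B_4$ and cancels. So it suffices to bound $|\tr_d|$ of each remaining term according to $|S|$, and to check that the totals reproduce $4\alpha$, $6\alpha^2$, $4\alpha\beta^2$ and $\beta^4$ for $|S|=1,2,3,4$.

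\emph{Case $|S|=1$ (four terms).} For a term $\widehat B\cdots E_i\cdots\widehat B$, the tracial H\"older inequality $|\tr_d(XY)|\le\|X\|_2\|Y\|_2$ together with $\|\widehat B_j\|\le1$ and $\|1\|_2=1$ gives $|\tr_d|\le\|E_i\|_1\le\|E_i\|_2\le\alpha$. These terms contribute $4\alpha$.

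\emph{Case $|S|=2$ (six terms).} Use cyclicity of the trace together with H\"older in the form $|\tr_d(XE_iYE_j)|\le\|X\|\|Y\|\|E_i\|_2\|E_j\|_2$, where $X,Y$ are products of the $\widehat B$ factors. This gives $\alpha^2$ per term, hence $6\alpha^2$.

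\emph{Case $|S|=3$ (four terms).} Apply Schatten H\"older with exponents $(2,4,4)$ to the three error factors, bounding the interleaved $\widehat B$'s in operator norm. This yields $\alpha\beta^2$ per term, hence $4\alpha\beta^2$.

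\emph{Case $|S|=4$ (one term).} Apply H\"older with exponents $(4,4,4,4)$ to get $|\tr_d(E_1E_2E_3E_4)|\le\beta^4$.

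Summing the four cases gives the stated bound. The only point needing care is the normalized Schatten H\"older inequality, $|\tr_d(X_1\cdots X_k)|\le\prod\|X_i\|_{p_i}$ with $\sum 1/p_i=1$ and $\|\cdot\|_\infty$ the operator norm. This holds for $\tr_d$ because the normalization is consistent: $\tr_d$ is a normalized trace, so $\|1\|_p=1$. It also justifies the monotonicity $\|E\|_1\le\|E\|_2$ used in the case $|S|=1$. There is no genuine obstacle beyond this bookkeeping.
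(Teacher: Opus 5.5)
Your proposal is correct and matches the paper's proof: expand multilinearly by the number of error factors, bound each one-error term by $\alpha$, bound the terms with two, three, and four errors by Schatten H\"older with exponents $(2,2)$, $(2,4,4)$, and $(4,4,4,4)$, and count six, four, and one such terms. In the one-error case, the direct bound $|\tr_d(XE_i)|\le\|X\|_2\|E_i\|_2\le\alpha$ is enough, so the detour through $\|E_i\|_1$ is harmless but unnecessary.
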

\begin{proof}
Expand by the number of error factors.
The four terms with one error cost at most $\alpha$ each.
Schatten H\"older with exponents $(2,2)$, $(2,4,4)$, and
$(4,4,4,4)$ bounds each term with two, three, and four errors by
$\alpha^2$, $\alpha\beta^2$, and $\beta^4$, respectively.
There are six, four, and one such terms.
\end{proof}

\subsection{The Hermitian cyclic representative and trace separation}
Write $p\cyc q$ when $p-q$ is a sum of additive commutators.
Equivalently, $p$ and $q$ have the same total coefficient in each
class of words under cyclic rotation
\cite[Definition~1.2 and Remark~1.3]{KS}.

Order the selfadjoint letters as $T_1<\cdots<T_{37}$.
For a Hermitian polynomial $p$, define $\HC(p)$ by collecting these
class coefficients as follows. For two distinct classes paired by
reversal, take the lexicographically least word $w$ in their union
and assign the two class coefficients to $w,w^*$.
For a class fixed by reversal, assign its coefficient to its least
palindromic rotation, if one exists; otherwise assign half its
coefficient to its least rotation $w$ and half to $w^*$.
Keep the constant coefficient unchanged.\label{cs:cyclic}

This rule is linear over $\R$, preserves Hermitian symmetry and class
coefficients, does not increase degree, and satisfies $\HC(p)\cyc p$.
It also preserves coefficients in $\Z[i]$: in a class fixed by reversal
with no palindrome, the words form distinct adjoint pairs, whose
coefficients have even integer sum. Thus the division by two remains
in $\Z[i]$. Define
\begin{equation}\label{cs:cyclicquartic}
 F_4=\HC(F_\flat)\cyc F_\flat.
\end{equation}
The two polynomials therefore have identical traces under every
substitution. The presentation of $Q_\flat$ as residual, unitarity, selector,
and balance squares is modulo commutators, as in
Lemma~\ref{csimp:r1:lem:major}; the individual majorants need not be
Hermitian squares.

For the negative evaluation, use the group readouts of
Section~\ref{subsec:coordinates} and set
\[
 e_2=\ell d^*\ell,\qquad C_d=xz,\qquad C_p=d^*e_2,\qquad
 C_a=ba_+,\qquad j=qsq=sqs,\qquad
 s_z=\frac25\sum_{k=0}^4z^k-1.
\]
Let $\mathbf a_4\in L(\Lambda)^{37}$ be the corresponding tuple in
\eqref{csimp:r3:complexcoords}--\eqref{csimp:r3:realcoords}:
each complex group readout $U$ supplies
$\operatorname{Re}U=(U+U^*)/2$ and
$\operatorname{Im}U=(U-U^*)/(2i)$, followed by the seven symmetry
coordinates. All entries of $\mathbf a_4$ are selfadjoint contractions.
Let $F_2$ denote the free group on two generators.

\begin{theorem}\label{thm:quartic}\label{csimp:r3:quarticthm}
The Hermitian polynomials $F_\flat,F_4\in
\Z[i]\langle T_1,\ldots,T_{37}\rangle$ have degree four and
914 and 387 distinct monomials, respectively.
For every $d\ge1$ and every tuple $A=(A_1,\ldots,A_{37})$ of
selfadjoint matrices in $M_d$, without norm restrictions,
\[
 \tr_d F_4(A)=\tr_d F_\flat(A)\ge\frac34.
\]
At the specified contraction tuple $\mathbf a_4$,
\[
 \tau_\Lambda F_4(\mathbf a_4)
 =\tau_\Lambda F_\flat(\mathbf a_4)=-1.
\]
The same negative evaluation holds in the separable $\mathrm{II}_1$ factor
$L(\Lambda*F_2)$. Before expanding the complex dictionary symbols into
the $T_j$, the sum of the absolute values of the coefficients of
$F_\flat$ is at most $3+(4930/7)\Omega_\flat$.
\end{theorem}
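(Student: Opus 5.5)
The proof splits into three parts: the formal counts, the negative evaluation, and the matrix lower bound. The last is the substantive one.

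\textbf{Formal data.} Hermitian symmetry holds because every summand in \eqref{csimp:r3:quarticdef} is Hermitian: $\mathcal P(L,R)$, $\mathcal N$, the squares, and the signal pair $z_2^*Wz_2W+Wz_2^*Wz_2$. The map $\HC$ preserves Hermitian symmetry. Coefficients lie in $\Z[i]$ because $\Omega_\flat\cdot\frac{13}{7}$, $\Omega_\flat\cdot\frac{325}{14}$ and $\Omega_\flat\cdot\frac12$ are integers, and expanding $T_{2j-1}+iT_{2j}$ only introduces Gaussian integers. The degree is four, since every $\mathcal N(AB)$ has degree four in the symbols and the symbols are linear in the $T_j$. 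I would take the monomial counts $914$ and $387$ and the coefficient bound $3+(4930/7)\Omega_\flat$ from a direct tally of the dictionary terms. The last equation $\tr_dF_4=\tr_dF_\flat$ is immediate from $F_4\cyc F_\flat$.

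\textbf{Negative evaluation.} At $\mathbf a_4$ every symbol is a unitary or a symmetry with its group value. For a product $AB$ of unitaries, $\mathcal N(AB)=1=(AB)^*(AB)$, so $\mathcal P(L,R)=(L-R)^*(L-R)$ exactly. The thirty-one equations hold in $\Lambda$ by the relations of Section~\ref{subsec:coordinates}, using $j=qsq=sqs$, $W=ava^*$ and $e_2=\ell d^*\ell$. The unitarity squares vanish, and $H_z=0$ by Lemma~\ref{cs:Hbound}; hence $\widetilde H_z=0$ by \eqref{eq:selectorcomparison}. So $Q_\flat=0$. The signal terms $z_2^*Wz_2W$ and $Wz_2^*Wz_2$ have trace zero by Proposition~\ref{prop:s2:negative-evaluation}, which gives $-1$. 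For $L(\Lambda*F_2)$, the inclusion $L(\Lambda)\subseteq L(\Lambda*F_2)$ is trace preserving. Moreover $\Lambda*F_2$ is ICC, so this algebra is a separable $\mathrm{II}_1$ factor.

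\textbf{Matrix bound.} Put $\rho=\tr_dQ_\flat\ge0$. If $\rho\ge\rho_\flat$ for a threshold of order $2^{-4J-120}$, I would bound the signal below by $-1-2\tr_d(\ldots)$. The difficulty is that the inputs are unbounded. Tracial H\"older gives $|\tr_d(z_2^*Wz_2W)|\le\|z_2\|_4^2\|W\|_4^2$. The quantities $\|z_2\|_4^4$ and $\|W\|_4^4$ are controlled by the positive quartic terms: the balance squares and $(1-W^2)^2$ dominate $\tr_d((z_2^*z_2)^2)$ up to constants. So the signal is at least $-c_1-c_2\rho$. Then $\Omega_\flat\rho$ dominates whenever $\rho$ exceeds the threshold, because $\Omega_\flat\gg c_2$.

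If instead $\rho<\rho_\flat$, I would apply Lemma~\ref{csimp:r3:polar} to obtain a rounded tuple of unitaries and symmetries. After rounding, all thirty-one binomial residuals and $H_{\bar z}$ are $O(\sqrt\rho)$. Lemma~\ref{cs:Hbound} then gives $\|\bar z^5-1\|_2=O(\rho^{1/4})$. The remaining steps are as follows.
\begin{itemize}
\item The relations $jq=qs$ and $js=sq$ give $(\bar q\bar s)^3\approx1$, and $r=t^2$, $v=tr$ give $t^6\approx1$.
\item Lemma~\ref{csimp:r3:roundorder} makes these orders exact.
\item The retained relations \eqref{csimp:r3:fifteen} are then recovered with errors $O(\rho^{1/4})$; I would do this by substituting the coordinate products, for instance $b\approx qaq$, $p\approx C_pd\ldots$, and $e_2\approx\ell d^*\ell$.
\item Lemmas~\ref{csimp:r3:recoverthree} and~\ref{csimp:r17:lem:core} bound all 97 full relations by $O(\rho^{1/4})<\delta$.
\item Theorem~\ref{thm:gate} gives $\|[\bar z_2,\bar u\bar v\bar u^*]\|_2\le g_0$.
\item Using $\bar W\approx\bar a\bar v\bar a^*$, the signal trace identity gives a value of at least $1-g_0^2-o(1)$.
\end{itemize}

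\textbf{Main obstacle.} The delicate step is transporting the signal from the raw tuple to the rounded one. The raw $z_2$ and $W$ are unbounded, so only $4$-norm control is available. I would apply Lemma~\ref{cs:fourfactor} with $\alpha=5\sqrt\rho$ and $\beta=\sqrt5\rho^{1/4}$; this bounds the change by $O(\rho^{1/4})$, which is below $1/8$ for $\rho<\rho_\flat$. The $\rho^{1/4}$ losses from the selector, and the fourth power in $\Omega_\flat$, are what force the choice $\Omega_\flat=7\cdot2^{4J+126}$.
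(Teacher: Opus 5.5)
\textbf{Gap: the exact reflection identities are never enforced.}
Your architecture matches the paper's: a threshold on $\rho=\tr_dQ_\flat$, Lemma~\ref{csimp:r3:polar}, the selector bound from Lemma~\ref{cs:Hbound}, order rounding, recovery, Theorem~\ref{thm:gate}, and Lemma~\ref{cs:fourfactor}. Handling large $\rho$ by tracial H\"older on the raw words, rather than on the rounded ones, is a harmless variant. The gap is at ``Lemma~\ref{csimp:r3:roundorder} makes these orders exact'' and at the later appeal to Lemmas~\ref{csimp:r3:recoverthree} and~\ref{csimp:r17:lem:core}.

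In the quartic, $t$, $z$ and $q$ are independent dictionary symbols and are rounded independently. So $\bar q\bar t\bar q=\bar t^*$ and $\bar q\bar z\bar q=\bar z^*$ hold only up to the residuals of $qt=t^*q$ and $qz=z^*q$, i.e.\ up to $O(\sqrt\rho)$. Lemma~\ref{csimp:r3:roundorder} requires $QUQ=U^*$ exactly. That fails for $U=\bar t$, $Q=\bar q$; it is automatic only for $U=\bar q\bar s$. The two recovery lemmas also require $qtq=t^*$, $qzq=z^*$, $t^6=1$, $(qs)^3=1$ and $x=szs$ on the nose. In Theorem~\ref{thm:matrixpositivity} the words $t=X_4X_6$, $z=X_5X_6$, $q=X_6$ make these reflections automatic, but here nothing in your plan enforces them.

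\textbf{The missing step.} First symmetrize. For a unitary $U$ one has $\|U-\sgn(\operatorname{Re}U)\|_2\le\|U-U^*\|_2$. Replace $\bar t\bar q$ and $\bar z\bar q$ by these sign symmetries and multiply back by $\bar q$. This gives $t'$ and $\widehat z$, exactly reversed by $\bar q$, at cost $O(\sqrt\rho)$. Then proceed as follows.
\begin{itemize}
\item Apply Lemma~\ref{csimp:r3:roundorder} to $t'$, using $\|(t')^6-1\|_2=O(\sqrt\rho)$, which follows from $\bar v\approx\bar t^3$ and $\bar v^2=1$.
\item Apply it also to $\bar q\bar s$.
\item Set $\widehat x=\widehat s\widehat z\widehat s$, and build the remaining core readouts from $\bar q,\bar a,\widehat t,\widehat z,\widehat s$.
\item Apply Theorem~\ref{thm:gate} to these readouts rather than to the barred symbols, and compare the readouts with the independently rounded symbols.
\end{itemize}
The fifth-power defect must also be carried from $\bar z$ to $\widehat z$; this still costs only $O(\rho^{1/4})$.

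\textbf{Two smaller corrections.} First, $Q_\flat$ contains no unitarity square for $W$; the seven such squares are for $C_d,C_p,r,C_a,v,\ell,s_z$. Your large-$\rho$ bound must instead take $\|1-W^2\|_2\le5\sqrt\rho$ and $\|1-z_2^*z_2\|_2\le5\sqrt\rho$ from the link chains in Lemma~\ref{csimp:r3:polar}. These give $\|u\|_4^2\le1+5\sqrt\rho$ for $u=z_2,W$, hence a signal trace of at least $-3-20\sqrt\rho-50\rho$, which suffices. Second, the $\Z[i]$ claim for $F_4$ needs the observation that the halving in $\HC$ stays in $\Z[i]$. In a reversal-fixed class without palindromes, the words come in adjoint pairs, so the class coefficient $\sum(c_w+\overline{c_w})$ is an even integer.
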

\begin{proof}
Fix $A$ as in the statement, and evaluate all dictionary symbols at
$A$. Put $\rho=\tr_d Q_\flat(A)\ge0$ and $e=128\sqrt\rho$.
Lemma~\ref{csimp:r3:polar} rounds the 22 dictionary symbols to
unitaries or symmetries $\bar u$, with all binomial residuals and
$H_{\bar z}$ of norm at most $e$.

First make the reflection and finite order identities exact.
For a unitary $U$,
\[
 \|U-\operatorname{sgn}(\operatorname{Re}U)\|_2
 \le\|U-U^*\|_2,
\]
where the sign at zero is chosen to be $1$.
Apply this to $\bar t\bar q$ and $\bar z\bar q$, then multiply on
the right by $\bar q$. The resulting unitaries $t',\widehat z$
are exactly reversed by $\bar q$ and differ from $\bar t,\bar z$
by at most $e$.
The equations for $r,v$ give $\|\bar t^3-\bar v\|_2\le2e$,
so $\|(t')^6-1\|_2\le10e$.
Lemma~\ref{csimp:r3:roundorder} therefore gives
\[
 \widehat t^6=1,\qquad
 \bar q\widehat t\bar q=\widehat t^*,\qquad
 \|\widehat t-\bar t\|_2\le11e.
\]
The two equations for $j$ give
$\|\bar q\bar s\bar q-\bar s\bar q\bar s\|_2\le2e$, hence
$\|(\bar q\bar s)^3-1\|_2\le2e$.
Applying the same lemma with order three produces a symmetry
$\widehat s$ such that
$(\bar q\widehat s)^3=1$ and
$\|\widehat s-\bar s\|_2\le2e$.

Set $\widehat q=\bar q$, $\widehat a=\bar a$, and
$\widehat x=\widehat s\widehat z\widehat s$, and form the remaining
core readouts using \eqref{csimp:r17:eq:dict}.
For comparison with the independent dictionary symbol $e_2$, set
$\widehat e_2=\widehat\ell\widehat d^*\widehat\ell$; this is the
auxiliary short word in the compression target; the full system entry
is recovered separately. Successive use of the quadratic equations gives the following
bounds for $\|\bar u-\widehat u\|_2$, in multiples of $e$:
\[
\begin{array}{c|rrrrrrrrrrrr}
u&b&r&v&z_2&z_3&a_+&d&\ell&e_2&p&W&x\\\hline
\text{bound}&1&23&35&48&48&47&16&24&65&164&36&6
\end{array}
\]
For example, $xs=sz$ gives the bound $6e$ for $x$.
The two equations defining $d$ then cost
$(2+2\cdot6+2\cdot1)e=16e$; the bounds for $e_2$ and $p$ are
$(1+2\cdot24+16)e=65e$ and
$(2+2\cdot16+2\cdot65)e=164e$.
All retained tests other than the power relations have defects at
most $213e$, the largest being the compression bound
$(48+1+164)e$.

For $0\le\rho\le1$, Lemma~\ref{cs:Hbound} gives
\[
 \|\bar z^5-1\|_2^2
 \le e^2+\tfrac52e
 \le16704\sqrt\rho\le2^{16}\sqrt\rho.
\]
The edit to $\widehat z$ therefore gives
$\|\widehat z^5-1\|_2\le(256+640)\rho^{1/4}
\le2^{10}\rho^{1/4}$.
Thus all fifteen retained defects are at most
$2^{15}\rho^{1/4}$, and Lemma~\ref{csimp:r3:recoverthree}
bounds the 97 full defects by $2^{30}\rho^{1/4}$.
Recall $\delta=2^{-J}$. If
\[
 \rho<\rho_*:=\frac4{\Omega_\flat}
 =\frac{\delta^4}{7\cdot2^{124}},
\]
these full defects are less than
$\delta/(2\,7^{1/4})<\delta/2$.
Choose $\eps>0$ strictly between their maximum and $\delta$.
Theorem~\ref{thm:gate} applies, also when $\rho=0$, and the core
signal has trace at least $1-g_0^2$, where $g_0=5\cdot2^{-20}$.

Lemma~\ref{cs:fourfactor}, with
$\alpha=5\sqrt\rho$ and $\beta^2=5\sqrt\rho$, bounds the total
change of the two signal words under polar rounding by
\[
 40\sqrt\rho+550\rho\le64\sqrt\rho+1024\rho.
\]
The subsequent core edits cost at most
$2(2\cdot48+2\cdot36)e=336e$ by the table. Consequently,
\[
 \tr_d F_\flat(A)
 \ge1-g_0^2-2^{16}\sqrt\rho-1024\rho+\Omega_\flat\rho
 \ge\frac34\qquad(\rho<\rho_*).
\]
The last inequality uses $\rho_*<2^{-124}$ and
$g_0^2+2^{-46}+2^{-114}<1/4$.
For $\rho\ge\rho_*$, the rounded signal has trace at least
$-3$, so
\[
 \tr_d F_\flat(A)
 \ge-3+(\Omega_\flat-1024)\rho-64\sqrt\rho
 \ge-3+\tfrac{15}{16}\Omega_\flat\rho\ge\frac34.
\]
Here $64/\sqrt\rho\le32\sqrt{\Omega_\flat}$, and
$1024+32\sqrt{\Omega_\flat}\le\Omega_\flat/16$.

At $\mathbf a_4$, every binomial residual, unitarity defect, selector,
and balance square vanishes, so the signal gives trace $-1$.
The inclusion into $L(\Lambda*F_2)$ preserves the canonical trace.
Free product normal form gives infinite conjugacy classes for every
nonidentity element of $\Lambda*F_2$; as this group is countable,
its von Neumann algebra is a separable $\mathrm{II}_1$ factor.

In the formal alphabet of the complex symbols, their adjoints, and
the selfadjoint symbols, the coefficient sums contributed to
$Q_\flat$ are at most $124$ from the majorants, $12$ from the six
terms of weight $1/2$, $650/7$ from the $s_z$ unitarity penalty, and
$(13/7)16^2=3328/7$ from the selector. Their sum is $4930/7$;
the selector bound uses
$\|\widetilde H_z\|_{\mathrm{coeff},1}\le16$ from
\eqref{eq:selectorcomparison}, where this norm is the sum of absolute
coefficients. Expanding in the primitive letters gives 914 distinct
monomials in $F_\flat$ and 387 in $F_4$, both of degree four.
All coefficients of $14Q_\flat$ lie in $\Z[i]$.
Since $\Omega_\flat$ is divisible by $14$ and $\HC$ preserves
$\Z[i]$, both polynomials have the asserted coefficients.
\end{proof}

\begin{corollary}\label{cor:quarticmodule}
For $0\le\eps<1$, both $F_4+\eps$ and $F_\flat+\eps$ lie outside
$\mathcal M_{37}+[\mathcal A_{37},\mathcal A_{37}]_{\rm lin}$,
where $\mathcal A_{37}$ is the free $*$-algebra and $\mathcal M_{37}$
is its contraction quadratic module from \eqref{eq:cubemoduleintro}.
\end{corollary}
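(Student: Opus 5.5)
The argument mirrors Corollary~\ref{cor:module}, with $\mathbf a_4$ in place of the six-variable group tuple. Let $\tau_\Lambda$ be the canonical trace on $L(\Lambda)$. I will show that $\tau_\Lambda(p(\mathbf a_4))\ge0$ for every $p\in\mathcal M_{37}+[\mathcal A_{37},\mathcal A_{37}]_{\rm lin}$. I will also show that $\tau_\Lambda((F+\eps)(\mathbf a_4))=-1+\eps<0$ for $F\in\{F_4,F_\flat\}$ and $0\le\eps<1$. Together these give the exclusion.

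\emph{Nonnegativity on the module.} Every entry of $\mathbf a_4$ is a selfadjoint contraction in $L(\Lambda)$; this was noted before Theorem~\ref{thm:quartic}. The real and imaginary parts of a unitary have norm at most $1$, and the seven symmetry coordinates have norm $1$. Therefore $1-T_j^2$ evaluates to a positive operator for each $j$, and so does $b^*(1-T_j^2)b$. Hermitian squares $a^*a$ evaluate to positive operators as well. Since $\tau_\Lambda$ is a positive linear functional, every element of $\mathcal M_{37}$ has nonnegative trace at $\mathbf a_4$. Since $\tau_\Lambda$ is tracial, every additive commutator has trace zero.

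\emph{Negativity of $F+\eps$.} Theorem~\ref{thm:quartic} gives $\tau_\Lambda F_4(\mathbf a_4)=\tau_\Lambda F_\flat(\mathbf a_4)=-1$. Adding $\eps$ gives $-1+\eps<0$. Suppose $F+\eps=m+c$ with $m\in\mathcal M_{37}$ and $c$ in the commutator span. Evaluating the trace at $\mathbf a_4$ would then give $-1+\eps=\tau_\Lambda(m(\mathbf a_4))\ge0$, which is a contradiction.

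The only point that needs care is the contraction property of the tuple, which is essential because $\mathcal M_{37}$ contains the terms $1-T_j^2$. The rest is immediate from Theorem~\ref{thm:quartic}.
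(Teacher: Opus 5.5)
Your proposal is correct and is the paper's argument: evaluate at the contraction tuple $\mathbf a_4$, where every element of $\mathcal M_{37}$ has nonnegative trace and every additive commutator has trace zero, while Theorem~\ref{thm:quartic} gives $\tau_\Lambda(F+\eps)(\mathbf a_4)=-1+\eps<0$. You spell out the check, left implicit in the paper's one-line proof, that each entry of $\mathbf a_4$ is a selfadjoint contraction, which is what makes $1-T_j^2\ge0$.
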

\begin{proof}
Evaluation at $\mathbf a_4$ has trace $-1+\eps<0$, whereas every
element of the displayed set has nonnegative trace there.
\end{proof}

\begin{remark}\label{cor:analyticquartic}
The same evaluation separates $F_4+\eps$, for $0\le\eps<1$, from
weak limits of sums of Hermitian squares modulo the weakly closed
space of cyclic differences in the analytic free $*$-algebra of
\cite{Rad04}. Indeed, its defining seminorms are
$\|p\|_R=\sum_w|c_w|R^{|w|}$ for $p=\sum_wc_ww$, where $|w|$
is the length of $w$, and
\[
 |\tau_\Lambda(p(\mathbf a_4))|\le\|p\|_{R=1}.
\]
This continuous tracial functional is nonnegative on squares and
their weak closure, vanishes on the closed cyclic subspace, and
takes value $-1+\eps$ on $F_4+\eps$.
Thus the example refutes the matrix positivity implication in
\cite[Corollary~1.2]{Rad04}, not the theorem there assuming
positivity in all $\mathrm{II}_1$ algebras.
\end{remark}

\begin{proposition}\label{cs:degreeoptimal}
A Hermitian polynomial of degree at most three with nonnegative trace
on all unrestricted selfadjoint matrix tuples is cyclically equivalent
to a sum of squares of real affine linear polynomials.
Thus degree four is minimal for trace separation without norm restrictions.
\end{proposition}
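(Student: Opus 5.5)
The plan is to strip off the cubic part by a homogeneity argument, and then reduce the remaining quadratic problem to commutative linear algebra over $\R$. Write $p=p_0+p_1+p_2+p_3$ with $p_k$ homogeneous of degree $k$; each $p_k$ is Hermitian because the involution preserves degree.

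\emph{Step 1: remove the cubic part.} For a selfadjoint tuple $A$ and real $t$, the tuple $tA$ is again selfadjoint, since no norm restriction is imposed. Hence
\[
 0\le\tr_d p(tA)=\tr_d p_0+t\,\tr_d p_1(A)+t^2\tr_d p_2(A)+t^3\tr_d p_3(A)
 \qquad(t\in\R).
\]
A real cubic polynomial in $t$ that is nonnegative on all of $\R$ has vanishing leading coefficient, so $\tr_d p_3(A)=0$ for every $d$ and every selfadjoint tuple $A$. I will then invoke the fact that a polynomial whose trace vanishes on all selfadjoint matrix tuples of all sizes is cyclically equivalent to zero \cite[Remark~1.3 and Proposition~2.3]{KS}. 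For a homogeneous cubic one can also argue directly: the traces of the distinct cyclic classes of words of length three are linearly independent functions on generic selfadjoint matrices of large size. Either way $p_3\cyc0$. I expect this step to be the main point needing care, because the citation must be one valid for selfadjoint variables and for arbitrary dimension; the direct independence argument is the fallback.

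\emph{Step 2: normalize the quadratic part.} Since $p$ is Hermitian, the constant $p_0$ is real and the linear coefficients $b_j$ are real. Write $p_2=\sum_{j,k}c_{jk}X_jX_k$. Hermitian symmetry gives $c_{kj}=\overline{c_{jk}}$. Modulo the commutators $[X_j,X_k]$ we may replace $c_{jk}$ by $\tfrac12(c_{jk}+c_{kj})=\operatorname{Re}c_{jk}$. Hence $p\cyc p_0+\sum_jb_jX_j+\sum_{j,k}C_{jk}X_jX_k$ with $C$ a real symmetric matrix. Now evaluate at $d=1$, that is, at real scalars $X_j=a_j$. The hypothesis gives
\[
 p_0+b^Ta+a^TCa\ge0\qquad(a\in\R^N).
\]
Homogenizing, this is equivalent to positive semidefiniteness of the real symmetric $(N+1)\times(N+1)$ matrix
\[
 G=\begin{pmatrix}p_0&b^T/2\\ b/2&C\end{pmatrix}.
\]
Indeed, for $s\ne0$ the quadratic form of $G$ at $(s,a)$ is $s^2$ times the value at $a/s$. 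The case $s=0$ follows by continuity, or from $C\succeq0$, which holds because the leading form of a nonnegative quadratic is nonnegative.

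\emph{Step 3: factor and conclude.} Write $G=\sum_k\ell_k\ell_k^T$ with $\ell_k\in\R^{N+1}$, and set $L_k=\ell_{k,0}+\sum_j\ell_{k,j}X_j$. Then $\sum_kL_k^2$ has constant term $p_0$, linear coefficients $b_j$, and symmetric quadratic coefficients $C_{jk}$. Hence $p\cyc\sum_kL_k^2$, a sum of squares of real affine linear polynomials.

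\emph{Minimality.} Such squares have nonnegative trace at every tuple in every tracial von Neumann algebra, and commutators have trace zero. So no polynomial of degree at most three can separate matrix traces from the trace at a tuple such as $\mathbf a_4$. Theorem~\ref{thm:quartic} exhibits a quartic that does separate, so degree four is minimal for unrestricted separation.
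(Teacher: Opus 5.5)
Your proposal is correct and follows the paper's proof. Scaling $A\mapsto tA$ forces $\tr_d p_3(A)=0$, and the Klep--Schweighofer trace-zero criterion, which the paper cites as \cite[Theorem~2.1]{KS}, gives $p_3\cyc0$; your fallback independence argument is a sound safeguard, since real symmetric evaluations cannot detect Hermitian cubic terms such as $i(X_1X_2X_3-X_3X_2X_1)$. The remaining quadratic, symmetrized modulo commutators, lifts a real commutative quadratic that is nonnegative on $\R^N$, and your Gram-matrix homogenization spells out the affine sum-of-squares step that the paper states in one line.
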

\begin{proof}
Write $p=p_3+p_2+p_1+p_0$, where $p_j$ is homogeneous of degree $j$.
For every selfadjoint matrix tuple $A$, nonnegativity of
$\tr_d p(tA)$ for all $t\in\R$ forces $\tr_d p_3(A)=0$.
The trace-zero criterion \cite[Theorem~2.1]{KS} gives $p_3\cyc0$.
Modulo commutators, $X_iX_j$ and $X_jX_i$ agree; Hermitian symmetry
therefore identifies the remainder with the symmetric lift of a real
commutative quadratic polynomial. Scalar evaluation makes this
quadratic nonnegative everywhere, so it is a sum of squares of real
affine linear forms. Lifting those forms proves the claim.
\end{proof}
The scaling argument uses unrestricted tuples and gives no degree
optimality assertion on a contraction domain.

\section{A counterexample in two selfadjoint variables}\label{app:fewvariables}

We encode the unrestricted quartic from Appendix~\ref{sec:quartic}
in two selfadjoint variables. Polynomial readouts recover its input
symbols in one corner of a matrix algebra with twelve blocks.
A further penalty controls the error when this corner is only
approximate. The resulting polynomial has integer coefficients and
degree at most $48$, and separates matrix contractions from
contractions in a finite tracial algebra.

\subsection{The quartic input and polynomial readouts}

Use $F_\flat$ from Theorem~\ref{thm:quartic}, before cyclic
collection. Its fifteen complex symbols, in order, are
\[
 x,z,z_2,z_3,a,b,a_+,d,e_2,p,C_d,C_p,t,r,C_a,
\]
and its seven selfadjoint symbols are
\[
 q,v,\ell,s,s_z,W,j.
\]
Each complex symbol represents two selfadjoint coordinates.
Thus the quartic theorem applies to arbitrary complex matrix values
of the first fifteen symbols and arbitrary selfadjoint values of
the last seven, without norm restrictions.
Its normalized trace is at least $3/4$ on these matrix inputs and
equals $-1$ at the specified substitution in
$L(\Lambda*F_2)$, with canonical trace $\tau$, from
Appendix~\ref{sec:quartic}. At that substitution the complex
symbols are unitaries and the selfadjoint symbols are symmetries.

In this formal alphabet, the selector is
\begin{equation}\label{enc:formalselector}
 \widetilde H_z=2z^2+2(z^*)^2+2z+2z^*
              -\frac32(z^*z+zz^*)-5s_z.
\end{equation}
Substituting $z=T_3+iT_4$ and $s_z=T_{35}$ gives exactly
$T_3^2-7T_4^2+4T_3-5T_{35}$.
The formal penalty coefficients have denominators dividing $28$:
the selector square has denominator dividing $4$ and weight $13/7$,
while the remaining weights have denominators dividing $14$.
Since
\[
 \Omega_\flat=7\cdot2^{4J+126},\qquad J=2^{55226},
\]
is divisible by $28$, $F_\flat$ has real integer coefficients
in the formal symbols and their adjoints. Its expansion in the
selfadjoint coordinates $T_i$ instead has coefficients in $\Z[i]$.
Returning to the formal alphabet consisting of the fifteen complex
symbols, their adjoints, and the seven selfadjoint symbols, we have
\begin{equation}\label{enc:coefficient}
 c_F=3+\frac{4930}{7}\Omega_\flat\in\Z_{>0},\qquad
 \|F_\flat\|_{\mathrm{coeff},1}\le c_F.
\end{equation}

For two free selfadjoint variables $X,Y$, define
\begin{equation}\label{enc:spectral}
 e=-\frac{(4X-1)(4X-2)(4X-3)}6,\qquad
 \chi(X)=4X(4X-1)(4X-2)(4X-3).
\end{equation}
When $\chi(X)=0$ at a selfadjoint matrix $X$, its spectrum lies
in $\{0,1/4,1/2,3/4\}$, and $e(X)$ is the projection onto its
zero eigenspace. The readouts below will encode the quartic in
this corner.

For integer exponents $a,b\in\{0,1,2\}$, set
\begin{equation}\label{enc:paths}
 R_{(a)}=2^{16}Y(4X)^a,\qquad
 R_{(a,b)}=\bigl(2^{16}R_{(a)}Y-(1^a+2^a+3^a)\bigr)(4X)^b.
\end{equation}
Use the eight exponent pairs
\[
 (\beta^{(1)},\ldots,\beta^{(8)})
 =((0,0),(0,1),(1,0),(0,2),(1,1),(2,0),(1,2),(2,1)),
\]
and define $C_{ik}=2^{16}R_{\beta^{(i)}}Y
R_{\beta^{(k)}}^*$ for $1\le i,k\le8$.
Assign the fifteen complex symbols, in their displayed order, to
\begin{equation}\label{enc:complexreadouts}
 (C_{12},C_{13},C_{14},C_{15},C_{16},
 C_{23},C_{24},C_{25},C_{26},C_{34},C_{35},C_{36},C_{45},C_{46},C_{56}),
\end{equation}
and assign $(q,v,\ell,s,s_z,W,j)$ to
\begin{equation}\label{enc:realreadouts}
 (C_{11},C_{22},C_{33},C_{44},C_{55},C_{66},C_{17}+C_{71}).
\end{equation}
Write $B_u$ for the readout of a symbol $u$, and put
$B_{u^*}=B_u^*$. Since $C_{ki}=C_{ik}^*$, the seven selfadjoint
symbols have selfadjoint readouts at every selfadjoint pair
$(X,Y)$. The readouts need not be contractions; the unrestricted
positivity of $F_\flat$ is used for precisely this reason.

For $i=1,\ldots,6$, the polynomials $R_{\beta^{(i)}}$ have degrees
$2,3,3,4,4,4$, respectively, and $R_{\beta^{(7)}}$ has degree $5$.
Hence
\begin{equation}\label{enc:degree}
 \deg B_u+\deg e\le12
\end{equation}
for every symbol; for $u=j$, this sum is at most $11$.

\subsection{The encoded polynomial and a specified substitution}

For words in the formal input symbols and their adjoints, set
\[
 \mathcal T(1)=e,\qquad
 \mathcal T(b_1\cdots b_k)=B_{b_1}e\cdots B_{b_k}e
 \quad(k\ge1),
\]
and extend $\mathcal T$ linearly. Define
\begin{equation}\label{enc:formula}
\begin{split}
 G_F&=\tfrac12\bigl(\mathcal T(F_\flat)+\mathcal T(F_\flat)^*\bigr),
 \qquad K_F=2^{440}c_F,\\
 \Psi_F&=81\bigl(1+24G_F+(24K_F\chi(X))^2\bigr).
\end{split}
\end{equation}
The map $\mathcal T$ is a word encoding rule. When $e$ is a
projection, cyclicity of trace identifies each encoded word with
the corresponding product of compressed readouts $eB_ue$.
No multiplicativity of $\mathcal T$ is required on arbitrary inputs.

For the negative pair in $M_{12}(L(\Lambda*F_2))$, index the blocks by a root
$\varnothing$, three vertices $a\in\{1,2,3\}$, and eight leaves
$ (a,b)\in\{1,2,3\}^2\setminus\{(3,3)\}$.
Join the root to each vertex $a$ and join $a$ to each leaf
$(a,b)$. Define $X_\circ$ to be diagonal, with entries $0$ at
the root, $a/4$ at vertex $a$, and $b/4$ at leaf $(a,b)$,
each multiplied by the algebra's identity.

Order the leaves lexicographically and form the integer matrix
\[
 \mathsf V_{i,(a,b)}=a^{\beta^{(i)}_1}b^{\beta^{(i)}_2}
 \qquad(1\le i\le8).
\]
Its determinant is $16$, and the maximum absolute row and column
sums of $\mathsf V^{-1}$ are $30$ and $33$, respectively.
For each formal symbol $u$, let $u_\circ$ be its
value in the negative substitution of Appendix~\ref{sec:quartic}.
Define a Hermitian block matrix $\mathsf H\in M_8(L(\Lambda*F_2))$
by putting the fifteen complex values in the positions
$1\le i<k\le6$, in the order of
\eqref{enc:complexreadouts}, and their adjoints in the opposite
positions. Put $q_\circ,v_\circ,\ell_\circ,s_\circ,(s_z)_\circ,W_\circ$
on diagonal positions $1$ through $6$, put $j_\circ/2$ in
positions $(1,7)$ and $(7,1)$, and set all other entries to zero.
Let
\[
 A_{\rm leaf}=\mathsf V^{-1}\mathsf H(\mathsf V^{-1})^{\mathsf T},
 \qquad
 \mathsf V A_{\rm leaf}\mathsf V^{\mathsf T}=\mathsf H.
\]
Finally, define the selfadjoint $Y_\circ$ to have $2^{-16}I$
on both orientations of every tree edge and
$2^{-16}A_{\rm leaf}$ on the leaf block, with all other entries
zero.

\subsection{Trace bounds and minimality}

\begin{theorem}\label{thm:twovariables}
The polynomial $\Psi_F$ in \eqref{enc:formula} is Hermitian,
has integer coefficients and degree at most $48$, and satisfies
\[
 \tr_d\Psi_F(X,Y)\ge243/4
\]
for every $d\ge1$ and every pair of selfadjoint matrix contractions
$X,Y\in M_d$. The specified pair $(X_\circ,Y_\circ)$ consists
of contractions and satisfies
\[
 (\tr_{12}\otimes\tau)\Psi_F(X_\circ,Y_\circ)=-81.
\]
\end{theorem}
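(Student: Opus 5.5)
The proof splits into three parts. First, the algebraic bookkeeping. Then the negative evaluation, which is a direct calculation. Finally the matrix lower bound, which splits on the size of $\chi(X)$.

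\emph{Algebraic properties.} $\Psi_F$ is Hermitian because $G_F$ is symmetrized and $\chi(X)$ is selfadjoint. For the degree bound, $F_\flat$ has degree four in the formal symbols. Each encoded letter contributes $\deg B_u+\deg e\le12$ by \eqref{enc:degree}, so $\deg G_F\le48$. The term $(24K_F\chi(X))^2$ has degree $8$. For integrality, $F_\flat$ has integer coefficients in the formal symbols. The readouts carry the factors $2^{16}$ and the $\tfrac16$ in $e$. The prefactors $81$ and $24$ in $\Psi_F$ are chosen to absorb the denominators $2$ (symmetrization) and $6^{k+1}$ when $k\le4$. I would check carefully that $81\cdot24\cdot\tfrac12\cdot 6^{-5}$ times the powers of $4$ and $2^{16}$ gives integers; this works because each $e$ contributes $4^3/6=\tfrac{32}{3}$, and $3^{5}=243$ is handled by $81\cdot 3$ from the $24$.

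\emph{Negative evaluation.} At $X_\circ$, $\chi(X_\circ)=0$ and $e(X_\circ)$ is the root projection. The tree structure makes $R_{(a)}$ map the root to vertex $a$ with weight $a^{\beta_1}$. The subtraction of $1^a+2^a+3^a$ in $R_{(a,b)}$ removes the return paths to the root. Hence $eR_{\beta}$ sends the root into the leaf block with weight vector given by the rows of $\mathsf V$. The compressed readout $eB_ue$ is then the $(i,k)$ entry of $\mathsf V A_{\rm leaf}\mathsf V^{\mathsf T}=\mathsf H$. This gives exactly $u_\circ$, and $j_\circ$ for $C_{17}+C_{71}$. The root corner has normalized trace $1/12$, so $\tr_{12}\otimes\tau$ applied to $G_F$ equals $\tfrac1{12}\tau F_\flat(\mathbf a)=-\tfrac1{12}$. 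Note that $\mathcal T(1)=e$ handles the constant term. Therefore $\Psi_F=81(1-2)=-81$. Contractivity of $Y_\circ$ follows from the $2^{-16}$ scaling together with the row and column sums $30,33$ of $\mathsf V^{-1}$, giving $\|A_{\rm leaf}\|\le 990\cdot\|\mathsf H\|$, which is far below $2^{16}$.

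\emph{Matrix positivity.} Let $X,Y$ be selfadjoint contractions, so every readout has operator norm at most some explicit $2^{c}$ with $c\le 2^{8}$. The trace of $G_F$ is bounded crudely by $c_F$ times the product norms, which is $\le 2^{440}c_F/24=K_F/24$. If $\|\chi(X)\|_2\ge \eta$ for suitable $\eta$, the square term dominates: $\tr(24K_F\chi)^2\ge(24K_F\eta)^2$, which exceeds $1+K_F$ plus margin. If $\chi(X)$ is small in $2$-norm, round $X$ by functional calculus to $\widehat X$ with spectrum in $\{0,\tfrac14,\tfrac12,\tfrac34\}$, so that $\widehat e=e(\widehat X)$ is an exact projection. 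The cost is controlled by $\|\chi(X)\|_2$ through a Lipschitz bound on each word; $\chi$ has simple roots, so $|t-\widehat t|\lesssim|\chi(t)|$. On the corner $\widehat e M_d\widehat e$ with normalized trace, $\mathcal T(F_\flat)$ has trace equal to $\tr(\widehat e)\cdot\tr_{\widehat e}F_\flat(\widehat eB_u\widehat e)$. The compressed readouts are arbitrary matrices and selfadjoint where required, so the unrestricted bound $\tr F_\flat\ge3/4$ of Theorem~\ref{thm:quartic} applies on the corner. If $\widehat e=0$ the encoded trace is $0$.

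The main obstacle is the weighting. A corner bound gives $\tr_d G_F\ge\tfrac34\tr_d(\widehat e)-\text{error}\ge -\text{error}$, since $\tr_d(\widehat e)\ge0$. The Lipschitz error must be at most $\sim\tfrac1{24}\cdot\tfrac14$, and the square penalty must cover the complementary regime. I would balance the two cases with $\eta\approx(K_F)^{-1/2}$, using that the rounding error is at most $K_F\|\chi\|_2/24$. Then $81(1-\text{small}+\ldots)\ge 243/4$ follows, with the constants $2^{440}$ chosen to make this work.
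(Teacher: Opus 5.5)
Your algebraic bookkeeping and the negative evaluation at $(X_\circ,Y_\circ)$ follow the paper's proof.

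One correction to the integrality count. $\mathcal T(b_1\cdots b_k)=B_{b_1}e\cdots B_{b_k}e$ contains exactly $k\le4$ factors $e$, and $\mathcal T(1)=e$ contains one. Every coefficient of $e=-\tfrac{32}{3}X^3+16X^2-\tfrac{22}{3}X+1$ lies in $\tfrac13\Z$. Hence $81=3^4$ clears the denominators, and the even factor $24$ clears the $\tfrac12$. Your $6^{k+1}$ accounting, which looks only at the leading coefficient $4^3/6$, is not a valid argument, although the conclusion is correct.

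The matrix positivity step fails as you assemble it. This is a genuine gap.

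\begin{itemize}
\item In your large regime you use the constant bound $24|\tr_dG_F|\le K_F$. The square term $(24K_F)^2\|\chi(X)\|_2^2$ must then exceed about $K_F$, which forces $\eta\gtrsim 1/(24\sqrt{K_F})$.
\item In your small regime the rounding error in $24\tr_dG_F$ is up to $K_F\|\chi(X)\|_2\le K_F\eta$. This must be at most $1/4$, which forces $\eta\le 1/(4K_F)$.
\end{itemize}

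Since $K_F\ge2^{440}$, the two requirements are incompatible. With your choice $\eta\approx K_F^{-1/2}$, the small-regime error is of order $\sqrt{K_F}$ rather than at most $1/4$. No threshold rescues the split for the given $\Psi_F$: its penalty coefficient $24K_F$ is only linear in the Lipschitz constant, whereas a case split with the crude bound would need it to be far larger.

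The missing observation is that the rounding does not require $\chi(X)$ to be small.
\begin{itemize}
\item For every $t\in[-1,1]$, the nearest $\widehat t\in\{0,\tfrac14,\tfrac12,\tfrac34\}$ satisfies $|t-\widehat t|\le2|\chi(t)|$, because the other three factors of $\chi(t)$ have modulus at least $\tfrac12$.
\item So every contraction $X$ has a contraction $\widehat X$ with $\chi(\widehat X)=0$ and $\|X-\widehat X\|_2\le2\|\chi(X)\|_2$.
\item Corner positivity gives $\tr_dG_F(\widehat X,Y)\ge0$. Telescoping over the words of $G_F$, with $Y$ fixed, then yields the global linear bound $\tr_dG_F(X,Y)\ge-K_F\|\chi(X)\|_2$ for every contraction pair. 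Your constant $K_F/24$ also works here.
\item With $u=24K_F\|\chi(X)\|_2$, completing the square gives
\[
81^{-1}\tr_d\Psi_F(X,Y)\ge1-u+u^2\ge\tfrac34 .
\]
\end{itemize}

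This is how the paper concludes: no case split and no crude bound on $|\tr_dG_F|$ are needed. The square penalty is designed to dominate a linear error, not a constant one.
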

\begin{proof}
First suppose that $\chi(X)=0$, and abbreviate $e=e(X)$.
Then $e$ is a projection. Cyclicity identifies $\tr_dG_F(X,Y)$
with the ambient trace of $F_\flat$ evaluated in $eM_de$ at
the symbols $eB_u(X,Y)e$. The seven selfadjoint symbols remain
selfadjoint. If $r=\operatorname{rank}e>0$, the quartic theorem
gives
\[
 \tr_dG_F(X,Y)\ge\frac{3r}{4d}\ge0;
\]
if $e=0$, this trace is zero. No norm bounds on the compressed
readouts are needed.

For the coefficient norms in the primitive alphabet $X,Y$,
directly from the readout formulas we have
\[
 \|R_{(a)}\|_{\mathrm{coeff},1}\le2^{21},\quad
 \|R_{(a,b)}\|_{\mathrm{coeff},1}\le2^{42},\quad
 \|B_u\|_{\mathrm{coeff},1}\le2^{101},\quad
 \|e\|_{\mathrm{coeff},1}=35<2^6.
\]
The bound for $B_u$ includes the sum defining $B_j$.
Since $F_\flat$ has formal degree four,
\[
 \deg G_F\le48,\qquad
 \|G_F\|_{\mathrm{coeff},1}\le c_F2^{428}.
\]
For $t\in[-1,1]$, choose a nearest
$\widehat t\in\{0,1/4,1/2,3/4\}$. The three factors of
$\chi(t)$ corresponding to the other roots have absolute value
at least $1/2$, so $|t-\widehat t|\le2|\chi(t)|$.
Functional calculus therefore gives a contraction $\widehat X$
with $\chi(\widehat X)=0$ and
$\|X-\widehat X\|_2\le2\|\chi(X)\|_2$.
Telescoping the words of $G_F$, with $Y$ unchanged, yields
\[
 \tr_dG_F(X,Y)
 \ge-2\cdot48\,c_F2^{428}\|\chi(X)\|_2
 \ge-K_F\|\chi(X)\|_2.
\]
As $\chi(X)$ is selfadjoint, completing the scalar square gives
\[
 81^{-1}\tr_d\Psi_F(X,Y)
 \ge1-24K_F\|\chi(X)\|_2
       +(24K_F\|\chi(X)\|_2)^2\ge3/4.
\]

The readouts have integer coefficients, and
$e\in\frac13\Z[X]$. Each encoded word contains at most four
factors $e$, including the image $\mathcal T(1)=e$ of the
constant. Thus $81=3^4$ clears these denominators, and the even
coefficient $24$ clears the symmetrization denominator.
The real integrality of the formal input was established above.
This proves the integer coefficient claim; Hermitian symmetry
follows from \eqref{enc:formula}, and the squared penalty has
degree eight, so the total degree is at most $48$.

For the specified pair, $\|X_\circ\|=3/4$ and
$\chi(X_\circ)=0$. Every block of $\mathsf H$ has norm at most
one. The inverse matrix row and column bounds therefore give
the following bound on the block rows of $Y_\circ$:
\[
 \|Y_\circ\|
 \le2^{-16}\max\{4,1+30\cdot8\cdot33\}
 =7921\cdot2^{-16}<1.
\]
Here $4$ bounds the degree of each nonleaf vertex, and the
additional $1$ in a leaf row accounts for its edge to its parent.
Thus $(X_\circ,Y_\circ)$ is a contraction pair.

At this pair, the path recurrence \eqref{enc:paths} gives
\[
 eR_{(a,b)}=
 \sum_{\substack{1\le u,v\le3\\(u,v)\ne(3,3)}}
 u^av^b E_{\varnothing,(u,v)}\otimes1,
\]
where $E_{\varnothing,(u,v)}$ is the matrix unit from the leaf
$(u,v)$ to the root. The subtraction in \eqref{enc:paths}
removes the return to the root. It follows that $eC_{ik}e$
has root block $(\mathsf V A_{\rm leaf}\mathsf V^{\mathsf T})_{ik}
=\mathsf H_{ik}$. Consequently every input symbol is recovered
in this corner, including $j$ by adding its two halves.
The corner trace identity used above gives
\[
 (\tr_{12}\otimes\tau)G_F(X_\circ,Y_\circ)=-1/12,
 \qquad
 (\tr_{12}\otimes\tau)\Psi_F(X_\circ,Y_\circ)
 =81(1-24/12)=-81.\qedhere
\]
\end{proof}

Two selfadjoint variables are minimal for this separation on
contractions. Indeed, a Hermitian polynomial in one selfadjoint
variable is a real polynomial. Nonnegative scalar evaluations
make it nonnegative on $[-1,1]$, and continuous functional
calculus then gives a nonnegative value at every selfadjoint
contraction in a tracial algebra.

\Needspace{0.85\textheight}
\section{The expanded polynomial}\label{app:nativeexpanded}
For the selfadjoint variables $X_1,\ldots,X_6$, the polynomial
\eqref{eq:nativefinal} has the following expansion, where
$J=2^{55226}$ and $M=2^{2J+54}$ as in \eqref{eq:globalconstants}.
Powers of words denote repeated concatenation, and adjoints reverse
the order of the letters. The display lists the two signal words
and the fifteen penalty words with their reversals; the constant
term is $30M-1$.

\begingroup\small\setlength{\jot}{1pt}
\begin{equation}\label{csimp:r3:expanded33}
\begin{aligned}
f={}&30M-1\\
&+X_{4}X_{6}X_{4}X_{5}(X_{6}X_{4})^{2}X_{2}X_{3}(X_{4}X_{6})^{3}X_{3}X_{2}(X_{4}X_{6})^{2}X_{5}X_{4}X_{6}X_{4}X_{2}X_{3}(X_{4}X_{6})^{3}X_{3}X_{2}\\
&+X_{2}X_{3}(X_{6}X_{4})^{3}X_{3}X_{2}X_{4}X_{6}X_{4}X_{5}(X_{6}X_{4})^{2}X_{2}X_{3}(X_{6}X_{4})^{3}X_{3}X_{2}(X_{4}X_{6})^{2}X_{5}X_{4}X_{6}X_{4}\\
&-M\bigl((X_{4}X_{6})^{6}+(X_{6}X_{4})^{6}\\
&\qquad+X_{6}X_{1}X_{6}X_{1}X_{6}X_{1}+X_{1}X_{6}X_{1}X_{6}X_{1}X_{6}\\
&\qquad+X_{4}X_{6}X_{1}X_{5}X_{6}X_{1}X_{6}X_{4}X_{1}X_{6}X_{5}X_{1}+X_{1}X_{5}X_{6}X_{1}X_{4}X_{6}X_{1}X_{6}X_{5}X_{1}X_{6}X_{4}\\
&\qquad+X_{2}X_{3}X_{1}X_{5}X_{6}X_{1}X_{3}X_{2}X_{1}X_{6}X_{5}X_{1}+X_{1}X_{5}X_{6}X_{1}X_{2}X_{3}X_{1}X_{6}X_{5}X_{1}X_{3}X_{2}\\
&\qquad+X_{5}(X_{6}X_{4})^{2}(X_{6}X_{5}X_{4}X_{6}X_{4})^{2}X_{5}(X_{6}X_{4})^{2}\\
&\qquad+(X_{4}X_{6})^{2}X_{5}X_{4}X_{6}X_{4}(X_{5}(X_{6}X_{4})^{2})^{2}X_{6}X_{5}\\
&\qquad+X_{1}X_{5}X_{6}X_{1}X_{6}X_{5}X_{1}X_{6}X_{5}X_{1}(X_{5}X_{6}X_{1})^{3}X_{6}X_{5}X_{1}X_{6}X_{5}\\
&\qquad+X_{5}X_{6}X_{1}X_{5}X_{6}X_{1}(X_{6}X_{5}X_{1})^{3}X_{5}X_{6}X_{1}X_{5}X_{6}X_{1}X_{6}X_{5}X_{1}\\
&\qquad+X_{1}X_{5}X_{6}X_{1}X_{5}X_{6}X_{1}X_{6}X_{5}X_{1}(X_{6}X_{5}X_{4}X_{6}X_{4})^{2}X_{5}X_{6}X_{1}X_{5}X_{6}X_{1}X_{6}X_{5}X_{1}X_{6}X_{5}X_{1}X_{4}\\
&\qquad\quad{}\cdot X_{6}X_{4}X_{5}(X_{6}X_{4})^{2}\\
&\qquad+(X_{4}X_{6})^{2}X_{5}X_{4}X_{6}X_{4}X_{1}X_{5}X_{6}X_{1}X_{5}X_{6}X_{1}X_{6}X_{5}X_{1}X_{6}X_{5}(X_{4}X_{6}X_{4}X_{5}X_{6})^{2}X_{1}X_{5}X_{6}\\
&\qquad\quad{}\cdot X_{1}X_{6}X_{5}X_{1}X_{6}X_{5}X_{1}\\
&\qquad+(X_{5}X_{6})^{5}+(X_{6}X_{5})^{5}\\
&\qquad+X_{2}X_{3}X_{6}X_{2}X_{3}X_{6}X_{3}X_{2}X_{6}X_{3}X_{2}X_{6}+X_{6}X_{2}X_{3}X_{6}X_{2}X_{3}X_{6}X_{3}X_{2}X_{6}X_{3}X_{2}\\
&\qquad+(X_{6}X_{2}X_{3}(X_{6}X_{4})^{2})^{2}(X_{6}X_{3}X_{2}(X_{6}X_{4})^{2})^{2}\\
&\qquad+((X_{4}X_{6})^{2}X_{2}X_{3}X_{6})^{2}((X_{4}X_{6})^{2}X_{3}X_{2}X_{6})^{2}\\
&\qquad+X_{2}X_{3}(X_{4}X_{6})^{2}X_{2}X_{3}(X_{6}X_{4})^{2}X_{3}X_{2}((X_{4}X_{6})^{2}X_{3}X_{2}X_{6})^{2}\\
&\qquad+(X_{6}X_{2}X_{3}(X_{6}X_{4})^{2})^{2}X_{2}X_{3}(X_{4}X_{6})^{2}X_{3}X_{2}(X_{6}X_{4})^{2}X_{3}X_{2}\\
&\qquad+X_{2}X_{3}X_{5}X_{6}X_{3}X_{2}X_{6}X_{5}+X_{5}X_{6}X_{2}X_{3}X_{6}X_{5}X_{3}X_{2}\\
&\qquad+X_{2}X_{3}(X_{6}X_{4})^{2}X_{5}(X_{6}X_{4})^{2}X_{6}X_{3}X_{2}(X_{6}X_{4})^{2}X_{6}X_{5}(X_{4}X_{6})^{2}\\
&\qquad+(X_{6}X_{4})^{2}X_{5}(X_{6}X_{4})^{2}X_{6}X_{2}X_{3}(X_{6}X_{4})^{2}X_{6}X_{5}(X_{4}X_{6})^{2}X_{3}X_{2}\\
&\qquad+X_{2}X_{3}(X_{4}X_{6})^{2}X_{5}X_{4}X_{6}X_{4}X_{3}X_{2}(X_{4}X_{6}X_{4}X_{5}X_{6}X_{1}X_{5}X_{6}X_{1}X_{6}X_{5}X_{1}X_{6}X_{5}X_{1})^{2}\\
&\qquad\quad{}\cdot (X_{4}X_{6}X_{4}X_{1}X_{5}X_{6}X_{1}X_{5}X_{6}X_{1}X_{6}X_{5}X_{1}X_{6}X_{5})^{2}\\
&\qquad+(X_{5}X_{6}X_{1}X_{5}X_{6}X_{1}X_{6}X_{5}X_{1}X_{6}X_{5}X_{1}X_{4}X_{6}X_{4})^{2}\\
&\qquad\quad{}\cdot (X_{1}X_{5}X_{6}X_{1}X_{5}X_{6}X_{1}X_{6}X_{5}X_{1}X_{6}X_{5}X_{4}X_{6}X_{4})^{2}X_{2}X_{3}X_{4}X_{6}X_{4}X_{5}(X_{6}X_{4})^{2}X_{3}X_{2}\\
&\qquad+((X_{4}X_{6})^{3}X_{5})^{2}+(X_{5}(X_{6}X_{4})^{3})^{2}\bigr).
\end{aligned}
\end{equation}
\endgroup

\bibliographystyle{amsplain}
\bibliography{reference}

\end{document}